\newcommand{\R}{{\mathbb R}}       
\newcommand{\Z}{{\mathbb Z}}       
\newcommand{\BB}{{\mathcal B}}
\newcommand{\DD}{{\mathcal D}}
\newcommand{\CC}{{\mathcal C}}
\newcommand{\FF}{{\mathcal F}}
\newcommand{\HH}{{\mathcal H}}
\newcommand{\MM}{{\mathcal M}}
\newcommand{\WW}{{\mathcal W}}
\newcommand{\AZ}{{\mathcal A}}
\newcommand{\EE}{{\mathcal E}}
\newcommand{\TT}{{\mathcal T}}
\newcommand{\om}{{\Omega}}
\newcommand{\hm}{{\omega}}
\newcommand{\diam}{\mathop{\rm diam}}
\newcommand{\dist}{{\rm dist}}
\newcommand{\ds}{\displaystyle }
\newcommand{\fiproof}{{\hspace*{\fill} $\square$ \vspace{2pt}}}
\newcommand{\rf}[1]{{(\ref{#1})}}
\newcommand{\supp}{\operatorname{supp}}
\newcommand{\vphi}{{\varphi}}
\newcommand{\ve}{{\varepsilon}}
\newcommand{\vv}{{\vspace{2mm}}}
\newcommand{\vvv}{\vspace{4mm}}
\newcommand{\wt}[1]{{\widetilde{#1}}}
\newcommand{\wh}[1]{{\widehat{#1}}}
\newcommand{\noi}{\noindent}
\newcommand{\lip}{{\rm Lip}}
\newcommand{\pom}{{\partial \Omega}}
\newcommand{\bad}{{\mathsf{Bad}}}
\newcommand{\HD}{{\mathsf{HD}}}
\newcommand{\LD}{{\mathsf{LD}}}
\newcommand{\sss}{{\mathsf{Stop}}}
\newcommand{\ttt}{{\mathsf{Top}}}
\newcommand{\tree}{{\mathsf{Tree}}}
\newcommand{\ls}{{\mathsf{LS}}}
\newcommand{\whsa}{{\mathsf{WHSA}}}
\newcommand{\wts}{{\mathsf{WTS}}}
\newcommand{\wtn}{{\mathsf{WTN}}}
\newcommand{\baup}{{\mathsf{BAUP}}}
\newcommand{\batpp}{{\mathsf{BATPP}}}
\newcommand{\case}{{\mathsf{Type}}}
\def\Xint#1{\mathchoice
{\XXint\displaystyle\textstyle{#1}}%
{\XXint\textstyle\scriptstyle{#1}}%
{\XXint\scriptstyle\scriptscriptstyle{#1}}%
{\XXint\scriptscriptstyle\scriptscriptstyle{#1}}%
\!\int}
\def\XXint#1#2#3{{\setbox0=\hbox{$#1{#2#3}{\int}$ }
\vcenter{\hbox{$#2#3$ }}\kern-.58\wd0}}
\def\avint{\Xint-}
\newtheorem{theorem}{Theorem}[section]
\newtheorem{lemma}[theorem]{Lemma}
\newtheorem{corollary}[theorem]{Corollary}
\newtheorem{coro}[theorem]{Corollary}
\newtheorem{proposition}[theorem]{Proposition}
\newtheorem{propo}[theorem]{Proposition}
\newtheorem*{lemma*}{Lemma}
\newtheorem*{theorem*}{Theorem}
\theoremstyle{definition}
\newtheorem{definition}[theorem]{Definition}
\theoremstyle{remark}
\newtheorem{rem}[theorem]{\bf Remark}
\numberwithin{equation}{section}
\newcommand{\RRem}{\begin{rem}}
\newcommand{\erem}{\end{rem}}
\def\d{\partial}
\newcommand{\mih}[1]{\marginpar{\color{red} \scriptsize \textbf{Mi:} #1}}
\def\@tocline#1#2#3#4#5#6#7{\relax
  \ifnum #1>\c@tocdepth 
  \else
    \par \addpenalty\@secpenalty\addvspace{#2}%
    \begingroup \hyphenpenalty\@M
    \@ifempty{#4}{%
      \@tempdima\csname r@tocindent\number#1\endcsname\relax
    }{%
      \@tempdima#4\relax
    }%
    \parindent\z@ \leftskip#3\relax \advance\leftskip\@tempdima\relax
    \rightskip\@pnumwidth plus4em \parfillskip-\@pnumwidth
    #5\leavevmode\hskip-\@tempdima
      \ifcase #1
       \or\or \hskip 1em \or \hskip 2em \else \hskip 3em \fi%
      #6\nobreak\relax
    \dotfill\hbox to\@pnumwidth{\@tocpagenum{#7}}\par
    \nobreak
    \endgroup
  \fi}
\def\cB{{\mathscr{B}}}
\def\cC{{\mathscr{C}}}
\def\bR{{\mathbb{R}}}
\def\bS{{\mathbb{S}}}
\def\bN{{\mathbb{N}}}
\newcommand{\ps}[1]{\left( #1 \right)}
\newcommand{\isif}[1]{\left\{\begin{array}{cc} #1
\end{array}\right.}
\newcommand{\divv}{{\text{{\rm div}}}}
\def\Car{\textrm{Car}}
\def\gec{\gtrsim}
\def\lec{\lesssim}
\def\loc{\textrm{loc}}
\def\grad{\nabla}
\def\loc{\textrm{loc}}
\def\bR{\mathbb{R}}
\def\lec{\lesssim}
\def\Lip{\textrm{Lip}}
\begin{document}

\title[Uniform rectifiability and elliptic measure]{Uniform rectifiability, elliptic measure, square functions, and $\ve$-approximability via an ACF monotonicity formula}


\author[Azzam]{Jonas Azzam}

\address{Jonas Azzam\\
School of Mathematics \\ University of Edinburgh \\ JCMB, Kings Buildings \\
Mayfield Road, Edinburgh,
EH9 3JZ, Scotland.}
\email{j.azzam "at" ed.ac.uk}

\newcommand{\jonas}[1]{\marginpar{\color{magenta} \scriptsize \textbf{Jonas:} #1}}

\author[Garnett]{John Garnett}

\address{John Garnett\\
Department of Mathematics, 6363 Math Sciences Building \\University of California
at Los Angeles, Los Angeles, California 90095-1555.
}
\email{jbg@mat.ucla.edu}

\author[Mourgoglou]{Mihalis Mourgoglou}

\address{Mihalis Mourgoglou\\
BCAM - Basque Center for Applied Mathematics\\
Mazarredo, 14 E48009 Bilbao, Spain\\
Departamento de Matem\`aticas, Universidad del Pa\' is Vasco, Aptdo. 644, 48080 Bilbao, Spain and\\
Ikerbasque, Basque Foundation for Science, Bilbao, Spain.
}
\email{michail.mourgoglou@ehu.eus}

\author[Tolsa]{Xavier Tolsa}
\address{Xavier Tolsa
\\
ICREA, Passeig Lluís Companys 23 08010 Barcelona, Catalonia, and\\
Departament de Matem\`atiques and BGSMath
\\
Universitat Aut\`onoma de Barcelona
\\
Edifici C Facultat de Ci\`encies
\\
08193 Bellaterra (Barcelona), Catalonia
}
\email{xtolsa@mat.uab.cat}

\subjclass[2010]{31B15, 28A75, 28A78, 35J15, 35J08, 42B37}
\thanks{J.G. was supported by NSF Grant DMS 1217239. M.M. was supported  by the Basque Government through IKERBASQUE and the BERC 2014-2017 program, and by Spanish Ministry of Economy and Competitiveness MINECO: BCAM Severo Ochoa excellence accreditation SEV-2013-0323 and MTM2014-53850. X.T. was supported by the ERC grant 320501 of the European Research Council (FP7/2007-2013) and partially supported by MTM-2013-44304-P, MTM-2016-77635-P,  MDM-2014-044 (MICINN, Spain), 2014-SGR-75 (Catalonia), and by Marie Curie ITN MAnET (FP7-607647).
}

\begin{abstract}
Let $\Omega\subset\R^{n+1}$, $n\geq2$, be an open set with Ahlfors-David regular boundary that satisfies the corkscrew condition. We consider a uniformly elliptic operator $L$ in divergence form associated with a matrix $A$ with real, merely bounded and possibly non-symmetric coefficients, which are also locally Lipschitz and satisfy suitable Carleson type estimates.
In this paper we show that if $L^*$ is the operator in divergence form associated with the transpose matrix of $A$, then $\partial\Omega$ is uniformly $n$-rectifiable if and only if every bounded solution of $Lu=0$ and every bounded solution of $L^*v=0$ in $\Omega$ is
$\varepsilon$-approximmable if and only if every bounded solution of $Lu=0$ and every bounded solution of $L^*v=0$ in $\Omega$  satisfies a suitable square-function Carleson measure estimate. Moreover, we obtain two additional criteria for uniform rectifiability. One is given in terms of the so-called ``$S<N$'' estimates, and another in terms of a suitable corona decomposition involving $L$-harmonic and $L^*$-harmonic measures. We also prove that if $L$-harmonic measure and $L^*$-harmonic measure satisfy a weak $A_\infty$-type condition, then $\d \Omega$ is $n$-uniformly rectifiable. In the process we obtain a version of Alt-Caffarelli-Friedman monotonicity formula for a fairly wide class of elliptic operators which is of independent interest and plays a fundamental role in our arguments.

\end{abstract}

\maketitle

\tableofcontents

\section{Introduction}

Let  $\Omega \subset \R^{n+1}$ be open and $A=(a_{ij})_{1 \leq i,j \leq n+1}$ be a matrix with real measurable coefficients in $\Omega$. We say that $A$ is {\it uniformly elliptic} in $\Omega$ with constant $\Lambda \geq 1$ if it satisfies the following conditions:
\begin{align}\label{eqelliptic1}
&\Lambda^{-1}|\xi|^2\leq \langle A(x) \xi,\xi\rangle,\quad \mbox{ for all $\xi \in\R^{n+1}$ and a.e. $x\in\Omega$.}\\
&\langle A(x) \xi,\eta \rangle  \leq\Lambda |\xi| |\eta|, \quad \mbox{ for all $\xi, \eta \in\R^{n+1}$ and a.e. $x\in\Omega$.} \label{eqelliptic2}
\end{align}
Notice that the matrix $A$ is {\it possibly non-symmetric}. If 
$$L = -{\rm div} (A(\cdot)\nabla)$$
 is an elliptic operator of divergence form associated with a uniformly elliptic matrix $A$ as above in $\Omega$ we write $L \in \mathcal{L}(\Omega)$. We shall write $L \in \mathcal{L}_{\Car}(\Omega)$ if, in addition, $a_{ij} \in \Lip_{\loc}(\Omega)$ and the following Carleson condition holds:
\begin{equation}\label{eqelliptic3}
\sup_{\substack{x\in\partial\Omega\\r>0}} \frac1{r^n} \int_{B(x,r)\cap\Omega} 
\biggl(\,
\sup_{\substack{z_1,z_2\in B(y, M \delta_\Omega(y)) \cap \Omega\\ \delta_\Omega(z_k)\geq \frac{1}{4}\,\delta_\Omega(y)}}
\frac{|a_{ij}(z_1) - a_{ij}(z_2)|}{|z_1-z_2|}\biggr)\,dy \leq C,
\end{equation}
for $1\leq i,j \leq n+1$, where $\delta_\Omega(x):=\dist(x,\partial\Omega)$, $dy$ stands for the Lebesgue measure in $\R^{n+1}$,  and $M \geq 4$ is a large constant. It is clear that if $A^*$ is the transpose matrix of $A$ and $L^*$ the uniformly elliptic operator associated with $A^*$, then $L \in \mathcal{L}(\Omega) $ (resp. $\mathcal{L}_\Car(\Omega) $) implies $L^* \in \mathcal{L}(\Omega)$ (resp. $ \mathcal{L}_\Car(\Omega) $).  A measurable function $u: \Omega \to \R$ that satisfies the equation $Lu=0$ in the weak sense is called {\it $L$-harmonic}.


In this paper we  characterize  uniform $n$-rectifiability in $\R^{n+1}$, $n\geq 2$, in terms of approximability and in terms of 
Carleson measure estimates for square functions involving bounded $L$-harmonic and $L^*$-harmonic functions. 

To state our results in more detail, we need now to introduce some further definitions and notation.
A set $E\subset \R^d$ is called $n$-{\textit {rectifiable}} if there are Lipschitz maps
$f_i:\R^n\to\R^d$, $i=1,2,\ldots$, such that 
\begin{equation}\label{eq001}
\HH^n\biggl(E\setminus\bigcup_i f_i(\R^n)\biggr) = 0,
\end{equation}
where $\HH^n$ stands for the $n$-dimensional Hausdorff measure. 

A set $E\subset\R^{d}$ is called $n$-AD-{\textit {regular}} (or just AD-regular or Ahlfors-David regular) if there exists some
constant $C_{0}>0$ such that
$$C_0^{-1}r^n\leq \HH^n(B(x,r)\cap E)\leq C_0\,r^n\quad \mbox{ for all $x\in
E$ and $0<r\leq \diam(E)$.}$$
The set $E\subset\R^{d}$ is  uniformly  $n$-{\textit {rectifiable}} if it is 
$n$-AD-regular and
there exist constants $\theta, M >0$ such that for all $x \in E$ and all $0<r\leq \diam(E)$ 
there is a Lipschitz mapping $g$ from the ball $B_n(0,r)$ in $\R^{n}$ to $\R^d$ with $\text{Lip}(g) \leq M$ such that
$$
\HH^n (E\cap B(x,r)\cap g(B_{n}(0,r)))\geq \theta r^{n}.$$


The analogous notions for  measures are the following. 
A Radon measure $\mu$ on $\R^d$ is $n$-{\textit {rectifiable}} if it vanishes outside  an $n$-rectifiable
set $E\subset\R^d$ and if moreover $\mu$ is absolutely continuous with respect to $\HH^n|_E$.
On the other hand, $\mu$ is called $n$-AD-{\textit {regular}} if it is of the form $\mu=g\,\HH^n|_E$, where 
$E$ is $n$-AD-regular and $g:E\to (0,+\infty)$ satisfies $g(x)\approx1 $ for all $x\in E$, with the implicit constant independent of $x$.
If, moreover, $E$ is uniformly $n$-rectifiable, then $\mu$ is called uniformly $n$-{\textit {rectifiable}}.

 We say that an open set $\Omega\subset\R^{n+1}$ satisfies the {\textit {corkscrew condition}} if for every ball $B(x,r)$ with
 $x\in\partial\Omega$ and $0<r\leq\diam(\Omega)$ there exists another ball $B(x',r')\subset \Omega\cap B(x,r)$
 with radius $r'\approx r$, with the implicit constant independent of $x$ and $r$. 
Let us remark that we do not ask $\Omega$ to be connected. For example, if $E\subset\R^{n+1}$
is a closed $n$-AD-regular set, then it follows easily that $\R^{n+1}\setminus E$ satisfies the corkscrew condition.

We say that $\Omega\subset \mathbb{R}^{n+1}$ satisfies the {\it Harnack chain condition} if there is a uniform constant $C$ such that for every $\rho >0,\, \Lambda\geq 1$, and every pair of points
$x,y \in \Omega$ with $\min(\dist(x,\d\Omega),\,\dist(y,\d\Omega)) \geq\rho$ and $|x-y|<\Lambda\,\rho$, there is a chain of
open balls
$B_1,\dots,B_N \subset \Omega$, $N\leq C(\Lambda)$,
with $x\in B_1,\, y\in B_N,$ $B_k\cap B_{k+1}\neq \varnothing$
and $C^{-1}\diam (B_k) \leq \dist (B_k,\partial\Omega)\leq C\diam (B_k).$  The chain of balls is called
a {\it Harnack Chain}. Note that if such a chain exists, then
\[u(x)\approx_{N}u(y),\] for any positive $L$-harmonic function $u$.

Let $u$ be a  
bounded $L$-harmonic function on $\Omega$.  For $\ve > 0$ we say that
$u$ is $\ve$-\textit{approximable} 
if for every ball $B$ centered at the boundary $\d \Omega$ of radius $r(B)\in  (0, \diam(\Omega))$, there is $\varphi=\varphi_B \in W^{1,1}(B \cap \Omega)$ and $C>0$ such that 
\begin{equation}
\label{epapp}
\|u - \varphi\|_{L^{\infty}(B \cap\Omega)} < \varepsilon
\end{equation}
\and
\begin{equation}
\label{epCar}
\frac{1}{r(B)^n} \int_{B\cap \Omega} |\nabla \varphi(y)| \,dy \leq C(\|u\|_{L^\infty(\Omega)},\ve).
\end{equation}
An analogous notion of ``global'' $\ve$-approximability, where there the function $\vphi$ is taken independently of $B$
, was introduced by Varopoulos in \cite{Var}
in connection with corona problems. See \cite[Chapter VIII]{Garnett} for some applications and a
proof on the upper half plane and \cite{KKoPT}, \cite{KKiPT} \cite{HMM2} and \cite{Pipher} for surveys of more recent applications. A local version similar to the one above was introduced in \cite{HKMP} and it was used to prove the $A_\infty$ property of elliptic measure for non-symmetric time independent elliptic equations in the upper half-space.

We are now ready to state our main results.
\vv

\begin{theorem} \label{teo1}
Let $\Omega\subset\R^{n+1}$, $n\geq2$, be a domain with $n$-AD-regular boundary satisfying the corkscrew condition, and let $L \in \mathcal{L}_\Car(\Omega)$.  
Then the following conditions are equivalent:
\begin{itemize}

\item[(a)] Every bounded  $L$-harmonic and every bounded $L^*$-harmonic function in $\Omega$ is $\varepsilon$-approximable for all $\varepsilon  > 0.$ 
\vv
\item[(b)] There is $C>0$ such that  every bounded $L$-harmonic and every bounded $L^*$-harmonic function on $\Omega$ satisfies the following: if $B$ is a ball centered at $\partial\Omega$, then 
\begin{equation}\label{eqhip1}
\int_B |\nabla u(x)|^2\,\dist(x,\partial\Omega)\,dx\leq C\,\|u\|^2_{L^\infty(\Omega)}\,r(B)^n.
\end{equation}
\item[(c)]  $\partial \Omega$ is uniformly rectifiable.
\end{itemize}
\end{theorem}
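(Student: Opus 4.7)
The plan is to establish the cycle $(c) \Rightarrow (a) \Rightarrow (b) \Rightarrow (c)$. The first two implications broadly extend arguments developed for harmonic functions (Hofmann--Martell--Mayboroda and successors) to the $\mathcal{L}_\Car(\Omega)$ setting, while the closing implication is the principal novelty and is precisely where the Alt--Caffarelli--Friedman (ACF) monotonicity formula announced in the abstract plays its decisive role.

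For $(c) \Rightarrow (a)$, I would invoke the David--Semmes corona decomposition of the uniformly rectifiable boundary into trees whose Carleson regions are well approximated by Lipschitz graphs. On each such tree one constructs $\varphi$ by making it locally constant on the Whitney region above every stopping cube, with the constant equal to an average of $u$ over a corkscrew ball adjacent to that cube. Telescoping over $\asymp \log(1/\varepsilon)$ generations produces \eqref{epapp} by Fatou-type and non-tangential convergence estimates available for $\mathcal{L}_\Car$-harmonic functions on corkscrew domains with AD-regular boundary, while the Carleson packing of stopping cubes inside each region produces \eqref{epCar}. Since $L \in \mathcal{L}_\Car$ implies $L^* \in \mathcal{L}_\Car$, the same argument covers bounded $L^*$-harmonic functions.

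For $(a) \Rightarrow (b)$, I would use a polarization argument in the spirit of Varopoulos. Ellipticity gives $|\nabla u|^2 \lesssim A\nabla u\cdot\nabla u = \tfrac{1}{2}\,\mathrm{div}(A\nabla u^2)$ since $Lu=0$. Multiplying by $\delta_\Omega\psi$ for a cut-off $\psi$ adapted to $B$, integrating by parts, and using the decomposition $u=(u-\varphi)+\varphi$ yields
\[
\int_{B\cap\Omega}|\nabla u|^2\,\delta_\Omega\,dx \;\lesssim\; \|u-\varphi\|_\infty\,\|u\|_\infty\,r(B)^n \;+\; \|u\|_\infty \int_{B\cap\Omega}|\nabla\varphi|\,dy,
\]
where the Carleson condition \eqref{eqelliptic3} on $\nabla A$ absorbs the commutator terms generated by moving derivatives past the non-symmetric coefficients. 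Choosing $\varepsilon$ small and inserting \eqref{epCar} closes the estimate with a constant depending only on $\|u\|_\infty$.

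The main obstacle is $(b) \Rightarrow (c)$. Suppose for contradiction that $\partial\Omega$ fails to be uniformly rectifiable. Then by a suitable geometric criterion (e.g.\ the failure of a BAUP-type flatness condition on a Carleson-positive family of boundary balls), one locates a Carleson-dense collection of balls $B=B(\xi,r)$ at which $\partial\Omega$ is quantitatively non-flat. At each such $B$ the corkscrew condition allows one to select corkscrew regions on ``opposite sides'' of a candidate hyperplane, and to construct a bounded $L$-harmonic $u^+$ and a bounded $L^*$-harmonic $u^-$ (e.g.\ suitably normalized truncated Green's functions) whose positive parts have boundary traces supported on well-separated subsets of $\partial\Omega\cap B$. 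The ACF monotonicity formula for $\mathcal{L}_\Car$ then produces
\[
\frac{1}{r^{2n+2}}\left(\int_{B\cap\Omega}|\nabla u^+|^2\,\delta_\Omega\,dx\right)\left(\int_{B\cap\Omega}|\nabla u^-|^2\,\delta_\Omega\,dx\right)\geq c>0,
\]
with the lower bound quantitatively governed by the flatness defect. Summing these lower bounds over the Carleson-positive family of bad balls contradicts \eqref{eqhip1}. The hard parts here are twofold: first, establishing the ACF monotonicity formula for the non-symmetric $\mathcal{L}_\Car$ class (the classical proof uses spherical harmonics and Steiner symmetrization tightly adapted to the Laplacian, both unavailable in this generality, and must be replaced by quantitative coefficient-freezing combined with Carleson-type error control coming from \eqref{eqelliptic3}); second, translating the geometric non-flatness of $\partial\Omega$ into the support separation needed to apply ACF, in rough open sets lacking Harnack chain access.
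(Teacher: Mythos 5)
Your overall cycle $(c)\Rightarrow(a)\Rightarrow(b)\Rightarrow(c)$ is a legitimate plan, and the first two implications are essentially known (the paper simply cites \cite{HMM2} for $(c)\Rightarrow(a),(b)$; your Varopoulos-type sketch for $(a)\Rightarrow(b)$ is plausible, though the ``commutator'' terms are not where any real difficulty lies). The genuine gap is in $(b)\Rightarrow(c)$, and it is twofold. First, your contradiction scheme cannot close: the functions $u^\pm$ you build are ball-dependent, so a lower bound of the form $r^{-2n-2}\bigl(\int_B|\nabla u^+|^2\delta_\Omega\bigr)\bigl(\int_B|\nabla u^-|^2\delta_\Omega\bigr)\geq c$ at each ``bad'' ball is perfectly compatible with \eqref{eqhip1}: each factor is allowed to be as large as $C\|u^\pm\|_\infty^2 r^n$, and with Green-function normalizations ($\|u^\pm\|_\infty\approx r$ on $B$) the per-ball upper bound beats your lower bound, while summing over bad balls gains nothing because there is no single bounded solution accumulating square-function mass over the non-packed family. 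This is precisely why the paper uses $(a)$/$(b)$ only once, in the Key Lemma (Lemma \ref{lemclau}), applied to randomized sums $u_a=\sum_Q a_Q u_Q$ of boundary-data solutions (a single bounded $L$-harmonic function per choice of signs), to extract a corona decomposition for the elliptic measures of $L$ and $L^*$ (Propositions \ref{propo1} and \ref{propo**}); uniform rectifiability is then deduced from that corona decomposition by entirely different machinery (integration by parts for $\int|\nabla^2 u_*|^2u$ to pack the $\case(1,R)$ cubes, the $\whsa$/$\batpp$ analysis for $\case(2,R)$, and the new criterion of Proposition \ref{propowtf}).

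Second, your use of the ACF formula does not match what it can deliver. Theorem \ref{teoACF-elliptic} requires two nonnegative functions that are subharmonic for a \emph{single} operator, with disjoint supports and a common zero, and it yields an almost-monotonicity of $J(x,r)$ in $r$, not an absolute lower bound at one scale; an $L$-harmonic/$L^*$-harmonic pair is not covered, and the nondegeneracy of both factors (Green function comparable to the distance at corkscrew points on ``both sides'') is exactly the information you do not have from non-flatness alone — in the paper it comes from $\omega^{p_R}(3Q)\approx\mu(Q)/\mu(R)$, i.e.\ from the corona decomposition, via Lemma \ref{lem:failure}. In the paper the ACF formula is applied after freezing coefficients at cubes outside $\case(0,R)$, and always to truncations of \emph{one} Green function on different components (Lemma \ref{lemconn}), or to a Green function paired with a distance/linear barrier (Lemma \ref{lemfi}, Section 7), in order to prove quantitative connectivity ($\wts$) for the $\case(3,R)$ cubes where the level sets of the $L$- and $L^*$-Green functions fail to meet — the very obstruction caused by the absence of Harnack chains and of symmetry, which your sketch does not address. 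So the proposal, as written, is missing both the mechanism that converts $(a)$/$(b)$ into usable information (the corona decomposition) and a correct role for the ACF formula.
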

\vv

Theorem \ref{teo1} has been proved recently in \cite{GMT}  in the special case when $-L$ is the Laplace operator. The ``one direction" of the theorem had already appeared in  \cite{HMM2}, where Hofmann, Martell and Mayboroda showed that the statements (a)\footnote{In fact, they proved the stronger ``global version'' of $\ve$-approximability, where $\vphi$ is independent of $B$.}
 and (b) hold for open subsets of $\R^{n+1}$ with uniformly  $n$-rectifiable boundary satisfying the corkscrew condition. In fact, they first showed it for the Laplace operator but almost the same proofs work also for uniformly elliptic operators in divergence form with coefficients satisfying \eqref{eqelliptic1}, \eqref{eqelliptic2} and \eqref{eqelliptic3}\footnote{This follows from the fact that for the class of operators considered in the theorem,  $L$-harmonic measure is $A_\infty$ in NTA domains.
 }. In the present paper we show that $(a)\Rightarrow(c)$ and $(b)\Rightarrow (c)$.

{
The simultaneous absence of Harnack chains and self-adjointeness of the matrix  in the assumptions of the theorem 
makes the problem very challenging (indeed, if one assumed either the uniformity of the domain or the elliptic matrix $A$ to be symmetric, then the proof of the aforementioned theorem would be concluded at the end of Section \ref{secur}). Analogous difficulties are not rare in boundary value problems with rough data for real, time-independent, uniformly elliptic operators in related contexts. 
Recall, for example, the beautiful proof in \cite{HKMP} of the solvability of the 
Dirichlet problem in $\R^{n+1}_+$ after the study of other particular cases of elliptic matrices in different previous works 
by several authors (e.g., symmetric, block, triangular).}

\begin{rem}\label{remb'}
In fact, we will show that the following statement, which is weaker than (b) in Theorem \ref{teo1}, suffices to prove that 
 $\partial \Omega$ is uniformly rectifiable:
\begin{itemize}
\item[(b$'$)] {\em There is $C>0$ such that  every bounded continuous $L$-harmonic and every bounded continuous $L^*$-harmonic function on $\Omega$ satisfies the following: if $B$ is a ball centered at $\partial\Omega$, then }
\begin{equation}\label{eqhip1'}
\int_B |\nabla u(x)|^2\,\dist(x,\partial\Omega)\,dx\leq C\,\|u\|^2_{L^\infty(\Omega)}\,r(B)^n.
\end{equation}
\end{itemize}
\end{rem}

To prove $(a)\Rightarrow(c)$ and $(b')\Rightarrow (c)$ in Theorem \ref{teo1} we first show both (a) or (b')  imply the existence of a corona decomposition in terms of the elliptic measures associated to $L$ and $L^*$. A
similar idea was already used in \cite{GMT} for the case of the Laplacian. One important difference
is that now we have to deal with two different elliptic measures instead of a single one.  

Next, we show that the existence of such corona decomposition implies the uniform rectifiability of
$\partial\Omega$. This is the most difficult (and newer) step of the arguments in the present paper.
In \cite{GMT} this was shown by using the connection between Riesz transforms  and harmonic measure, in combination with the rectifiability criterion from \cite{NToV}. 
However, in the more general situation of Theorem  \ref{teo1} the connection with Riesz transforms is not available and consequently we have to use different techniques. 

To show that the existence of the corona decomposition described above implies uniform recitifability first we apply some integration by parts techniques inspired by the nice ideas from \cite{HLMN}.
 In this work, Hofmann, Le, Martell and Nystr\"om (inspired in turn by some ideas from Lewis nad Vogel \cite{LV}) showed that the so called ``weak $A_\infty$ condition'' for the harmonic and $p$-harmonic measure implies the uniform $n$-rectifiability of $\partial\Omega$.
However, in our situation an important obstacle arises when the operator $L$ is not symmetric:
roughly speaking, the corona decomposition above ensures that in many scales and locations some level sets of the Green functions associated to $L$ and $L^*$ are ``large''. In the presence of Harnack chain conditions (i.e. when $\Omega$ is a uniform domain) both level sets intersect and indeed they are the same if $L=L^*$. This is essential for the implementation of the
aforementioned integration by parts. The problem is that in our general situation of Theorem \ref{teo1}
neither the Harnack chain assumption nor the symmetry of $L$ are present and thus we can not ensure the non-empty intersection of those level sets. 

So we need two distinguish two families of cubes of $\partial\Omega$, depending on wether the
associated level sets of the Green functions for $L$ and $L^*$ intersect or not.
Loosely speaking, we show that near the cubes of $\partial\Omega$ for which such intersection is non-empty
$\partial\Omega$ is  well approximated bilaterally by  two parallel planes ($\batpp$)\footnote{This condition should be compared to the ``bilateral approximation by a union of planes''
($\baup$) condition in \cite{DS2} and to the  ``weak half-space approximation'' ($\whsa$) in
\cite{HLMN}.}, while the cubes 
 for which such intersection is empty
satisfy
the property of being ``weak topologically satisfactory'' ($\wts$) (see Definition \ref{WTS}).
This property can be considered as a variant of the 
``weak topologically nice'' ($\wtn$) property introduced by David and Semmes in \cite{DS2}, which is necessary and sufficient for uniform rectifiability in the codimension $1$ case.

An essential tool  to derive that many cubes of $\partial \Omega$ are weak topologically satisfactory
is a suitable version of the Alt-Caffarelli-Friedman (ACF) monotonicity formula for elliptic operators which will be discussed below. The fact that the ACF formula is useful to prove quantitative connectivity (in relation with NTA domains) was first observed by Aguilera,
Caffarelli and Spruck \cite{ACS}. 
As far as we know, this is the first time such a formula is used in problems in connection with
uniform rectifiability in terms of  elliptic or harmonic measure\footnote{The ACF monotonicity formula 
has been used in works in connection with the two-phase problem for harmonic measure such as 
\cite{KPT}, \cite{AMT}, \cite{AMTV}, but not in connection with uniform rectifiability.}.
The last step of the proof of the implications  $(a)\Rightarrow(c)$ and $(b)\Rightarrow (c)$ of Theorem \ref{teo1} consists in obtaining a quite flexible criterion for uniform rectifiability involving
the two types of families $\wts$ and $\batpp$ of cubes, which is also of independent interest (see Proposition 
\ref{propowtf}).
\vv

As a corollary of Theorem \ref{teo1} we deduce another characterization of uniform rectifiability in terms
of a square function - nontangential maximal function estimate (of the type ``$S<N$") in the case $n\geq2$. To state this result we need some additional notation. Given $x\in\partial\Omega$, we consider the {\textit {cone}} with vertex $x \in \d \Omega$ and aperture $\alpha \geq 2$ given by
$$\Gamma(x,\alpha) = \{y\in\Omega:|x-y| < \alpha\,\dist(y,\partial\Omega)\}$$
and for a continuous function $u$ in $\Omega$, we define the {\textit {non-tangential maximal function}}
$$N_{*.\alpha} u(x) = \sup_{y\in\Gamma(x,\alpha)}|u(y)|.$$
For $u\in W^{1,2}_{loc}(\Omega)$ we also define the {\textit {square function}}
$$S_\alpha u(x) = \left(\int_{y\in\Gamma(x,\alpha)}|\nabla u(y)|^2\,\dist(y,\partial\Omega)^{1-n}\,dy\right)^{1/2}.$$
When $\alpha=2$ we just write $S u$ and $N_* u$. 
Then we have:

\begin{coro}\label{coro1}
Let $\Omega\subset\R^{n+1}$, $n\geq2$, be a domain with $n$-AD-regular boundary satisfying the corkscrew condition, and let $L \in \mathcal{L}_\Car (\Omega)$. Set  $\mu = \mathcal{H}^n_{|\d \Omega}$ and let $p,p^*\in [2, \infty)$. Suppose that there exists some constant $C>0$  such that 
\begin{equation}\label{eqhld2}
\|S u\|_{L^p(\mu)} \leq C\,\|N_{*} u\|_{L^p(\mu)}\quad \mbox{ for every $L$-harmonic function $u \in C_0(\overline \Omega)$}
\end{equation}
and
\begin{equation}\label{eqhld2*}
\|S u\|_{L^{p^*}(\mu)} \leq C\,\|N_{*} u\|_{L^{p^*}(\mu)}\quad \mbox{ for every $L^*$-harmonic function $u \in C_0(\overline \Omega)$.}
\end{equation}
Then $\partial\Omega$ is uniformly rectifiable.
\end{coro}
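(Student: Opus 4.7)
The plan is to deduce condition (b$'$) of Remark~\ref{remb'} from the hypothesis and then invoke Theorem~\ref{teo1} to conclude that $\partial\Omega$ is uniformly rectifiable. By symmetry between $L$ and $L^*$ (using \eqref{eqhld2} and \eqref{eqhld2*}), it suffices to handle bounded continuous $L$-harmonic functions. Normalize $\|u\|_{L^\infty(\Omega)}\le 1$ and fix a ball $B=B(\xi,r)$ with $\xi\in\partial\Omega$; the goal is the Carleson bound
\begin{equation*}
\int_B|\nabla u(x)|^2\,\dist(x,\partial\Omega)\,dx\lesssim r^n.
\end{equation*}

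First, I would truncate $u$ to produce a test function to which \eqref{eqhld2} can be applied. For a large constant $K$ (to be chosen) let $\phi\in C(\partial\Omega)$ coincide with $u|_{\partial\Omega}$ on $\partial\Omega\cap KB$, vanish off $\partial\Omega\cap 2KB$, and satisfy $\|\phi\|_\infty\le 1$ (Tietze extension). Let $v$ solve $Lv=0$ in $\Omega$ with datum $\phi$ in the Perron--Wiener--Brelot sense. Since $\partial\Omega$ is $n$-AD-regular and $\Omega$ satisfies the corkscrew condition, every boundary point is Wiener-regular and, in the unbounded case, the decay at infinity of $L$-harmonic measure of compact subsets of $\partial\Omega$ forces $v\to 0$ at infinity, so $v\in C_0(\overline\Omega)$ with $\|v\|_\infty\le 1$. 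Set $w:=u-v$: then $Lw=0$, $\|w\|_\infty\le 2$, and $w\equiv 0$ on $\partial\Omega\cap KB$. By boundary H\"older regularity (classical for $L\in\mathcal{L}_\Car$ in our setting) there exists $\alpha>0$ such that $|w(y)|\lesssim(\dist(y,\partial\Omega)/(Kr))^\alpha$ on $B$, and interior Caccioppoli then gives $\int_B|\nabla w|^2\dist\,dx\lesssim K^{-2\alpha}r^n$, harmlessly small for $K$ large.

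Next, I would control $\int_B|\nabla v|^2\dist\,dx$ via the $S<N$ bound applied to $v$. A Fubini computation together with the AD-regularity of $\mu$ yields
\begin{equation*}
\int_B|\nabla v|^2\dist\,dx\lesssim\int_{3B\cap\partial\Omega}(Sv)^2\,d\mu+r^n,
\end{equation*}
the $r^n$ term absorbing the part of $B$ at distance $\gtrsim r$ from $\partial\Omega$ via Caccioppoli. Since $p\ge 2$, H\"older's inequality and \eqref{eqhld2} give
\begin{equation*}
\int_{3B\cap\partial\Omega}(Sv)^2\,d\mu\le\mu(3B)^{1-2/p}\|Sv\|_{L^p(\mu)}^2\lesssim r^{n(1-2/p)}\|N_*v\|_{L^p(\mu)}^2.
\end{equation*}
It remains to see $\|N_*v\|_{L^p(\mu)}^p\lesssim(Kr)^n$. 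Since $\|v\|_\infty\le 1$ and the boundary datum of $v$ is supported in $2KB$, the pointwise bound of $v$ by $L$-harmonic measure together with its H\"older-type decay at infinity (standard in corkscrew + AD-regular domains) yields $N_*v(x)\lesssim(Kr/|x-\xi|)^\gamma$ for $|x-\xi|\gtrsim Kr$ and some $\gamma>0$; dyadic summation then produces $\|N_*v\|_{L^p(\mu)}^p\lesssim(Kr)^n$ as soon as $p\gamma>n$.

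The principal obstacle is the condition $p\gamma>n$: the decay exponent $\gamma$ is intrinsic to $(\Omega,L)$ and may be much smaller than $n/p$. To bypass it I would invoke a self-improvement of \eqref{eqhld2}: the standard elliptic toolkit for $L\in\mathcal{L}_\Car$ on corkscrew + AD-regular $\Omega$ (interior Caccioppoli, Harnack on corkscrew chains, boundary H\"older regularity) yields good-$\lambda$ inequalities between $Sv$ and $N_*v$ which, combined with \eqref{eqhld2}, upgrade the $L^p$-equivalence to an $L^q$-equivalence for every $q\in(0,\infty)$. Choosing $q$ with $q\gamma>n$ and rerunning the previous display then delivers the required estimate on $\int_B|\nabla v|^2\dist\,dx$. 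Summing the contributions of $v$ and $w$ gives (b$'$), after which Theorem~\ref{teo1} completes the proof.
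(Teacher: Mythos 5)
Your overall skeleton (deduce (b$'$) of Remark \ref{remb'} by truncating the boundary data, applying \eqref{eqhld2} to the truncated solution $v$, and absorbing the error $w=u-v$ by boundary H\"older regularity plus Caccioppoli) is the intended route, but the way you close the argument contains a genuine gap. The "self-improvement" you invoke at the end --- good-$\lambda$ inequalities between $Sv$ and $N_*v$ upgrading the hypothesis \eqref{eqhld2} from one exponent $p$ to every $q\in(0,\infty)$ --- is not available from "interior Caccioppoli, Harnack on corkscrew chains, boundary H\"older regularity" in this setting. The domain is only assumed corkscrew with AD-regular boundary: there are no Harnack chains, no doubling or $A_\infty$ information on elliptic measure, and local good-$\lambda$ comparisons between square functions and non-tangential maximal functions are essentially of the same depth as the Carleson-measure estimates the whole paper is trying to characterize; assuming them here is circular. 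Moreover the hypothesis is only given for $u\in C_0(\overline\Omega)$ and for a single $p\geq 2$, so you cannot treat an all-$q$ equivalence as an off-the-shelf tool.

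The good news is that the detour is unnecessary, and this is exactly where the paper's Lemma \ref{lem:u-decay-infinty} enters (the text points out that the proof is that of \cite[Corollary 1.2]{GMT} with that lemma replacing \cite[Lemma 6.1]{GMT}). Your function $v$ is bounded, vanishes at infinity and vanishes continuously on $\partial\Omega\setminus 2KB$, so the lemma gives the strong decay
$$|v(x)|\lesssim \Bigl(\frac{Kr}{Kr+\dist(x,2KB)}\Bigr)^{n-1+\alpha},$$
i.e.\ the decay exponent is $\gamma=n-1+\alpha$, not merely the boundary H\"older exponent you had in mind. Since $p\geq 2$ and $n\geq 2$, one has $p(n-1+\alpha)>n$ automatically --- this is precisely the reason the corollary restricts to $p,p^*\in[2,\infty)$ --- so your dyadic summation for $\|N_*v\|_{L^p(\mu)}^p\lesssim (Kr)^n$ goes through with no improvement of \eqref{eqhld2} needed. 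Replace the last paragraph of your proposal by an appeal to Lemma \ref{lem:u-decay-infinty} and the argument is complete; as written, the good-$\lambda$ step is the one claim that cannot be justified.
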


The proof of this corollary is the same as the one of \cite[Corollary 1.2]{GMT} using Lemma \ref{lem:u-decay-infinty} instead of \cite[Lemma 6.1]{GMT}. 

Remark also that if inequality \eqref{eqhld2} (or  \eqref{eqhld2*}) holds for some $\alpha > 2$ then it also holds for $a=2$ with constant depending on $\alpha$. This readily follows from the fact that if $\Omega$ is as above, then for $p\geq 2$, $\|N_{*} u\|_{L^p(\mu)} \lesssim_\alpha \|N_{*,\alpha} u\|_{L^p(\mu)}$ and  $\|S_\alpha u\|_{L^p(\mu)} \lesssim_\alpha \|Su\|_{L^p(\mu)}$. The first inequality 
is trivial, while the second one is just a rearrangement of sums over dyadic cubes in $\DD_\mu$ by an argument in the spirit of (but  simpler than) the one in Lemma \ref{lemcase0}. Thus, there is no loss of generality if we assume \eqref{eqhld2} only for $\alpha=2$.

If $p \in \Omega$, we denote by $\hm^{p}$ and $\hm^{p}_*$ the harmonic measures with pole at $p$ associated to $L$ and $L^*$ respectively.

As a by product of the techniques used to prove Theorem \ref{teo1} we also obtain the following: 

\begin{theorem} \label{teo2}
Let $\Omega\subset\R^{n+1}$, $n\geq2$, be a domain with $n$-AD-regular boundary satisfying the corkscrew condition, and let $L \in \mathcal{L}_\Car(\Omega)$. Let $c_0 \in (0,1)$ small enough depending only on $n$, the AD-regularity and the ellipticity constants. Suppose that there exist $\ve, \ve' \in (0,1)$ with $\ve'$ small enough, such that for every ball $B$ centered at $\d \Omega$ with $\diam(B) \leq \diam(\Omega)$ there exists a corkscrew point $x_B \in \frac{1}{2}B \cap\Omega$ with $\dist(x_B, \d \Omega)\geq c_0 r(B)$, so that the following holds: for any $E \subset B \cap \d \Omega$,
\begin{equation}\label{eq:Amu1}
\mbox{if}\quad\mu(E)\leq \ve\,\mu(B), \quad\text{ then }\quad \hm^{x_B}(E)\leq \ve'\,\hm^{x_B}(B).
\end{equation} 
If the same holds for $\hm^{x_B^*}_*$ with $\ve_*, \ve'_*, c_0^* \in (0,1)$ and corkscrew point $x_B^* \in \frac{1}{2}B \cap\Omega $, then $\partial \Omega$ is uniformly rectifiable.
\end{theorem}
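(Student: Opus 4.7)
The statement is essentially a byproduct of the machinery developed for Theorem \ref{teo1}, so the strategy is to extract a corona decomposition involving the elliptic measures $\omega$ and $\omega_*$ directly from the weak $A_\infty$ hypothesis and then feed that decomposition into the uniform rectifiability criterion used in the proof of the implications $(a)\Rightarrow(c)$ and $(b')\Rightarrow(c)$ of Theorem \ref{teo1}.

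First, I would build a corona decomposition by a stopping-time construction on the dyadic lattice of $\partial\Omega$, driven by \eqref{eq:Amu1} and its analogue for $\omega_*$. Fix a ball $B$ centered at $\partial\Omega$ with $\diam(B) \leq \diam(\Omega)$, together with the corkscrew points $x_B, x_B^*$. Declare the root of a tree to be $B \cap \partial\Omega$ and stop at a descendant cube $Q$ as soon as the density $\omega^{x_B}(Q)/\mu(Q)$ deviates by more than a fixed factor $\lambda$ from its value at the root, or the analogous bound fails for $\omega_*^{x_B^*}$. The contrapositive of \eqref{eq:Amu1} (applied on subballs concentric with the stopping cubes and using the doubling/change-of-pole estimates for elliptic measure of $L\in\mathcal{L}_{\Car}(\Omega)$), combined with a standard iteration as in a Calder\'on--Zygmund stopping-time decomposition, shows that the total $\mu$-measure of the stopping cubes is a fixed fraction strictly less than $\mu(R)$; iterating then yields a corona decomposition whose good trees have both $\omega^{x_R}$ and $\omega_*^{x_R^*}$ two-sided comparable to $\mu$ on every cube of the tree.

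Next, I would convert this corona decomposition into a statement about level sets of Green functions. Using the classical comparison
\[
\omega^{X}(Q) \approx r(Q)^{n-1}\, G_L(X, x_Q)
\]
between elliptic measure and the Green function at an interior corkscrew point $x_Q$ of $Q$, the comparability along each good tree forces prescribed level sets of $G_L(\cdot, x_R)$ and of $G_{L^*}(\cdot, x_R^*)$ to be quantitatively large in the Whitney region attached to each cube. This is precisely the geometric input that the main body of the paper extracts from hypothesis (a) or (b$'$). From here the proof proceeds identically to the proofs of $(a)\Rightarrow(c)$ and $(b')\Rightarrow(c)$: apply the ACF-type monotonicity formula cube by cube to split each good tree into a $\batpp$ family, where the two level sets intersect and yield bilateral planar approximation of $\partial\Omega$, and a $\wts$ family, where the level sets are disjoint and yield the weak topologically satisfactory property. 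Proposition \ref{propowtf} then converts this dichotomy into uniform rectifiability of $\partial\Omega\cap B$; arbitrariness of $B$ and the uniformity of the constants give the conclusion.

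The main obstacle will be coordinating the two weak $A_\infty$ hypotheses, for $\omega^{x_B}$ and $\omega_*^{x_B^*}$, into a \emph{single} corona decomposition. Because the corkscrew points $x_B$ and $x_B^*$ need not coincide and, in the absence of the Harnack chain condition, need not even be joined by a controlled Harnack chain, the two stopping-time trees produced by running the argument separately for each measure may a priori be poorly aligned. One must check that their common refinement still satisfies the Carleson packing estimate with uniform constants and that the resulting tree structure is compatible with the subsequent ACF-based $\batpp$/$\wts$ analysis. Once that is in place, every remaining step is a direct appeal to the machinery established for Theorem \ref{teo1}.
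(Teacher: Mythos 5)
Your outline follows essentially the same route as the paper: Proposition \ref{propo2} extracts a corona decomposition from the weak $A_\infty$ hypothesis, this is done for $L$ and $L^*$ and combined (Proposition \ref{propo**}), and then the corona-to-UR machinery common to Theorem \ref{teo1} (Lemma \ref{lem:failure}, the type decomposition, the ACF formula, the $\batpp$/$\wts$ dichotomy and Proposition \ref{propowtf}) finishes the proof. Two of the tools you invoke, however, are not available in corkscrew domains without Harnack chains, and one of them sits at the load-bearing point of your stopping-time argument. You propose to pack the stopping cubes by applying the contrapositive of \eqref{eq:Amu1} ``on subballs concentric with the stopping cubes'' together with ``doubling/change-of-pole estimates'' for elliptic measure; in this generality $\omega$ need not be doubling and there is no change-of-pole estimate, so that step as written does not go through. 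The paper avoids it entirely: the high/low density stopping conditions are formulated with respect to the \emph{root's} pole $p_R$, the high-density cubes are packed by the trivial mass bound (as in Lemma \ref{lemhdld}, giving \eqref{eq:bad2-mu}), and the low-density cubes satisfy $\omega^{p_R}(B_L(R))\leq A^{-1}\omega^{p_R}(R)$ by disjointness and maximality, so that the complemented, dyadic form of the hypothesis \eqref{eq:Amu1-dyadic} applied once, at the root and with the root's pole, yields $\mu(B_L(R))\leq(1-\ve)\mu(R)$ and hence the geometric packing \eqref{eq:mu(stop)}. Similarly, the two-sided comparison $\omega^{X}(Q)\approx r(Q)^{n-1}G(X,x_Q)$ at a prescribed corkscrew point is not valid without connectivity; what is actually used are the one-sided bounds (Lemma \ref{l:w>G} and the bump-function/Caccioppoli estimate as in Lemma \ref{lem6262}), packaged in Lemma \ref{lem:failure}, which only produces \emph{some} point $\wt Y_Q$ at distance $\approx\ell(Q)$ where $u\approx\delta_\Omega$. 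Your use of the comparison is compatible with this weaker statement, so there the issue is one of precision rather than substance.

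Finally, the difficulty you single out as the main obstacle, namely coordinating the two weak $A_\infty$ hypotheses into one corona decomposition, is in fact the easy part: the paper runs Proposition \ref{propo2} separately for $\omega$ and $\omega_*$ and takes the common refinement of the two corona decompositions; intersections of trees are trees and the two Carleson packing conditions simply add (Proposition \ref{propo**}). The genuine problem created by the two poles $x_B$ and $x_B^*$ not being joined by Harnack chains does not appear at the corona stage but later, for the cubes where the level sets of $u$ and $u_*$ do not meet (the $\case(3,R)$ cubes), and that is precisely what the ACF monotonicity formula and the $\wts$ condition are designed to handle.
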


\vv
In the case that $L=-\Delta$, the theorem above was proved first in \cite{HMU} assuming that $\Omega$ is a uniform domain. For general corkscrew domains with $n$-AD-regular boundaries and $L=-\Delta$, this was shown in \cite{HLMN} (and later in \cite{MTo} by different arguments). On the other hand, for non-symmetric elliptic operators in uniform domains, Theorem \ref{teo2} has also been 
proved in \cite{HMT}\footnote{In fact, the assumption in \cite{HMT} on the coefficients of the associated matrix is stated in terms of the gradient of the coefficients, although, in the presence of Harnack chains it is equivalent to \eqref{eqelliptic3}. Although, this is not the case in more general domains as the ones we investigate in the present paper. }.
The assumption that $\Omega$ is uniform, which guaranties a quantitative path connectedness of $\Omega$, is an essential ingredient of the arguments in \cite{HMT}.

\vv
As explained above, an important component of the proof of the Theorems \ref{teo1} and \ref{teo2} is an elliptic analogue of the Alt-Caffarelli-Friedman (ACF) {\it monotonicity formula}. There are some versions of this formula for parabolic (or even more general) operators, such as in \cite{Caffarelli-Kenig}, \cite{CJK}, or \cite{Mat-Pet}. However, for our purposes these variants of the ACF formula are not suitable. So in our paper
we prove another version of this formula for elliptic operators which is new, as far we know, and which may be of independent interest due to its possible applications to other free boundary problems. 

{
For the ACF formula we consider the more general operator
\begin{equation}\label{eqwtl}
\wt L u = - \divv A \nabla u + \vec b\cdot \nabla u + d\, u - \divv(\vec e \,u),
\end{equation}
where $A$ is a uniformly elliptic real measurable  matrix satisfying \rf{eqelliptic1} and \rf{eqelliptic2},  $\vec b =
\vec b(x) $ and $\vec e=\vec e(x)$ are vectors with real $L_{\loc}^\infty( \R^{n+1})$ coordinates and $d$ is a real $L_{\loc}^\infty( \R^{n+1})$ function. 
For $x\in \R^{n+1}$ and $r>0$ we consider $w(x,r)$ such that
\begin{align}\label{eqwxr1}
w(x,r)\geq \sup_{y\in B(x,r)} &|A(y)-A(x)| \\ 
&+\sup_{y\in B(x,r)}|y-x| \,(|\vec b(y)| + |\vec e(y)|)+ \sup_{y\in B(x,r)}|y-x|^2 \,|d(y)|,\notag
\end{align}
so that $w(x,\cdot)$ is right continuous for each fixed $x$. For instance, we can take
\begin{align*}
w(x,r)= \lim_{t\to r+} \Bigl(\,\,\sup_{y\in B(x,t)}&|A(y)-A(x)|\\
&+\sup_{y\in B(x,t)}|y-x| \,(|\vec b(y)| + |\vec e(y)|)+ \sup_{y\in B(x,t)}|y-x|^2 \,|d(y)|\Bigr).
\end{align*}

\vv

Our version of the ACF formula reads as follows.

\begin{theorem} \label{teoACF-elliptic}  Let $\wt L$ be as in \eqref{eqwtl} and 
suppose that 
$A(x)= Id$. Let $x \in  \R^{n+1}$ and $R>0$. Let $u_1,u_2\in
W^{1,2}(B(x,R))\cap C(B(x,R))$ be nonnegative $L$-subharmonic functions such that $u_1(x)=u_2(x)=0$ and $u_1\cdot u_2\equiv 0$. 
Set
\begin{equation}\label{eqACF2}
J(x,r) = \left(\frac{1}{r^{2}} \int_{B(x,r)} \frac{|\grad u_1(y)|^{2}}{|y-x|^{n-1}}dy\right)\cdot \left(\frac{1}{r^{2}} \int_{B(x,r)} \frac{|\grad u_2(y)|^{2}}{|y-x|^{n-1}}dy\right)
\end{equation}
and denote 
$$K_r = \max_{i=1,2} \,\left( 
{\ds \int_{B(x,r)}\frac{u_i(y)^2}{|y-x|^{n+1}}\,dy}\right)^{1/2} \left({\ds\int_{B(x,r)} \frac{|\grad u_i(y)|^{2}}{|y-x|^{n-1}}dy}\right)^{-1/2}.$$
Then $J(x,r)$ is an absolutely continuous function of $r\in (0,R)$ and
\begin{equation}\label{eqww2}
\frac{J'(x,r)}{J(x,r)}\geq - c\,\frac{(1+K_r)\,w(x,r)}{r}, \quad\mbox{ for a.e. $0<r<R$,}
\end{equation}
with $c$ depending $n$.
If, in addition, there exists a modulus of continuity $w_0: [0, \infty] \to [0,\infty]$ such that
\begin{equation}\label{eq:Dini cond}
C_{w_0}:=\int_0^{1/2}  \left( w_0(t) \,\log\frac{1}{t} \right)^2\,\frac{dt}{t} < \infty
\end{equation}
and
\begin{equation}\label{eqACF1}
u_i(y)\leq C_1 w_0\Big(\frac{|y-x|}r\Big)\,\|u\|_{\infty,B(x,r)},\quad \mbox{$i=1,2$,}
\end{equation}
for all $0<r\leq R$ and $y\in B(x,r)$, then 
\begin{equation}\label{eqACF0b}
K_r \lesssim 1+C_1\,C_{w_0}, \quad \mbox{ for all $0<r\leq R$},
\end{equation}
with the implicit constant depending on $n$.
\end{theorem}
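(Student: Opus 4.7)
The approach is to adapt the classical Alt--Caffarelli--Friedman argument for the Laplacian, absorbing the difference between $\tilde L$ and $-\Delta$ as a perturbation controlled by $w(x,r)$. Translate so that $x = 0$ and write $I_i(r) = \int_{B_r}|\nabla u_i(y)|^2 |y|^{-(n-1)}\,dy$, so that $J(0,r) = r^{-4}I_1(r)I_2(r)$ and
$$r\,\frac{J'(0,r)}{J(0,r)} \;=\; r\,\frac{I_1'(r)}{I_1(r)} \;+\; r\,\frac{I_2'(r)}{I_2(r)} \;-\; 4.$$
The coarea formula gives $I_i'(r) = r^{-(n-1)}\int_{\partial B_r}|\nabla u_i|^2\,d\sigma$ for a.e.\ $r$, so $I_i$ (and hence $J$) is absolutely continuous on $(0,R)$.

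The main step is to establish, for each $i$, a single-function inequality of the form
$$r\,\frac{I_i'(r)}{I_i(r)} \;\geq\; 2\alpha_i(r) \;-\; C_n\bigl(1+K_r\bigr)\,w(0,r),$$
where $\alpha_i(r) = \sqrt{(n-1)^2/4 + \lambda_i(r)} - (n-1)/2$ and $\lambda_i(r)$ is the first Dirichlet eigenvalue of $-\Delta_{S^n}$ on $E_i(r) = \{\theta \in S^n : u_i(r\theta) > 0\}$. To obtain it, I test the weak subharmonicity $\tilde L u_i \leq 0$ against the nonnegative function $u_i\,|y|^{-(n-1)}$, integrate by parts on $B_r \setminus B_\varepsilon$, and let $\varepsilon \to 0$ using $u_i(0)=0$ and continuity. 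Splitting $A = Id + (A-Id)$, the principal part of the computation, together with the identity
$$(n-1)\int_{B_r}\frac{u_i\,\nabla u_i\cdot y}{|y|^{n+1}}\,dy \;=\; \frac{n-1}{2r^n}\int_{\partial B_r} u_i^2\,d\sigma$$
(obtained by writing $y/|y|^{n+1} = -\tfrac{1}{n-1}\nabla|y|^{-(n-1)}$ and using $\Delta|y|^{-(n-1)} = 0$ away from the origin), produces
$$I_i(r) \;-\; \frac{n-1}{2r^n}\int_{\partial B_r} u_i^2\,d\sigma \;\leq\; \frac{1}{r^{n-1}}\int_{\partial B_r} u_i\,\partial_\nu u_i\,d\sigma \;+\; R_i(r),$$
where $R_i(r)$ gathers the contributions from $A-Id$, $\vec b$, $\vec e$, $d$. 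Each of these is estimated using the pointwise bounds $|A(y)-Id| + |y|(|\vec b(y)| + |\vec e(y)|) + |y|^2 |d(y)| \leq w(0,r)$ for $|y|\leq r$, together with Cauchy--Schwarz; the dangerous cross-terms such as $\int |\nabla u_i|\,u_i/|y|^n\,dy \leq I_i^{1/2}\bigl(\int u_i^2/|y|^{n+1}\,dy\bigr)^{1/2}$ are precisely what produce the factor $K_r$, yielding $|R_i(r)| \leq C\,w(0,r)\,(1+K_r)\,I_i(r)$. Combining this with the classical ACF optimization --- parametrizing by $T_i = r^{-n}\int_{\partial B_r}u_i^2\,d\sigma$ and $P_i = r^{2-n}\int_{\partial B_r}(\partial_\nu u_i)^2\,d\sigma$, applying Cauchy--Schwarz to $\int u_i\partial_\nu u_i$, and invoking the spherical Poincar\'e inequality $\int_{\partial B_r}|\nabla_\mathrm{tan}u_i|^2\,d\sigma \geq r^{-2}\lambda_i(r)\int_{\partial B_r}u_i^2\,d\sigma$ --- delivers the desired single-function estimate.

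Since $u_1 u_2 \equiv 0$, the sets $E_1(r),E_2(r)$ are disjoint subsets of $S^n$, so the Friedland--Hayman inequality yields $\alpha_1(r) + \alpha_2(r) \geq 2$; summing the two single-function estimates produces $rJ'/J \geq -2C_n(1+K_r)w(0,r)$, which is \eqref{eqww2}. For \eqref{eqACF0b} under the Dini condition, insert the pointwise estimate \eqref{eqACF1} into $\int_{B_r}u_i^2/|y|^{n+1}\,dy$ and decompose dyadically into annuli $B_{2^{-k}r}\setminus B_{2^{-k-1}r}$: on each annulus $|y| \sim 2^{-k}r$ and $u_i \leq C_1 w_0(2^{-k})\|u\|_\infty$, so the numerator is comparable to $C_1^2\|u\|_\infty^2\sum_k w_0(2^{-k})^2$. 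A matching lower bound for $I_i(r)$ via a Caccioppoli-style estimate on each annulus, combined with the $|y|^{-(n+1)}$ versus $|y|^{-(n-1)}$ weight mismatch that forces the integration in $|y|$ to pick up an extra $\log$ factor when comparing annular contributions to $I_i$, reassembles the weighted sum $\sum_k k^2 w_0(2^{-k})^2 \approx C_{w_0}$; this is precisely where the $\log(1/t)$ in the Dini condition appears. The main obstacles are the careful accounting in the perturbation step: without symmetry of $A$ the spherical normal/tangential decomposition is not perfectly adapted to the coercivity of $A$, so one must verify that every surviving order-one term is the classical $-\Delta$ contribution and every other term carries a factor $w(0,r)$; and the rigorous justification of the integration by parts near the origin with merely measurable $A$ and locally bounded lower-order coefficients, which is handled via a standard mollification or truncation argument together with the decay $u_i(y) \to 0$ as $y\to 0$.
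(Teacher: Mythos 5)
Your strategy for \eqref{eqww2} is essentially the paper's: perturb the classical ACF computation, use the spherical Poincar\'e inequality and Friedland--Hayman, and let the cross terms $\int u_i|\nabla u_i|/|y|^n$ generate the factor $K_r$. However, the single-function inequality you claim, $r\,I_i'(r)/I_i(r)\geq 2\alpha_i(r)-C_n(1+K_r)\,w(0,r)$ with $C_n$ depending only on $n$, is stronger than what the computation gives. When you integrate by parts the term $u_i\,\divv((A-Id)\nabla u_i)\,|y|^{1-n}$, a boundary term of size $w(0,r)\,r^{1-n}\int_{\partial B_r}u_i|\nabla u_i|\,d\sigma$ survives; this is a surface integral at radius $r$ and is \emph{not} controlled by the solid integral $I_i(r)$ for a.e.\ fixed $r$. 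The paper controls it via the spherical eigenvalue (gaining $\lambda_i^{-1/2}\leq\gamma_i^{-1}$) and converts it to the solid integral only under the extra hypothesis \eqref{eqfac998}; when \eqref{eqfac998} fails one gets $J'/J\geq 1/r$ directly, and when some $\gamma_j\geq 5$ or $(1+K_r)w(0,r)$ is not small, separate cases are needed because, after multiplying back by $\gamma_i$ in the logarithmic derivative, the error is multiplicative in $\gamma_i$ and must be absorbed into the main term $2(\gamma_1+\gamma_2-2)/r$. Your sketch omits this absorption/case analysis, and with $\alpha_i$ large the clean additive error you wrote is not what comes out.

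The more serious gap is in your proof of \eqref{eqACF0b}. Bounding $\int_{B_r}u_i^2/|y|^{n+1}$ annulus by annulus by $C_1^2\|u_i\|_\infty^2\sum_k w_0(2^{-k})^2$ and then seeking a ``matching lower bound for $I_i(r)$'' does not close: Caccioppoli gives upper, not lower, bounds on gradient integrals, and the inequality you would need, $\|u_i\|_{\infty,B_r}^2\lesssim\int_{B_r}|\nabla u_i|^2/|y|^{n-1}\,dy$, is false. Indeed, take $u_i=\rho^\alpha f(\theta)$ with $f$ the first Dirichlet eigenfunction of a spherical cap of angular radius $\ve$ and $\alpha\approx\ve^{-1}$: then $\|u_i\|_{\infty,B_r}^2/I_i(r)\approx\ve^{1-n}\to\infty$, while \eqref{eqACF1} and \eqref{eq:Dini cond} hold with $w_0(t)=t$ and $C_1=1$. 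Moreover, the $\log(1/t)$ in \eqref{eq:Dini cond} cannot be produced by any ``weight mismatch'' between $|y|^{-(n+1)}$ and $|y|^{-(n-1)}$ on single annuli, since both weights are comparable to constants there. The mechanism that actually works (and is what the paper does) is: use $u_i(0)=0$ to write $u_i(y)^2\leq 2\int_0^{|y|}u_i(t\,y/|y|)\,|\nabla u_i(t\,y/|y|)|\,dt$, integrate in polar coordinates and exchange the order of integration so that the radial integration of $s^{-1}$ produces exactly the weight $\log(r/|y|)$, apply Cauchy--Schwarz against $I_i(r)^{1/2}$, split the log-weighted integral at $B_{r/2}$ using \eqref{eqACF1} and \eqref{eq:Dini cond}, and close the resulting self-improving inequality with the sub-mean-value estimate $\|u_i\|_{\infty,B_{r/2}}\lesssim\bigl(\int_{B_r}u_i^2/|y|^{n+1}\,dy\bigr)^{1/2}$ valid for nonnegative $L$-subharmonic functions. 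Some version of this ray-integration and bootstrapping step is indispensable and is missing from your argument.
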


The proof of Theorem \ref{teoACF-elliptic} is given in the Appendix \ref{secappendix}.
\begin{rem}
Notice that the condition \eqref{eq:Dini cond} is satisfied when we assume $w_0(t)=t^\alpha$, for some $\alpha \in (0,1)$, i.e., when $u$ is H\"older continuous up to the boundary.
\end{rem}}
\begin{rem}\label{rem:acf-dini}\label{rem100}
 Note that, under the assumptions of Theorem \ref{teoACF-elliptic}, if the condition  \rf{eqACF1} is fulfilled for all $r\in(r_1,r_2)$, 
from \rf{eqww2} and \rf{eqACF0b} we have
$$J(x,r_1)\leq J(x,r_2)\,e^{c\int_{r_1}^{r_2}\frac{w(x,r)}r\,dr}.$$
\end{rem}

\vvv

\noi{\bf Acknowledgements.} 
We would like to thank 
S. Hofmann and J. M. Martell for their useful comments on some arguments in the proof of Lemma \ref{lem:packing-case1}, and
S. Hofmann for letting us know about an updated version of \cite{HMM2}. Also,
we are grateful to S. Kim for some helpful discussions pertaining his work on Green functions and elliptic regularity theory.
Finally, we thank D. De Silva for providing us with the reference of the work by Aguilera, Caffarelli and Spruck \cite{ACS}.



\vv



\section{Preliminaries}

We will write $a\lesssim b$ if there is $C>0$ so that $a\leq Cb$ and $a\lesssim_{t} b$ if the constant $C$ depends on the parameter $t$. We write $a\approx b$ to mean $a\lesssim b\lesssim a$ and define $a\approx_{t}b$ similarly. Sometimes we will also use the notation $\avint_{F}$ for the average $|F|^{-1}\int_{F}$ over a set $F \subset \R^{n+1}$ with respect to the $(n+1)$-Lebesgue measure.
 
In the whole paper, $\Omega$ will be an open set in $\R^{n+1}$, with $n\geq2$.\vv

\subsection{The dyadic lattice $\DD_\mu$}\label{subsec:dyadic}

Given an $n$-AD-regular measure $\mu$ in $\R^{n+1}$ we consider 
the dyadic lattice of ``cubes'' built by David and Semmes in \cite[Chapter 3 of Part I]{DS2}. The properties satisfied by $\DD_\mu$ are the following. 
Assume first, for simplicity, that $\diam(\supp\mu)=\infty$). Then for each $j\in\Z$ there exists a family $\DD_{\mu,j}$ of Borel subsets of $\supp\mu$ (the dyadic cubes of the $j$-th generation) such that:
\begin{itemize}
\item[$(a)$] each $\DD_{\mu,j}$ is a partition of $\supp\mu$, i.e.\ $\supp\mu=\bigcup_{Q\in \DD_{\mu,j}} Q$ and $Q\cap Q'=\varnothing$ whenever $Q,Q'\in\DD_{\mu,j}$ and
$Q\neq Q'$;
\item[$(b)$] if $Q\in\DD_{\mu,j}$ and $Q'\in\DD_{\mu,k}$ with $k\leq j$, then either $Q\subset Q'$ or $Q\cap Q'=\varnothing$;
\item[$(c)$] for all $j\in\Z$ and $Q\in\DD_{\mu,j}$, we have $2^{-j}\lesssim\diam(Q)\leq2^{-j}$ and $\mu(Q)\approx 2^{-jn}$;
\item[$(d)$] there exists $C>0$ such that, for all $j\in\Z$, $Q\in\DD_{\mu,j}$, and $0<\tau<1$,
\begin{equation}\label{small boundary condition}
\begin{split}
\mu\big(\{x\in Q:\, &\dist(x,\supp\mu\setminus Q)\leq\tau2^{-j}\}\big)\\&+\mu\big(\{x\in \supp\mu\setminus Q:\, \dist(x,Q)\leq\tau2^{-j}\}\big)\leq C\tau^{1/C}2^{-jn}.
\end{split}
\end{equation}
This property is usually called the {\em small boundaries condition}.
From (\ref{small boundary condition}), it follows that there is a point $z_Q\in Q$ (the center of $Q$) such that $\dist(z_Q,\supp\mu\setminus Q)\gtrsim 2^{-j}$ (see \cite[Lemma 3.5 of Part I]{DS2}).
\end{itemize}
We set $\DD_\mu:=\bigcup_{j\in\Z}\DD_{\mu,j}$. 

In case that $\diam(\supp\mu)<\infty$, the families $\DD_{\mu,j}$ are only defined for $j\geq j_0$, with
$2^{-j_0}\approx \diam(\supp\mu)$, and the same properties above hold for $\DD_\mu:=\bigcup_{j\geq j_0}\DD_{\mu,j}$.

Given a cube $Q\in\DD_{\mu,j}$, we say that its side length is $2^{-j}$, and we denote it by $\ell(Q)$. Notice that $\diam(Q)\leq\ell(Q)$. 
We also denote 
\begin{equation}\label{defbq}
B_Q:=B(z_Q,c_1\ell(Q)),
\end{equation}
where $c_1>0$ is some fix constant so that $B_Q\cap\supp\mu\subset Q$, for all $Q\in\DD_\mu$.

For $\lambda>1$, we write
$$\lambda Q = \bigl\{x\in \supp\mu:\, \dist(x,Q)\leq (\lambda-1)\,\ell(Q)\bigr\}.$$

We denote $\delta_\Omega(x)=\dist(x,\partial\Omega)$.

\vv

\subsection{Sobolev spaces}

For an open set $\Omega \subset \R^{n+1}$,  we define $W^{1,2}(\Omega)$ to be the space of all weakly differentiable functions $u \in L^{2} (\Omega)$, whose weak derivatives belong to $L^2(\Omega)$. We also define $Y^{1,2}(\Omega)$ to be the space of all weakly differentiable functions $u \in L^{2^*} (\Omega)$, where $2^*= \frac{2(n+1)}{n-1}$, whose weak derivatives belong to $L^2(\om)$. We endow those spaces with the norms
\begin{align*}
\| u\|_{W^{1,2}(\om)}&=\| u\|_{L^{2}(\om)} +\| \nabla u\|_{L^{2}(\om)}\\
\| u\|_{Y^{1,2}(\om)} &=\| u\|_{L^{2^*}(\om)} +\| \nabla u\|_{L^{2}(\om)}.
\end{align*} 
Let $W^{1,2}_0(\om)$ and $Y^{1,2}_0(\om)$ be the closure of $C^\infty_0(\om)$ in $W^{1,2}(\om)$ and $Y^{1,2}(\om)$ respectively and note that by Sobolev's inequality, 
$$\| u \|_{L^{2^*}(\om)} \leq C \| \nabla u \|_{L^{2}(\om)},$$
for all $u \in Y_0^{1,2}(\om)$,  thus, $W_0^{1,2}(\om) \subset Y_0^{1,2}(\om)$. If the $(n+1)$-dimensional Lebesgue measure of $\om$ is finite then $W_0^{1,2}(\om) = Y_0^{1,2}(\om)$. The aforementioned Sobolev spaces are in fact Hilbert spaces with inner product 
$$\langle u, v \rangle:=\int_\om \nabla u \cdot \nabla v.$$

\vv

\subsection{A regularity result}

For an open set $\Omega \subset \R^{n+1}$, we denote 
$$ \| f \|_{C^\alpha(\Omega)}:= \sup_{\Omega}| f | + \sup_{x\neq y: x, y \in \Omega} \frac{| f (x)- f(y)|}{|x-y|^\alpha} $$
and we define the space of inhomogeneous $\alpha$-H\"older functions by 
$$C^{\alpha}(\Omega):=\{f:\Omega \to \R: \|f \|_{C^\alpha(\Omega)}<\infty\}.$$

\begin{lemma}\label{lem:grad-u-regul}
Let us assume that  $u\in W^{1,2}(B(0,1))$ is a weak solution of $Lu=\divv F$. If $a_{ij} \in C^\alpha(B(0,1))$, for $1\leq i,j\leq n+1$, and $F \in C^\alpha(B(0,1);\R^{n+1})$  then for any $\delta \in (0,1)$, it holds that
$$
\| \nabla u \|_{C^\alpha(B(0, 1-\delta))}  \lesssim_{n, \delta} \| \nabla u\|_{L^2(B(0,1))} +  \| F \|_{C^\alpha(B(0,1))},
$$
with the implicit constant depending also on the $\alpha$-H\"older norm of the coefficients $a_{ij}$.
\end{lemma}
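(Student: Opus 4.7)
The approach I would take is the classical Schauder estimate via freezing of coefficients combined with Campanato's characterization of $C^\alpha$. By a covering and rescaling argument it suffices, for each $x_0\in B(0,1-\delta)$, to control the Campanato seminorm of $\grad u$ on small balls centered at $x_0$. Fixing such $x_0$ and a radius $\rho$ small compared to $\delta$, I would freeze the matrix $A$ at $x_0$ and rewrite the equation as
\[
-\divv\bigl(A(x_0)\grad u\bigr) = \divv\bigl(F + (A(x_0)-A(\cdot))\grad u\bigr) \quad\text{in } B(x_0,\rho),
\]
so that the operator on the left has constant coefficients while the spatial variation of $A$ is absorbed into a new divergence source on the right.

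Next I would decompose $u = v + w$ on $B(x_0,\rho)$, where $v$ solves the homogeneous constant-coefficient problem $-\divv(A(x_0)\grad v)=0$ in $B(x_0,\rho)$ with $v=u$ on $\partial B(x_0,\rho)$, and $w = u-v$ has zero trace. After a linear change of variables the equation for $v$ reduces to Laplace's equation, so the standard interior regularity of harmonic functions yields the Campanato decay
\[
\int_{B(x_0,\tau\rho)} |\grad v - (\grad v)_{B(x_0,\tau\rho)}|^2 \leq C\tau^{n+3}\int_{B(x_0,\rho)}|\grad v - (\grad v)_{B(x_0,\rho)}|^2,\qquad \tau\in(0,\tfrac12].
\]
Testing the equation for $w$ against $w$ and subtracting the constant $F(x_0)$ from the source (which is permissible since $w$ has zero trace) gives, by ellipticity and the Hölder hypotheses on $F$ and $A$,
\[
\int_{B(x_0,\rho)}|\grad w|^2 \lesssim \rho^{n+1+2\alpha}\,\|F\|_{C^\alpha}^2 + \rho^{2\alpha}\,\|A\|_{C^\alpha}^2\int_{B(x_0,\rho)}|\grad u|^2.
\]
Combining these via $\grad u=\grad v+\grad w$ and writing $\phi(x_0,\rho) = \int_{B(x_0,\rho)}|\grad u-(\grad u)_{B(x_0,\rho)}|^2$, I obtain an iteration inequality of the form
\[
\phi(x_0,\tau\rho) \leq C_0\,\tau^{n+3}\,\phi(x_0,\rho) + C_1\,\rho^{n+1+2\alpha}\bigl(\|F\|_{C^\alpha}^2 + \|A\|_{C^\alpha}^2\,\|\grad u\|_{L^\infty(B(x_0,\rho))}^2\bigr).
\]
Choosing $\tau$ so that $C_0\tau^{n+3}\leq \tfrac12\tau^{n+1+2\alpha}$ (possible since $\alpha<1$) and iterating yields $\phi(x_0,\rho)\lesssim \rho^{n+1+2\alpha}$ uniformly in $x_0$, and Campanato's theorem then translates this into the claimed $C^\alpha$ estimate on $\grad u$.

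The main obstacle will be the absorption step: the term $(A(x_0)-A(\cdot))\grad u$ couples the unknown $\grad u$ back into the source, so for the Campanato iteration to close without circularity one first has to bootstrap the $W^{1,2}$ hypothesis to a local $L^\infty$ bound on $\grad u$. This is achieved by De Giorgi--Nash--Moser regularity together with a Caccioppoli-type estimate for the perturbed equation, which jointly yield an interior $L^\infty$ bound on $\grad u$ controlled by $\|\grad u\|_{L^2(B(0,1))} + \|F\|_{C^\alpha(B(0,1))}$; once this is in hand, the iteration runs and delivers the stated estimate.
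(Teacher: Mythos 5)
Your overall strategy (freezing coefficients, the harmonic/constant-coefficient comparison function, the Campanato excess iteration) is exactly the standard proof of the result the paper invokes -- the paper itself simply cites Theorem 5.19 of Giaquinta--Martinazzi, whose proof runs along the lines you sketch. However, there is a genuine gap in the way you close the circularity you correctly identified. You propose to obtain a local $L^\infty$ bound on $\nabla u$ \emph{before} running the iteration, ``by De Giorgi--Nash--Moser regularity together with a Caccioppoli-type estimate.'' De Giorgi--Nash--Moser gives interior boundedness and H\"older continuity of the solution $u$ itself, not of its gradient; for operators with merely bounded measurable (or even merely $C^\alpha$) coefficients there is no direct DGNM-type $L^\infty$ bound for $\nabla u$, and one cannot differentiate the equation since $a_{ij}$ is only H\"older. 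Indeed, local boundedness of $\nabla u$ is part of the conclusion of the Schauder estimate you are proving, so assuming it as input at this stage is circular, and the iteration inequality as you wrote it (with $\|\nabla u\|_{L^\infty(B(x_0,\rho))}^2$ in the inhomogeneous term) does not close.

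The standard repair, which is how the cited proof proceeds, is a two-step bootstrap entirely inside the Morrey--Campanato framework: first combine the $\tau^{n+1}$ decay of $\int|\nabla v|^2$ for the frozen-coefficient comparison function with your energy estimate for $w$ to get, via the algebraic iteration lemma, a Morrey bound $\int_{B(x_0,\rho)}|\nabla u|^2\lesssim \rho^{\lambda}$ for every $\lambda<n+1$ (this step needs no sup bound on $\nabla u$, only $\rho^{2\alpha}$-smallness of the coefficient oscillation); feeding this into the excess decay yields $\nabla u\in C^{\alpha'}_{loc}$ for some $\alpha'>0$, hence $\nabla u$ is locally bounded; then rerun your excess iteration with $\int_{B_\rho}|\nabla u|^2\lesssim\rho^{n+1}$ to obtain the full exponent $\rho^{n+1+2\alpha}$ and conclude $\nabla u\in C^\alpha$ with the stated bound. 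With that replacement for your final paragraph the argument is correct; as written, the justification of the key a priori gradient bound is not valid.
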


\begin{proof}
The lemma follows from Theorem 5.19 in \cite{GiaMa}.
\end{proof}

\vv
\subsection{About elliptic measure}

Let $A=(a_{ij}(x))_{1\leq j \leq n+1}$ be a matrix that satisfies \eqref{eqelliptic1} and \eqref{eqelliptic2} without any additional assumption (e.g. locally Lipschitz or symmetric).
We consider the second order elliptic operator $L = -\divv A \nabla$ and we say that a function $u \in W^{1,2}_{loc}(\om)$ is a {\it weak solution} of the equation $L u=0$ in $\Omega$ (or just {\it $L$-harmonic}) if 
\begin{equation}\label{eq:solution}
\int A \nabla u \nabla \Phi=0, \;\; \mbox{ 
for all $\Phi \in C^\infty_0(\om)$}.
\end{equation} 
We also say that  $u \in W^{1,2}_{loc}(\om)$ is a {\it supersolution} (resp. \textit{subsolution}) for $ L$ in  $\Omega$ or just {\it $L$-superharmonic} (resp. {\it $L$-subharmonic}) if $\int A \nabla u \nabla \Phi \geq 0$   (resp. $\int A \nabla u \nabla \Phi \leq 0$) for all non-negative $\Phi \in C^\infty_0(\om)$.

\vv
Following  \cite[Section 9]{HKM}, from now on we make the convention that if $\om$ is unbounded, then the point at infinity always belongs to its boundary. So, all the topological notions are understood with respect to the compactified space $\overline{\R}^{n+1}=\R^{n+1} \cup \{\infty\}$. Moreover, the functions $f \in C(E)$, for $E \subset \overline{\R}^{n+1}$ are assumed to be continuous and real-valued. Therefore, all functions in $C(\d \om)$ are bounded even if $\Omega$ is unbounded. 

\vv
If  $ \Phi \in W^{1,2}(\Omega)\cap C(\overline \Omega)$ with $\phi=\Phi|_{\d \om}$, then one can construct a unique {\it variational solution} for the $L$-Dirichlet problem with data $\phi$. Indeed, by Lax-Milgram theorem  in $W_0^{1,2}(\Omega)$, there exists a unique $v \in W_0^{1,2}(\om)$ such that $\int_\om A \nabla v \cdot \nabla \Psi = - \int_\om A \nabla \Phi \cdot \nabla \Psi$, for every $\Psi \in W_0^{1,2}(\Omega)$. Therefore, if we set $u= v+\Phi$, it is clear that $u \in  W^{1,2}(\Omega)$, $Lu=0$, and $u|_{\d \om}=\phi$ in the Sobolev sense. To prove uniqueness, one should exploit the ellipticity condition as well as the fact that the difference of two variational solutions with the same data is a solution for $L$ which lies in $W^{1,2}_0(\om)$. 

We say that a point $x_0 \in \d \om\setminus \{\infty\}$  is {\it Sobolev $L$-regular} if, for each function $\Phi \in W^{1,2}(\Omega)\cap C(\overline \Omega)$, the $L$-harmonic function $h$ in $\om$ with $h-\Phi \in W^{1,2}_0(\om)$ satisfies
$$\lim_{x \to x_0} h(x)=\Phi(x_0) .$$

\vv
 \begin{theorem}[Theorem 6.27 in \cite{HKM}] \label{teoreg1}If for $x_0 \in \partial \Omega\setminus \{\infty\}$ it holds that
  $$\int_0^1 \frac{\textup{cap}(B(x_0,r) \cap \Omega^c, B(x_0,2r))}{\textup{cap}(B(x_0,r),B(x_0,2r))}\, \frac{dr}{r}=+\infty,$$
 then $x_0$ is Sobolev $L$-regular. Here ${\rm cap}(\cdot, \cdot)$ stands for the variational $2$--capacity of the condenser $(\cdot, \cdot)$ (see e.g. \cite[p. 27]{HKM}).
 \end{theorem}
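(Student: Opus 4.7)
The statement is the Wiener criterion for boundary regularity of divergence-form operators, as recorded in \cite{HKM}. Since $L = -\divv(A\nabla)$ with $A$ merely bounded measurable, classical arguments (e.g.\ with explicit barriers) are unavailable, and one must exploit only the De Giorgi--Nash--Moser machinery and capacity estimates. My plan follows the classical Maz'ya-type approach.

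\textbf{Reduction and setup.} Let $h$ be the variational $L$-solution with $h-\Phi\in W^{1,2}_0(\Omega)$. Replacing $\Phi$ by $\Phi-\Phi(x_0)$ (which only shifts $h$ by the same constant, since constants are $L$-harmonic), I may assume $\Phi(x_0)=0$. Set, for $r\in(0,1)$,
\[
M(r)=\operatorname*{ess\,sup}_{B(x_0,r)\cap\Omega}h,\qquad m(r)=\operatorname*{ess\,inf}_{B(x_0,r)\cap\Omega}h,\qquad \omega(r)=M(r)-m(r).
\]
By the continuity of $\Phi$, for every $\eta>0$ there is $r_\eta>0$ so that $|\Phi|\leq\eta$ in $B(x_0,r_\eta)\cap\overline\Omega$. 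The goal is to show $\omega(r)\to 0$ as $r\to 0$; combined with $|\Phi|\leq\eta$ on $\partial\Omega\cap B(x_0,r_\eta)$, this gives $\lim_{x\to x_0}h(x)=0$ by comparison with the constants $\pm\eta$ on the ``interior'' piece $\partial B(x_0,r)\cap\Omega$ via the weak maximum principle for variational solutions.

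\textbf{Key oscillation estimate.} The heart of the proof is the inequality
\[
\omega(r/2)\leq \bigl(1-c_0\,\gamma(r)\bigr)\,\omega(r)+2\eta,\qquad \gamma(r):=\frac{\operatorname{cap}(B(x_0,r)\cap\Omega^c,\,B(x_0,2r))}{\operatorname{cap}(B(x_0,r),\,B(x_0,2r))},
\]
valid for $r<r_\eta$, with $c_0$ depending only on $n$ and $\Lambda$. To prove it, normalize by working with $v:=M(2r)-h$ (or $h-m(2r)$), which is a nonnegative $L$-supersolution on $B(x_0,2r)\cap\Omega$. Extend $v$ by zero to the boundary values where $h=\Phi\leq\eta$; up to the additive error controlled by $\eta$, the extension inherits the trace $v=0$ on $B(x_0,r)\cap\Omega^c$ in the $W^{1,2}_0$-sense. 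Test the equation $\int A\nabla v\cdot\nabla(\varphi^2 v)\geq 0$ against the square of the capacitary cutoff of the condenser $(B(x_0,r)\cap\Omega^c,B(x_0,2r))$; after a Caccioppoli estimate this yields
\[
\Xint-_{B(x_0,r)}v^2\,dy\ \geq\ c_0\,\gamma(r)\,\bigl(\sup_{B(x_0,2r)\cap\Omega}v\bigr)^2,
\]
(a capacitary Poincaré inequality, cf.\ \cite[Ch.\,6]{HKM}). The weak Harnack inequality for the nonnegative supersolution $v$ then converts the $L^2$-lower bound on $B(x_0,r)$ into the pointwise bound $\inf_{B(x_0,r/2)\cap\Omega}v\geq c_0\gamma(r)\sup_{B(x_0,2r)\cap\Omega}v$, and applying this to both $M(2r)-h$ and $h-m(2r)$ gives the oscillation inequality above.

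\textbf{Iteration.} Plugging $r=2^{-k}$ and iterating, one obtains
\[
\omega(2^{-k})\leq \omega(1)\prod_{j=1}^{k}\bigl(1-c_0\gamma(2^{-j})\bigr)+C\eta.
\]
Because $\gamma(r)$ is (essentially) monotone in $r$ and $\log(1-c_0 t)\leq -c_0 t$ for $t\in[0,1)$, the infinite product vanishes exactly when $\sum_{j}\gamma(2^{-j})=+\infty$, which by a standard dyadic comparison is equivalent to the divergence of the Wiener integral in the hypothesis. Letting $k\to\infty$ and then $\eta\to 0$ gives $\omega(r)\to 0$, proving Sobolev $L$-regularity of $x_0$.

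\textbf{Main obstacle.} The delicate step is the capacitary Poincaré inequality used in the middle of the second paragraph: one must choose the test function carefully so that the non-symmetric, merely bounded matrix $A$ contributes only $\Lambda$-dependent constants, and so that the boundary trace of $v$ on $B(x_0,r)\cap\Omega^c$ can be read off in terms of the capacity. All other ingredients (Caccioppoli, weak Harnack, weak maximum principle) are standard for operators in $\mathcal{L}(\Omega)$ and do not require the local Lipschitz regularity of the coefficients.
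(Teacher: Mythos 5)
First, note that the paper does not actually prove this statement: it is imported verbatim as Theorem 6.27 of \cite{HKM}, so the only honest comparison is between your sketch and the standard proof of the sufficiency of the Wiener criterion (which is also, in essence, what \cite{HKM} does via capacitary potentials). Your global architecture is exactly that standard route: an oscillation decay estimate $\omega(r/2)\leq(1-c_0\gamma(r))\,\omega(r)+2\eta$ with $\gamma(r)$ the relative capacity of $B(x_0,r)\cap\Omega^c$ in $B(x_0,2r)$, iterated dyadically so that divergence of $\sum_j\gamma(2^{-j})$ (equivalently of the Wiener integral) kills the oscillation, plus the weak maximum principle to conclude $h(x)\to\Phi(x_0)$. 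That skeleton is correct and is the right level of generality for $L\in\mathcal L(\Omega)$.

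The genuine gap is in the step you yourself flag as the heart of the matter. With $v=M(2r)-h$, the boundary values of $v$ on $\partial\Omega\cap B(x_0,2r)$ are $M(2r)-\Phi\geq M(2r)-\eta$, i.e.\ \emph{large}, so $v$ cannot be "extended by zero" with trace $0$ on $B(x_0,r)\cap\Omega^c$ in the $W^{1,2}_0$ sense; the function that has zero trace there is $(h-\eta)^+$, not $v$. Moreover, the displayed inequality
\begin{equation*}
\avint_{B(x_0,r)}v^2\,dy\ \geq\ c_0\,\gamma(r)\,\Bigl(\sup_{B(x_0,2r)\cap\Omega}v\Bigr)^{2}
\end{equation*}
points the wrong way for a function vanishing on a set of large relative capacity: Maz'ya's capacitary Poincar\'e inequality gives an \emph{upper} bound $\avint_{B_r}v^2\lesssim \gamma(r)^{-1}r^2\avint_{B_{2r}}|\nabla v|^2$ in that situation, and a lower bound of the stated form is false for generic such $v$ (vanishing on a large-capacity set makes the average small, not large). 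What is actually needed is the growth lemma: truncate, setting $\lambda:=M(2r)-\eta$ and $w:=\min(v,\lambda)$ in $B(2r)\cap\Omega$, extend $w$ by the constant $\lambda$ across $B(2r)\setminus\Omega$ (the pasting lemma makes $w$ a nonnegative $L$-supersolution in all of $B(2r)$ with $w=\lambda$ q.e.\ on $B(r)\cap\Omega^c$), and then obtain $\inf_{B(r/2)}w\geq c_0\,\gamma(r)\,\lambda$ either by comparison with the $L$-capacitary potential of $B(r)\cap\Omega^c$ in $B(2r)$ (using that $L$-capacity and the variational $2$-capacity are comparable up to constants depending only on $\Lambda$), or by the weak Harnack inequality combined with a Maz'ya-type capacitary estimate on the level sets of $w$. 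Note that weak Harnack alone, as you invoke it, cannot convert "$w\geq\lambda$ on $B(r)\cap\Omega^c$" into a lower bound on $\avint_{B_r}w^q$, because $B(r)\cap\Omega^c$ may have zero Lebesgue measure --- this is precisely why capacity, via the potential comparison or Maz'ya's inequality, must enter at this point. With the key lemma repaired in this way (and with the standard dyadic quasi-monotonicity of $\gamma$ replacing the claimed monotonicity), the iteration and the conclusion go through as you describe.
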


We say that a point $x_0 \in \d \om$  is {\it Wiener regular} if, for each function $f \in C(\d \om; \R)$, the $L$-harmonic function $H_f$ constructed by the Perron's method satisfies 
$$\lim_{x \to x_0} H_f(x)=f(x_0) .$$
See \cite[Chapter 9]{HKM}. 
 
 \vv
 \begin{lemma}[Theorem 9.20 in \cite{HKM}]
 Suppose that $x_0 \in \d \om \setminus \{\infty\}$. If $x_0$ is Sobolev $L$-regular then it is also  Wiener regular.
 \end{lemma}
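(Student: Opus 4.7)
The plan is to reduce the Wiener regularity question to the Sobolev regularity hypothesis by a uniform approximation of the continuous boundary data by traces of functions in $W^{1,2}(\Omega)\cap C(\overline\Omega)$, together with the $L^\infty$-stability of the Perron construction.

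\textbf{Step 1: Approximation of the data.} Fix $f\in C(\partial\Omega;\mathbb{R})$ and $\varepsilon>0$. Since $\partial\Omega$ (in the compactified sense if $\Omega$ is unbounded) is a compact metric space, the space of Lipschitz functions on $\partial\Omega$ is uniformly dense in $C(\partial\Omega)$, so we may pick a Lipschitz $g:\partial\Omega\to\mathbb{R}$ with $\|f-g\|_{L^\infty(\partial\Omega)}<\varepsilon$. Extend $g$ to a compactly supported Lipschitz function $\Phi$ on $\mathbb{R}^{n+1}$ (Kirszbraun/McShane) and, if $\Omega$ is unbounded, multiply by a smooth cutoff far from $x_0$ so that $\Phi\in W^{1,2}(\Omega)\cap C(\overline{\Omega})$ and still $\Phi|_{\partial\Omega}=g$ near $x_0$ with $\|g-\Phi|_{\partial\Omega}\|_\infty<\varepsilon$.

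\textbf{Step 2: Identification of the variational and Perron solutions.} Let $h$ be the variational solution of $Lh=0$ in $\Omega$ with $h-\Phi\in W_0^{1,2}(\Omega)$, as discussed just before the lemma. By the Sobolev $L$-regularity of $x_0$, $\lim_{x\to x_0}h(x)=\Phi(x_0)=g(x_0)$. The key sub-step is to show $h=H_\Phi$, the Perron solution with datum $\Phi|_{\partial\Omega}$. This follows from the usual comparison: $h$ is $L$-harmonic, bounded (by the weak maximum principle applied to $h-\Phi\in W^{1,2}_0$ together with $\Phi\in L^\infty$), and attains the correct boundary values at every Sobolev $L$-regular point; since the set of Wiener regular points contains the set of Sobolev regular points in a dense sense (by Kellogg-type results, and at worst by a barrier/comparison argument with Perron super- and sub-solutions constructed from continuous functions $\geq \Phi|_{\partial\Omega}$ and $\leq \Phi|_{\partial\Omega}$), one deduces $h\equiv H_\Phi$. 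This is really the technical heart of the statement in \cite{HKM}.

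\textbf{Step 3: Passage to the continuous datum $f$.} By the linearity and $L^\infty$-contractivity of the Perron map (immediate from the definition via upper/lower classes and the maximum principle for $L$-harmonic functions),
\begin{equation*}
\|H_f-H_\Phi\|_{L^\infty(\Omega)}\leq \|f-\Phi|_{\partial\Omega}\|_{L^\infty(\partial\Omega)}\leq 2\varepsilon.
\end{equation*}
Combined with Step 2, for every $\varepsilon>0$ we get
\begin{equation*}
\limsup_{x\to x_0}|H_f(x)-f(x_0)|\leq 2\varepsilon + \limsup_{x\to x_0}|H_\Phi(x)-\Phi(x_0)| + |\Phi(x_0)-f(x_0)|\leq 3\varepsilon,
\end{equation*}
and letting $\varepsilon\to 0$ proves $\lim_{x\to x_0}H_f(x)=f(x_0)$, i.e.\ $x_0$ is Wiener regular.

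The main obstacle is Step 2: identifying the energy-minimizing solution with the Perron solution for data that is simultaneously continuous on $\partial\Omega$ and the trace of a Sobolev function. For the Laplacian this is classical; in the general divergence-form setting it requires the weak maximum principle for $W^{1,2}_0$ supersolutions and a careful comparison with Perron upper/lower envelopes, exactly as carried out in \cite[Chapters 6--9]{HKM}.
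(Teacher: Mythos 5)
First, note that the paper does not prove this lemma at all: it is quoted verbatim as Theorem 9.20 of [HKM], so there is no internal proof to compare with. Your outline (uniform approximation of $f$ by Lipschitz data admitting a $W^{1,2}(\Omega)\cap C(\overline\Omega)$ extension, identification of the variational solution with the Perron solution for such data, Sobolev regularity to get the boundary limit, and $L^\infty$-contractivity of the Perron envelopes to pass back to $f$) is indeed the standard strategy, and essentially the one in [HKM].

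However, your Step 2 — which you correctly identify as the heart of the matter — is not actually proved, and the justification you sketch does not work as stated. Deducing $h\equiv H_\Phi$ from the fact that $h$ attains the datum at Sobolev $L$-regular points, plus ``Kellogg-type results'' asserting that irregular points are small, is circular in this context: Kellogg's theorem concerns Wiener-regular points, which is exactly what the lemma is trying to produce, and even granting it you would still need a uniqueness statement for bounded $L$-harmonic functions agreeing with the boundary datum outside a set of capacity zero, which in [HKM] is itself a consequence of the resolutivity machinery. The correct route is the comparison/resolutivity theorem for data in $C(\overline\Omega)\cap W^{1,2}(\Omega)$ (showing $\underline{H}_\phi\le h\le \overline{H}_\phi$, hence $h=H_\phi$), which [HKM] prove via approximation by supersolutions coming from obstacle problems and convergence theorems — no information about regular boundary points enters there. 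As written, your argument has a genuine gap at precisely this step. A secondary issue: in the unbounded case your cutoff destroys the uniform approximation, since the paper's convention puts $\infty\in\partial\Omega$ and $f$ may have a nonzero limit there, while your truncated $\Phi$ vanishes near infinity; so $\|f-\Phi|_{\partial\Omega}\|_{L^\infty(\partial\Omega)}\le 2\varepsilon$ fails unless you first subtract the value at infinity (and handle that constant separately, noting a nonzero constant is not in $W^{1,2}$ of an unbounded $\Omega$), which is exactly the kind of adjustment the paper alludes to when it says the [HKM] results extend to unbounded domains after inspection.
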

 
Note that some of the aforementioned results form \cite{HKM} are only stated for $\Omega$ bounded. Although, a careful inspection of their proofs shows that, with our above construction of variational solutions,  they extend to the case that $\Omega$ is unbounded. Moreover,  $\infty$ is a Wiener regular point for each unbounded $\Omega \subset \R^{n+1}$, if $n \geq 2$ (see Theorem 9.22 in \cite{HKM}).
 
We say that $\Omega$ is Sobolev $L$-regular (resp.\ Wiener regular) if all the points in $\partial\Omega \setminus \{\infty\}$ are Sobolev $L$-regular 
(resp.\ Wiener regular).

As a consequence of Theorem \ref{teoreg1} and the preceding lemma, if $\partial\Omega$ is $n$-AD-regular, then it is also
Sobolev $L$-regular and Wiener regular.

Let $\Omega \subset \R^{n+1}$ be Wiener regular and $x\in\Omega$. If $f \in C(\partial \Omega)$, then the map $f \mapsto \overline{H}_f(x)$ is a bounded linear functional on $C(\partial \Omega)$. Therefore, by Riesz representation theorem and the maximum principle, there exists a probability measure $\hm^{x}$ on $\partial \Omega$ (associated to $L$ and the point $x \in \Omega$) defined on Borel subsets of $\d\Omega$ so that
$$ \overline{H}_f(x) =\int_{\partial \Omega} f \, d\hm^{x}, \;\; \mbox{for all $x \in \Omega$.}$$ 
We call $\omega^x$ the elliptic measure associated to $L$ and $x$.\vv

\vv
{
The following lemma is  standard and its proof is omitted. 
\begin{lemma}\label{lem:Pullback}
Let $\Omega \subset \R^{n+1}$ be an open set, and assume that $A$ is a uniformly elliptic matrix with real entries and $\Phi:\R^{n+1} \to \R^{n+1}$ is a bi-Lipschitz map. If we set
$$\tilde A:=  |\det D_{\Phi}|\, D_{\Phi^{-1}} (A\!\circ\!\Phi) D^T_{\Phi^{-1}},$$ 
then $\tilde A$ is a uniformly elliptic matrix and $u$ is a weak solution of $L_{A} u =0$ in $\Phi(\Omega)$ if and only if $\tilde{u}=u\circ \Phi$ is a weak solution of $L_{\tilde A} \tilde u =0$ in $\Omega$.

Assuming $\Omega$ is a Wiener regular domain, we have that for any set $E \subset \Phi(\d \Omega)=\d \Phi(\Omega)$ and $x\in \Omega$,
\begin{equation}\label{e:pushf}
 \omega_{\Phi(\Omega)}^{L_{A},\Phi(x)}(E)=\omega_{{\Omega}}^{L_{\widetilde A},x}(\Phi^{-1}(E)).
\end{equation}
\end{lemma}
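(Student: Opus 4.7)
The plan is to treat this as a textbook change-of-variables computation for the PDE, together with an appeal to uniqueness of the variational/Perron solution to transfer the conclusion to elliptic measure. No deep machinery is needed; the content of the lemma is just keeping track of what the chain rule does in weak form.

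First I would verify that $\tilde A$ is uniformly elliptic. Since $\Phi$ is bi-Lipschitz, both $D_\Phi$ and $D_{\Phi^{-1}}$ are defined a.e., bounded above, and have determinants bounded above and away from zero. For a.e.\ $x$ and any $\xi\in\R^{n+1}$, writing $\eta=D_{\Phi^{-1}}(\Phi(x))^T\xi$, one has $|\eta|\approx |\xi|$ with constants depending only on the bi-Lipschitz constant of $\Phi$, and
\[
\langle \tilde A(x)\xi,\xi\rangle \;=\; |\det D_\Phi(x)|\,\langle (A\!\circ\!\Phi)(x)\,\eta,\eta\rangle,
\]
so the ellipticity bounds \eqref{eqelliptic1}--\eqref{eqelliptic2} for $A$ transfer to $\tilde A$, with constants depending only on $\Lambda$ and the bi-Lipschitz constant.

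For the solution-preservation statement I would write the weak formulation on $\Phi(\Omega)$ and make the substitution $y=\Phi(x)$. If $u\in W^{1,2}_{\loc}(\Phi(\Omega))$ and $\varphi\in C_0^\infty(\Phi(\Omega))$, set $\tilde u = u\circ\Phi$ and $\tilde\varphi = \varphi\circ\Phi \in C_0^\infty(\Omega)$. The chain rule gives $(\nabla u)(\Phi(x)) = D_{\Phi^{-1}}(\Phi(x))^T \nabla\tilde u(x)$ and similarly for $\tilde\varphi$, and the Jacobian of the change of variables is $|\det D_\Phi(x)|$. Plugging in,
\[
\int_{\Phi(\Omega)}\!\langle A\nabla u,\nabla\varphi\rangle\,dy
 = \int_\Omega \!\langle |\det D_\Phi| D_{\Phi^{-1}}(A\!\circ\!\Phi) D_{\Phi^{-1}}^T \nabla\tilde u,\,\nabla\tilde\varphi\rangle\,dx
 = \int_\Omega \langle \tilde A\nabla\tilde u,\nabla\tilde\varphi\rangle\,dx.
\]
Since $\varphi\mapsto\tilde\varphi$ is a bijection of $C_0^\infty(\Phi(\Omega))$ onto $C_0^\infty(\Omega)$, the two weak formulations are equivalent.

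For the elliptic measure identity I would use uniqueness. Wiener regularity is preserved under bi-Lipschitz maps (either by arguing directly via capacity comparison in Theorem \ref{teoreg1}, or because the Perron solution transfers via the equivalence just established), so $\omega^{L_{\tilde A},x}_\Omega$ is well-defined. Given $f\in C(\partial\Phi(\Omega))$, let $H_f$ denote its $L_A$-harmonic extension to $\Phi(\Omega)$; then by the previous paragraph $H_f\circ\Phi$ is $L_{\tilde A}$-harmonic in $\Omega$, and by boundary continuity it is the $L_{\tilde A}$-harmonic extension of $f\circ\Phi$. Therefore
\[
\int f\,d\omega^{L_A,\Phi(x)}_{\Phi(\Omega)} = H_f(\Phi(x)) = \int (f\circ\Phi)\,d\omega^{L_{\tilde A},x}_\Omega
\]
for every $f\in C(\partial\Phi(\Omega))$, which by Riesz representation is exactly \eqref{e:pushf}. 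No step is a real obstacle; the only thing to be mildly careful with is that the chain-rule manipulations are being applied to a.e.-defined Jacobians of a bi-Lipschitz map, which is standard.
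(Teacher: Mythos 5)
Your proposal is correct, and it is exactly the standard change-of-variables argument that the paper itself invokes when it omits the proof of Lemma \ref{lem:Pullback} with the remark that it is standard: verify ellipticity of $\tilde A$ via $\langle \tilde A\xi,\xi\rangle=|\det D_\Phi|\langle (A\circ\Phi)\eta,\eta\rangle$ with $\eta=D_{\Phi^{-1}}^T\xi$, transfer the weak formulation by the substitution $y=\Phi(x)$, and then identify the two Perron solutions to get \eqref{e:pushf} from the Riesz representation. The only point to tidy is that $\varphi\circ\Phi$ is merely a compactly supported Lipschitz function, not an element of $C_0^\infty(\Omega)$, since $\Phi$ is only bi-Lipschitz; this is harmless because such functions are admissible test functions for $W^{1,2}_{\loc}$ solutions by density of $C_0^\infty$ in $W^{1,2}_0$, so the equivalence of the two weak formulations goes through as you intend.
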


As a corollary of this result we have the following.
\begin{corollary}\label{cor:A(x0)=id}
Let $\Omega \subset \R^{n+1}$ be an open set, and assume that $A$ is a uniformly elliptic matrix with real entries.  Let $A_s= (A + A^*)/2$  be the symmetric part of $A$ and for a fixed point $y_0 \in \om$ define $S= \sqrt{A_s(y_0)}$. If  
\[
\tilde{A}(\cdot) = S^{-1} (A\circ S)(\cdot) S^{-1},
\]
then $\tilde{A}$ is uniformly elliptic,  $\tilde A_s(z_0) = Id$ if $z_0 = S^{-1}y_0$ and $u$ is a weak solution of $L_{A} u =0$ in $\Omega$ if and only if $\tilde{u}=u\circ S$ is a weak solution of $L_{\tilde A} \tilde u =0$ in $S^{-1}(\Omega)$ . 

Assuming $\Omega$ is a Wiener regular domain, we have that for any set $E \subset \d \Omega$ and $x\in \Omega$,
\begin{equation}\label{e:pushf}
 \omega_{\Omega}^{L_{A},x}(E)=\omega_{S^{-1}(\Omega)}^{L_{\widetilde A},S^{-1}x}(S^{-1}(E)).
\end{equation}
\end{corollary}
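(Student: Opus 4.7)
The plan is to derive this corollary as a direct specialization of Lemma \ref{lem:Pullback} applied to the linear bi-Lipschitz map $\Phi(z) = Sz$. Since $A_s(y_0)$ is symmetric and positive definite (by the uniform ellipticity of $A$), its principal square root $S = \sqrt{A_s(y_0)}$ is well-defined, symmetric, and positive definite; in particular $S:\R^{n+1}\to\R^{n+1}$ is a bi-Lipschitz bijection with inverse $\Phi^{-1}(y) = S^{-1}y$, and $D_{\Phi^{-1}}^{T} = (S^{-1})^{T} = S^{-1}$. The hypotheses of Lemma \ref{lem:Pullback} are therefore met.

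I would apply Lemma \ref{lem:Pullback} with $\Phi = S$, taking the source domain there to be $S^{-1}(\Omega)$, so that $\Phi(S^{-1}(\Omega)) = \Omega$. The recipe in that lemma produces the matrix
$$\widehat{A}(z) := |\det S|\, S^{-1} A(Sz)\, S^{-1},$$
which differs from the $\tilde{A}$ in the statement of the corollary only by the positive scalar $|\det S|$. Because the operator $L = -\divv(A\nabla\,\cdot\,)$ is linear in $A$, the equations $L_{\widehat A}\tilde u = 0$ and $L_{\tilde A}\tilde u = 0$ describe the same class of weak solutions, and the associated elliptic measures coincide. Hence the equivalence ``$L_A u = 0$ in $\Omega$ $\iff$ $L_{\tilde A}(u\circ S) = 0$ in $S^{-1}(\Omega)$'' and the pushforward identity \eqref{e:pushf} follow verbatim from the conclusions of Lemma \ref{lem:Pullback}.

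The remaining items are routine verifications. Uniform ellipticity of $\tilde A$ follows from $\langle \tilde A(z)\xi,\xi\rangle = \langle A(Sz)\,S^{-1}\xi,\,S^{-1}\xi\rangle$ together with the two-sided bound $|S^{-1}\xi|\approx|\xi|$, where the implicit constants depend only on $\|S\|$, $\|S^{-1}\|$, and hence on the ellipticity constant of $A$. For the normalization, note that conjugation by the symmetric matrix $S^{-1}$ commutes with taking symmetric parts, so
$$\tilde A_s(z_0) = S^{-1} A_s(Sz_0)\, S^{-1} = S^{-1} A_s(y_0)\, S^{-1} = S^{-1} S^{2} S^{-1} = \mathrm{Id},$$
using $S^{2} = A_s(y_0)$ by construction. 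The only point that demands any care is handling the Jacobian factor $|\det S|$ that appears in Lemma \ref{lem:Pullback} but is dropped from $\tilde A$; as explained above, this is harmless precisely because the operator is in divergence form and rescaling the coefficient matrix by a positive constant preserves both weak solutions and elliptic measure.
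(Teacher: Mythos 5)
Your proposal is correct and takes essentially the same route as the paper, which simply presents the corollary as a specialization of Lemma \ref{lem:Pullback} to the constant linear map $\Phi=S$ with $S=\sqrt{A_s(y_0)}$ symmetric and positive definite. Your explicit points — that the constant Jacobian factor $|\det S|$ may be dropped because rescaling a divergence-form coefficient matrix by a positive constant changes neither weak solutions nor elliptic measure, and that $\tilde A_s(z_0)=S^{-1}A_s(y_0)S^{-1}=\mathrm{Id}$ — are exactly the details the paper leaves implicit.
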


\vv

Here we used that $A_s$ is a symmetric and uniformly elliptic matrix and thus $A_s(y_0)$ has a unique square root $S$ which is also symmetric and uniformly elliptic with real entries. 

Let us recall some simple facts from linear algebra which help us understand how the geometry of $\Omega$ is affected by the linear transformation above. Note that $S$ is orthogonally diagonalizable since it is symmetric, which means that it represents a linear transformation with scaling in mutually perpendicular directions. Hence $S^{-1}$ is a special bi-Lipschitz change of variables that takes balls to ellipsoids, where eigenvectors determine directions of semi-axes, eigenvalues determine lengths of semi-axes and its maximum eccentricity is given by $\sqrt{(\lambda_{\max} / {\lambda_{\min}})}$ (where $\lambda_{\max}$  are $\lambda_{\min}$ 
are the maximal and minimal eigenvalues of $S^{-1}$),
which is in turn bounded below by $\sqrt{\Lambda}^{-1}$ and above by $\sqrt{\Lambda}$.  

In particular, $S^{-1}(\d \Omega)=\d (S^{-1}(\Omega))$, $\Lambda^{-1/2} \leq \|S^{-1} \|\leq   \Lambda^{1/2}$, i.e., $S^{-1}$ distorts distances by at most a constant depending on ellipticity. It is not hard to see that the collection $\wt \DD_{\mu}:= \{ S^{-1}(Q) \}_{Q \in \DD_\mu}$  forms a dyadic grid on $S^{-1}(\d\om)$ 
as described in Subsection \ref{subsec:dyadic}, where the involved constants depend on the ones in $\DD_\mu$ and ellipticity. Finally, there is a one-to-one correspondence  between the connected components of $\{x \in \om:  u > \alpha\}$ and the connected components of  $\{x \in S^{-1}(\om): \wt u > \alpha\}$ via $S^{-1}$. 
 }

\vvv

\begin{lemma}\label{lemgreen*}
Let $\Omega\subset\R^{n+1}$ be an open, connected set so that $\partial\Omega$ is Sobolev $L$-regular. 
There exists a Green function $G:\Omega\times \Omega\setminus\{(x,y):x=y\}\to \R$ associated with $L$ which
satisfies the following.
For $0<a<1$, there are are positive constants $C$ and $c$ depending on $a$, $n$ and $\Lambda$ such that for all $x,y\in\Omega$ with $x\neq y$, it holds:
$$0\leq G(x,y)\leq C\,|x-y|^{1-n}$$

\vspace{-4mm}
$$G(x,y)\geq c\,|x-y|^{1-n} \quad \mbox{ if $|x-y|\leq a\,\delta_\Omega(x)$,}$$

\vspace{-3mm}
$$G(x,\cdot)\in C(\overline\Omega \setminus \{x\}) \cap W^{1,2}_{loc}(\Omega \setminus \{x\}) 
\quad \mbox{ and } \quad G(x,\cdot)|_{\partial\Omega} \equiv 0,$$

\vspace{-3mm}
$$G(x,y) = G^*(y,x),$$
where $G^*$ is the Green function associated with the operator $L^* = -\divv A^*\nabla$, and for every $\vphi\in C_c^\infty(\R^{n+1})$,
\begin{equation}\label{eqgreen*23}
\int_{\partial\Omega} \vphi\,d\omega^x - \vphi(x) = - \int_\Omega A^*(y)\nabla_yG(x,y)\cdot \nabla\vphi(y)\,dy,
\quad\mbox{ for a.e. $x\in\Omega$.}
\end{equation}
\end{lemma}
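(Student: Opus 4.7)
The plan is to assemble the classical construction of the Green function in the style of Gr\"uter--Widman and Hofmann--Kim, using approximation by bounded Sobolev $L$-regular subdomains and the fact that the Sobolev $L$-regularity hypothesis together with the connectedness of $\Omega$ suffices to run the usual machinery. First I would fix $x\in\Omega$ and, for each $\rho>0$ small, let $u_\rho\in W^{1,2}_0(\Omega)$ (or $Y^{1,2}_0(\Omega)$ if $\Omega$ is unbounded) be the unique variational solution of
\[
L u_\rho=\frac{1}{|B(x,\rho)|}\one_{B(x,\rho)}.
\]
Existence and uniqueness follow from Lax--Milgram as in the paragraph preceding the lemma. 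I would then define $G(x,y):=\lim_{\rho\to 0}u_\rho(y)$ in $\Omega\setminus\{x\}$, with the limit to be justified.

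Next, I would derive the two-sided size bound. The upper bound $G(x,y)\leq C|x-y|^{1-n}$ is obtained by a Moser iteration on $u_\rho$ away from $B(x,\rho)$, combined with the standard energy estimate $\int |\nabla u_\rho|^2\lesssim u_\rho(x)$, exactly as in \cite{HK} or \cite{GruterWidman}; the argument is insensitive to symmetry of $A$. The lower bound $G(x,y)\gtrsim|x-y|^{1-n}$ on $|x-y|\leq a\,\delta_\Omega(x)$ is derived by comparing $G(x,\cdot)$ in the ball $B(x,a\,\delta_\Omega(x))$ with a barrier built from the fundamental solution of the constant-coefficient operator $-\divv(A(x)\nabla)$ and applying the maximum principle. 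Passing to the limit $\rho\to 0$ yields the announced bounds and the membership $G(x,\cdot)\in W^{1,2}_{\loc}(\Omega\setminus\{x\})$, while the De Giorgi--Nash H\"older estimate gives continuity on $\Omega\setminus\{x\}$.

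For the continuity up to $\partial\Omega$ with vanishing trace, I would use the Sobolev $L$-regularity hypothesis. Fix $x_0\in\partial\Omega\setminus\{\infty\}$ and a small ball $B$ centered at $x_0$ not containing $x$; then $G(x,\cdot)\in W^{1,2}(\Omega\cap B')$ for some slightly larger $B'$ and solves $L G(x,\cdot)=0$ weakly there, with zero boundary trace in the $W^{1,2}$-sense. The Sobolev $L$-regularity of $x_0$ (applied to a cutoff extension of $G(x,\cdot)$) forces $G(x,y)\to 0$ as $y\to x_0$. When $\Omega$ is unbounded one uses in addition that $\infty$ is Wiener regular in dimension $n+1\geq 3$ together with the global decay $G(x,y)\lesssim|x-y|^{1-n}\to 0$ to handle the point at infinity.

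Symmetry $G(x,y)=G^*(y,x)$ and the representation formula \eqref{eqgreen*23} are then obtained simultaneously. Let $G^*$ denote the Green function for $L^*$ constructed by the same procedure. Testing the equation for $u_\rho^{(x)}$ against a mollified version $u_\sigma^{*,(y)}$ of $G^*(y,\cdot)$ (and vice versa) and sending first $\sigma\to 0$ and then $\rho\to 0$ yields
\[
G(x,y)=G^*(y,x)
\]
after using the continuity and local $W^{1,2}$ bounds just established. The representation formula is obtained by approximation: for $\vphi\in C_c^\infty(\R^{n+1})$, write $\vphi=h+\psi$ in $\Omega$ where $h$ is the variational $L$-solution with data $\vphi|_{\partial\Omega}$ and $\psi\in W^{1,2}_0(\Omega)$; testing the equation $L G(x,\cdot)=\delta_x$ against $\vphi$ through the approximating $u_\rho$ gives
\[
\int_{\partial\Omega}\vphi\,d\omega^x-\vphi(x)=-\int_\Omega A^*(y)\nabla_y G(x,y)\cdot\nabla\vphi(y)\,dy
\]
for a.e.\ $x$, the almost-everywhere qualifier arising because the identification of $h(x)$ with $\int \vphi\,d\omega^x$ uses Wiener regularity.

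The main obstacle I expect is justifying the limiting argument $\rho\to 0$ in the non-symmetric, possibly unbounded setting, and the coupling $(\rho,\sigma)\to 0$ needed for symmetry: the global finiteness of $\|\nabla u_\rho\|_{L^2}$ that one gets for free in bounded domains has to be replaced by the $Y^{1,2}$-framework and the global $|x-y|^{1-n}$ upper bound. Once the pointwise bound is in hand, however, dominated convergence and the H\"older regularity of $u_\rho$ (uniform in $\rho$) make the passage to the limit routine.
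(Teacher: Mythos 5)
Your construction of $G$ (variational approximations $u_\rho$, Moser iteration for the upper bound, barriers for the lower bound, De Giorgi--Nash for interior continuity, Sobolev $L$-regularity at the boundary) is essentially the content of the results of Hofmann--Kim, Kang--Kim and Gr\"uter--Widman that the paper simply cites; the part of the lemma that the paper actually has to prove is the representation formula \eqref{eqgreen*23}, and this is where your sketch has a genuine gap. You cannot ``test the equation $LG(x,\cdot)=\delta_x$ against $\vphi$'': the distributional identity \eqref{eq:Green-dirac} is only valid for test functions compactly supported in $\Omega$ (or in $W^{1,2}_0/Y^{1,2}_0$), while $\vphi\in C_c^\infty(\R^{n+1})$ does not vanish on $\partial\Omega$. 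Your decomposition $\vphi=h+\psi$ with $\psi=\vphi-h\in W^{1,2}_0(\Omega)$ is the right first move (it is exactly the paper's), and the formula for the $\psi$-part does follow by approximating $\psi$ by $C_0^\infty$ functions, as in \eqref{eqclaim388}. But you are then left with the key identity
\begin{equation*}
\int_\Omega A^*(y)\,\nabla_y G(x,y)\cdot\nabla h(y)\,dy=0 \quad\mbox{ for a.e. }x\in\Omega,
\end{equation*}
for the $L$-harmonic function $h\in W^{1,2}(\Omega)$, and your proposal never addresses it. This is not routine: $h$ is neither compactly supported nor in $W^{1,2}_0(\Omega)$, so one has to insert a cutoff $\eta_\ve$ vanishing near the pole, use that $\eta_\ve\,G(x,\cdot)\in Y^{1,2}_0(\Omega)$ — which requires the global estimate \eqref{eq:Y12Green} of Kang--Kim, indispensable when $\Omega$ is unbounded — and control the commutator term by a Hardy--Littlewood maximal function argument; even the absolute convergence of $\int_\Omega A^*\nabla_yG(x,\cdot)\cdot\nabla h$ for a.e. $x$ needs \eqref{eq:Y12Green} together with the local bounds \eqref{eqgpo0} near the pole. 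None of this is covered by ``dominated convergence and H\"older regularity make the passage to the limit routine.''

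A second, smaller gap concerns the identification $h(x)=\int_{\partial\Omega}\vphi\,d\omega^x$ when $\Omega$ is unbounded. Wiener regularity of the finite boundary points is not enough: since $\infty$ is treated as a boundary point, the maximum principle only identifies $h$ with the Perron solution after one shows that $h$ vanishes at infinity. In the paper this decay is extracted from the already established identity \eqref{eqffpp1} combined with Caccioppoli's inequality and the pointwise bound $G(x,y)\lesssim|x-y|^{1-n}$; your remark about Wiener regularity of $\infty$ is made only in connection with the boundary continuity of $G$ and does not supply this step. (Relatedly, the a.e.-$x$ qualifier in \eqref{eqgreen*23} comes from the convergence/maximal-function arguments above, not from Wiener regularity.) With these two points supplied, your outline would align with the paper's proof; without them it does not yet establish \eqref{eqgreen*23}.
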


In the statement in \rf{eqgreen*23}, one should understand that the integral on right hand side is absolutely convergent for a.e.\ $x\in\Omega$.

\begin{proof}
{ 
In \cite{HK}, Hofmann and Kim showed that
there exists a  function $G(\cdot, \cdot)$, continuous on $\om \times \om \setminus \{(x,y) \in \om \times \om: x=y\}$ and in $W^{1,2}_{loc}(\Omega \setminus \{x\}) $, so that $G(x, \cdot)$ is locally integrable for every $x\in \om$ with the following properties (among others):
\begin{enumerate}
\item For any $\eta \in C^\infty_0(\om)$ with $\eta \equiv 1$ in $B(y,r)$, for $r<\dist(y, \d \om)$,  it holds that $(1-\eta) G(\cdot,y) \in Y^{1,2}_0(\om)$.
\item For any $\Psi \in C^\infty_0(\om)$.
\begin{align}
\int_{\Omega} A(y) \nabla_y G(y,x) \nabla \Psi(y)\, dy =\Psi(x).\label{eq:Green-dirac}
\end{align}
\item If  $p\in [1,\frac{ n+1}{n-1})$ then 
\begin{equation}\label{eqgpo0}
\|G(x,\cdot)\|_{L_p(B(x,r))} + r\,\|\nabla_yG(x,\cdot)\|_{L_p(B(x,r))} \leq C\,r^{2-n+n/p},
\end{equation}
for all $0<r<\delta_\Omega(x)$. The same estimates hold for $G(\cdot,x)$.

\item $G(x,y)=G^*(y,x)$, where $G^*$ stands for the Green function associated with $L^*=- \divv A^*\nabla$ and $A^*$ for the transpose matrix of $A$ (which is also uniformly elliptic with the same ellipticity constants).
\end{enumerate}
Also $G$ is unique in the class of functions for which the potential 
$$u(x)=\int_\Omega G(y,x)\,f(y)\,dy,\,\,\textup{for}\,\,f \in L^\infty_c(\Omega)$$
belongs to $Y_0^{1,2}(\Omega)$ and satisfies $ L^*u=f$.}

Kang and Kim
further proved in \cite{KK} that 
\begin{equation}\label{eq:Y12Green}
\|G(\cdot, y)\|_{Y^{1,2}(\om \setminus B(y,r))} \lesssim r^{-\frac{n-1}{2}}, \quad \mbox{ for all $y\in\Omega$,}
\end{equation}
and
$$|G(x,y)|\leq C\,|x-y|^{1-n},\quad \mbox{ for all $x,y\in\Omega$, $x\neq y$.}$$
See Theorem 3.6 and Corollary 4.1 in \cite{KK}. { Moreover, we can show that $G \geq 0$ and  
$$G(x,y)\geq c\,|x-y|^{1-n}, \quad \mbox{ if $|x-y|\leq a\,\delta_\Omega(x)$,}$$ 
if we argue as Gr\"uter and Widman did in \cite[Theorem 1.1]{GW} but with the function $G$ above (in fact, the Green function of Gr\"uter and Widman coincides with the one of Hofmann and Kim in bounded domains). }

So it just remains to show the identity \rf{eqgreen*23}. Although this is rather standard, we will show the details.
If $u$ is the variational solution with data $\vphi|_{\d \om}$, then $u \in C(\overline \om)\cap W^{1,2}(\Omega)$ and $u - \vphi \in W_0^{1,2}(\om)$. By using the estimates above for the Green function and Fubini, one can show that the integral
\begin{equation}\label{eqfubini*33}
\int_\om A^*(y) \nabla_y G(x,y) \nabla u(y)\, dy
\end{equation}
is absolutely convergent for a.e.\ $x\in\Omega$. Indeed, given any ball $B=B(x_0,r)$ such that $10B\subset \Omega$, we have
\begin{align}\label{eqtrw17}
\int_B\int_\om |A^*(y) \nabla_y G(x,y) \nabla u(y)|\, dy\,dx & \lesssim
\int_{B}\int_{|x-y|\leq 2 r(B)}|\nabla_y G(x,y)|\, | \nabla u(y)|\,dy\,dx \\
&\quad +   \int_{B}\int_{|x-y|> 2 r(B)}|\nabla_y G(x,y)|\, | \nabla u(y)|\,dy\,dx.\nonumber
\end{align}
By \rf{eqgpo0} and Tonelli's theorem, the first integral on the right hand side is bounded by
$$\int_{y\in 3B}\int_{x\in B}|\nabla_y G(x,y) |\,dx\,|\nabla u(y)|dy \lesssim c(r)\,\|\nabla u\|_{L^1(B)} \leq c'(r) 
\|\nabla u\|_{L^2(B)},$$
while for the second one, by Cauchy-Schwarz and \rf{eq:Y12Green} we have
$$\int_{|x-y|> 2 r(B)}|\nabla_y G(x,y)|\, |  \nabla u(y)|\,dy \leq \|G(x,\cdot)\|_{Y^{1,2}(\Omega\setminus 2B)}\|\nabla u\|_{L^2(\Omega)}\leq c(r)\,\|\nabla u\|_{L^2(\Omega)}.
$$
So the last integral in \rf{eqtrw17} is also finite, which shows that indeed the integral in \rf{eqfubini*33} is absolutely convergent for a.e.\ $x\in B$, and thus for a.e.\ $x\in\Omega$.

We claim now that for a.e.\ $x\in\Omega$, we have
\begin{equation}\label{eqclaim388}
(u-\vphi)(x)= \int_\om A^*(y) \nabla_y G(x,y) \nabla(u- \vphi)(y)\, dy.
\end{equation}
To show this, since $u-\vphi\in W^{1,2}_0(\Omega)$, there exists a sequence of functions $\psi_k\in C_0^\infty(\Omega)$ which converge to $u-\vphi$ in $W^{1,2}(\Omega)$.
Then note that the same estimates we used to show that the integral in \rf{eqfubini*33} is convergent, when applied to 
$u-\vphi-\psi_k$ give that
$$\int_B\int_\om |A^*(y) \nabla_y G(x,y) \nabla (u  -\vphi - \psi_k)
(y)|\, dy\,dx\leq c(r)\,\|u  -\vphi - \psi_k\|_{W^{1,2}(\Omega)} \to 0$$
as $k\to\infty$.
By applying \rf{eq:Green-dirac} to $\psi_k$, we deduce that
$$\int_B \Bigl|\psi_k(x) - \int_\om A^*(y) \nabla_y G(x,y) \nabla( u  -\vphi)(y)\, dy\Bigr|\,dx\to 0
\quad\mbox{ as $k\to\infty$.}$$
Since $\psi_k$ converges to $u-\vphi$ in $L^1(B)$, we get
$$\int_B \Bigl|(u-\vphi)(x) - \int_\om A^*(y) \nabla_y G(x,y) \nabla( u  -\vphi)(y)\, dy\Bigr|\,dx =0,$$
which proves our claim \rf{eqclaim388}.

We will show now that 
$$\int_\om A^*(y) \nabla_y G(x,y) \nabla u(y)\, dy=0$$
for a.e.\ $x\in\Omega$ such that the integral on left hand side is absolutely convergent.
To this end, fix $\ve >0$ small enough, so that $\ve \ll \delta_{\om}(x)$ and let  
$$\eta_\ve(y):= \eta(|x-y| / \ve),$$
where $\eta \in C^\infty_0(\R)$ is a non-negative function so that $\eta \equiv 0$ in $B(0,1)$ and $\eta\equiv 1$ in $\R^{n+1} \setminus B(0,2)$. Therefore, by dominated convergence, it is enough to prove that for a.e. $x \in \om$,
$$\lim_{\ve \to 0}I_\ve := \lim_{\ve \to 0}  \int_\om A^*(y) \nabla_y G(x,y) \nabla u(y) \eta_\ve(y)\, dy=0.$$  
Note that 
\begin{align*}
I_\ve &= \int_\om A^*\nabla [\eta_\ve G(x,\cdot)]\cdot \nabla u  - \int_\om A^*\nabla u \cdot \nabla \eta_\ve \,  G(x,\cdot)\\ 
&=:I^1_\ve-I^2_\ve,
\end{align*}
and $I^1_\ve=0$, since $u \in W^{1,2}(\Omega)$ is a solution for $L u=0$ in $\om$ and $\eta_\ve G(x,\cdot) \in Y_0^{1,2}(\om)$ (recall that $C^\infty_0(\om)$ is dense in $Y_0^{1,2}(\om)$). It remains to show that $I_\ve^2 \to 0$ as $\ve \to 0$. Indeed, by the fact $\nabla \eta_\ve$ is supported in the annulus $B(x,2\ve) \setminus B(x, \ve)$, the bound $\|\nabla \eta_\ve \|_\infty\lesssim \ve^{-1}$, the pointwise bounds of Green function and Cauchy-Schwarz, we have that 
\begin{align*}
|I^2_\ve| &\lesssim \|A\|_{\infty} \ve^{-n} \int_{B(x,2\ve) \cap \om} |\nabla u|   \lesssim \ve\, \mathcal{M}(\nabla u \chi_{\om})(x),
\end{align*}
where $\mathcal{M}$ stands for the Hardy-Littlewood maximal function. Since, 
$$\|\mathcal{M}(\nabla u \chi_{\om})\|_{L^2} \lesssim \|\nabla u\|_{L^2(\om)} \leq \| u\|_{W^{1,2}(\om)} <\infty,$$
then, for a.e. $x\in \om$, $ \mathcal{M}(\nabla u \chi_{\om})(x)<\infty$, and thus, $I_\ve^2 \to 0$ as $\ve \to 0$ for a.e. $x\in \om$. Hence we have shown that
\begin{equation}\label{eqffpp1}
(u-\vphi)(x)= - \int_\om A^*(y) \nabla_y G(x,y) \nabla\vphi(y)\, dy, \quad\mbox{ for a.e. $x\in\Omega$.}
\end{equation}

To prove \rf{eqgreen*23} we will show now that $u(x)= \int_{\d \om} \vphi \,d\omega^x$ for all $x\in\Omega$. First we need to check that $u(x)\to 0$ as $x\to\infty$. To this end, suppose that $\supp\vphi\subset B(0,R)$, and 
without loss of generality assume that  $0\in \d \om$. 
Then if $x \in \R^{n+1} \setminus B(0, 4R)$, in view of \eqref{eqffpp1} and, Cauchy-Schwarz,  Caccioppoli's inequality for $L$-subharmonic functions and the pointwise bounds for Green function, for a.e. $x\in  \overline\Omega\setminus B(0,4R)$ we have that
\begin{align*}
u(x) &\lesssim \|A\|_{L^\infty(\om)}\, \|\nabla \vphi\|_{L^\infty(\om)}\, \int_{B(0,R)} |\nabla_y G(y,x)|\, dy\\
&\lesssim \|\nabla \vphi\|_{L^\infty(\om)}\,R^{n-1}\left(\,\avint_{B(0,2R)} G(y,x)^2\, dy\right)^{1/2}\\
& \lesssim \|\nabla \vphi\|_{L^\infty(\om)}\,\frac{R^{n-1}}{|x|^{n-1}}.
 \end{align*}
Since $u$ is continuous in $\overline\Omega$, the above estimate holds true for every $x\in  \overline\Omega\setminus B(0,4R)$, and thus  $u(x)\to 0$ as $x\to\infty$, as wished.

 Finally, since  $u \in C(\overline\om)$ and for every $\xi \in \d \om \setminus\{\infty\}$ it holds that $\ u(x) \to \vphi(\xi)$ continuously and $u$ vanishes at $\infty$, by the maximum principle, $u=H_{\vphi|_{\d \om}}$. Thus, $u(x)= \int_{\d \om} \vphi \,d\omega^x$ for all $x\in\Omega$.
\end{proof}

\vv

In case $\Omega=\R^{n+1}$, the result above can be rephrased appropriately by replacing the Green function $G(\cdot,\cdot)$ by the
 fundamental solution $\EE_L(\cdot,\cdot)$. This satisfies

\begin{equation}\label{eq:ptws-EE}
 C_\ve^{-1}\,|x-y|^{1-n} \leq \EE_L(x,y)\leq C_\ve\, |x-y|^{1-n}, \quad\textup{for}\,\,x \neq y, 
\end{equation}

\vspace{-3mm}
$$
\EE_L(x,\cdot)\in C(\R^{n+1} \setminus \{x\}) \cap W^{1,2}_{loc}(\R^{n+1} \setminus \{x\}),
$$
\vspace{-3mm}
$$\EE_L(x,y) = \EE_{L^*}(y,x),$$
and
$$ \vphi(x) =  \int A\nabla_y\EE_L(y,x)\cdot \nabla\vphi(y)\,dy.$$
\vv

The following result, sometimes known as ``Bourgain's estimate", also holds. For a proof see e.g. Lemma 11.21 in \cite{HKM}.

\begin{lemma}\label{l:bourgain}
Let $\Omega\subsetneq \bR^{n+1}$ be open with $n$-AD-regular boundary,  $x\in \d\Omega$, and $0<r\leq\diam(\partial\Omega)/2$.  Then 
\begin{equation}\label{e:bourgain}
\hm^y(B(x,2r))\geq c >0, \;\; \mbox{ for all }y\in \Omega\cap B(x,r)
\end{equation}
where $c$ depends on $n$, the ellipticity constant $\Lambda$ and the $n$-AD-regularity constant of $\partial\Omega$.
\end{lemma}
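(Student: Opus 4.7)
The plan is to bound $\hm^y(B(x,2r))$ from below by comparing, via the maximum principle in $D:=\Omega\cap B_0$ with $B_0:=B(x,7r/4)$, a smoothed Perron solution with a single layer potential built from the Green function of $L$ on $B_0$.

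First I would set $K:=\partial\Omega\cap\overline{B(x,r)}$, which satisfies $\mathcal{H}^n(K)\gtrsim r^n$ by $n$-AD-regularity, and define
\[
\tilde v(z):=\int_K G_0(z,w)\,d\mathcal{H}^n(w),
\]
where $G_0$ is the Green function of $L$ on $B_0$ (provided by Lemma \ref{lemgreen*}, since $\partial B_0$ is Wiener regular). This $\tilde v$ is continuous on $\overline{B_0}$, is $L$-harmonic on $B_0\setminus K$ and in particular on $D$, and vanishes on $\partial B_0$. Using $G_0(z,w)\lesssim|z-w|^{1-n}$ (Lemma \ref{lemgreen*}) together with a dyadic annuli decomposition of $K$ around $z$ and AD-regularity of $\partial\Omega$, I expect to obtain a uniform upper bound $\tilde v(z)\leq C_1 r$ on $\overline{B_0}$. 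The corresponding lower bound $\tilde v(y)\geq c_1 r$ for $y\in B(x,r)\cap\Omega$ would then follow from the pointwise estimate $G_0(y,w)\gtrsim r^{1-n}$ for all $w\in K$ (see the last paragraph), integrated against $\mathcal{H}^n|_K$.

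Next I would fix a continuous cutoff $\eta\in C_c(\mathbb{R}^{n+1})$ with $\eta\equiv 1$ on $\overline{B_0}$, $\eta\equiv 0$ outside $B(x,2r)$, and $0\leq\eta\leq 1$. Letting $H_\eta(z):=\int\eta\,d\hm^z$ be its $L$-harmonic Perron extension, the domination $\eta\leq\chi_{B(x,2r)}$ on $\partial\Omega$ gives $\hm^y(B(x,2r))\geq H_\eta(y)$, so the goal reduces to proving $H_\eta(y)\geq c$. To this end, I would apply the maximum principle to $F:=H_\eta-\tilde v/(C_1r)$ on $D$; this function is $L$-harmonic on $D$. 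By Theorem \ref{teoreg1} every boundary point of $\Omega$ lying on $\partial D$ is Wiener regular, while boundary points of $D$ on $\partial B_0$ satisfy the exterior ball condition, so $F$ extends continuously to $\overline D$. On $\partial\Omega\cap\overline{B_0}\subset\partial D$ we have $H_\eta=\eta\equiv 1$ and $\tilde v/(C_1r)\leq 1$, hence $F\geq 0$; on $\partial B_0\cap\overline\Omega\subset\partial D$ we have $H_\eta\geq 0$ and $\tilde v=0$, so again $F\geq 0$. The maximum principle then yields
\[
\hm^y(B(x,2r))\geq H_\eta(y)\geq \frac{\tilde v(y)}{C_1 r}\geq \frac{c_1}{C_1}=:c>0.
\]

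The hard part will be the pointwise lower bound $G_0(y,w)\gtrsim r^{1-n}$ for $y\in B(x,r)\cap\Omega$ and $w\in K$. Lemma \ref{lemgreen*} directly supplies $G_0(y,w)\gtrsim|y-w|^{1-n}$ only in the near-diagonal regime $|y-w|\leq a\,\delta_{B_0}(y)$ for some fixed $a<1$, whereas $|y-w|$ may be comparable to $r$ while $\delta_{B_0}(y)\geq 3r/4$. My plan is to bridge this intermediate range with a Harnack chain of bounded length, applied to the $L^*$-harmonic function $w\mapsto G_0(y,w)$ in $B_0\setminus\{y\}$, transferring the near-diagonal estimate at an auxiliary point $w'$ with $|y-w'|\sim r/4$ to all of $K$; since the chain length is controlled by $n$ and the ellipticity constant, the resulting bound is uniform in $w\in K$.
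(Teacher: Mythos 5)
Your argument is correct in outline, but note that the paper does not actually prove this lemma: it disposes of it by citing Lemma 11.21 of \cite{HKM}, i.e.\ the intended route is through nonlinear potential theory, where the $n$-AD-regularity of $\partial\Omega$ yields a uniform capacity density condition for $\Omega^c$ in every ball centered on $\partial\Omega$ (the same quantity appearing in Theorem \ref{teoreg1}), and the cited lemma converts that capacity density into the uniform lower bound for $\omega^y(B(x,2r))$. What you propose instead is the classical Bourgain-type barrier argument (essentially the proof of Lemma 3.3--3.4 in \cite{AHM3TV} for the Laplacian, which the paper quotes in connection with Lemma \ref{l:w>G}): form the potential $\tilde v=\int_K G_0(\cdot,w)\,d\HH^n(w)$ of $\HH^n|_{\partial\Omega\cap\overline{B(x,r)}}$ against the Green function of the ball $B_0$, obtain $\tilde v\lesssim r$ from upper AD-regularity and $G_0(z,w)\lesssim|z-w|^{1-n}$, obtain $\tilde v(y)\gtrsim r$ on $\Omega\cap B(x,r)$ from lower AD-regularity and a pointwise lower bound on $G_0$, and compare with the Perron extension of a cutoff via the maximum principle in $\Omega\cap B_0$. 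This buys a self-contained proof using only Lemma \ref{lemgreen*} and Moser's Harnack inequality, and it only uses the measure-theoretic size of $\partial\Omega$, whereas the cited route goes through capacity; the price is a pair of technical points you only partly address. First, to speak of the Green function of $L$ in $B_0$ you must extend $A$ (say by the identity matrix) to $B_0\setminus\Omega$ keeping uniform ellipticity, since $A$ is only given on $\Omega$; this is harmless because the extended operator agrees with $L$ on $D=\Omega\cap B_0$, and it is the same implicit extension the paper makes when it uses the fundamental solution $\EE_L$ in Lemma \ref{lemapprox}. Second, in the lower bound for $G_0(y,w)$ the Harnack chain in the second variable (legitimate, since $w\mapsto G_0(y,w)$ is a positive $L^*$-solution in $B_0\setminus\{y\}$ and $w\in K$ lies at distance $\geq 3r/4$ from $\partial B_0$) can only have boundedly many balls when $|y-w|\gtrsim r$, because the chain must avoid the pole $y$; for $w\in K$ with $|y-w|$ small you cannot chain with uniformly many balls, and you must instead invoke the near-diagonal estimate of Lemma \ref{lemgreen*} directly, which is available precisely because $\delta_{B_0}(y)\geq 3r/4$. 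With this explicit case split the bound $G_0(y,w)\gtrsim r^{1-n}$ is uniform and the rest of your comparison argument goes through as written.
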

\vv

The next lemma is deduced from the preceding one by standard arguments involving the pointwise bounds for Green function and Lemma \ref{l:bourgain} and maximum principle.

\begin{lemma}\label{l:w>G}
Let $\Omega\subset\R^{n+1}$ be open with $n$-AD-regular boundary.
Let $B=B(x_0,r)$ be a closed ball with $x_0\in\pom$ and $0<r<\diam(\pom)$. Then, 
\begin{equation}
 \omega^{x}(4B)\gtrsim  r^{n-1}\, G(x,y),\quad\mbox{
 for all $x\in \Omega\backslash  2B$ and $y\in B\cap\Omega$,}
 \end{equation}
 with the implicit constant depending on $n$, the ellipticity constant $\Lambda$ and the $n$-AD-regularity constant of $\partial\Omega$.
\end{lemma}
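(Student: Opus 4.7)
The plan is to compare the $L$-harmonic function $u(x):=\omega^x(4B)$ with a constant multiple of $x\mapsto r^{n-1}G(x,y)$ on the region $\Omega\setminus 2B$, and conclude the desired inequality via the minimum principle for $L$-harmonic functions.

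First I fix $y\in B\cap\Omega$ and observe that $x\mapsto G(x,y)$ is $L$-harmonic in $\Omega\setminus\{y\}$, continuous on $\overline\Omega\setminus\{y\}$, and vanishes identically on $\partial\Omega$; this is a consequence of the symmetry $G(x,y)=G^*(y,x)$ combined with the defining properties of the Green function for $L^*$ recorded in Lemma \ref{lemgreen*}. Since $y\in B\subset 2B$, the function $G(\cdot,y)$ is in fact $L$-harmonic throughout $\Omega\setminus 2B$. On the other hand, $u$ is $L$-harmonic in all of $\Omega$. Consequently, for any constant $c'>0$, the function $v(x):=u(x)-c'\,r^{n-1}G(x,y)$ is $L$-harmonic in $\Omega\setminus 2B$, and it suffices to show that $v\geq 0$ there for a suitable choice of $c'$.

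Next I would control $v$ on the boundary of $\Omega\setminus 2B$. On $\partial\Omega\setminus 2B$ one has $G(x,y)=0$ and $u\geq 0$, so $v\geq 0$ trivially. On $\partial(2B)\cap\Omega$, any point $x$ satisfies $|x-y|\geq 2r-r=r$, and the pointwise bound $G(x,y)\leq C|x-y|^{1-n}$ of Lemma \ref{lemgreen*} yields $r^{n-1}G(x,y)\leq C$. In parallel, Bourgain's estimate (Lemma \ref{l:bourgain}), applied to the ball $B(x_0,2r)$, gives $u(x)=\omega^x(B(x_0,4r))\geq c_0>0$ for every $x\in\Omega\cap B(x_0,2r)$, hence in particular on $\partial(2B)\cap\Omega$. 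Choosing $c':=c_0/C$ therefore forces $v\geq 0$ on the entire boundary of $\Omega\setminus 2B$ that lies inside $\R^{n+1}$.

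Finally, the minimum principle applied to $v$ in $\Omega\setminus 2B$ yields $v\geq 0$ throughout, which is precisely the claimed inequality. In the unbounded case this requires a short exhaustion argument: on $\partial B(0,R)\cap\Omega$ with $R$ large, the bound $G(x,y)\leq C|x-y|^{1-n}\lesssim R^{1-n}$ forces $c'\,r^{n-1}G(x,y)\to 0$ uniformly as $R\to\infty$, while $u\geq 0$; applying the minimum principle in $(\Omega\setminus 2B)\cap B(0,R)$ and letting $R\to\infty$ recovers $v(x)\geq 0$ at any fixed $x$. The main subtlety, rather than an obstacle, is precisely this handling of the unbounded case; the rest of the argument is a clean two-sided boundary comparison of $u$ and the Green function.
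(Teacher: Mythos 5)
Your argument is exactly the one the paper intends: the paper gives no proof of Lemma \ref{l:w>G} beyond saying it follows from the pointwise Green function bounds, Lemma \ref{l:bourgain}, and the maximum principle (citing \cite{AHM3TV} for $L=\Delta$), and your comparison of $\omega^x(4B)$ with $c'\,r^{n-1}G(\cdot,y)$ on $\Omega\setminus 2B$, including the truncation at $\partial B(0,R)$ to handle unbounded $\Omega$, is precisely that standard argument. The only point to tidy is the application of Lemma \ref{l:bourgain} at radius $2r$, which as stated requires $2r\leq \diam(\partial\Omega)/2$, so the regime $r>\diam(\partial\Omega)/4$ (and the points with $|x-x_0|=2r$ exactly) needs the usual routine adjustment of scales.
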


For a proof in the case of $L=\Delta$, see \cite[Lemma 3.3]{AHM3TV}.

\vv

The next result is also standard and follows from Lemma \ref{l:bourgain}. For a proof see e.g. Lemma 2.3 in \cite{AM15}.

\begin{lemma}\label{lemholder**1}
\label{l:holder}
Let $\Omega\subsetneq\R^{n+1}$ be open with $n$-AD-regular boundary and let $x\in \d\Omega$. Then there is $\alpha>0$ so that for all $0<r<\diam(\Omega)$,
\begin{equation}\label{e:wholder}
 \omega^{y}({B}(x,r)^{c})\lesssim \ps{\frac{|x-y|}{r}}^{\alpha},\quad \mbox{ for all } y\in \Omega\cap B(x,r),
 \end{equation}
where $\alpha$ and the implicit constant depend on $n$, the ellipticity constant $\Lambda$ and the $n$-AD-regularity constant of $\partial\Omega$.
\end{lemma}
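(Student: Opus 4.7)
The plan is to combine Bourgain's estimate (Lemma \ref{l:bourgain}) with a dyadic iteration of the maximum principle. Set $u(z):=\omega^z(\d\Omega\setminus B(x,r))$, which is $L$-harmonic in $\Omega$ with $0\le u\le 1$. Since $\d\Omega$ is $n$-AD-regular, every boundary point is Wiener regular, and the boundary datum $\chi_{\d\Omega\setminus B(x,r)}$ vanishes in a relative neighbourhood of each $\xi\in\d\Omega\cap B(x,r)$; hence $u$ extends continuously by $0$ to $\d\Omega\cap B(x,r)$. One may assume $|x-y|\le r/4$, since otherwise the conclusion follows trivially from $u\le 1$.

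Let $R_k:=2^{-k}r$, $D_k:=\Omega\cap B(x,R_k)$, and $M_k:=\sup_{z\in D_k}u(z)$. The base case $M_1\le 1-c_0$ is a direct application of Lemma \ref{l:bourgain} on $B(x,r/2)$: for every $z\in B(x,r/2)\cap\Omega$, $\omega^z(B(x,r))\ge c_0$, so $u(z)\le 1-c_0$. For the inductive step, the maximum principle on $D_k$ --- using that $u$ vanishes continuously on $\d\Omega\cap B(x,R_k)$ and is bounded by $M_k$ on $\d B(x,R_k)\cap\overline{\Omega}$ --- gives
\[
u(z)\;\le\; M_k\bigl(1-\widetilde\omega_{D_k}^{z}(\d\Omega\cap B(x,R_k))\bigr),\qquad z\in D_k,
\]
where $\widetilde\omega_{D_k}^{z}$ denotes $L$-harmonic measure for $D_k$. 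Applying Bourgain's estimate inside $D_k$ at the boundary point $x$ and scale $R_k/2$ (which is legitimate because $\d D_k$ coincides with $\d\Omega$ throughout $B(x,R_k)$, so the $n$-AD-regularity at $x$ is inherited) yields $\widetilde\omega_{D_k}^{z}(\d\Omega\cap B(x,R_k))\ge c_1$ for all $z\in B(x,R_{k+1})\cap\Omega$, with $c_1>0$ uniform in $k$. Hence $M_{k+1}\le(1-c_1)M_k$ and, by induction, $M_k\le (1-c_1)^{k-1}$. For $y\in\Omega\cap B(x,r)$ with $|x-y|=\rho\le r/4$, choosing $k$ so that $R_{k+1}<\rho\le R_k$ gives $u(y)\le M_k\lesssim(\rho/r)^{\alpha}$ with $\alpha:=\log_2\tfrac{1}{1-c_1}>0$, which is the desired bound.

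The main obstacle will be justifying the subdomain version of Bourgain's estimate, since $D_k$ does not have $n$-AD-regular boundary globally. The resolution, which is standard but worth making explicit, is that the proof of Lemma \ref{l:bourgain} only uses a capacity-density condition (equivalently, the presence of a sufficiently large portion of the complement) at the specific boundary point where the lower bound is claimed. Since $\d D_k$ agrees with $\d\Omega$ throughout the ball $B(x,R_k)$ and $x\in\d\Omega\cap\d D_k$, the argument goes through at $x$ verbatim, with constants depending only on $n$, $\Lambda$, and the $n$-AD-regularity constant of $\d\Omega$.
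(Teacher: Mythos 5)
Your argument is correct and is essentially the proof the paper has in mind: the paper simply cites Lemma 2.3 of \cite{AM15}, noting the result is standard and follows from Lemma \ref{l:bourgain}, and that standard proof is exactly your dyadic iteration of the maximum principle on $\Omega\cap B(x,2^{-k}r)$ combined with Bourgain's estimate, whose validity for the truncated domains at the point $x$ is justified by the inherited capacity-density condition, just as you explain.
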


\vv
From the preceding lemma, the maximum principle, and standard Moser estimates for subsolutions of $L$, one obtains the following auxiliary result which will be necessary below.

\begin{lemma}\label{lem333}
Let $\Omega\subsetneq\R^{n+1}$ be open with $n$-AD-regular boundary. Let $x\in\partial\Omega$ and $0<r<\diam(\Omega)$.
Let $u$ be a non-negative $L$-harmonic function in $B(x,4r)\cap \Omega$ and continuous in $B(x,4r)\cap \overline\Omega$
so that $u\equiv 0$ in $\partial\Omega\cap B(x,4r)$. Then extending $u$ by $0$ in $B(x,4r)\setminus \overline\Omega$,
there exists a constant $\alpha>0$ such that
$$u(y)\leq C\,\left(\frac{\delta_\Omega(y)}r\right)^\alpha \!\sup_{B(x,2r)}u
\leq C\,\left(\frac{\delta_\Omega(y)}r\right)^\alpha \;\avint_{B(x,4r)}u,
\quad \mbox{for all $y\in B(x,r)$,}$$
where $C$ and $\alpha$ depend on $n$, the ellipticity constant $\Lambda$ and the AD-regularity of $\partial \Omega$.
In particular, $u$ is $\alpha$-H\"older continuous in $B(x,r)$.
\end{lemma}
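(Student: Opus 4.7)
The plan splits into two parts, following the hint. The second inequality $\sup_{B(x,2r)}u\leq C\avint_{B(x,4r)}u$ will follow from Moser's $L^\infty$--$L^1$ bound for $L$-subsolutions applied to the extension of $u$ by zero across $\partial\Omega$. The first inequality, a boundary H\"older decay, will be obtained by iterating the maximum principle on dyadically shrinking subdomains around the nearest boundary point of $y$, combined with Bourgain's lower bound for elliptic measure (Lemma \ref{l:bourgain}). The main obstacle I anticipate is legitimizing the latter step on the subdomains rather than on $\Omega$ itself.

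For the $L^\infty$ bound, let $\tilde u$ denote the extension of $u$ by zero to $B(x,4r)\setminus\overline\Omega$; this is continuous on $B(x,4r)$ by the hypothesis that $u$ vanishes continuously on $\partial\Omega\cap B(x,4r)$. A standard cutoff/density argument shows that $\tilde u$ is $L$-subharmonic in $B(x,4r)$: for any nonnegative $\varphi\in C_c^\infty(B(x,4r))$,
\[
\int A\nabla\tilde u\cdot\nabla\varphi=\int_\Omega A\nabla u\cdot\nabla\varphi\leq 0,
\]
the last step relying on the fact that, thanks to the continuous vanishing of $u$ on $\partial\Omega\cap B(x,4r)$, one can approximate $u\,\varphi$ in $W^{1,2}(\Omega\cap B(x,4r))$ by functions compactly supported in $\Omega$. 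The Moser $L^\infty$ estimate for $L$-subsolutions on $B(x,4r)$ then delivers $\sup_{B(x,2r)}u=\sup_{B(x,2r)}\tilde u\lesssim\avint_{B(x,4r)}\tilde u=\avint_{B(x,4r)}u$.

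For the boundary H\"older decay, fix $y\in B(x,r)\cap\Omega$, set $\delta:=\delta_\Omega(y)$, and choose $\xi\in\partial\Omega$ with $|y-\xi|=\delta$; one may assume $\delta\leq r/16$, as the opposite case is trivial. Put $\rho_k:=4^k\delta$, $V_k:=B(\xi,\rho_k)\cap\Omega$, $M_k:=\sup_{V_k}u$, and let $K$ be the largest integer with $\rho_K\leq r/4$, so that $V_k\subset B(x,2r)$ for all $k\leq K$ and $\overline{V_{K+1}}\subset B(x,4r)$. The key is to show $M_k\leq\theta M_{k+1}$ for some $\theta\in(0,1)$ depending only on $n$, the ellipticity constant and the AD-regularity constant. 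For $z\in V_k\subset V_{k+1}$, the maximum principle in $V_{k+1}$ together with $u\equiv 0$ on $\partial\Omega\cap B(\xi,\rho_{k+1})$ yields
\[
u(z)\leq M_{k+1}\,\omega^z_{V_{k+1}}\bigl(\partial B(\xi,\rho_{k+1})\cap\overline\Omega\bigr),
\]
so it suffices to establish $\omega^z_{V_{k+1}}\bigl(\partial\Omega\cap B(\xi,\rho_{k+1})\bigr)\geq 1-\theta$ for $z\in V_k$. To this end, apply Bourgain's estimate (Lemma \ref{l:bourgain}) to the subdomain $V_{k+1}$ at the boundary point $\xi$ and scale $\rho_k=\rho_{k+1}/4$; this gives $\omega^z_{V_{k+1}}(B(\xi,\rho_{k+1}/2))\geq c$ for all $z\in V_{k+1}\cap B(\xi,\rho_k)=V_k$, and because $\partial B(\xi,\rho_{k+1})\cap B(\xi,\rho_{k+1}/2)=\emptyset$ the charged set $\partial V_{k+1}\cap B(\xi,\rho_{k+1}/2)$ is contained in $\partial\Omega\cap B(\xi,\rho_{k+1})$. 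Iterating $K\approx\log_4(r/\delta)$ times then yields $u(y)\leq M_0\leq\theta^K\sup_{B(x,2r)}u\lesssim(\delta/r)^\alpha\sup_{B(x,2r)}u$ with $\alpha:=\log_4(1/\theta)$. The asserted $\alpha$-H\"older continuity of $u$ in $B(x,r)$ then follows by combining this pointwise boundary decay with the classical interior De Giorgi--Nash--Moser H\"older estimate for $L$-harmonic functions via a standard patching argument.

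The main obstacle is that Lemma \ref{l:bourgain} is formulated for the ambient domain $\Omega$, whereas the argument above invokes it on the subdomains $V_{k+1}$. To justify this transfer, one verifies that $\partial V_{k+1}=\bigl(\partial\Omega\cap\overline{B(\xi,\rho_{k+1})}\bigr)\cup\bigl(\partial B(\xi,\rho_{k+1})\cap\overline\Omega\bigr)$ is itself $n$-AD-regular with constants depending only on $n$ and the AD-regularity constant of $\partial\Omega$. Since $\partial V_{k+1}$ is the union of a piece of AD-regular $\partial\Omega$ and a chunk of the sphere $\partial B(\xi,\rho_{k+1})$, a short case-by-case inspection (according to whether the base point lies on $\partial\Omega$ or on the spherical portion) yields the required two-sided bound on $\mathcal H^n(\partial V_{k+1}\cap B(\eta,t))$ for $\eta\in\partial V_{k+1}$ and $0<t\leq\rho_{k+1}$, with constants uniform in $k$.
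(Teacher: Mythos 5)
Your overall route is sound and is essentially the one the paper has in mind (it derives the lemma from the boundary H\"older decay of elliptic measure, the maximum principle and Moser's estimate; your dyadic iteration is exactly the proof of that decay, and your treatment of the zero extension and the $L^\infty$--$L^1$ Moser bound is fine). The genuine gap is precisely in the step you flagged: the claim that $\partial V_{k+1}=\partial\bigl(B(\xi,\rho_{k+1})\cap\Omega\bigr)$ is $n$-AD-regular with uniform constants is false in general, so the ``short case-by-case inspection'' cannot be completed. Lower regularity can fail at points where $\Omega$ meets the sphere tangentially through a thin piece: for instance, arrange $\Omega$ near a point $\eta\in\partial B(\xi,\rho_{k+1})$ to consist locally of a small ball of radius $\epsilon$ internally tangent to the sphere at $\eta$, together with a large boundary sheet lying at distance $\approx\epsilon$ \emph{outside} the sphere (this keeps $\partial\Omega$ uniformly $n$-AD-regular). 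Then $\eta\in\partial V_{k+1}$, but for $\epsilon\ll t\leq\rho_{k+1}$ one has $\mathcal H^n\bigl(\partial V_{k+1}\cap B(\eta,t)\bigr)\approx\epsilon^n\ll t^n$, since the nearby bulk of $\partial\Omega$ lies outside $\overline{B(\xi,\rho_{k+1})}$ and the spherical cap near $\eta$ lies in the interior of $\Omega^c$. So you cannot invoke Lemma \ref{l:bourgain} for $V_{k+1}$ by verifying its hypothesis as stated.

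The fix is that you do not need AD-regularity of all of $\partial V_{k+1}$: you only need Bourgain's estimate for $V_{k+1}$ at the single point $\xi\in\partial\Omega\subset\partial V_{k+1}$ and scale $\rho_k$, and the proof of that estimate (Lemma 11.21 in [HeKM], the paper's cited source) uses only a capacity density condition at that point and scale, namely $\textup{cap}\bigl(\overline B(\xi,\rho_k)\setminus V_{k+1},B(\xi,2\rho_k)\bigr)\gtrsim \textup{cap}\bigl(\overline B(\xi,\rho_k),B(\xi,2\rho_k)\bigr)$, plus the maximum principle. Since $V_{k+1}^c\supseteq\Omega^c$ and $\mathcal H^n\bigl(\partial\Omega\cap B(\xi,\rho_k)\bigr)\gtrsim\rho_k^n$ by the AD-regularity of $\partial\Omega$ (which yields the capacity lower bound $\gtrsim\rho_k^{n-1}$), this hypothesis transfers to $V_{k+1}$ with constants depending only on $n$, $\Lambda$ and the AD-regularity constant, uniformly in $k$. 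With that substitution (or by quoting a CDC version of Bourgain's lemma), the rest of your iteration, the bookkeeping of scales, and the concluding Moser and patching arguments go through.
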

{

We shall now prove a lemma concerning the rate of decay at infinity of a bounded $L$-harmonic function vanishing outside a ball centered at the boundary. In the case that the elliptic matrix is symmetric, one may use a generalised Kelvin transform (see Section 3 in \cite{SW}) and argue as in Lemma 6.1 \cite{GMT}. In the non-symmetric case though this method does not work and we follow an alternative path. In particular, we follow mutatis mutandi the steps in Lemma 4.9 in \cite{HKMP} until the statement of the Main Claim. The main obstacle in following  the proof of the corresponding claim in \cite{HKMP} is that our domain does not satisfy the Harnack chain condition. Thus, we are forced to give a different and a bit more complicated argument.

\begin{lemma}\label{lem:u-decay-infinty}
Let $\Omega\subset\R^{n+1}$, $n\geq 2$, be a domain with $n$-AD-regular boundary.
Let $u$ be a bounded,  $L$-harmonic function in $\Omega$, vanishing at $\infty$, and let $B$ be a ball centered at
$\partial\Omega$. Suppose that $u$ vanishes continuously in $\partial\Omega\setminus B$. Then, there is a constant
$\alpha>0$ such that
\begin{equation}\label{eqdjl125}
|u(x)| \lesssim \frac{r(B)^{n-1+\alpha}}{\bigl(r(B) + \dist(x,B)\bigr)^{n-1+\alpha}}\,\|u\|_{L^\infty(\Omega \cap (3B \setminus 2B))}.
\end{equation}
Both $\alpha$ and the constant implicit in the above estimate depend only on $n$, the AD-regularity constant of $\partial \Omega$ and the ellipticity constant  $\Lambda$.
\end{lemma}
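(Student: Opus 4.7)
The plan is to follow \cite[Lemma 4.9]{HKMP} until the statement of its main claim, at which point one must depart because of the absence of Harnack chains in our setting. The argument proceeds in three stages.

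\emph{First stage: reduction to the complementary domain.} After normalizing $r(B) = 1$ and $x_B = 0$, set $M := \|u\|_{L^\infty(\Omega \cap (3B\setminus 2B))}$ and consider $\Omega_* := \Omega \setminus \overline{2B}$. On $\Omega_*$, $u$ is $L$-harmonic, vanishes continuously on $\partial\Omega \setminus \overline{2B}$ (a subset of $\partial\Omega \setminus B$), vanishes at infinity, and by the interior continuity of $L$-solutions satisfies $|u| \leq M$ on $\partial(2B) \cap \Omega$. Since $n \geq 2$ and $\infty$ is Wiener regular, the maximum principle applied to $u$ in $\Omega_*$ yields
\[
|u(x)| \leq M \cdot v(x), \quad x \in \Omega_*, \qquad v(x) := \omega^{x}_{\Omega_*, L}(\partial(2B) \cap \overline\Omega).
\]
The case $x \in 2B$ of the lemma follows from the maximum principle combined with Lemma \ref{lem333} applied in the shell $2B \setminus B$, where $u$ vanishes on $\partial\Omega$, yielding $\|u\|_{L^\infty(2B \cap \Omega)} \lesssim M$.

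\emph{Second stage: transfer to the full domain.} Bourgain's estimate (Lemma \ref{l:bourgain}) applied to the ball $4B$ gives $\omega^y_\Omega(4B) \geq c$ for every $y \in \partial(2B) \cap \Omega$. A further application of the maximum principle in $\Omega_*$, comparing $v$ to $c^{-1}\omega^\cdot_\Omega(4B)$ (both $L$-harmonic, vanishing on $\partial\Omega\setminus 2B$ and at $\infty$, with $v\leq c^{-1}\omega^\cdot_\Omega(4B)$ on $\partial(2B)\cap\Omega$), then yields $v(x) \lesssim \omega^x_\Omega(4B)$. Hence it suffices to establish what we shall call the \emph{main claim}:
\[
\omega^x_\Omega(4B) \lesssim (1 + |x|)^{-(n-1+\alpha)}, \qquad x \in \Omega \setminus 4B.
\]

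\emph{Third stage: proof of the main claim and the principal obstacle.} In \cite{HKMP}, the analogous claim is proved using Poisson-kernel estimates in the half-space together with Harnack chains to transfer information between interior corkscrew points and generic interior points. Here, neither tool is available. The proposed strategy is a case analysis at the scale $R := |x|$: if $\delta_\Omega(x) \gtrsim R$ (interior regime), combine Lemma \ref{l:w>G} with the Green function pointwise bounds (Lemma \ref{lemgreen*}) to obtain the $(n-1)$-decay, and iterate the Hölder estimate for harmonic measure (Lemma \ref{lemholder**1}) across the $O(\log R)$ dyadic shells between $B$ and $x$ to pick up the additional $\alpha$ factor; if $\delta_\Omega(x) \ll R$ (boundary regime), pick the closest boundary point $\xi \in \partial\Omega \setminus 4B$ to $x$ and apply Lemma \ref{lem333} on a ball $B(\xi, cR) \cap \Omega$ disjoint from $4B$, where $\omega^\cdot_\Omega(4B)$ vanishes continuously on $\partial\Omega$, producing an extra $(\delta_\Omega(x)/R)^\alpha$ factor multiplying the interior-regime estimate. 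The principal obstacle—and the substitute for the Harnack chain step in \cite{HKMP}—is the iterative transfer in the interior regime from corkscrew points to arbitrary interior points, which must be carried out by dyadic chaining using Bourgain's estimate, Lemma \ref{lemholder**1}, and the $n$-AD-regularity of $\partial\Omega$, absorbing any logarithmic losses into the geometric decay along the shells. Once the two regimes are established, combining them yields the main claim, and chaining back through the earlier stages completes the proof.
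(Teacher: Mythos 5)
Your first two stages are fine as far as they go, but observe that your ``main claim'' $\omega^x_\Omega(4B)\lesssim (1+|x|)^{-(n-1+\alpha)}$ is essentially the lemma itself applied to $u=\omega^{\,\cdot}_\Omega(4B)$, so all of the work is still ahead of you, and it is exactly in the third stage that there is a genuine gap. First, the tools you name do not do what you need: Lemma \ref{l:w>G} gives the \emph{lower} bound $\omega^x(4B)\gtrsim r^{n-1}G(x,y)$, whereas the $(n-1)$-decay requires an \emph{upper} bound for $\omega^x(4B)$; such a bound can be obtained, but via the representation formula \eqref{eqgreen*23} together with Caccioppoli and the pointwise bound $G(x,y)\lesssim|x-y|^{1-n}$ (this is the computation carried out in Lemma \ref{lem6262}), not via Lemma \ref{l:w>G}. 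More seriously, the extra exponent $\alpha$ cannot be produced by ``iterating Lemma \ref{lemholder**1} across the dyadic shells'': that lemma gives smallness of $\omega^y(B(x,r)^c)$ only when $y$ is close to the boundary point $x$; from a point of $\Omega\cap\partial(2^{k+1}B)$ lying at distance comparable to $2^k$ from $\partial\Omega$ it yields nothing better than a constant. What is needed at every shell is a uniform absorption estimate: for each $x\in\Omega\cap\partial(2^{k+1}B)$, the harmonic measure in $\Omega_k=\Omega\setminus\overline{2^kB}$ of $\partial\Omega\setminus 2^kB$ is bounded below by a fixed $\delta>0$ (this is \eqref{eq:udecay-mainest}). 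You correctly flag this as ``the principal obstacle,'' but the proposal only lists ingredients (Bourgain, AD-regularity, dyadic chaining, absorbing logarithmic losses) without supplying the mechanism, and naive chaining fails precisely because $\Omega$ has no Harnack chains joining points at different distances from $\partial\Omega$.

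For comparison, the paper majorizes $u$ by $\kappa_0\,\EE_0$, a multiple of the fundamental solution, which has the $|x|^{1-n}$ decay built in, and then proves the per-shell gain: if $u\leq v$ on $\Omega\setminus 2^kB$ then $u\leq(1-\delta)v$ on $\Omega\setminus 2^{k+1}B$; iterating gives the exponent $\alpha$ with no logarithmic losses to absorb. The absorption estimate is proved by covering the annulus $C_0^2B_{k+1}\setminus B_k$ with a grid of cubes of side comparable to $2^k$: for cubes $Q$ with $3Q\cap\partial\Omega\neq\varnothing$, Lemma \ref{l:bourgain} gives $\omega^x_{\Omega_k}(10Q\cap\partial\Omega)\gtrsim1$; any other cube is joined to such a cube by a finite chain of grid cubes all of whose triples avoid $\partial\Omega$, and along this chain Harnack's inequality may legitimately be applied to $\omega^{\,\cdot}_{\Omega_k}(10Q\cap\partial\Omega)$ because the chain stays at distance comparable to $2^k$ from $\partial\Omega_k$ --- so the Harnack chain condition for $\Omega$ is never invoked. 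The comparability $v(y)\approx v(x)$ across the annulus (since $\EE_0(y)\approx|y|^{1-n}$ there) then closes the estimate. This grid-chaining device, which confines Harnack's inequality to a region at a fixed distance from the boundary, is exactly the substitute for the Harnack-chain step of \cite{HKMP} that your argument is missing; without it, the per-shell gain, and hence the exponent $\alpha$, is not established.
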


\begin{proof}
We begin with a few reductions. After possible rotation, translation and dilation we may assume that $B$ is the unit ball $B(0,1)$ (since our operators are invariant under such transformations). We may further renormalize $u$ so that $u \leq 1$ in $\Omega \cap (3B \setminus 2B)$ and assume without loss of generality that $u \geq 0$. Indeed, let $\Omega_1= \Omega \setminus 2B$ and $f=u|_{\d \Omega_1}$. We may write $f = f^+ - f^-$, where $f^+$ and $f^-$ is the positive and  negative part of $f$ respectively and notice that
$$\max(f^+, f^-) = |f| \leq  \|u\|_{L^\infty(\Omega \cap (3B \setminus 2B))} \leq 1,$$ 
since $f$ vanishes on $\d \Omega_1\setminus \overline {2B}$. Then, we construct $L$-harmonic functions $u^+$ and $u^-$ continuous up to the boundary of $\d \Omega_1$ with $u^\pm|_{\d \Omega_1}= f^\pm$ that vanish at infinity. Therefore, by maximum principle, $u = u^+- u ^-$ and
$$\|u^\pm\|_{L^\infty(\Omega_1)}\leq 1.$$
Thus, we may treat each $u^\pm$ separately and assume without loss of generality that $u \geq 0$ with $\|u\|_{L^\infty(\Omega_1)}\leq 1$.

Now, let $\EE_0(\cdot):=\EE(0,\cdot)$ be the fundamental solution for $L$ with pole at $0$ and since $\EE_0(x) \approx |x|^{1-n}$ we may choose a constant $
\kappa_0>1$ so that $u(x) \leq v(x):= \kappa_0\, \EE_0(x)$ for any $x \in \Omega \cap (3B \setminus 2B)$. Since both $u$ and $v$ vanish at infinity, by maximum principle we have that $u(x) \leq v(x)$ for any $x \in \Omega \setminus 2B$. 

\vv

It is rather easy to see that our lemma follows from the following claim and an iteration argument and the details are left to the interested reader. \vv

\noi{\bf Main Claim:} If $u \leq v$ in $\Omega \setminus 2^kB$, for some $k \geq 1$, then there exists $\delta \in (0,1)$ such that $u \leq (1-\delta) v$ in $\Omega \setminus 2^{k+1}B$, where $\delta$ depends only on the dimension, the AD-regularity and ellipticity constants.

\vv


Indeed, let us denote for simplicity $B_\ell:= 2^{\ell}B$ for $\ell \geq 1$. If $\d B_{k+1}\cap \Omega=\varnothing$, then $u\equiv 0$ in $\Omega\backslash B_{k+1}$ and we are done, so assume $\d B_{k+1}\cap \Omega\neq \varnothing$. Define the domain $ \Omega_k:=\Omega  \setminus \overline B_k$ and notice that its boundary is AD-regular, and thus, Wiener regular. Since $u$ is continuous in $\Omega \setminus 2B$, for fixed  $x \in \d B_{k+1}\cap \Omega$, we can write
$$ 
u(x) = \int_{\d\Omega_k} u(y) \, d \omega^x_{ \Omega_k}= \int_{\d B_{k}}  u(y) \, d \omega^x_{\Omega_k} \leq \int_{\d B_{k}}  v(y) \, d \omega^x_{ \Omega_k},
$$
where the second equality follows from the fact that $u$ vanishes on $\d \Omega \setminus 2B$ and the last one from the assumption that $u \leq v$ in $\Omega \setminus B_k$. Therefore,  since $v$ is $L$-harmonic in $\Omega_k$, we have that
\begin{align*}
u(x) & \leq  \int_{\d \Omega_k}  v(y) \, d \omega^x_{\Omega_k} - \int_{\d \Omega \setminus  B_k}  v(y) \, d \omega^x_{\Omega_k} = v(x) - \int_{\d \Omega \setminus  B_k}  v(y) \, d \omega^x_{\Omega_k}.
\end{align*}

It suffices to show that there exists $\delta \in (0,1)$ such that
\begin{equation}\label{eq:udecay-mainest}
 \int_{\d \Omega \setminus  B_k}  v(y) \, d \omega^x_{\Omega_k} \geq \delta\, v(x).
 \end{equation}
 
By the AD-regularity of $\mu$ (with constant $C_0$) and the fact that $0\in\supp\mu$ (which can be assumed for free), we know that
$\mu(C_0^2B_{k+1}\setminus B_{k})\geq (2^n-1) \, C_0 \,2^{kn}$.  Consider a grid of closed cubes in $\R^{n+1}$ with diameter $\frac1{100}2^{-k}$.
Denote by $I$ the subfamily of cubes from this grid that intersect the closure of $C_0^2B_{k+1}\setminus B_{k}$, so that
$$\overline{C_0^2B_{k+1}\setminus B_{k}}\subset \bigcup_{Q\in I} Q.$$
Denote by $I_0$ the subfamily of the cubes $Q\in I$ such that $3Q\cap \partial\Omega\neq \varnothing$. We know that $I_0\neq\varnothing$ because  $\mu(C_0^2B_{k+1}\setminus B_{k+1})>0$.
By \eqref{e:bourgain} we obtain
\begin{equation}\label{eqclaim481}
\omega_{\Omega_k}^x(10Q\cap \partial\Omega) =  \omega_{\Omega_k}^x(10Q\cap \partial\Omega_k) \geq c >0,\quad
\mbox{ for all $x\in 3Q\cap\Omega_k$, $Q\in I_0$,}
\end{equation}
where we took also into account that $10Q\cap \partial B_k=\varnothing$.

We claim that  
\begin{equation}\label{eqclaim482}
\omega_{\Omega_k}^x(10Q\cap \partial\Omega) \geq c'>0\quad
\mbox{ for all $x\in Q\cap\Omega_k$, $Q\in I$,}
\end{equation}
which in particular shows that $\omega_{\Omega_k}^x(10Q\cap\partial\Omega)\geq\delta$ (with $\delta=c'$) for all 
$x\in\Omega_k\cap \partial B_{k+1}$, as wished.
The  estimate \rf{eqclaim482} is obvious if $Q\in I_0$, by \rf{eqclaim481}. In case $Q\in I\setminus I_0$, there is a chain of cubes
$$Q=Q_0,Q_1,Q_2,\ldots Q_{m-1},Q_m$$
so that $Q_i$ and $Q_{i+1}$ are neighbors (i.e., they intersect but they have disjoint interiors), $Q_i\in I\setminus I_0$ for 
$0\leq i\leq m-1$, and $Q_m\in I_0$, with $m\leq\# I$ (so $m$ is bounded  by some constant depending only on $C_0$). Since $Q_{m-1}\subset 3Q_m$, in light of \rf{eqclaim481}, it is clear that \rf{eqclaim482} also holds for $x\in Q_{m-1}\cap \Omega_k$.  Now the cubes $Q_0,Q_1,Q_2,\ldots Q_{m-1}$ form a Harnack chain 
in $\Omega_k$ (because $3Q_i\cap
\partial\Omega_k=\varnothing$ for $0\leq i\leq m-1$), and so deduce $\omega_{\Omega_k}^x(10Q\cap \partial\Omega) \gtrsim 1$ for all $x\in Q_0$.

Since $ x \in \d B_{k+1}$, then for every $y \in C_0^2B_{k+1}\setminus B_{k}$, we have that $v(y) \leq (C_0\,C_\ve)^{-2} v(x)$ and moreover, there exists a cube $Q_x \in I$ such that $x \in Q_x$. Thus, by \eqref{eqclaim482} we obtain
$$
 \int_{\d \Omega \setminus  B_k}  v(y) \, d \omega^x_{\Omega_k}  \geq  \int_{10Q_x \cap \d \Omega}  v(y) \, d \omega^x_{\Omega_k} \geq (C_0\,C_\ve)^{-2} \,v(x) \, \omega^x_{\Omega_k}(10Q_x ) \gtrsim v(x).
 $$

We  have shown that for any $x \in \d B_{k+1}$ it holds  $u(x) \leq (1- \delta) v(x)$. By maximum principle the same is true in  $\Omega \setminus B_{k+1}$, which  concludes the proof of Main Claim.

\end{proof}
}
\vv


\section{The corona decomposition for elliptic measure}\label{secorona}

From now on  till the end of the paper, we will assume that $\Omega\subset\R^{n+1}$ is an open set with $n$-AD-regular boundary satisfying the corkscrew condition.
We denote $\mu=\HH^n|_{\partial\Omega}$, and we consider the associated David-Semmes lattice $\DD_\mu$.
 We also denote by $\omega^p$ and $\omega_*^p$ the $L$-harmonic and $L^*$-harmonic measures in $\Omega$ with pole at $p\in\Omega$, respectively. 

A {\it corona decomposition} of $\mu$ is a partition of $\DD_\mu$ into trees.
A family $\TT$ of cubes from $\DD_\mu$ is a {\it tree} if it
verifies the following properties:
\begin{enumerate}
\item $\TT$ has a maximal element (with respect to inclusion) $Q(\TT)$ which contains all the other
elements of $\TT$ as subsets of $\R^{n+1}$. The cube $Q(\TT)$ is the ``root'' of $\TT$.

\item If $Q,Q'$ belong to $\TT$ and $Q\subset Q'$, then any $\mu$-cube $P\in\DD^\mu$ such that $Q\subset P\subset
Q'$ also belongs to $\TT$.
\item If $Q\in\TT$, then either all the children belong to $\TT$ or none of them do.
\end{enumerate}
If $R=Q(\TT)$, we also write $\TT=\tree(R)$.

By arguments quite similar to the ones in \cite{GMT}, we will prove the following:

\begin{propo}\label{propo1}
Let $\Omega\subset\R^{n+1}$, $n\geq2$, be an open set with $n$-AD-regular boundary satisfying the corkscrew condition and let  $L \in \mathcal L(\Omega)$. 
Suppose that one of the following assumptions holds:
\begin{itemize}
\item[(a)] Every bounded $L$-harmonic function in $\Omega$ is $\varepsilon$-approximable for all $\varepsilon  > 0,$ or 
\vv
\item[(b)] There is $C>0$ such that if $u$ is a bounded $L$-harmonic function on $\Omega$ and $B$ is a ball centered at $\partial\Omega$,
\begin{equation}\label{eqhip10}
\int_B |\nabla u(x)|^2\,\delta_\Omega(x)\,dx\leq C\,\|u\|^2_{L^\infty(\Omega)}\,r(B)^n.
\end{equation}
\end{itemize}
Then $\mu$ admits a corona decomposition
$\DD_\mu=\bigcup_{R\in \ttt} \tree(R)$
so that the family $\ttt$ is a Carleson family, that is,
\begin{equation}\label{eqpack1}
\sum_{R\subset S: R\in\ttt}\mu(R)\leq C\,\mu(S)\quad \mbox{for all $S\in\DD_\mu$},
\end{equation}
and for each $R\in\ttt$ there exists a corkscrew point $p_R\in\Omega$ with 
$$c^{-1}\ell(R)\leq\dist(p_R,R)\leq \dist(p_R,\partial\Omega)\leq c\,\ell(R)$$
so that
$$
\omega^{p_R}(3Q)\approx \frac{\mu(Q)}{\mu(R)}\quad\mbox{ for all $Q\in\tree(R)$,}$$
with the implicit constant uniform on $Q$ and $R$.
\end{propo}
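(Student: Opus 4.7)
The plan is to follow closely the stopping-time construction of \cite{GMT}, adapted to the elliptic setting via the tools of Lemmas \ref{lemgreen*}--\ref{lemholder**1}. For each prospective top cube $R$, the corkscrew condition yields a point $p_R\in\Omega$ with $\dist(p_R,R)\approx \dist(p_R,\partial\Omega)\approx \ell(R)$. Fix a large constant $A>1$ and define
\[
\sss(R) \;=\; \bigl\{\,\text{maximal } Q\in\DD_\mu,\ Q\subsetneq R \,:\, \omega^{p_R}(3Q)\notin [A^{-1}\mu(Q)/\mu(R),\; A\,\mu(Q)/\mu(R)]\,\bigr\},
\]
and let $\tree(R)$ consist of all $Q\in\DD_\mu$ with $Q\subset R$ not strictly contained in any element of $\sss(R)$. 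The next generation of top cubes is $\sss(R)$. Iterating from suitable initial root cubes partitions $\DD_\mu$ as $\bigcup_{R\in\ttt}\tree(R)$, and the sought comparability $\omega^{p_R}(3Q)\approx \mu(Q)/\mu(R)$ on each tree is then automatic from the definition of $\sss(R)$, modulo standard use of Bourgain's estimate at the roots to get $\omega^{p_R}(3R)\approx 1$.

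The substance is to verify the Carleson packing \rf{eqpack1}. By the usual self-improvement argument (cf.\ \cite[Section 3]{GMT}), it suffices to prove the single-step estimate
\[
\sum_{Q\in\sss(R)}\mu(Q)\;\leq\; (1-\eta)\,\mu(R)
\]
for some $\eta>0$ and $A$ chosen sufficiently large. For this, I would construct for each $R\in\ttt$ a bounded continuous $L$-harmonic function $u_R\in C(\overline\Omega)$ with $\|u_R\|_\infty\leq 1$, given as the $L$-harmonic extension of a truncated signed combination of indicators $\chi_{3Q\cap\partial\Omega}$, $Q\in\sss(R)$, where the sign $\pm 1$ records whether $\omega^{p_R}(3Q)$ exceeds $A\,\mu(Q)/\mu(R)$ or falls below $A^{-1}\mu(Q)/\mu(R)$. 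Because the stopping criterion forces a definite deviation at every $Q\in\sss(R)$, a model computation at $p_R$ shows that $u_R$ exhibits a pointwise oscillation of order $\gtrsim 1$ between $p_R$ and corkscrew points above each stopping cube. Combined with Caccioppoli and the boundary H\"older estimate (Lemma \ref{lemholder**1}), this yields a localized lower bound $\int_{W_Q}|\nabla u_R|^2\,\delta_\Omega(x)\,dx\gtrsim \mu(Q)$ on a Whitney-type region $W_Q$ above $Q$.

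Summing over $Q\in\sss(R)$ and invoking hypothesis (b) (or Remark \ref{remb'}, namely (b$'$), since $u_R$ is continuous up to the boundary) gives $\sum_{Q\in\sss(R)}\mu(Q)\lesssim \|u_R\|_\infty^2\,\mu(R)\leq C\mu(R)$, from which the $(1-\eta)$-packing follows once $A$ is chosen sufficiently large. Under hypothesis (a), the same conclusion is reached by $\varepsilon$-approximating $u_R$ for $\varepsilon$ below a fixed constant: the approximant $\varphi_R$ satisfies $\int_{B_R\cap\Omega}|\nabla\varphi_R|\,dx\leq C_\varepsilon\,\mu(R)$, and the $\varepsilon$-closeness transfers the pointwise oscillation of $u_R$ at each stopping cube into an $L^1$ Carleson packing on $\varphi_R$ giving the same $(1-\eta)$-estimate.

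The principal obstacle, compared to \cite{GMT}, is the non-symmetry of $L$ together with the lack of Harnack chains in $\Omega$: one can neither identify the Green functions of $L$ and $L^*$, nor freely pass Harnack-style between distinct corkscrew points. These are circumvented by systematically using Bourgain's estimate (Lemma \ref{l:bourgain}) and the Green function--harmonic measure comparison (Lemma \ref{l:w>G}) to relate $\omega^{p_R}$ to boundary behaviour on subcubes of $R$ without appealing to connectivity, at the price of tracking each corkscrew point $p_R$ and its associated test function $u_R$ independently rather than through a single global reference.
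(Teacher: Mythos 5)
There is a genuine gap at the heart of your scheme: the single-step estimate $\sum_{Q\in\sss(R)}\mu(Q)\leq(1-\eta)\,\mu(R)$ is not attainable for the low-density half of your stopping condition, and your construction cannot produce it. Choosing $A$ large does control the high-density cubes (a Vitali-type covering argument gives $\mu$-measure $\lesssim A^{-1}\mu(R)$, no PDE needed), but cubes where $\omega^{p_R}(3Q)$ is \emph{small} relative to $\mu(Q)/\mu(R)$ can carry essentially all of the $\mu$-measure of $R$ when $\omega^{p_R}$ and $\mu$ are badly related (this is exactly the enemy scenario), and smallness of harmonic measure on them says nothing about their $\mu$-mass. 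Moreover, the bound your single function $u_R$ would deliver through hypothesis (b) is $\sum_{Q\in\sss(R)}\mu(Q)\lesssim\mu(R)$, which is trivially true because the stopping cubes are pairwise disjoint subsets of $R$; nothing in the construction converts the \emph{size} of the deviation (the parameter $A$ or $\delta$) into a gain, so no $(1-\eta)$ improvement follows. Note also that your claimed pointwise oscillation ``$\gtrsim1$ between $p_R$ and corkscrew points above each stopping cube'' is not available: at $p_R$ the harmonic extension of $\chi_{3Q}$ has size $\omega^{p_R}(3Q)$, which for a low-density cube is small by definition, and without Harnack chains you cannot transport lower bounds between distant corkscrew points anyway.

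The paper's proof is built precisely to avoid the need for a single-step estimate on the low-density cubes. It iterates the low-density stopping: families $\LD^k(R)$ are defined generation by generation, each time with a \emph{new} pole $p_Q$ placed at height $\approx\ve\,\ell(Q)$ over the current cube $Q$, so that $\omega^{p_Q}(Q)\geq1-c\,\ve^\alpha$ (Lemma \ref{lempq0}). For each such $Q$ one builds (Lemma \ref{lemapprox}) a bounded $L$-harmonic $u_Q=\int_{E_Q}f_Q\,d\omega^x$ with $E_Q=Q\setminus B_L(Q)$, whose oscillation $\gtrsim1$ occurs \emph{locally}, between two Whitney-scale balls $V_Q^1,V_Q^2$ near $p_Q$, via a fundamental-solution construction; the crucial structural point is that the sets $E_Q$ over all generations are pairwise disjoint, so the random-sign sums $u_a=\sum_Q a_Q u_Q$ are uniformly bounded. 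Applying hypothesis (b) (with Khintchine/orthogonality), or hypothesis (a) via $\ve_0$-approximants of $u_a$ together with Poincar\'e, to these sums yields $\sum_{k\leq m}\sum_{Q\in\LD^k(R)}\mu(Q)\lesssim_\ve\mu(R)$, hence the decay $\mu(B_L^m(R))\lesssim m^{-1}\mu(R)$ rather than any one-step $(1-\eta)$ bound. The corona packing \eqref{eqpack1} is then obtained as in \cite[Lemma~3.9]{GMT} by combining this $1/m$ decay for iterated low-density cubes with the $A^{-1}$ smallness for high-density cubes. If you want to salvage your approach, you would have to replace the single function $u_R$ per top cube by a family of functions attached to iterated low-density generations with moving poles and disjoint boundary data, which is exactly the paper's mechanism.
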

\vv

Of course, the analogous result is valid replacing the  operator $L$ by $L^*$ and $\omega$ by $\omega_*$.
Note that for the validity of the proposition  we do not impose any additional regularity assumption on the coefficients of the matrix $A$ (e.g. to be locally Lipschitz).
Most of this section is devoted to the proof of this proposition, which is one of the main steps for the proof of Theorem
\ref{teo1}. We will follow quite closely the arguments from \cite{GMT}.

\vv
\subsection{The approximation lemma}

The next result is an immediate consequence of Lemma \ref{lemholder**1}.

\begin{lemma}\label{lempq0}
There are constants $0<\alpha<1$ and $c_2>0$, depending only on $n$ and the AD-regularity constant of $\mu$ such that the
following holds. For any $0<\ve<1/2$ and any $Q\in\DD_\mu$, we have
$$\omega^x(Q)\geq \omega^x(\tfrac34B_Q)\geq1-c_2\ve^\alpha\quad\mbox{ if $x\in \frac12B_Q$\; and\; $\dist(x,\partial\Omega)\leq \ve\,\ell(Q)$.}$$
\end{lemma}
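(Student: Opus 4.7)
The proof should be a short and direct application of the Hölder decay for elliptic measure stated in Lemma~\ref{lemholder**1}. The plan is as follows.

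The first inequality $\omega^x(Q)\geq \omega^x(\tfrac34 B_Q)$ is immediate from the defining property of $B_Q$: since $B_Q\cap\supp\mu\subset Q$ and $\omega^x$ is supported on $\partial\Omega=\supp\mu$, we have $\omega^x(\tfrac34 B_Q)=\omega^x(\tfrac34 B_Q\cap\partial\Omega)\leq\omega^x(B_Q\cap\partial\Omega)\leq\omega^x(Q)$.

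For the second inequality, I would first observe that we only need to handle small $\ve$: if $\ve\geq c_1/8$, then the estimate is trivial by choosing $c_2$ so large that $c_2\ve^\alpha\geq 1$, which works because $\omega^x$ is a probability measure. So assume $\ve<c_1/8$. Pick $x'\in\partial\Omega$ realizing the distance from $x$ to the boundary, so $|x-x'|\leq \ve\ell(Q)$, and set $r=\tfrac{c_1}{8}\ell(Q)$. A quick triangle-inequality check (using $|x-z_Q|\leq\tfrac12 c_1\ell(Q)$ and $|x'-x|<\tfrac{c_1}{8}\ell(Q)$) shows that $B(x',r)\subset \overline{\tfrac34 B_Q}$, so $(\tfrac34 B_Q)^c\subset B(x',r)^c$, and moreover $x\in B(x',r)\cap\Omega$ since $|x-x'|\leq\ve\ell(Q)<r$. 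Then Lemma~\ref{lemholder**1}, applied at the boundary point $x'$ with this radius $r$, yields
\[
\omega^x\bigl((\tfrac34 B_Q)^c\bigr)\;\leq\;\omega^x\bigl(B(x',r)^c\bigr)\;\lesssim\;\left(\frac{|x-x'|}{r}\right)^{\!\alpha}\;\lesssim\;\ve^{\alpha},
\]
which is exactly what we need after choosing $c_2$ to absorb the implicit constant.

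There is essentially no obstacle here; the only things to double-check are the purely geometric inclusion $B(x',r)\subset \tfrac34 B_Q$ (which fixes the specific choice $r=\tfrac{c_1}{8}\ell(Q)$), the admissibility $r<\diam(\partial\Omega)$ required in Lemma~\ref{lemholder**1} (which follows from $\ell(Q)\lesssim\diam(\partial\Omega)$), and the degenerate case $\ve\geq c_1/8$ handled by the trivial upper bound. The exponent $\alpha$ and the final constant $c_2$ are inherited directly from Lemma~\ref{lemholder**1}.
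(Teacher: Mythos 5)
Your argument is correct and is exactly the route the paper intends: the paper simply states that Lemma \ref{lempq0} is an immediate consequence of Lemma \ref{lemholder**1}, and your choice of the boundary point $x'$, the radius $r=\tfrac{c_1}{8}\ell(Q)$ with the inclusion $B(x',r)\subset\tfrac34 B_Q$, and the trivial treatment of $\ve\gtrsim c_1$ supply precisely the omitted details. Nothing further is needed.
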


\vv

For any $Q\in\DD_\mu$, we consider a corkscrew point $p_Q\in \frac12B_Q\cap\Omega$, with 
\begin{equation}\label{eq1}
\delta_\Omega(p_Q)\approx\ve\,\ell(Q),\quad \;\ve\ll1,
\end{equation}
so that
$\omega^{p_Q}(Q)\geq \omega^x(\tfrac34B_Q)\geq 1-C\,\ve^\alpha$. The corkscrew condition of $\Omega$ ensures the existence of such point $p_Q$.
We denote by $y_Q$ a point in $\partial\Omega$ such that
\begin{equation}\label{eq-1}
\delta_\Omega(p_Q) = |y_Q-p_Q|,
\end{equation}
and we assume that $p_Q$ has been chosen so that 
\begin{equation}\label{eq0}
B(y_Q,|y_Q-p_Q|)\subset \tfrac34B_Q.
\end{equation}
For a small constant $0<\tau<1/2$ to be fixed below, we also denote 
$$V_Q= B\big(p_Q,(1-\tau)\delta_\Omega(p_Q)\bigr),$$ so that $V_Q\subset\Omega$. Notice that
$$r(V_Q)\approx \ve \,\ell(Q).$$
\vv

The next lemma is quite similar to Lemma 3.2 in \cite{GMT}. 

\begin{lemma}\label{lemapprox}
Suppose that the constant $\ve$ in \rf{eq1} is small enough and $\tau$ is also small enough.
Let $Q\in\DD_\mu$ and let $E_Q\subset Q$ be such that
$$\omega^{p_Q}(E_Q) \geq (1-\ve)\,\omega^{p_Q}(Q).$$
Then there exists a non-negative $L$-harmonic function $u_Q$ on $\Omega$ and a Borel function $f_Q$ with 
$$u_Q(x) = \int_{E_Q} f_Q\,d\omega^x,\qquad f_Q\leq c\,\chi_{E_Q},$$
so that
\begin{equation}\label{eqkey30}
\int_{V_Q} |\nabla u_Q(x)|^2\,\delta_\Omega(x)\,dx\gtrsim_\tau r(V_Q)^n\approx_\ve \ell(Q)^n.
\end{equation}
\end{lemma}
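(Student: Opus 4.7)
The plan is to realize $u_Q$ as $u_Q(x)=\int f_Q\,d\omega^x$ for a suitable $f_Q\leq c\,\chi_{E_Q}$, and to force $u_Q$ to oscillate on $V_Q$ by a definite amount. The naive choice $f_Q=\chi_{E_Q}$ fails: since $\omega^{p_Q}(E_Q)\geq(1-\ve)(1-c_2\ve^\alpha)$ is close to $1$ by Lemma \ref{lempq0}, the nonnegative $L$-harmonic function $1-\omega^x(E_Q)$ is small at $p_Q$, and iterating Harnack along a chain of overlapping interior balls of radius $\sim\tau\,\delta_\Omega(p_Q)$ contained in $\Omega$ (of cardinality depending on $\tau$) yields $1-\omega^x(E_Q)\lesssim_\tau\ve+\ve^\alpha$ throughout $V_Q$, so $u_Q$ would be essentially constant there. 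Hence $f_Q$ must be modified to either exclude or concentrate $\omega^{p_Q}$-mass near the boundary point $y_Q$.

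Set $m(r):=\omega^{p_Q}(E_Q\cap B(y_Q,r))$. It is nondecreasing and right-continuous with $m(\infty)\approx 1$ and $m(0)=0$ (harmonic measure assigns no mass to singletons on an AD-regular boundary when $n\geq 2$, as follows from the standard CFMS-type upper bound $\omega^x(B)\lesssim r(B)^{n-1}G(x,z_B)$ together with $G\lesssim|\cdot|^{1-n}$), so one can choose $r_0$ with $m(r_0)\in[\tfrac13,\tfrac23]$. Let $C_0$ be a large constant to be chosen. In \textbf{Case A}, $r_0\geq C_0\,\tau\,\delta_\Omega(p_Q)$: set $f_Q:=\chi_{E_Q\setminus B(y_Q,r_0)}$, so $u_Q(p_Q)\geq 1/3$; since $u_Q$ has vanishing boundary values on $\partial\Omega\cap B(y_Q,r_0)$, Lemma \ref{lem333} yields $u_Q(x)\leq C(|x-y_Q|/r_0)^\alpha$ on $B(y_Q,r_0/4)\cap\Omega$, and taking $x_1\in V_Q$ with $|x_1-y_Q|\lesssim \tau\,\delta_\Omega(p_Q)$ (possible because $\dist(\overline V_Q,y_Q)=\tau\,\delta_\Omega(p_Q)$) gives $u_Q(x_1)\leq C\,C_0^{-\alpha}\leq 1/12$ for $C_0$ large. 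In \textbf{Case B}, $r_0<C_0\,\tau\,\delta_\Omega(p_Q)$: set $f_Q:=\chi_{E_Q\cap B(y_Q,r_0)}$, so $u_Q(p_Q)\geq 1/3$; letting $x_1:=p_Q+\tfrac12\delta_\Omega(p_Q)(p_Q-y_Q)/|p_Q-y_Q|\in V_Q$, so that $|x_1-y_Q|=\tfrac32\delta_\Omega(p_Q)$ and $\delta_\Omega(x_1)\geq\tfrac12\delta_\Omega(p_Q)$, the same standard upper bound (dual to Lemma \ref{l:w>G}) gives $\omega^{x_1}(B(y_Q,r_0))\lesssim (r_0/\delta_\Omega(p_Q))^{n-1}\lesssim (C_0\tau)^{n-1}\leq 1/12$ for $\tau$ small enough.

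In both cases $u_Q(p_Q)\geq 1/3$ and $u_Q(x_1)\leq 1/12$ with $x_1\in V_Q$. Interior Harnack applied to the nonnegative $L$-harmonic function $u_Q$ on $B(p_Q,\tfrac14\delta_\Omega(p_Q))$ and on $B(x_1,c\,\delta_\Omega(x_1))$ produces balls $B_1,B_2\subset V_Q$ of radii $\gtrsim_\tau\delta_\Omega(p_Q)$ on which $u_Q\geq 1/6$ and $u_Q\leq 1/6$ respectively. Setting $\bar u_Q:=\avint_{V_Q}u_Q$ and splitting according to whether $\bar u_Q$ is above or below $1/8$, at least one of $B_1,B_2$ yields
\[
\int_{V_Q}(u_Q-\bar u_Q)^2\,dx\gtrsim_\tau \delta_\Omega(p_Q)^{n+1}.
\]
The Poincar\'e inequality on the ball $V_Q$ then gives $\int_{V_Q}|\nabla u_Q|^2\,dx\gtrsim_\tau r(V_Q)^{n-1}$, and since $\delta_\Omega\geq\tau\,\delta_\Omega(p_Q)\approx_\tau r(V_Q)$ throughout $V_Q$, we conclude $\int_{V_Q}|\nabla u_Q|^2\,\delta_\Omega\,dx\gtrsim_\tau r(V_Q)^n$. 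The main technical obstacle is the CFMS-type upper bound $\omega^x(B)\lesssim r(B)^{n-1}G(x,z_B)$ used in Case B (and in justifying the absence of point masses of $\omega^{p_Q}$): although natural and well known in corkscrew domains with AD-regular boundary, it is not recorded explicitly in the preliminaries and will need to be proved or cited carefully.
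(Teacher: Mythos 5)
Your route is genuinely different from the paper's: the paper takes $f_Q=\chi_{E_Q}\,g_Q$, where $g_Q$ is a smoothed potential of the fundamental solution concentrated near $y_Q$, so that $g_Q$ itself oscillates by $\gtrsim 1$ between a ball near $y_Q$ and a ball around $p_Q$, and then transfers this oscillation to $u_Q$ using $|g_Q-u_Q|\lesssim_\tau\ve^\alpha$ on $V_Q$; you instead split $E_Q$ at a stopping radius around $y_Q$ and create the oscillation directly from boundary decay (Lemma \ref{lem333}) in one case and a harmonic-measure upper bound in the other. The scheme is viable, and the bound you flag, $\omega^{x_1}(B(y_Q,r_0))\lesssim (r_0/\delta_\Omega(p_Q))^{n-1}$, does follow from \rf{eqgreen*23} together with Caccioppoli and $G(x,y)\lesssim|x-y|^{1-n}$ via the usual bump-function computation (the same device as in Lemma \ref{lem6262}); no corkscrew pole is needed, only the pointwise Green bound. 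However, two steps fail as written. First, the stopping radius: $m(r)=\omega^{p_Q}(E_Q\cap B(y_Q,r))$ need not attain a value in $[\tfrac13,\tfrac23]$. Absence of atoms only gives $m(0+)=0$; $m$ may still jump across the whole interval at some positive radius, since nothing prevents $\omega^{p_Q}$ from charging $E_Q\cap\partial B(y_Q,r)$ with mass larger than $\tfrac13$ (e.g.\ $\partial\Omega$ may contain a large piece of a sphere centered at $y_Q$). The fix is simple: set $r_0=\inf\{r:\ \omega^{p_Q}(E_Q\cap\overline{B(y_Q,r)})\geq\tfrac13\}$; by continuity of the measure from above and below, $\omega^{p_Q}(E_Q\cap\overline{B(y_Q,r_0)})\geq\tfrac13$ while $\omega^{p_Q}(E_Q\cap B(y_Q,r_0))\leq\tfrac13$, so both the ``near'' set $E_Q\cap\overline{B(y_Q,r_0)}$ and the ``far'' set $E_Q\setminus B(y_Q,r_0)$ carry mass $\gtrsim1$ (for $\ve$ small), and your two cases run with closed/open balls respectively.

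Second, the Harnack step is not correct as stated: from $u_Q(p_Q)\geq\tfrac13$ and $u_Q(x_1)\leq\tfrac1{12}$, Harnack gives $u_Q\geq(3C_H)^{-1}$ near $p_Q$ and $u_Q\leq C_H/12$ near $x_1$, where $C_H$ is the Harnack constant on balls $B(x,\tfrac12\delta_\Omega(x))$; since $C_H$ may well exceed $2$, the two bounds need not be separated by any common threshold such as $\tfrac16$, and the subsequent mean-value/Poincar\'e argument collapses. This is repairable, but the repair must be made explicit: either force $u_Q(x_1)$ below $(C\,C_H^2)^{-1}$ — in Case A by taking $C_0$ large depending on $C_H$ (legitimate, as $C_H$ is a universal constant), in Case B by taking $\tau$ small — or avoid Harnack altogether and use the interior De Giorgi--Nash oscillation estimate, which shows $u_Q$ deviates from its value at the center by at most, say, $\tfrac1{100}$ on balls $B(p_Q,c\,\delta_\Omega(p_Q))$ and $B(x_1,c\,\delta_\Omega(x_1))$ with $c$ universal, so the gap between $\tfrac13$ and $\tfrac1{12}$ survives on two balls of radius $\gtrsim_\tau r(V_Q)$ inside $V_Q$. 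With these two corrections (and the minor care needed to place $x_1$ so that the second ball stays inside $V_Q$), your final chain — deviation from the mean on a set of measure $\gtrsim_\tau r(V_Q)^{n+1}$, Poincar\'e on $V_Q$, and $\delta_\Omega\geq\tau\,\delta_\Omega(p_Q)$ on $V_Q$ — does give \rf{eqkey30}.
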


\begin{proof}
Let $y_Q\in\partial\Omega$ be the point defined in \rf{eq-1}. 
Recall that $\EE_L(x,y)$ is the fundamental solution of the elliptic operator $L$ and satisfies
\begin{equation}\label{eq**1}
0<\EE_L(x,y)\approx \frac1{|x-y|^{n-1}}.
\end{equation}
Consider the function
\begin{equation}\label{deggg*}
g_Q(x) := \int_{B(y_Q,\tau \,r(V_Q))} \frac1{\tau \,r(V_Q)} \,\EE_L(x,y)\,d\mu(y).
\end{equation}
From the AD-regularity of $\mu$ and \rf{eq**1} we deduce that
 $\|g_Q\|_\infty\lesssim 1$. Let $s_Q$ be the closest point to $y_Q$ in $\partial B(p_Q,(1-2\tau)\delta_\Omega(p_Q))$. Obviously, 
 $$B(s_Q,\tau\delta_\Omega(p_Q))\subset  B(p_Q,(1-\tau)\delta_\Omega(p_Q))=:V_Q.$$
From \rf{eq**1} we deduce that 
$$g_Q(x)\approx \tau^n,\quad \mbox{for all $x\in B(s_Q,\tau\delta_\Omega(p_Q))$,}$$
while
$$g_Q(y)\approx 1,\quad \mbox{for all $y\in B(p_Q,\tfrac12\delta_\Omega(p_Q))$.}$$
Hence, if $\tau$ is small enough (depending only on $n$ and the implicit constant in \rf{eq**1}), 
we deduce that
\begin{equation}\label{eqdif**1}
|g_Q(x)- g_Q(y)|\gtrsim 1\qquad \mbox{for all $x\in B(s_Q,\tau\delta_\Omega(p_Q))$ 
and all $y\in B(p_Q,\tfrac12\delta_\Omega(p_Q))$.}
\end{equation}

Now we define $f_Q:=\chi_{E_Q}\,g_Q$ and
$$u_Q(x) := \int f_Q\, d\omega^x = \int_{E_Q} g_Q\,d\omega^x.$$
Since $g_Q$ is $L$-harmonic in $\Omega$ and continuous in $\R^{n+1}$ because of the local $\mu$-integrability of $\EE_L(x,y)$, we have that, for all $x\in\Omega$,
$$g_Q(x) = \int g_Q\,d\omega^x,$$
and then,
\begin{equation}\label{eqggg*0}
\bigl|g_Q(x) - u_Q(x)\bigr| = \left|\int_{\partial\Omega\setminus E_Q} g_Q\,d\omega^x\right|
\leq \|g_Q\|_\infty\,\omega^x(\partial\Omega\setminus E_Q)\lesssim 
\,\omega^x(\partial\Omega\setminus E_Q).
\end{equation}
By \rf{eq1} and  the assumption in the lemma,
$$
\omega^{p_Q}(\partial\Omega\setminus E_Q) = 
\omega^{p_Q}(\partial\Omega\setminus Q) + \omega^{p_Q}(Q\setminus E_Q)
\leq C\ve^\alpha + \ve\lesssim \ve^\alpha,
$$
and then 
 by a Harnack chain argument
it follows that 
\begin{equation}\label{eqggg*1}
\omega^{x}(\partial\Omega\setminus E_Q)\lesssim_\tau\ve^\alpha\quad\mbox{ for all $x\in V_Q$.}
\end{equation}
Therefore, 
$$
\bigl|g_Q(x) - u_Q(x)\bigr|\lesssim_\tau \ve^\alpha\quad\mbox{ for all $x\in V_Q$.}
$$
Assuming $\ve$ small enough, from this estimate and \rf{eqdif**1} we infer that
$$
|u_Q(x)- u_Q(y)|\gtrsim 1\qquad \mbox{for all $x\in B(s_Q,\tau\delta_\Omega(p_Q))$ 
and all $y\in B(p_Q,\tfrac12\delta_\Omega(p_Q))$.}
$$
Now, using Poincar\'e's inequality we derive 
$$\avint_{V_Q}|\nabla u_Q|^2\,dx \gtrsim_\tau \frac1{r(V_Q)^2},$$
which implies the estimate \rf{eqkey30}.
\end{proof}

\vv
From now on we fix $\tau>0$ small enough so that the preceding lemma holds,
and we will drop the dependence on $\tau$ when writing the estimates above.

For the record, note that we have shown above
that
\begin{equation}\label{eq**4}
|u_Q(x)- u_Q(y)|\gtrsim 1\qquad \mbox{for all $x\in V_Q^1$ 
and all $y\in V_Q^2$,}
\end{equation}
where $V_Q^1:=B(s_Q,\tau\delta_\Omega(p_Q))$ and $V_Q^2:=B(p_Q,\tfrac12\delta_\Omega(p_Q))$, with
$V_Q^1\cup V_Q^2\subset V_Q$.
\vv


\subsection{The stopping cubes and the key lemma}\label{secdens1}

Next we construct some stopping cubes analogous to the ones in Section 4 of \cite{GMT}.
Let $0<\delta\ll1$ and $A\gg1$ be some fixed constants. For a fixed a cube $R\in\DD_\mu$,
let $Q\in\DD_\mu$, $Q\subset R$.
We say that $Q\in\HD(R)$ (high density) if $Q$ is a maximal cube satisfying
$$\frac{\omega^{p_R}(2Q)}{\mu(2Q)}\geq A \,\frac{\omega^{p_R}(2R)}{\mu(2R)}.$$
We say that $Q\in\LD(R)$ (low density) if $Q$ is a maximal cube satisfying
$$\frac{\omega^{p_R}(Q)}{\mu(Q)}\leq \delta \,\frac{\omega^{p_R}(R)}{\mu(R)}$$
(notice that $\omega^{p_R}(R)\approx \omega^{p_R}(2R)\approx 1$). Observe that the definition of the family $\HD(R)$ involves the density of $2Q$, while the one of $\LD(R)$ involves the density of $Q$.

We denote
$$B_H(R)=\bigcup_{Q\in \HD(R)} Q \quad\mbox{ and }\quad B_L(R)=\bigcup_{Q\in \LD(R)} Q.$$
\vv

The same arguments as in Lemmas 4.1 and 4.2 of \cite{GMT} yield the following:

\begin{lemma}\label{lemhdld}
We have
$$\mu(B_H(R)) \lesssim \frac1A\,\mu(R)$$
and
$$\omega^{p_R}(B_L(R)) \leq \delta\,\omega^{p_R}(R).$$
\end{lemma}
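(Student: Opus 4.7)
The plan is to handle the two estimates essentially independently, using the maximality of the stopping cubes together with a bounded overlap argument in the high-density case.

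For the high-density estimate, I would start from the defining inequality of $\HD(R)$, which rearranges to
\[
\mu(2Q) \leq \frac{1}{A}\,\frac{\mu(2R)}{\omega^{p_R}(2R)}\,\omega^{p_R}(2Q) \quad\text{for every } Q\in\HD(R).
\]
Summing over $Q\in\HD(R)$ and using that the cubes $Q$ themselves are pairwise disjoint (by maximality) and satisfy $\mu(Q)\leq \mu(2Q)$, the task reduces to controlling $\sum_{Q\in\HD(R)}\omega^{p_R}(2Q)$. Here I would invoke the bounded-overlap property of the doubled cubes $\{2Q\}_{Q\in\HD(R)}$ in the David-Semmes lattice $\DD_\mu$ (cf. the small-boundary condition (d) in Section 2.1, which implies $\#\{Q'\in\DD_{\mu,j}:\;2Q'\ni x\}$ is bounded by a dimensional constant). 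This gives
\[
\sum_{Q\in\HD(R)}\omega^{p_R}(2Q) \lesssim \omega^{p_R}\Bigl(\bigcup_{Q\in\HD(R)} 2Q\Bigr)\leq \omega^{p_R}(2R)\approx 1,
\]
and combined with $\mu(2R)\lesssim \mu(R)$ by AD-regularity, yields $\mu(B_H(R))\lesssim \mu(R)/A$.

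For the low-density estimate the argument is cleaner: the maximal cubes in $\LD(R)$ are pairwise disjoint as subsets of $\partial\Omega$, and each satisfies
\[
\omega^{p_R}(Q) \leq \delta\,\frac{\omega^{p_R}(R)}{\mu(R)}\,\mu(Q).
\]
Summing and using $\sum_{Q\in\LD(R)}\mu(Q)\leq\mu(R)$ gives $\omega^{p_R}(B_L(R)) \leq \delta\,\omega^{p_R}(R)$.

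The only mildly delicate point is the bounded overlap of the family $\{2Q\}_{Q\in\HD(R)}$; I would justify it by recalling that the cubes of a fixed generation in $\DD_\mu$ have comparable diameters and are pairwise disjoint, so their doublings can overlap only with cubes of comparable scale, and the number of such neighbors of any fixed $Q$ is bounded by a constant depending only on $n$ and the AD-regularity constant. With that in hand both estimates follow from one line of arithmetic, so no further obstacle is expected.
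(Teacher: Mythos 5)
Your low-density estimate is correct and is exactly the intended argument: since the definition of $\LD(R)$ involves the density of $Q$ itself (as the paper points out), the maximal cubes are pairwise disjoint and the one-line summation works. The high-density half, however, has a genuine gap: the family $\{2Q\}_{Q\in\HD(R)}$ does \emph{not} have bounded overlap in general. The cubes in $\HD(R)$ are pairwise disjoint but may occur at arbitrarily many different scales, and a single point $x\in\partial\Omega$ can lie in $2Q_j$ for infinitely many disjoint cubes $Q_j\in\HD(R)$ with $\ell(Q_j)\to0$ accumulating at $x$ (take $Q_j$ of sidelength $\approx 2^{-j}$ at distance $\approx 2^{-j}$ from $x$; then $\dist(x,Q_j)\le\ell(Q_j)$, so $x\in 2Q_j$ for every $j$, and nothing in the maximality of the stopping family rules this out). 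The small-boundary condition you cite only controls the overlap of doubles of cubes within a \emph{single} generation, which already follows from AD-regularity; it says nothing across generations. Consequently the step $\sum_{Q\in\HD(R)}\omega^{p_R}(2Q)\lesssim\omega^{p_R}\bigl(\bigcup_Q 2Q\bigr)$ is unjustified, and this is precisely the step carrying all the content of the lemma.

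The standard repair (and essentially the argument of Lemma 4.1 in \cite{GMT}, which the paper invokes) is to route the estimate through a maximal function rather than a direct summation. For $x\in Q\in\HD(R)$ one has $2Q\subset B(x,2\ell(Q))$ and, by AD-regularity, $\mu(B(x,2\ell(Q)))\approx\mu(2Q)$; hence, using also $\omega^{p_R}(2R)\geq\omega^{p_R}(R)\gtrsim1$ (Lemma \ref{lempq0}) and $\mu(2R)\approx\mu(R)$, the stopping inequality gives $\mathcal M_\mu\omega^{p_R}(x):=\sup_{r>0}\frac{\omega^{p_R}(B(x,r))}{\mu(B(x,r))}\gtrsim \frac{A}{\mu(R)}$ for every $x\in B_H(R)$. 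The centered maximal operator $\mathcal M_\mu$ is of weak type $(1,1)$ with respect to the doubling measure $\mu$ (Besicovitch, or a Vitali-type $5r$-covering — this is where the overlap problem is genuinely absorbed), so $\mu(B_H(R))\lesssim \frac{\mu(R)}{A}\,\|\omega^{p_R}\|=\frac{\mu(R)}{A}$. With this substitution your proof is complete; without it, the high-density bound does not follow.
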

\vv

Next we denote $\LD^0(R)=\{R\}$, $\LD^1(R)=\LD(R)$, and inductively, for $k\geq1$, 
$$\LD^{k+1}(R) = \bigcup_{Q\in\LD^k(R)} \LD(Q),$$
and the subset of $R$ given by
$$B_L^k(R)=\bigcup_{Q\in\LD^k(R)} Q.$$
Notice that the stopping conditions to define the family of low density cubes $\LD^k(R)$
involve the $L$-harmonic measure $\omega^{p_Q}$ for a suitable $Q\in\LD^{k-1}(R)$, instead of $\omega^{p_R}$.

\vv

\begin{lemma}[Key Lemma]\label{lemclau}
Suppose that either the assumptions (a) or (b) in Theorem \ref{teo1}, or (b') in Remark \ref{remb'} hold. Suppose also that the constant $\ve>0$ in \rf{eq1} is chosen small enough
and that $0 <\delta\leq\ve$. Then for any $m\ge1$ we have
\begin{equation}\label{eqg**p45}
\sum_{k=1}^m \,\sum_{Q\in\LD^k(R)}\mu(Q)\lesssim_\ve\mu(R)
\end{equation}
and
\begin{equation}\label{eqg**p46}
\mu(B_L^m(R))\lesssim_\ve\frac1m\,\mu(R).
\end{equation}
\end{lemma}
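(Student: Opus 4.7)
First observe that \eqref{eqg**p46} follows immediately from \eqref{eqg**p45}. Since the sets $B_L^k(R)$ are nested and decreasing in $k$, and $\mu(B_L^k(R))=\sum_{Q\in\LD^k(R)}\mu(Q)$ by disjointness of cubes within a generation,
\[
m\,\mu(B_L^m(R)) \;\le\; \sum_{k=1}^m \mu(B_L^k(R)) \;\le\; \sum_{k=1}^m\sum_{Q\in\LD^k(R)}\mu(Q)\;\lesssim_\ve\;\mu(R).
\]
So the core task is the packing bound \eqref{eqg**p45}.

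The strategy is to build a single bounded $L$-harmonic test function $u$ whose gradient square-Carleson mass bounds $\sum_{k,Q}\mu(Q)$ from below, and then invoke hypothesis (b) (or its weaker variant (b$'$) from Remark \ref{remb'}); hypothesis (a) is reduced to (b$'$) by the standard $\ve$-approximation--to--Carleson argument, i.e.\ replacing $|\nabla u|$ in the integrand by $|\nabla\vphi|$ plus a uniformly small error, then using a Poincar\'e/Hardy estimate. For each ``parent'' cube $Q\in\{R\}\cup\bigcup_{k=0}^{m-1}\LD^k(R)$, Lemma \ref{lemhdld} applied inside $Q$ yields $\omega^{p_Q}(B_L(Q))\le\delta\,\omega^{p_Q}(Q)$, so with $\delta\le\ve$ the set $E_Q:=Q\setminus B_L(Q)$ satisfies the hypothesis of Lemma \ref{lemapprox}. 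The latter produces a non-negative, continuous $L$-harmonic function $u_Q$ with
\[
0\le u_Q(x)\le C\,\omega^x(Q),\qquad \int_{V_Q}|\nabla u_Q(y)|^2\,\delta_\Omega(y)\,dy\;\gtrsim\;\mu(Q).
\]

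Next, assemble these blocks into a single bounded function. Choose signs $\sigma_Q\in\{\pm 1\}$ for each relevant cube and set
\[
u\;:=\;\sum_{k=1}^m\sum_{Q\in\LD^k(R)}\sigma_Q\,u_Q,
\]
suitably normalized. For each fixed generation $k$ the cubes $Q\in\LD^k(R)$ are pairwise disjoint, whence $\sum_{Q\in\LD^k(R)}u_Q(x)\le C\,\omega^x(R)\le C$ uniformly in $x$. This controls each generational slice $v_k:=\sum_{Q\in\LD^k(R)}\sigma_Q u_Q$ by $C$ in $L^\infty$. To pass from a per-generation bound to a summed bound independent of $m$, the plan is to exploit cancellation between generations using the signs: via a probabilistic choice (or Khintchine--pigeonhole), one selects signs so that the superposition $u$ is bounded uniformly in $m$. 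Applying (b) to $u$ then gives $\int_{B_R}|\nabla u|^2\delta_\Omega\,dy\lesssim\mu(R)$; the sign cancellation annihilates cross-generation gradient interactions in expectation, while within each generation the Whitney balls $V_Q$ are pairwise disjoint, producing
\[
\sum_{k=1}^m\sum_{Q\in\LD^k(R)}\mu(Q)\;\lesssim\;\sum_{k=1}^m\int|\nabla v_k|^2\,\delta_\Omega\,dy\;\lesssim_\ve\;\mu(R),
\]
which is \eqref{eqg**p45}.

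The main obstacle, and where the argument departs from the harmonic-measure/Riesz-transform reductions available in \cite{GMT} for $L=-\Delta$, is verifying that the assembled $u$ can be kept bounded uniformly in $m$ in the absence of Harnack chains and without symmetry of $L$. The key inputs are the pointwise dominance $u_Q\le C\omega^{(\cdot)}(Q)$, the H\"older-type decay of bounded $L$-harmonic functions provided by Lemma \ref{lem333}, and the disjointness of $V_Q$ at each single generation; these substitute for the telescoping Harnack-chain estimates that would be automatic in uniform domains. A secondary technical point is ensuring that the cross-generation gradient cross-terms $\int\nabla v_k\cdot\nabla v_{k'}\,\delta_\Omega$ average away under the sign choice, which is where the random-sign/pigeonhole step is deployed.
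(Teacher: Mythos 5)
Your reduction of \eqref{eqg**p46} to \eqref{eqg**p45} is fine and matches the paper's Chebyshev/nestedness argument, and your lower bound $\int_{V_Q}|\nabla u_Q|^2\,\delta_\Omega\gtrsim\mu(Q)$ via Lemma \ref{lemapprox} is the right starting point. But the heart of the lemma is the uniform-in-$m$ bound $\|u_a\|_{L^\infty(\Omega)}\leq C$ for the superposition, and here your argument has a genuine gap: you only establish a per-generation bound $\|v_k\|_\infty\leq C$ and then claim that a random (or pigeonhole) choice of signs makes $u=\sum_{k=1}^m v_k$ bounded uniformly in $m$. No such sign choice exists in general: for $m$ functions each of sup norm $\approx 1$, random signs typically give sup norm growing like $\sqrt{m}$ (Khintchine controls $L^p$ averages over the signs, not the $L^\infty$ norm of a single realization), and ``cancellation between generations'' cannot be extracted from the information you have. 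The actual mechanism in the paper is deterministic and uses no cancellation at all: the sets $E_Q=Q\setminus B_L(Q)$ are pairwise disjoint \emph{across all generations}, because every cube of $\LD^{k+j}(R)$ contained in $Q\in\LD^k(R)$ sits inside $B_L(Q)$, hence $E_P\subset B_L(Q)$ is disjoint from $E_Q$. Since $u_Q(x)=\int_{E_Q}f_Q\,d\omega^x$ with $f_Q\leq c\,\chi_{E_Q}$, one gets $|u_a(x)|\leq c\sum_{k,Q}\omega^x(E_Q)\leq c\,\omega^x(\partial\Omega)=c$ for \emph{every} choice of signs; the random signs are then used only for the orthogonality $\int_\AZ\bigl|\sum_Q a_Q\nabla u_Q\bigr|^2\,d\lambda=\sum_Q|\nabla u_Q|^2$ when hypothesis (b) is applied. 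Without the disjointness observation your construction does not produce an admissible bounded test function, so \eqref{eqg**p45} does not follow.

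A secondary problem is your treatment of hypothesis (a): you cannot ``reduce (a) to (b$'$) by replacing $|\nabla u|$ by $|\nabla\vphi|$ plus a small error,'' since $\ve$-approximability controls $\vphi-u$ only in $L^\infty$ and gives merely an $L^1$ Carleson bound for $\nabla\vphi$; it says nothing about $\int|\nabla u|^2\delta_\Omega$. The paper instead works directly with averages: from \rf{eq**4} one has $|m_{V_Q^1}u_Q-m_{V_Q^2}u_Q|\gtrsim1$, Khintchine's inequality converts this into a lower bound for $\int_\AZ|m_{V_Q^1}u_a-m_{V_Q^2}u_a|\,d\lambda$, the $L^\infty$ closeness of $\vphi_a$ to $u_a$ transfers this to $\vphi_a$ (averages of functions, not gradients, so the $\ve_0$ error is harmless), and Poincar\'e plus the $L^1$ gradient Carleson bound \eqref{eqcc23} for $\vphi_a$ yields $\sum_Q\mu(Q)\lesssim\mu(R)$. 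You would need to incorporate this two-step argument (oscillation of averages, then Poincar\'e on the approximant) rather than invoking a reduction of (a) to a square-function estimate.
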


\begin{proof}
For $Q\subset\DD_\mu$, $Q\subset R$, we denote 
$$E_Q = Q\setminus B_L(Q).$$
By Lemma \ref{lempq0} and Lemma \ref{lemhdld} applied to $Q$,
\begin{equation}\label{eqhm11}
\omega^{p_Q}(E_Q)  = \omega^{p_Q}(Q) - \omega^{p_Q}(B_L(Q)) \geq (1-\delta)\,\omega^{p_Q}(Q)\geq (1-\delta)(1-c\,\ve^\alpha)\geq 1-c'\ve^\alpha.
\end{equation}
Hence, by Lemma \ref{lemapprox}, if $\ve$ is small enough and $\delta\leq\ve$, there exists a function $u_Q$ 
 on $\Omega$ and a non-negative Borel function $f_Q$ with $f_Q\leq c\,\chi_{E_Q}$ 
 such that
$$u_Q(x) = \int_{E_Q} f_Q\,d\omega^x,$$
satisfying
$$\int_{V_Q} |\nabla u_Q(x)|^2\,\dist(x,\partial\Omega)\,dx\geq c\,r(V_Q)^n.$$

Now let $\AZ$ denote the set of sequences $\{(a_{Q}):Q\in \bigcup_{k=1}^m \LD_k(R),\, a_{Q}=\pm 1\}$, and consider a probability measure $\lambda$ on $\AZ$ that assigns equal probability to $1$ and $-1$. For $a\in \AZ$, set
\[
u_{a}(x)= \sum_{k=1}^m \,\sum_{Q\in\LD^k(R)} a_{Q}\,u_Q.\]
Notice that the set $E_Q$ is contained out of the low density cubes from $\LD(Q)$. Hence, by construction, it turns out that
the sets $E_Q$, for $Q\in\LD^k(R)$, $k\geq1$, are pairwise disjoint.
This implies that the functions
$u_a$
are uniformly bounded by some fixed constant on $\Omega$. Indeed, by the definitions of the functions $u_a$ and $u_Q$,
\begin{equation}\label{eqhm12}
|u_a(x)| \leq  \int \sum_{k=1}^m \,\sum_{Q\in\LD^k(R)}  |a_Q|\,f_Q\,\chi_{E_Q}\,d\omega^x
\leq c\,\sum_{k=1}^m \,\sum_{Q\in\LD^k(R)}\hm^{x}(E_Q)\leq  c.
\end{equation}

\vv
$\bullet$ Suppose first that the assumption (b) in Theorem \ref{teo1} holds.
Let $B(R)$ be some big ball concentric with $R$, with radius comparable to $\ell(R)$, which contains
the sets $V_Q$, $Q\in\LD^k(R)$, $k=1,\ldots,m$.
Since these sets have bounded overlap, by (c) and orthogonality  we get 
\begin{align*}
\mu(R)
& \approx \ell(R)^{n}
 \gtrsim\int \ell(R)^{n} \|u_{a}\|_{\infty}^2\,d\lambda(a) \\
&  \gtrsim \iint_{B(R)} |\grad u_{a}(x)|^{2} \,\delta_\Omega(x)\,dx\, d\lambda(a)\\
& = \int_{B(R)}\int  \Bigl|\sum_{Q}a_Q \grad u_Q(x) \Bigr|^{2}\,  d\lambda(a)\,\delta_\Omega(x)\,dx \\
& = \int_{B(R)} \sum_{Q} |\grad u_Q(x)|^{2} \,\delta_\Omega(x)\,dx\\
& \geq \sum_{Q} \int_{V_Q}  |\nabla u_Q(x)|^{2}\,\delta_\Omega(x)\, dx
\gtrsim \sum_{Q} \ell(Q)^{n} \approx \sum_{Q} \mu(Q),
\end{align*}
where the sums above run over $Q\in \bigcup_k \LD_k(R)$.
This yields the first assertion of the lemma in this case.

\vv
$\bullet$ Suppose now that the hypothesis (a) in Theorem \ref{teo1} holds, i.e., that for all $\varepsilon_0  > 0$ 
every bounded $L$-harmonic function on $\Omega$ is $\varepsilon_0$-approximable. 
So, for some $\ve_0>0$ small enough to be chosen below, and $u$ and $a\in\AZ$ as above, let 
 $\varphi_a =\varphi_{a,\, R}\in W^{1,1}(B(R) \cap \Omega)$ such that 
$\|u_a - \varphi_a\|_{L^{\infty}(B(R) \cap \Omega)} < \varepsilon_0$ and 
\begin{equation}\label{eqcc23}
\int_{B(R) \cap \Omega} |\nabla \varphi_a(y)|\, dy \leq C\,\mu(R),
\end{equation}
where $B(R)$ is as above too.
Recall that from \rf{eq**4} we know that
$$|u_Q(x)- u_Q(y)|\gtrsim 1,\quad \mbox{for all $x\in V_Q^1$ 
and all $y \in V_Q^2$.}$$
Hence we deduce that
$$|m_{V_Q^1} u_Q - m_{V_Q^2} u_Q|\gtrsim 1,$$
for all $Q\in \bigcup_{k=1}^m \LD_k(R)$.
By Kintchine's inequality, we have
\begin{align*}
1\lesssim |m_{V_Q^1} u_Q - m_{V_Q^2} u_Q|&\leq
\Bigl(\sum_P|m_{V_Q^1} u_P - m_{V_Q^2} u_P|^2\Bigr)^{1/2}\\
& \approx \int_\AZ \Bigl|\sum_P a_P\bigl(m_{V_Q^1} u_P - m_{V_Q^2} u_P\bigr)\Bigr|\,d\lambda(a)\\
&= \int_\AZ \Bigl|m_{V_Q^1} u_a - m_{V_Q^2} u_a\Bigr|\,d\lambda(a)\\
& \leq \int_\AZ \Bigl|m_{V_Q^1} \vphi_a - m_{V_Q^2} \vphi_a\Bigr|\,d\lambda(a) + 2\ve_0
,
\end{align*}
where the sums run over $P\in \bigcup_{k=1}^m \LD_k(R)$.
Hence, if $\ve_0$ is small enough we obtain
$$1\lesssim \int_\AZ \Bigl|m_{V_Q^1} \vphi_a - m_{V_Q^2} \vphi_a\Bigr|\,d\lambda(a)$$
for each $Q$.
Thus, integrating on $V_Q$ and summing over $Q$, by Poincar\'e's inequality and the assumption (a) we obtain
\begin{align*}
\sum_Q \mu(Q) & \lesssim \sum_Q\ell(Q)^n \lesssim \sum_Q\int_{V_Q} \int_\AZ \frac1{\ell(Q)}\,\Bigl|m_{V_Q^1} \vphi_a - m_{V_Q^2} \vphi_a\Bigr|\,d\lambda(a)\,
dx\\
&
\lesssim \sum_Q \int_\AZ\int_{V_Q} \bigl|\nabla \vphi_a \bigr|\,
dx\,d\lambda(a)
\leq\int_\AZ\int_{B(R)} \bigl|\nabla \vphi_a \bigr|\,
dx\,d\lambda(a)
\stackrel{\eqref{eqcc23}}{\lesssim} \mu(R).
\end{align*}
 This  completes the proof of the first assertion of the lemma.
\vvv

The second estimate in the lemma follows from the fact that if $Q\in B_L^m(R)$, then $x$ belongs to $m$ different cubes $Q\in\LD^k(R)$, $k=1,\ldots,m$. So
$$\sum_{k=1}^m \,\sum_{Q\in\LD^k(R)}\chi_Q(x) = m,$$
and by Chebyshev,
$$\mu(B_L^m(R))\leq \frac1m\,\sum_{k=1}^m \,\sum_{Q\in\LD^k(R)}\mu(Q) \lesssim_\ve \frac1m\,\mu(R).$$

\vv
 $\bullet$ To prove that the lemma holds assuming (b') in Remark \ref{remb'} it is enough to notice that, by a small modification of the arguments, one can assume the functions
$u_a$ above to be continuous. For further details, see Remark 3.8 in \cite{GMT}. 
\end{proof}
\vv

\subsection{The end of the proof of Proposition \ref{propo1}}\label{subsec:endofpropo1}

To complete the proof of this proposition,  we will define the family $\ttt\subset\DD_\mu$ exactly as in 
Subsection 3.4 of \cite{GMT}, and then we will prove that it satisfies the packing condition 
\rf{eqpack1}, arguing as in
 \cite{GMT}.
For the convenience of the reader we will repeat here the definition of $\ttt$.
Given a cube $R\in\DD_\mu$ we let 
$$\sss(R):=\{ S \in \HD(R)\cup\LD(R):  \nexists \,\,\wt S \in \HD(R)\cup\LD(R) \,\, \textup{such that}\,\,  S \subsetneq \wt S\}.$$
Note that $\sss(R)$ is a family of pairwise disjoint cubes. We set 
$$\tree(R):=\{ Q \in \DD_\mu (R): \nexists \,\, S \in \sss(R) \,\, \textup{such that}\,\, Q\subset S\}.$$ 
 In particular, note that $\sss(R)\not\subset\tree(R)$.
Then, arguing as in Lemma 3.6 of \cite{GMT}, it follows easily that
$$
\omega^{p_R}(3Q)\approx \frac{\mu(Q)}{\mu(R)}\quad\mbox{ for all $Q\in\tree(R)$,}$$
with the implicit constant uniform on $Q$ and $R$.


Suppose first that $\partial\Omega$ is bounded and fix a cube $R_0\in\DD_\mu$ so that $\partial\Omega = R_0$  and we define the family of the top cubes in $R_0$ as follows:
first we define the families $\ttt_k$ for $k\geq0$ inductively. We set
$$\ttt_0=\{R_0\}.$$
Assuming that $\ttt_k$ has been defined, we set
$$\ttt_{k+1} = \bigcup_{R\in\ttt_k}\sss(R),$$
and then, 
\begin{equation}\label{eqtopr0}
\ttt=\ttt(R_0)=\bigcup_{k\geq0}\ttt_k.
\end{equation}
Notice that
$$\DD_\mu= \bigcup_{R\in\ttt}\tree(R),$$
and this union is disjoint.


Now, by the same arguments of Lemma 3.9 from \cite{GMT}, using Lemmas \ref{lemhdld} and \ref{lemclau}, we derive
\begin{lemma}\label{lemcarleson}
Under the assumptions of Lemma \ref{lemclau}, there exists a constant $C$ such that
for any $Q_0\in\DD_\mu(R_0)$,
\begin{equation}\label{eqpack00}
\sum_{R\in\ttt:R\subset Q_0} \mu(R)\leq C\,\mu(Q_0).
\end{equation}
\end{lemma}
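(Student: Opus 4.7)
The plan is to stratify $\ttt$ by the number of high-density stoppings along each branch of the iterative construction \rf{eqtopr0}, and then obtain geometric decay by choosing the constant $A$ large relative to the implicit constant of Lemma \ref{lemclau}.

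For each $R\in\ttt\setminus\{R_0\}$, let $P_R\in\ttt$ denote the unique $\ttt$-cube with $R\in\sss(P_R)$, and classify $R$ as HD-type if $R\in\HD(P_R)$ and LD-type otherwise. Let $h(R)$ count the number of HD-type cubes on the $\ttt$-ancestry chain from $R_0$ down to $R$ (inclusive), so $h(R_0)=0$. Set
\[
\ttt^{(k)}=\{R\in\ttt:h(R)=k\},\qquad M_k(Q_0):=\sum_{R\in\ttt^{(k)},\,R\subset Q_0}\mu(R).
\]
The goal is $\sum_{k\geq 0} M_k(Q_0)\lesssim\mu(Q_0)$. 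If $Q_0\notin\ttt$, the maximal $\ttt$-cubes $R_1,R_2,\ldots$ contained in $Q_0$ are pairwise disjoint with $\sum_i\mu(R_i)\leq\mu(Q_0)$, so the claim reduces to the case $Q_0\in\ttt$, and by relabelling we may further assume $Q_0=R_0$.

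The base case $M_0(R_0)\lesssim_\ve\mu(R_0)$ follows from the inclusion $\ttt^{(0)}\subset\{R_0\}\cup\bigcup_{j\geq 1}\LD^j(R_0)$ together with the first estimate of Lemma \ref{lemclau} (letting $m\to\infty$). For the inductive step, every $R\in\ttt^{(k+1)}$ arises as an iterated LD-descendant of a unique HD-seed $S$, where $S\in\HD(P)\cap\sss(P)$ for some $P\in\ttt^{(k)}$. Since each iterated LD step within $\ttt$ starting from $S$ lands in a subset of $\LD^j(S)$ for the appropriate $j$, Lemma \ref{lemclau} yields
\[
\sum_{R\in\ttt^{(k+1)}:\,R\subset S}\mu(R)\leq\mu(S)+\sum_{j\geq 1}\sum_{Q\in\LD^j(S)}\mu(Q)\lesssim_\ve\mu(S),
\]
while Lemma \ref{lemhdld} and the disjointness of cubes in $\HD(P)$ give $\sum_{S\in\HD(P)\cap\sss(P)}\mu(S)\leq\mu(B_H(P))\lesssim A^{-1}\mu(P)$. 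Summing over $P\in\ttt^{(k)}$,
\[
M_{k+1}(R_0)\leq C_\ve\,A^{-1}\,M_k(R_0).
\]
Choosing $A$ large so that $C_\ve A^{-1}\leq 1/2$ yields $M_k(R_0)\leq C\,2^{-k}\mu(R_0)$, and summing over $k$ gives \rf{eqpack00}.

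The main subtlety is the ordering of the parameter choices. One fixes $\ve$ (and $\delta\leq\ve$) small first, so that Lemmas \ref{lempq0}, \ref{lemapprox}, \ref{lemclau} apply with some constant $C_\ve$; crucially this $C_\ve$ does not depend on $A$, since $A$ only enters the argument through the HD density condition and Lemma \ref{lemhdld}, which is what produces the $A^{-1}$ factor. One is therefore free to choose $A$ large in terms of $C_\ve$ at the end to produce the contraction factor. No difficulty is intrinsic to the argument beyond keeping track of this ordering and of the fact that $\ch_L$-iterations inside $\ttt$ are controlled by (but not equal to) the $\LD^j$-iterations appearing in Lemma \ref{lemclau}.
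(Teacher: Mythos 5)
Your argument is correct and is essentially the argument the paper intends: the paper's proof is just a reference to \cite[Lemma 3.9]{GMT}, i.e.\ exactly the combination of Lemma \ref{lemhdld} (the $A^{-1}$ gain on high-density stops) with the Key Lemma \ref{lemclau} (packing of iterated low-density chains), which is what your stratification by the number of HD stops carries out. Your parameter bookkeeping is also consistent with the paper: the constant in Lemma \ref{lemclau} depends only on $\ve$ (and $\delta\leq\ve$), not on $A$, so fixing $\ve$ first and then taking $A\gtrsim C_\ve$ to produce the geometric decay is legitimate.
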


\vv
Arguing as in the proof of the proof
of Proposition 3.1 in \cite{GMT}, it follows that the family $\ttt$ satisfies the properties required
in the corona decomposition in Proposition \ref{propo1}.  

\vv In the case when $\partial\Omega$ is not bounded we apply a technique described in p.\ 38 of \cite{DS1}: we consider a family of cubes $\{R_j\}_{j\in J}\in\DD_\mu$ which are pairwise disjoint, whose union is all of $\supp\mu$, and which have the property 
that for each $k$ there at most $C$ cubes from $\DD_{\mu,k}$ not contained in any cube $R_j$. For each
$R_j$ we construct a family $\ttt(R_j)$ analogous to the above $\ttt(R_0)$ in connection with $R_0$,  as in \rf{eqtopr0}.
Then we set 
$$\ttt=\bigcup_{j\in J} \ttt(R_j) \cup \BB,$$
where $\BB\subset\DD_\mu$ is the family of cubes which are not contained in any cube $R_j$, $j\in J$. 
One can easily check that the family $\ttt$ satisfies all the properties from Proposition \ref{propo1}. See p.\ 38 of \cite{DS1}
for the construction of the family $\{R_j\}$ and additional details.

\vv
\subsection{Corona decomposition if elliptic measure satisfies a weak-$A_\infty$ condition}

Here we prove the direct analogue of Proposition \ref{propo1}.
\begin{propo}\label{propo2}
Let $\Omega\subset\R^{n+1}$, $n\geq2$, be an open set with $n$-AD-regular boundary satisfying the corkscrew condition and  $L \in \mathcal{L}(\Omega)$. 
Let $c_0 \in (0,1)$ small enough depending only on $n$, the AD-regularity and the ellipticity constants. Suppose that there exist $\ve \in (0,1)$ and $\ve' \in (0,c)$, where $c<1$ is the constant in \eqref{e:bourgain}, such that for every ball $B$ centered at $\d \Omega$ with $\diam(B) \leq \diam(\Omega)$ there exists a corkscrew point $x_B \in \frac{1}{2}B \cap\Omega$ with $\dist(x_B, \d \Omega)\geq c_0 r(B)$, so that for any subset $E \subset B \cap \d \Omega$, \eqref{eq:Amu1} holds.
Then $\mu$ admits a corona decomposition
$\DD_\mu=\bigcup_{R\in \ttt} \tree(R)$
so that the family $\ttt$ is a Carleson family, that is,
\begin{equation}\label{eqpack2}
\sum_{R\subset S: R\in\ttt}\mu(R)\leq C\,\mu(S)\quad \mbox{for all $S\in\DD_\mu$},
\end{equation}
and for each $R\in\ttt$ there exists a corkscrew point $p_R\in\Omega$ with 
$$c^{-1}\ell(R)\leq\dist(p_R,R)\leq \dist(p_R,\partial\Omega)\leq c\,\ell(R)$$
so that
$$
\omega^{p_R}(3Q)\approx \frac{\mu(Q)}{\mu(R)}\quad\mbox{ for all $Q\in\tree(R)$,}$$
with the implicit constant uniform on $Q$ and $R$.
\end{propo}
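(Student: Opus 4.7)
The argument mirrors the proof of Proposition \ref{propo1}: we use the same stopping families $\HD(R)$ and $\LD(R)$, construct $\ttt$ exactly as in Subsection \ref{subsec:endofpropo1}, and the only piece that must be reproved is the analogue of the Key Lemma \ref{lemclau}, since neither approximability nor the square function estimate is available here. For each $R \in \DD_\mu$ set $B_R := B(z_R, c_1\ell(R))$ as in \eqref{defbq}, so that $B_R \cap \partial\Omega \subset R$, and let $p_R := x_{B_R}$ be the corkscrew furnished by the hypothesis. Then $p_R \in \tfrac12 B_R$ with $\dist(p_R, \partial\Omega) \geq c_0 c_1 \ell(R)$, and by Bourgain's estimate (Lemma \ref{l:bourgain}) we have $\omega^{p_R}(B_R) \geq c$, hence also $\omega^{p_R}(R) \geq c$ since $B_R \cap \partial\Omega \subset R$. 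With $\HD(R)$ and $\LD(R)$ defined relative to this $p_R$ exactly as in Section \ref{secdens1}, Lemma \ref{lemhdld} continues to hold, as its proof uses only the definitions and Bourgain's estimate.

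To prove the analogue of Lemma \ref{lemclau}, I would apply the contrapositive of \eqref{eq:Amu1}: for $E \subset B_R \cap \partial\Omega$, if $\omega^{p_R}(E) > \varepsilon'\omega^{p_R}(B_R)$, then $\mu(E) > \varepsilon\mu(B_R)$. Taking $E := R \setminus B_L(R) \subset B_R \cap \partial\Omega$ (using $B_L(R) \subset R$), Lemma \ref{lemhdld} gives $\omega^{p_R}(B_L(R)) \leq \delta\,\omega^{p_R}(R)$, hence
\[
\omega^{p_R}(R\setminus B_L(R)) \geq (1-\delta)\,\omega^{p_R}(R) \geq (1-\delta)\,c \geq (1-\delta)\,c\,\omega^{p_R}(B_R),
\]
where in the last step we used $\omega^{p_R}(B_R) \leq 1$. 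Since $\varepsilon' < c$ by hypothesis, we may choose $\delta > 0$ so small that $(1-\delta)c > \varepsilon'$, and the contrapositive of \eqref{eq:Amu1} then yields $\mu(R\setminus B_L(R)) > \varepsilon\,\mu(B_R) \gtrsim \varepsilon\,\mu(R)$, giving
\[
\mu(B_L(R)) \leq (1-c_1'\varepsilon)\,\mu(R)
\]
for some absolute constant $c_1' > 0$. Applying the same argument to each $Q \in \LD^k(R)$ with its own $B_Q$ and $p_Q = x_{B_Q}$, and using the disjointness of the cubes within each layer $\LD^k(R)$, we obtain $\mu(B_L^{k+1}(R)) \leq (1-c_1'\varepsilon)\,\mu(B_L^k(R))$. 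Iterating yields the exponential decay $\mu(B_L^k(R)) \leq (1-c_1'\varepsilon)^k\mu(R)$, and summing the resulting geometric series produces the packing $\sum_{k\geq1}\sum_{Q\in\LD^k(R)}\mu(Q) \lesssim_\varepsilon \mu(R)$, which is the desired analogue of \eqref{eqg**p45}.

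With this analogue of the Key Lemma in hand, the family $\ttt$ built as in Subsection \ref{subsec:endofpropo1} satisfies the Carleson packing \eqref{eqpack2} by the same argument as in Lemma \ref{lemcarleson}, and the density comparison $\omega^{p_R}(3Q) \approx \mu(Q)/\mu(R)$ for $Q \in \tree(R)$ follows from the HD/LD stopping criteria exactly as in Lemma 3.6 of \cite{GMT}. The one subtle point — and the place where the direction of the weak-$A_\infty$ condition \eqref{eq:Amu1} matters — is the choice to place $B_R$ \emph{inside} $R$ rather than around it, so that Bourgain's estimate yields the lower bound $\omega^{p_R}(R) \geq c$. This is what allows the contrapositive of \eqref{eq:Amu1} to convert the $\omega$-smallness of $B_L(R)$ into $\mu$-smallness, and it is also precisely why the hypothesis $\varepsilon' < c$ is sharp for the argument to go through.
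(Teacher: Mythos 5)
Your overall strategy is the paper's: run the $\HD/\LD$ stopping time with pole at a corkscrew point near the center of $R$, use the weak-$A_\infty$ hypothesis to convert the $\omega$-smallness of $B_L(R)$ (Lemma \ref{lemhdld}) into $\mu$-smallness, and iterate geometrically; routing the packing through the layers $\LD^k(R)$ and Lemma \ref{lemcarleson} instead of packing the stopping generations $\sss^k(R)$ directly, as the paper does, is a harmless variation (you even get geometric decay of $\mu(B_L^k(R))$, stronger than the $1/m$ decay of the Key Lemma). However, the central step is invalid as written. The hypothesis \eqref{eq:Amu1} only concerns sets $E\subset B\cap\partial\Omega$, and you apply its contrapositive with $B=B_R$ to $E=R\setminus B_L(R)$. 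The inclusion you claim, ``$R\setminus B_L(R)\subset B_R\cap\partial\Omega$ since $B_L(R)\subset R$'', is backwards: by the choice of $c_1$ in \eqref{defbq} one has $B_R\cap\partial\Omega\subset R$, so $R$ is in general much larger than $B_R\cap\partial\Omega$ and $R\setminus B_L(R)$ is not contained in $B_R$. Since you explicitly single out the choice of the small \emph{interior} ball as the crucial point of the argument, this is a genuine gap, not a typo: the hypothesis, as stated, simply cannot be fed the set you feed it.

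The gap is repairable in two ways. (i) Keep your interior ball but apply \eqref{eq:Amu1} to $E:=(B_R\cap\partial\Omega)\setminus B_L(R)$, which \emph{is} contained in $B_R\cap\partial\Omega$: by Lemma \ref{l:bourgain} and Lemma \ref{lemhdld}, $\omega^{p_R}(E)\geq \omega^{p_R}(B_R)-\omega^{p_R}(B_L(R))\geq c-\delta\geq (c-\delta)\,\omega^{p_R}(B_R)$, so choosing $\delta<c-\ve'$ (possible since $\ve'<c$) the contrapositive of \eqref{eq:Amu1} gives $\mu(E)>\ve\,\mu(B_R)\gtrsim \ve\,\mu(R)$ by AD-regularity, and since $E\subset R\setminus B_L(R)$ you recover $\mu(B_L(R))\leq(1-c'\ve)\mu(R)$, after which your iteration and the appeal to Lemma \ref{lemcarleson} go through. (ii) Alternatively, do what the paper does: transfer \eqref{eq:Amu1} from balls to cubes by applying it on an enlarged ball $\wt B_R\supset R$ of radius comparable to $\ell(R)$, using Bourgain's estimate for the pole near the center of $R$ to get $\omega^{p_R}(R)\geq c\geq c\,\omega^{p_R}(\wt B_R)$; this yields the dyadic reformulation \eqref{eq:Amu1-dyadic}, i.e.\ that $\omega^{p_R}(F)\leq(1-\eta)\,\omega^{p_R}(R)$ forces $\mu(F)\leq(1-\ve)\,\mu(R)$ for any $F\subset R$, which applied to $F=B_L(R)$ (together with $\mu(B_H(R))\lesssim A^{-1}\mu(R)$ for $A$ large) gives $\sum_{S\in\sss(R)}\mu(S)\leq\tau\,\mu(R)$ with $\tau<1$, and the Carleson packing of $\ttt$ then follows from a two-line geometric series. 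Either repair leads to the stated proposition; as submitted, though, the proof does not stand.
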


\begin{proof}
Let $p_S$ be a Corkscrew point associated with the ball $B_S\subset S \in \DD_\mu$ and let $\wt B_S$ be a ball containing $S$ with radius comparable to $\ell(S)$.  
It is rather easy to check that \eqref{eq:Amu1} holds (with different constants) if instead of balls we have cubes in $\DD_\mu$. Indeed, let $E \subset S \in \DD_\mu$ such that $\omega^{p_S}(E) > \eta \,\omega^{p_S}(S) \geq c\,\eta\,\omega^{p_S}(\wt B_S)$, where we used \eqref{e:bourgain}. Since $ E \subset S \subset \wt B_S$, if we choose $\eta$ so that $c\,\eta \geq \ve'$ (here we use $\ve'/c <1$), then by \eqref{eq:Amu1} we have that $\mu(E) >  \ve\,\mu( \wt B_S) \geq \ve\,\mu(S)$. 
We have shown that  there exist $\eta,\ve \in (0,1)$ that  so for any $E \subset S \in \DD_\mu$,
\begin{equation*}
\mbox{if} \quad  \mu(E)\leq  \ve  \,\mu(S)  \quad\text{ then }\quad \hm^{x_{S}}(E)\leq   \, \eta \,\hm^{x_S}(S).
\end{equation*} 
and by taking $F= S\setminus E$,
\begin{equation}\label{eq:Amu1-dyadic}
\mbox{if} \quad  \hm^{x_{S}}(F)\leq   \,(1-\eta)\,\hm^{x_S}(S)  \quad\text{ then }\quad \mu(F)\leq  (1- \ve) \,\mu(S),\quad
\mbox{ for all $F\subset S$.}
\end{equation}

Let us fix $R	 \in \DD_\mu$ and let $p_R$ be a Corkscrew point associated with the ball $B_R \subset R$. We 
set $\delta=A^{-1}$ and we define $\HD(R)$, $\LD(R)$, 
$B_H(R)$, and $B_L(R)$ and in Subsection \ref{secdens1}.
Then it holds
\begin{align}
\mu(B_H(R))  &\leq C_n\,C_0^2\, A^{-1} \mu(R),\label{eq:bad2-mu}\\
\hm^{p_R}(B_L(R) )   &\leq  A^{-1} \hm^{p_R}(R),\label{eq:bad1-hm} 
\end{align}
where $C_n>0$ is a dimensional constant and $C_0$ is the constant in the AD-regularity condition. If we choose $A$ big enough so that $ A^{-1}< 1-\eta $, by \eqref{eq:Amu1-dyadic} we have that
$$\mu(B_L(R)) \leq  (1-\ve) \,\mu(R).$$
If we further assume that $C_n\,C_0^2\, A^{-1}  < \ve/2$, we obtain
\begin{equation}\label{eq:mu(stop)}
 \mu(B_H(R) \cup B_L(R)) < (1- \ve/2) \mu(R)=:\tau \mu(R).
 \end{equation}
 
Given a cube $R\in\DD_\mu$ we define the families $\sss(R)$ and $\tree(R)$  as in Subsection \ref{subsec:endofpropo1}, as well as the family $\ttt$. 
By construction, it follows that
$$
\omega^{p_R}(3Q)\approx \frac{\mu(Q)}{\mu(R)}\quad\mbox{ for all $Q\in\tree(R)$,}$$Note that $\sss(R)$ is a family of pairwise disjoint cubes and in view of \eqref{eq:mu(stop)}, it holds
\begin{equation}\label{eq:mu(st-bis)}
\sum_{S \in \sss(R)} \mu(S) \leq  \mu(B_H(R) \cup B_L(R)) <\tau \mu(R).
 \end{equation} 

From the preceding estimate the packing condition \rf{eqpack2} follows easily.
Indeed, for $R\in\ttt$, set $\sss^0(R)=\{R\}$ and, for $k\geq1$,
$$\sss^k(R) = \bigcup_{Q\in\sss^{k-1}(R)}\sss(Q),$$
so that 
$$\ttt\cap \DD_\mu(R) = \bigcup_{k\geq0}\sss(R).$$
Then we have
\begin{align*}
\sum_{Q \in \ttt:Q\subset R} \mu(Q) &= \sum_{k=0}^\infty \sum_{Q \in \sss^{k}(R)} \mu(Q) = \sum_{k=0}^\infty \sum_{S \in \sss^{k-1}(R)} \sum_{Q \in \sss(S)}\mu(Q) \\
&\leq  \sum_{k=0}^\infty  \sum_{S\in \sss^{k-1}(R)} \tau\,\mu(S) \leq \dots \leq \sum_{k=0}^\infty \tau^{k} \mu(R) \lesssim \mu(R).
\end{align*}
Thus, for a general cube $S\in\DD_\mu$, denoting by $\MM(S)$ the family of maximal cubes from $\ttt\cap\DD_\mu(S)$, we
have
$$
\sum_{Q \in \ttt:Q\subset S} \mu(Q)= \sum_{R \in \MM(S)} \sum_{Q\in\ttt:Q\subset R} \mu(Q) \lesssim \sum_{R \in \MM(S)} \mu(R)
\leq \mu(S),
$$  
which proves the packing condition \eqref{eqpack2}. 
 \end{proof}

\vv

\subsection{The mixed corona decomposition}

Consider the corona decompositions for $\omega$ and $\omega_*$ described in Proposition \ref{propo1}, with the associated pfamilies $\ttt$ and $\ttt_*$, which induce the partitions
$$\DD_\mu = \bigcup_{R\in\ttt}\tree(R),\qquad \DD_\mu = \bigcup_{R'\in\ttt_*}\tree_*(R').$$
Thus,
$$\DD_\mu = \bigcup_{R\in\ttt}\, \bigcup_{R'\in\ttt_*}\tree(R)\cap \tree_*(R').$$

 It is immediate to check
that if $\TT,\TT'\subset \DD_\mu$ are two non-disjoint trees (i.e., they contain some common cube from $\DD_\mu$), then
the family $\TT\cap\TT'$ is also a tree. That is, $\TT\cap\TT'$ satisfies the properties (1), (2), (3) stated at the beginning of
Section \ref{secorona}. Therefore, for $R\in\ttt$ and $R'\in\ttt_*$, $$\wt \tree(R,R'):= \tree(R)\cap \tree_*(R')$$
is a tree (unless this is empty).
 In this  case its root $Q$ coincides  either with $R$ or $R'$. Abusing notation we also write 
$\wt\tree(Q):=\wt \tree(R,R')$. So we have
$$\DD_\mu = \bigcup_{Q\in\wt\ttt} \wt\tree(Q), \quad \mbox{ where $\wt\ttt=\ttt\cup\ttt_*$.}
$$
By Proposition \ref{propo1} we have that $\ttt\cup\ttt_*$ satisfies the packing condition
$$
\sum_{R\subset S: R\in\wt\ttt}\mu(R)\leq 
\sum_{R\subset S: R\in\ttt}\mu(R) + \sum_{R\subset S: R\in\ttt_*}\mu(R)\leq C\,\mu(S),$$
for every $S\in\DD_\mu$.
So the following holds:

\begin{propo}\label{propo**}

Under the assumptions 
either of Theorem \ref{teo1} or Theorem \ref{teo2}, $\mu$ admits a corona decomposition
$\DD_\mu=\bigcup_{R\in \wt\ttt} \wt\tree(R)$
so that the family $\wt\ttt$ is a Carleson family, that is,
\begin{equation}\label{eqpack1**}
\sum_{R\subset S: R\in\wt\ttt}\mu(R)\leq C\,\mu(S),\quad \mbox{for all $S\in\DD_\mu$},
\end{equation}
and for each $R\in\wt \ttt$, there exist cubes $R_1,R_2\supset R$ and corkscrew points $p_{R_1},p_{R_2}\in\Omega$ with 
$$c^{-1}\ell(R_i)\leq\dist(p_{R_i},R_i)\leq \delta_\Omega(p_{R_i})\leq c\,\ell(R_i)\qquad \mbox{ for $i=1,2$,}$$
so that
$$
\omega^{p_{R_1}}(3Q)\approx \frac{\mu(Q)}{\mu(R_1)}\quad \mbox{and}\quad\omega_*^{p_{R_2}}(3Q)\approx \frac{\mu(Q)}{\mu(R_2)},$$
for all $Q\in\wt\tree(R)$,
with the implicit constants uniform on $Q$ and $R$.
\end{propo}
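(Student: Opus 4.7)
The plan is essentially bookkeeping on top of the single-measure corona decomposition already established. First I would apply the corona-decomposition result (Proposition \ref{propo1} under the hypotheses of Theorem \ref{teo1}, or Proposition \ref{propo2} under the hypotheses of Theorem \ref{teo2}) separately to $L$ and to $L^*$. This produces two Carleson families $\ttt,\ttt_*\subset\DD_\mu$ together with partitions $\DD_\mu=\bigcup_{R\in\ttt}\tree(R)$ and $\DD_\mu=\bigcup_{R'\in\ttt_*}\tree_*(R')$, each carrying the corresponding density estimates for $\omega$ and $\omega_*$ respectively, with associated corkscrew points $p_{R}$ and $p_{R'}^*$.

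Next I would form the common refinement of these two partitions. The key observation, which was flagged just before the proposition statement, is that whenever $\TT,\TT'\subset\DD_\mu$ are trees sharing a common cube, then $\TT\cap\TT'$ is itself a tree: properties (1) and (2) from Section \ref{secorona} are obvious, while property (3) (all children or none belong) is inherited because it holds for both $\TT$ and $\TT'$. Its root is whichever of the two roots of $\TT,\TT'$ is the smaller cube. Applying this to all non-empty intersections $\tree(R)\cap \tree_*(R')$ with $R\in\ttt$, $R'\in\ttt_*$ produces a partition of $\DD_\mu$ into sub-trees whose roots all lie in $\wt\ttt:=\ttt\cup\ttt_*$, and we re-index each such intersection by its root, giving $\wt\tree(Q)$ for $Q\in\wt\ttt$.

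The Carleson packing \eqref{eqpack1**} is then immediate from the union bound
$$\sum_{R\subset S,\, R\in\wt\ttt}\mu(R)\leq \sum_{R\subset S,\, R\in\ttt}\mu(R)+\sum_{R\subset S,\, R\in\ttt_*}\mu(R)\lesssim\mu(S),$$
using Proposition \ref{propo1} (or \ref{propo2}) applied to each family. For the density estimates, given $R\in\wt\ttt$ I would let $R_1$ be the unique element of $\ttt$ with $R\in\tree(R_1)$ and $R_2$ the unique element of $\ttt_*$ with $R\in\tree_*(R_2)$; then $R_1,R_2\supset R$, and the corkscrew points $p_{R_1}\in\Omega$ (associated to $L$) and $p_{R_2}\in\Omega$ (associated to $L^*$) inherit the desired properties from Proposition \ref{propo1}. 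The density estimates $\omega^{p_{R_1}}(3Q)\approx\mu(Q)/\mu(R_1)$ and $\omega_*^{p_{R_2}}(3Q)\approx\mu(Q)/\mu(R_2)$ for $Q\in\wt\tree(R)$ follow from the corresponding single-measure statements because $\wt\tree(R)\subset\tree(R_1)\cap\tree_*(R_2)$ by construction. There is no genuine obstacle here; the only mildly delicate point is checking that intersections of trees remain trees, and I would expect the actual proof to be quite short.
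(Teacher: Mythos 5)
Your proposal is correct and coincides with the paper's argument: the paper likewise applies Proposition \ref{propo1} (or \ref{propo2}) to both $\omega$ and $\omega_*$, observes that non-disjoint trees intersect in a tree whose root is one of the two original roots, sets $\wt\ttt=\ttt\cup\ttt_*$, and obtains the packing condition by the same union bound. Nothing is missing.
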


The proof follows by applying Proposition \ref{propo1} or \ref{propo2} both to $\omega$ and $\omega_*$ and considering the ``mixed corona decomposition" described above. 



\vv


\section{From the corona decomposition for elliptic measure to uniform rectifiability}\label{secur}

Our next objective consists in showing that if $\mu$ admits a corona decomposition involving elliptic measure 
such as the one in Proposition \ref{propo**}, then $\mu$ is uniformly rectifiable. Once we show this, those propositions, Theorems \ref{teo1} and \ref{teo2} will readily follow. The first idea consists in adapting the arguments by Hofmann, Le, Martell and Nystr\"om in \cite{HLMN},  which in turn are based on some of the techniques from \cite{LV}, to the case of elliptic measure. 
However, we will find when the matrix $A$ is non-symmetric, some big obstacles appear, and some additional connectivity
arguments will be required.

From now on we will assume that the matrix $A$ satisfies \rf{eqelliptic1}, \rf{eqelliptic2}, and \rf{eqelliptic3}.

\vv

\subsection{The conditions $\whsa$, $\batpp$, and $\wts$}

Following \cite{HLMN}, given $K_0\gg1$ and $0<\ve<K_0^{-6}$,
we say that a cube $Q\in\DD_\mu$ satisfies the ``$\ve$-weak half space approximation property'', and we write $Q\in \whsa_\ve$, if there is a half-space $H=H(Q)$ and a hyperplane $P=P(Q)=\partial
H$ so that
\begin{itemize}
\item $\dist(z,\supp\mu)\leq \ve\,\ell(Q)$ for every $z\in P\cap B(x_Q,\ve^{-2}\ell(Q))$,
\item $\dist(Q,P)\leq K_0^{3/2}\,\ell(Q)$, and
\item $H\cap B(x_Q,\ve^{-2}\ell(Q))\cap \supp\mu =\varnothing$.
\end{itemize}

As shown in \cite[Proposition 1.17]{HLMN},  $\mu$ is uniformly rectifiable if and only if, for every $\ve>0$, the family of cubes $Q\in\DD_\mu$ that do not satisfy the
$\whsa_\ve$ property is a Carleson family. That is, for each $S\in\DD_\mu$,
$$\sum_{Q\subset S:\,Q\not\in\whsa_\ve} \mu(Q)\leq C\,\mu(S).$$
This criterion is used then in \cite{HLMN} to show that the so-called weak $A_\infty$ property of harmonic
measure implies uniform rectifiability.

For our purposes, the preceding criterion is not suitable, and we will need a somewhat more powerful condition.
First we introduce the $\batpp_\ve$ cubes.
We say that a cube $Q\in\DD_\mu$ satisfies the condition of ``bilateral $\ve$-approximation by two parallel planes'', and we write $Q\in\batpp_\ve$,
if there are two parallel $n$-planes $P_1,P_2\subset\R^{n+1}$ such that
\begin{equation}
\label{batpp1}
\dist(z,P_1\cup P_2)\leq \ve\,\ell(Q)\mbox{ for every }z\in \supp\mu\cap B(x_Q,10\ell(Q)),\mbox{ and }
\end{equation}
\begin{equation}
\label{batpp2}
\dist(z,\supp\mu)\leq \ve\,\ell(Q)\mbox{ for every }z\in (P_1\cup P_2)\cap B(x_Q,10\ell(Q)).
\end{equation}
This condition is a variant of the $\mathsf{BAUP}$ condition from \cite[Chapter II.3]{DS2}, and
by Proposition II.3.18 in this reference, it turns out that $\mu$ is uniformly rectifiable if and only if,
given any $\ve>0$,
for each $S\in\DD_\mu$,
$$\sum_{Q\subset S:\,Q\not\in\batpp_\ve} \mu(Q)\leq C(\ve)\,\mu(S).$$

\begin{definition}\label{WTS}
We say that a cube $Q\in\DD_\mu$ is {\it weak topologically satisfactory} or $\wts$ if there are constants $A_{0},\alpha,p,t,\tau>0$ and connected sets $U_{1}(Q),U_{2}(Q),U_{1}'(Q),U_{2}'(Q)\subset A_0 B_{Q}$ so that
\begin{enumerate}
\item $\{x\in 10B_{Q}: \dist(x,E)>\tau\ell(Q)\}\subset U_{1}(Q)\cup U_{2}(Q)$,
\item $U_i(Q)\subset U_i'(Q)$ and $U_{1}'(Q)\cap U_{2}'(Q)=\varnothing$. 
\item For $i=1,2$ and all $x\in 10B_{Q}\cap E$ and $t\ell(Q)<r<10\ell(Q)$, there is a corkscrew ball $B(y,p\ell(Q))\subset U_{i}(Q)\cap B(x,r)$. 
\item For $x,y\in U_{i}(Q)$, there is a curve $\Gamma\subset U_{i}$ containing $x,y$ so that $\dist(\Gamma,E)\gec \alpha \ell(Q)$. 
\end{enumerate}
If we want to make explicit the dependence of $\wts$ on $A_{0},\alpha,p,t,\tau$, we will write instead $\wts(A_0,\alpha,p,t,\tau)$.
The property $\wts$ is a variant of the so-called {\it weak topologically nice} or $\wtn$ condition from \cite{DS2}. 

Given $a_0\geq1$, we say that the {\em compatibility condition} holds for some family $\FF\subset \wts$ if for all $P,Q\in \FF$ such that $2^{-a_{0}}\ell(Q)\leq  \ell(P)\leq \ell(Q)$, it holds that  $U_{i}(P)\cap 10B_{Q}\subset U_{i}'(Q)$.
\end{definition}

In Section \ref{secuniform} we will prove the following result.

\def\H{\HH}
\begin{propo}\label{propowtf}
Let $\mu$ be an $n$-AD-regular measure in $\R^{n+1}$. Let $\FF\subset \wts$ be some family of cubes satisfying
the compatibility condition, and suppose that for every $S\in \DD_{\mu}$
\begin{equation}\label{Bsum}
\sum_{Q\subset S \atop Q\not\in \batpp_{\ve} \cup \FF} \mu(Q) \lec \mu(S) .
\end{equation}
For appropriate choices of constants in the definitions of $\batpp_{\ve}$, $\wts$, and the compatibility condition, $\mu$ is uniformly rectifiable. 
\end{propo}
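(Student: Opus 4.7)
The strategy is to reduce to the $\baup$ characterization of uniform rectifiability proved in \cite[Chapter II.3]{DS2}: $\mu$ is uniformly rectifiable if and only if for some small $\delta>0$, the cubes $Q\in\DD_\mu$ that fail to be bilaterally $\delta$-approximated by a union of affine hyperplanes form a Carleson family. Since any cube in $\batpp_{\ve}$ is trivially in $\baup_{\ve}$ (a union of two parallel planes is a union of planes), hypothesis \rf{Bsum} reduces matters to showing that, after possibly adjusting $\ve$ and choosing $\delta=\delta(\ve,A_{0},\alpha,p,t,\tau)$ small, the family $\FF\cap (\baup_{\delta})^{c}$ is a Carleson family.

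First, I would set up a contradiction/compactness argument in the spirit of \cite[Proposition II.3.18]{DS2}. Suppose instead that there is some $S_{0}\in\DD_{\mu}$ with
\[
\sum_{Q\subset S_{0},\, Q\in\FF,\, Q\not\in\baup_{\delta}} \mu(Q) \gg \mu(S_{0}) ,
\]
for arbitrarily small $\delta$. By a standard pigeonhole plus rescaling, one produces a sequence of cubes $Q_{k}\in\FF\setminus \baup_{\delta_{k}}$ with $\delta_{k}\downarrow 0$; after translating $x_{Q_{k}}$ to the origin and rescaling $\ell(Q_{k})$ to $1$, the measures $\mu_{k}$ converge (along a subsequence) in the weak-$*$ topology of Radon measures to an $n$-AD-regular measure $\mu_{\infty}$ on $\R^{n+1}$ with support $E_{\infty}$. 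The next step is to transfer the $\wts$ structure across this blow-up: using the compatibility condition, which couples the sets $U_{i}(P),U_{i}'(Q)$ across nearby scales $2^{-a_{0}}\ell(Q)\leq \ell(P)\leq \ell(Q)$, one chains the data of finitely many ancestors and children of $Q_{k}$ inside $\FF$ to obtain, in the limit, two disjoint, connected, open sets $U_{1}^{\infty},U_{2}^{\infty}\subset\R^{n+1}\setminus E_{\infty}$ such that
\[
\{x\in\R^{n+1}:\dist(x,E_{\infty})>0\}\subset U_{1}^{\infty}\cup U_{2}^{\infty},
\]
and such that each $U_{i}^{\infty}$ satisfies a two-sided corkscrew condition at every point of $E_{\infty}$ at every scale, with uniform constants depending only on $p,t,\alpha,\tau,A_{0}$.

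Given such an $E_{\infty}$, one now invokes (a quantitative version of) the fact that an $n$-AD-regular set in $\R^{n+1}$ that locally separates $\R^{n+1}$ into exactly two corkscrew components with a uniform relative connectivity must be a hyperplane, or more weakly, must itself lie in $\baup_{\delta_{0}}$ at unit scale for some $\delta_{0}>0$. This is essentially David-Jerison/David-Semmes style topology for codimension one AD-regular sets; the two-sided corkscrew condition plus connectedness forces one-sidedness of the complement near $E_{\infty}$ and, combined with AD-regularity, gives enough local flatness to be approximated by a union of planes at a definite scale. Fixing $\delta$ smaller than this $\delta_{0}$ yields a contradiction with $Q_{k}\not\in\baup_{\delta_{k}}$ after passing to the limit.

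The hard part, as expected, is the middle step: propagating the two-sidedness through the blow-up in a way that produces a genuinely disconnected complement with uniform corkscrew constants on both sides. The compatibility hypothesis is precisely what allows the labels $1$ and $2$ of $U_{i}(Q)$ to be kept consistent among overlapping cubes of comparable scale, so that $U_{1}^{\infty}$ and $U_{2}^{\infty}$ are well-defined and connected in the limit. A second, milder obstacle is that $\wts$ only controls scales between $t\ell(Q)$ and $10\ell(Q)$; to fill in all scales around a blow-up point, I would iterate the compatibility along a chain of cubes $Q\supset Q^{(1)}\supset Q^{(2)}\supset\cdots$ all lying in $\FF$, using \rf{Bsum} together with a stopping-time argument to guarantee that such chains exist starting from a Carleson-thick subset of $S_{0}$. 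Once those technicalities are in place, the proof follows the general template of Section II.3 of \cite{DS2}, replacing $\wtn$ by the variant $\wts$ and $\baup$-type approximation by $\batpp$-type approximation as permitted by the hypothesis.
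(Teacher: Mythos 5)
Your reduction and the compactness scheme built on it have genuine gaps. First, the statement you want to extract --- that $\FF\cap(\baup_{\delta})^{c}$ (or $\FF\cap(\batpp_{\delta})^{c}$) is a Carleson family --- cannot be obtained by blowing up individual cubes, because $\wts$ is a purely topological condition at scale $\ell(Q)$ and carries no flatness whatsoever: a Lipschitz graph with large constant satisfies $\wts$ (with compatibility) at every cube yet fails $\baup_{\delta}$ at every cube once $\delta$ is small, so no per-cube implication ``$\wts\Rightarrow$ approximable by planes at unit scale'' can hold; the desired packing for such sets is true only because the set is uniformly rectifiable, i.e.\ your intermediate claim is essentially the proposition itself. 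The quantifiers in the contradiction set-up are also off: cubes failing $\baup_{\delta_{k}}$ with $\delta_{k}\downarrow0$ is a progressively \emph{weaker} piece of information (smaller $\delta$ is a stronger approximation requirement), so the blow-up limit is not constrained at all --- it could even be a hyperplane --- and no contradiction can be reached. Second, the ``middle step'' does not produce the structure you need: below a cube of $\FF$ the tree may consist entirely of $\batpp_{\ve}$ cubes whose two planes are genuinely separated, so the complement near the limit set has \emph{three} large components at those scales, not two, and chains of $\FF$-cubes through all scales (which is what iterating the compatibility condition requires) simply need not exist; hypothesis \eqref{Bsum} only controls the cubes outside $\batpp_{\ve}\cup\FF$. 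Third, the rigidity statement you invoke is false as stated: an $n$-AD-regular set whose complement has two connected pieces with two-sided corkscrews (Semmes' Condition B) need not be a hyperplane nor lie in $\baup_{\delta_{0}}$ at unit scale (again, Lipschitz graphs); the true theorem (Condition B implies uniform rectifiability, David--Jerison/David--Semmes) is a Carleson-measure statement and applying it to the blow-up limit would only tell you the limit is UR, which does not contradict the failure of $\baup$ at a single scale.

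For contrast, the paper avoids blow-ups entirely. It augments $E$ by dyadic skeletons $C_{Q}$ placed near the Carleson-packed bad cubes to obtain an AD-regular set $\wt E$, assigns to every cube a family of ``components'' according to its type ($\batpp_{\ve}$: the two half-spaces and the middle slab; $\wts$: $U_{1},U_{2}$; bad: the complement of $C_{Q}$), and proves a separated-pair lemma: for pairs of balls lying in different components, segments joining points with a common projection in a good set (selected by a maximal-function estimate) must meet $\wt E$. The proof of that lemma is an induction on scales whose whole content is the interplay you defer --- consecutive cubes $Q_{j},Q_{j+1}$ may switch between $\batpp_{\ve}$ and $\wts$, and the compatibility condition is used exactly to keep the labels of the $U_{i}$'s consistent across one such step. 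From the separated-pair lemma the paper verifies the David--Semmes local symmetry condition for $\wt E$, which yields uniform rectifiability of $\wt E$ and hence of $E$ by monotonicity of the $\beta$-number Carleson condition. Any successful argument has to confront that two-family, scale-by-scale interaction directly; it is precisely the part your outline leaves as ``technicalities.''
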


This will be the criterion we will use to prove that if $\mu$ admits a corona decomposition involving elliptic measure 
such as the one in Proposition \ref{propo1}, then $\mu$ is uniformly rectifiable. 
Our next objective consists in showing show that the existence of such corona decomposition implies that the
assumptions in Proposition \ref{propowtf} are fulfilled, and afterwards we will turn to the proof of the proposition.


\subsection{Preliminary notation and auxiliary lemmas}\label{sub95}

From now on we assume that $\mu$ admits a corona decomposition involving elliptic measure 
such as the one in Proposition \ref{propo1}.

We denote by $\WW=\WW(\Omega)$ a collection of closed dyadic Whitney cubes of $\Omega$, so that the cubes in $\WW$
cover $\Omega$, have non-overlapping interiors, and satisfy
$$8\diam(I)\leq \dist(8I,\partial\Omega)\leq \dist(I,\partial\Omega)\leq 80\,\diam(I),\quad\mbox{ for all $I\in\WW$,}$$
and
$$\diam(I_1)\approx\diam(I_2),\quad\mbox{ for $I_1,I_2\in\WW$ such that $I_1\cap I_2\neq\varnothing$.}$$
For some positive constants $k_0\ll1$ and $K_0\gg1$ to be fixed below and $Q\in\DD_\mu$, we set
$$\WW_Q = \{I\in\WW:\,k_0\ell(Q)\leq\ell(I)\leq K_0\,\ell(Q)\;\mbox{ and }\;\dist(I,Q)\leq K_0\,\ell(Q)\}.$$
We then define the Whitney region $U_Q$ associate with $Q$ as follows: for a small positive parameter $\tau \ll 1$, let 
$$I^*:=(1+\tau)I$$
be the enlarged cubes that preserve the properties of Whitney cubes. Namely,
$$\diam(I) \approx\diam(I^*)\approx \dist(I^*,\partial\Omega)\approx \dist(I,\partial\Omega).$$
Then, define the Whitney regions with respect to $Q$ by
$$U_Q = \bigcup_{I\in \WW_Q}I^*.$$
These Whitney regions may be non-connected, but the total number of connected components is at most $\#\WW_Q$, and so this is bounded above by some constant
depending on $K_0$. We denote the connected components of $U_Q$ by $\{U_Q^i\}_i$.

We consider the $L$-Green  function $G$, and some fixed cubes $R\in\wt\ttt$,  and $R_i\supset R$, $i=1,2$, given in Proposition \ref{propo**}. For such fixed cubes, we denote
$$u = \mu(R_1)\,G(p_{R_1}, \cdot),\quad u _*:=\mu(R_2)\,G(\cdot, p_{R_2}),\quad \nu = \mu(R_1)\,\omega^{p_{R_1}},\quad \nu_* = \mu(R_2)\,\omega_{*}^{p_{R_2}}.$$
Note that
$$\nu(3Q)\approx \nu_*(3Q)\approx \mu(Q), \quad\mbox{ for all $Q\in\wt\tree(R)$,}$$
by the properties of the corona decomposition.
\vv

\begin{lemma}\label{lem:failure}
There is some constant $C_2$ depending on the parameters of the corona decomposition in Proposition \ref{propo1} such that the following holds.
For each $Q\in\wt\tree(R)$ with $\ell(Q)\leq C_2^{-1}\ell(R)$ there exists $\wt Y_Q\in \Omega$ with 
$$C_2^{-1}\,\ell(Q)\leq\delta_\Omega(\wt Y_Q)\leq |\wt Y_Q-x_Q|\leq C_2\, \ell(Q)$$ such that
\begin{equation}\label{eq-*1}
\frac{\nu(3Q)}{\mu(Q)}\approx |\nabla u(\wt Y_Q)| \approx \frac{u(\wt Y_Q)}{\delta_\Omega(\wt Y_Q)}\approx1,
\end{equation}
with the implicit constant depending on the parameters of the corona decomposition in Propostion \ref{propo1}. The same estimates hold for $\nu_*$ and $u_*$, for a possibly different point $\wt Y^*_Q\in \Omega$, satisfying also $C_2^{-1}\,\ell(Q)\leq\delta_\Omega(\wt Y_Q^*)\leq |\wt Y_Q^*-x_Q|\leq C_2\, \ell(Q)$.
\end{lemma}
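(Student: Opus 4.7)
The first equivalence $\nu(3Q)/\mu(Q)\approx 1$ is immediate from the mixed corona decomposition in Proposition \ref{propo**}. The substance of the lemma is to produce a single point $\wt Y_Q$, at distance $\approx\ell(Q)$ from $\partial\Omega$, where the rescaled Green function $u=\mu(R_1)\,G(p_{R_1},\cdot)$ satisfies $u(\wt Y_Q)/\delta_\Omega(\wt Y_Q)\approx 1$ and $|\nabla u(\wt Y_Q)|\approx 1$ \emph{simultaneously}. The second comparison is the delicate one; the first is a straightforward CFMS-type computation.

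I would begin by fixing a corkscrew point $Y_Q\in\tfrac12 B_Q$ with $\delta_\Omega(Y_Q)\approx\ell(Q)$, which exists by the corkscrew condition. Since $p_{R_1}$ is a corkscrew for $R_1\supset R$, it sits at distance $\gtrsim\ell(R)\gg\ell(Q)$ from $B_Q$, so Lemma \ref{l:w>G} and the standard converse CFMS bound in AD-regular corkscrew domains (via Bourgain's estimate \ref{l:bourgain} and the maximum principle applied to $G(p_{R_1},\cdot)$ against a constant multiple of $\omega^{p_{R_1}}$ on a Whitney ball) yield $\ell(Q)^{n-1}\,G(p_{R_1},Y_Q)\approx\omega^{p_{R_1}}(3Q)$. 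Combined with $\omega^{p_{R_1}}(3Q)\approx\mu(Q)/\mu(R_1)\approx\ell(Q)^n/\mu(R_1)$ from the corona decomposition, this gives $u(Y_Q)\approx\ell(Q)$, and in particular $u(Y_Q)/\delta_\Omega(Y_Q)\approx 1$. The interior gradient estimate (Lemma \ref{lem:grad-u-regul}; the locally Lipschitz coefficients are in particular $C^\alpha$) then gives $|\nabla u(Y_Q)|\lesssim 1$. The task is to upgrade this to a two-sided estimate at a nearby point.

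The absence of Harnack chains is the main obstacle: I cannot invoke a long path along which $u$ descends from $\approx\ell(Q)$ to $\approx 0$. My plan is instead an iterative dyadic segmentation confined to the single ball $B(Y_Q,\delta_\Omega(Y_Q))\subset\Omega$. Fix $z_0\in\partial\Omega$ realizing $|Y_Q-z_0|=\delta_\Omega(Y_Q)$ and set $Y_j:=Y_Q+(1-2^{-j})(z_0-Y_Q)$, so $\delta_\Omega(Y_j)=\delta_\Omega(Y_Q)/2^j$ and the open segment $(Y_Q,z_0)$ lies in $\Omega$. With $M_j:=\sup_{[Y_j,Y_{j+1}]}|\nabla u|$, the fundamental theorem of calculus along the polygonal path $\bigcup_{j\geq 0}[Y_j,Y_{j+1}]$, combined with $u(Y_j)\to u(z_0)=0$ (by Lemma \ref{lem333} and continuity), gives
\[
1\,\approx\,\frac{u(Y_Q)}{\delta_\Omega(Y_Q)}\;\leq\;\sum_{j\geq 0}\frac{M_j}{2^{j+1}}.
\]
The boundary H\"older bound of Lemma \ref{lem333} yields $u(Y_j)\lesssim 2^{-j\alpha}\ell(Q)$, and the interior gradient estimate gives $M_j\lesssim u(Y_j)/\delta_\Omega(Y_j)\lesssim 2^{j(1-\alpha)}$. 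Hence $M_j/2^{j+1}\lesssim 2^{-j\alpha}$, a convergent geometric series, so for some $K$ depending only on $\alpha,n,\Lambda$, the tail $\sum_{j\geq K}M_j/2^{j+1}$ is at most $1/4$; this forces $\max_{0\leq j<K} M_j\gtrsim 1$. I then choose $\wt Y_Q$ on the extremal segment $[Y_{j^*},Y_{j^*+1}]$ realizing (up to a factor $2$) this supremum.

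For verification: by construction $\delta_\Omega(\wt Y_Q)\geq\delta_\Omega(Y_Q)/2^K\approx\ell(Q)$ and $|\wt Y_Q-x_Q|\lesssim\ell(Q)$, giving the required location bounds. The interior gradient estimate supplies the matching upper bound $|\nabla u(\wt Y_Q)|\lesssim u(\wt Y_Q)/\delta_\Omega(\wt Y_Q)\lesssim 1$. To obtain $u(\wt Y_Q)\approx\ell(Q)$ I would use Harnack's inequality through a chain of $O(2^K)$ balls contained entirely in $B(Y_Q,\delta_\Omega(Y_Q))\subset\Omega$ joining $Y_Q$ to $\wt Y_Q$: since $K$ is an absolute constant, this yields $u(\wt Y_Q)\approx u(Y_Q)\approx\ell(Q)$, so $u(\wt Y_Q)/\delta_\Omega(\wt Y_Q)\approx 1$. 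Thus $|\nabla u(\wt Y_Q)|\approx 1$ matches. The argument for $\wt Y_Q^*$ and $u_*$, $\nu_*$ is identical after replacing $L,\,G(p_{R_1},\cdot),\,\omega^{p_{R_1}}$ by $L^*,\,G(\cdot,p_{R_2})=G^*(p_{R_2},\cdot),\,\omega_*^{p_{R_2}}$, and invoking the $L^*$-component of the mixed corona decomposition from Proposition \ref{propo**}.
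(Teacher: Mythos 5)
Your second half---the dyadic segmentation of the segment from $Y_Q$ to its nearest boundary point, which produces a point $\wt Y_Q$ with $|\nabla u(\wt Y_Q)|\gtrsim 1$ and $\delta_\Omega(\wt Y_Q)\gtrsim\ell(Q)$, followed by a Harnack chain inside the single ball $B(Y_Q,\delta_\Omega(Y_Q))\subset\Omega$ and the interior gradient estimate for the matching upper bound---is sound, and it is essentially the argument the paper intends (it is the analogue of (5.7) in \cite{HLMN}, which the paper cites in place of a written proof). The genuine gap is in your first step: the lower bound $u(Y_Q)\gtrsim\ell(Q)$ at a \emph{prescribed} corkscrew point $Y_Q\in\frac12B_Q$. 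The ``converse CFMS'' estimate $\omega^{p_{R_1}}(3Q)\lesssim\ell(Q)^{n-1}G(p_{R_1},Y_Q)$ is not a theorem in a mere corkscrew domain with $n$-AD-regular boundary: both its standard proof and its validity rely on the Harnack chain condition, which is exactly what this paper does not assume. Bourgain's estimate plus the maximum principle only yield the opposite inequality, namely Lemma \ref{l:w>G}. Without quantitative connectivity, the corkscrew ball you fix may be essentially invisible from the pole $p_{R_1}$ (for instance, $\Omega$ may have two ``sides'' near $Q$, with $p_{R_1}$ on one side and your corkscrew ball on the other, joined only through distant or thin channels); then $G(p_{R_1},Y_Q)$ can be arbitrarily small compared with $\omega^{p_{R_1}}(3Q)/\ell(Q)^{n-1}$, and the starting point $u(Y_Q)\approx\ell(Q)$ of your chain of estimates fails.

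The correct launch---the one behind Lemma 4.24 of \cite{HLMN}, and visible in this paper in the proof of Lemma \ref{lem6262}---is to \emph{produce} the point rather than prescribe it: test $\omega^{p_{R_1}}(3Q)\approx\mu(Q)/\mu(R_1)$ against a bump function adapted to $Q$, use the representation formula \rf{eqgreen*23} together with Caccioppoli's inequality to conclude $\sup_{10B_Q}u\gtrsim\ell(Q)$, and then invoke the boundary H\"older estimate of Lemma \ref{lem333} (since $u$ vanishes continuously on $\partial\Omega$) to see that any point where $u\gtrsim\ell(Q)$ automatically lies at distance $\gtrsim\ell(Q)$ from $\partial\Omega$. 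With $Y_Q$ obtained this way, the remainder of your proof goes through verbatim. Be aware that this is not a cosmetic point in the present paper: the impossibility of pinning such points to prearranged locations is precisely why $\wt Y_Q$ and $\wt Y_Q^*$ may be different points, and it is the source of the later case distinction between $\case(2,R)$ and $\case(3,R)$ cubes and of the need for the ACF monotonicity formula.
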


To prove this one uses Lemma \ref{lem:grad-u-regul} and Harnack's inequality to obtain $ |\nabla u(\wt Y_Q)| \lesssim \frac{u(\wt Y_Q)}{\delta_\Omega(\wt Y_Q)}$. Apart from this, the proof of this lemma is basically the same as the one of Lemma 4.24 and (5.7) from \cite{HLMN}, and so we skip it.
\vv

\begin{lemma}\label{lem29} 
Let $0<\tau_0\leq1$. Given $Q\in\wt\tree(R)$ with $\ell(Q)\leq c_3\,\tau_0\,\ell(R)$,
suppose that
$$20B_Q\cap \{x:u(x)>\tau_0\,\ell(Q)\}\cap \{x:u_*(x)>\tau_0\,\ell(Q)\}\neq\varnothing.$$
If $k_0$ is chosen small enough (depending on $\tau_0$), and $K_0$ big enough, then there exists some point $Y_Q\in U_Q$ such that
$$\min\{ u(Y_Q),\, u_*(Y_Q)\}\gtrsim_{\tau_0}\,\ell(Q)\quad \text{ and } \quad|\nabla u_*(Y_Q)|\gtrsim_{\tau_0} 1.$$
\end{lemma}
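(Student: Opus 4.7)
The plan is to extract the single point $Y_Q\in U_Q$ from the hypothesis by taking a point $x_0$ where both $u$ and $u_*$ are large and pushing slightly toward its nearest boundary point along a straight segment. Boundary H\"older regularity will show $x_0$ already sits well inside $\Omega$, the one-dimensional mean value theorem along the segment will produce a point $Y_Q$ where $|\nabla u_*|\gtrsim_{\tau_0}1$, and an interior Harnack chain along the segment will transfer the lower bound on $u$ from $x_0$.

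To set things up, fix any $x_0\in 20B_Q$ with $u(x_0),u_*(x_0)>\tau_0\ell(Q)$. Since $\ell(Q)\le c_3\tau_0\ell(R)\le c_3\tau_0\ell(R_i)$ with $c_3$ small, one has $\delta_\Omega(p_{R_i})\approx\ell(R_i)\gg\ell(Q)$, so the poles lie far from $40B_Q$; the corona decomposition of Proposition \ref{propo**} combined with Lemma \ref{l:w>G} then gives the standard sup bound
\[
\|u\|_{L^\infty(40B_Q\cap\Omega)}+\|u_*\|_{L^\infty(40B_Q\cap\Omega)}\;\lesssim\; \ell(Q).
\]
Both $u$ and $u_*$ are non-negative, harmonic for the respective operators away from the poles, and vanish on $\partial\Omega\cap 40B_Q$, so the boundary H\"older estimate of Lemma \ref{lem333} produces an exponent $\alpha>0$ with $u(x_0)+u_*(x_0)\lesssim (\delta_\Omega(x_0)/\ell(Q))^{\alpha}\ell(Q)$, which together with the hypothesis forces $\delta_\Omega(x_0)\gtrsim_{\tau_0}\ell(Q)$.

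Next, take $y_0\in\partial\Omega$ realizing $d:=|x_0-y_0|=\delta_\Omega(x_0)$ and parametrize $x(t)=x_0+t(y_0-x_0)/d$, $t\in[0,d]$; the minimality of $d$ forces $\delta_\Omega(x(t))=d-t$ along the whole segment. The scalar function $t\mapsto u_*(x(t))$ drops from $>\tau_0\ell(Q)$ at $t=0$ to $0$ at $t=d$; letting $t_0$ be the first time it equals $\tau_0\ell(Q)/2$, the mean value theorem on $[0,t_0]$ produces $t^*\in(0,t_0)$ with
\[
|\nabla u_*(x(t^*))|\;\ge\;\frac{u_*(x_0)-u_*(x(t_0))}{t_0}\;>\;\frac{\tau_0\ell(Q)/2}{d}\;\gtrsim_{\tau_0}\;1,
\]
using $d\lesssim\ell(Q)$. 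Setting $Y_Q:=x(t^*)$ one obtains $u_*(Y_Q)\ge \tau_0\ell(Q)/2$, whence a second application of Lemma \ref{lem333} at $Y_Q$ gives $\delta_\Omega(Y_Q)\gtrsim_{\tau_0}\ell(Q)$. Because $\delta_\Omega(x(t))=d-t\ge \delta_\Omega(Y_Q)$ for all $t\in[0,t^*]$, the segment joining $x_0$ to $Y_Q$ lies inside a tube of thickness $\gtrsim_{\tau_0}\ell(Q)$ in $\Omega$; this supports an interior Harnack chain of length $N=N(\tau_0)$ avoiding $p_{R_1}$. Iterating Harnack's inequality for the non-negative $L$-harmonic function $u$ along the chain produces $u(Y_Q)\gtrsim_{\tau_0} u(x_0)>\tau_0\ell(Q)$. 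Finally $|Y_Q-x_Q|\lesssim\ell(Q)$ and $\delta_\Omega(Y_Q)\gtrsim_{\tau_0}\ell(Q)$, so once $k_0$ is chosen small and $K_0$ chosen large in terms of $\tau_0$, the Whitney cube through $Y_Q$ lies in $\WW_Q$, whence $Y_Q\in U_Q$.

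The main difficulty is precisely the absence of a global Harnack chain condition on $\Omega$: one cannot produce $Y_Q$ by an abstract existence argument and then hope to connect it to $x_0$ within $\Omega$. The key geometric point is that the radial segment from $x_0$ to its nearest boundary point satisfies the clean identity $\delta_\Omega(x(t))=d-t$, so the portion of the segment between $x_0$ and $Y_Q$ automatically inherits the lower bound $\delta_\Omega\gtrsim_{\tau_0}\ell(Q)$. This makes an interior Harnack chain available despite the failure of the Harnack chain condition for $\Omega$ itself, and is what enables the lower bound on $u$ at $Y_Q$.
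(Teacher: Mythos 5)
Your proof is correct and follows essentially the same route as the paper's: a mean value theorem along a segment inside the superlevel set $\{u_*>\tau_0\ell(Q)/2\}$ to locate $Y_Q$ with $|\nabla u_*(Y_Q)|\gtrsim\tau_0$, the bound $u_*\lesssim\ell(Q)$ from Lemma \ref{l:w>G} combined with the boundary H\"older decay of Lemma \ref{lem333} to get $\delta_\Omega(Y_Q)\gtrsim_{\tau_0}\ell(Q)$, a short Harnack chain to transfer the lower bound on $u$, and the choice of $k_0$ small to place $Y_Q$ in $U_Q$. The only (harmless) difference is that you run the segment from $x_0$ to its nearest boundary point, where $\delta_\Omega(x(t))=d-t$ is explicit, instead of the paper's segment $[Z_Q,x_Q]$ together with the connected component $V$ of the superlevel set.
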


\begin{proof}
Let $Z_Q\in 20 B_Q$ be such that $\min\{ u(Z_Q),\, u_*(Z_Q)\}> \tau_0\,\ell(Q)$.
Consider the open set
$\{x\in 20B_Q:\,u_*(x)>\tau_0\,\ell(Q)/2\}$, and denote by $V$ the connected component that contains $Z_Q$.
Denote by $L$ the segment $[Z_Q,x_Q]$ (recall that $x_Q$ is the center of $Q$), and let $Z'$ be the closest point in $\partial V$
which lies on $L$. Since $u_*(Z')=\tau_0\,\ell(Q)/2$, by the mean value theorem there exists some point $Y_Q\in (Z_Q,Z')$
such that
$$|\nabla u_*(Y_Q)|\geq \frac{|u_*(Z_Q)-u_*(Z')|}{|Z_Q-Z'|} \gtrsim \frac{\tau_0\,\ell(Q)}{\ell(Q)} 
= \tau_0.$$
Since $Y_Q\in V$, we also have $u_*(Y_Q)>\tau_0\,\ell(Q)/2$. 

It remains to show that  $u(Y_Q)\gtrsim_{\tau_0}\ell(Q)$ and that $Y_Q\in U_Q$. To this end,
note that by Lemma \ref{l:w>G}, $u_*(x)\lesssim\ell(Q)$ for all $x\in 40B_Q$, and hence by Lemma \ref{lem333} we have
$$u_*(x)\lesssim \left(\frac{\delta_\Omega(x)}{\ell(Q)}\right)^\alpha\,\ell(Q)
\quad\mbox{ for all $x\in 20B_Q$.}$$
Therefore,
\begin{equation}\label{eqvv0}
\dist(V,\supp\mu)\gtrsim_{\tau_0}\ell(Q).
\end{equation}
This shows that $V\subset U_Q$ if $k_0$ is chosen small enough, depending on $\tau_0$, and $K_0$ some absolute constant big enough. 
Further, by \rf{eqvv0} again and since $V$ is connected, there is Harnack chain of balls 
with radius comparable to $\dist(V,\supp\mu)$ which connects $Z_Q$ and and $Y_Q$, with the number of balls bounded 
by some constant depending on  $\tau_0$. This implies that  $u(Y_Q)\gtrsim_{\tau_0}\,\ell(Q)$.
\end{proof}
\vv

\begin{rem}
For the arguments below, it is important to note that we allow $k_0$ to depend on $\tau_0$, while $K_0$
should be consider as an absolute constant independent of $\tau_0$.
\end{rem}

We need now some additional notation. For a small parameter $\ve>0$, $Q\in\DD_\mu$, and $X,Y\in\Omega$, we write $X\approx_{\ve,Q}\!Y$ if $X$ and $Y$ may be connected by a chain of at most $\ve^{-1}$ balls $B(Z_k,\delta_\Omega(Z_k)/2)$, with $\ve^3\ell(Q)\leq\delta_\Omega(Z_K)\leq \ve^{-3}\ell(Q)$.
Then we set
$$\wt U_Q^i=\bigl\{X\in \Omega:X\approx_{\ve,Q} Y,\;\mbox{ for some $Y\in U_Q^i$}\bigr\}.$$
Note here that $\wt U_Q^i$ are enlarged versions of $ U_Q^i$ and it may happen that $\wt U_Q^i \cap \wt U_Q^j \neq \varnothing$, for $i \neq j$. Although, the overlap is uniformly controlled.  
 
For $X\in\Omega$, we set
\begin{equation}\label{eqmmm3}
B_X = \overline B\bigl(X,(1-\ve^{2M/\alpha})\,\delta_\Omega(X)\bigr),
\end{equation}
where $M\gg1$ is some constant big enough and $0<\alpha<1$ is as in Lemma \ref{lem333}.

For $Q\in\DD_\mu$ we consider the augmented regions $U_Q^{j,*}$ and $ U_Q^*$ defined as follows. We set
\begin{equation}\label{defuq1}
\WW_Q^{j,*} = \bigl\{I\in\WW: I^* \cap B_Y\neq\varnothing\;\mbox{ for some $Y\in\bigcup_{X\in\wt U_Q^j} B_X$}\bigr\},
\end{equation}
and then,  if $I^{**}:=(1+ 2\tau)I$, where $\tau\in (0,1)$ is the parameter in the definition of $I^*$, we let 
\begin{equation}\label{defuq2}
U_Q^{j,*} = \bigcup_{I\in \WW_Q^{j,*}} I^{**}\quad\mbox{and}\quad U_Q^{*}= \bigcup_j U_Q^{j,*}.
\end{equation}
Note that, by \rf{eq-*1} and a Harnack chain argument, there exist a connected component $U^{i}_Q$ such that
\begin{equation}
u(Y)\approx_{\ve,M,K_0} \delta_\Omega(Y),\quad\mbox{ for all $Y\in U_Q^{i,*}$.}
\end{equation}
The same inequalities hold in the situation of Lemma \ref{lem29} if $U_Q^i$ is the connected component of
$U_Q$ containing the point $Y_Q$ in that lemma.
Further, the upper estimates
$$u(Y)\lesssim_{\ve,M,K_0} \delta_\Omega(Y), \quad\; |\nabla u(Y)|\lesssim_{\ve,M,K_0} 1$$
hold for all $Y\in U^*_Q$, because $\nu(3Q)\lesssim \mu(Q)$ (recall the definition of $\nu$) and  Lemma \ref{l:w>G}. The same estimates are  true for $u _*$  since $\nu_*(3Q)\approx \mu(Q)$ for all $Q \not \in \HD^*(R)$. 

\vv

\subsection{Types of cubes}\label{subtype}

We need now to distinguish different types of cubes. To this end we consider constants $A_0,M\gg1$ (recall $A_0$ appears in
Definition \ref{WTS} and $M$ in \rf{eqmmm3}) and $0<\kappa_0,\theta_0,\tau_0\ll1$ to be fixed later. Further we assume that $K_0$ is big enough so that Lemma \ref{lem29} holds. We set:
\vv

\begin{itemize}
\item {\bf Type 0:} $Q\in\wt\tree(R)$, which satisfies at least one the following conditions:
\vv

\begin{enumerate}
\item  there exists some cube $P\in\DD_\mu\setminus \wt\tree(R)$ such that $\kappa_0\,\ell(Q)\leq \ell(P)\leq\kappa_0^{-1}\ell(Q)$
and $\dist(P,Q)\leq\kappa_0^{-1}\ell(Q)$; or
\vv

\item  $\sup \bigl\{ \frac{|A(y_1)-A(y_2)|}{ |y_1-y_2|}: y_1,y_2\in \kappa_0^{-1} B_Q,\, \delta_\Omega(y_i)\geq \kappa_0\,\ell(Q)
\bigr\}
>\theta_0\, \ell(Q)^{-1}$.
\end{enumerate}
\vv
\noi We write $Q\in \case(0,R)$ if (1) or (2) holds.
\vv

\item {\bf Type 1:} $Q\in\wt\tree(R)$, with $Q\not\in\case(0,R)$, such that
$$20B_Q\cap \{x:u(x)>\tau_0\,\ell(Q)\}\cap \{x:u_*(x)>\tau_0\,\ell(Q)\}\neq\varnothing$$
and, if $U_Q^i$ is the connected component of
$U_Q$ containing the point $Y_Q$ in Lemma \ref{lem29},
\begin{equation}\label{eqcase1}
\sup_{X\in \wt U^{i}_Q} \sup_{z\in B_X} |\nabla u _*(z) - \nabla u _*(Y_Q)| > \ve_0^M.
\end{equation}
 We write $Q\in \case(1,R)$ in this case.

\vv
\item {\bf Type 2:} $Q\in\wt\tree(R)$, with $Q\not\in\case(0,R)$, such that
$$20B_Q\cap \{x:u(x)>\tau_0\,\ell(Q)\}\cap \{x:u_*(x)>\tau_0\,\ell(Q)\}\neq\varnothing$$
and, if $U_Q^i$ is the connected component of
$U_Q$ containing the point $Y_Q$ in Lemma \ref{lem29},
\begin{equation}\label{eqcase2}
\sup_{X\in \wt U^{i}_Q} \sup_{z\in B_X} |\nabla u _*(z) - \nabla u _*(Y_Q)| \leq \ve_0^M.
\end{equation}
 We write $Q\in \case(2,R)$.
 \vv
 
\item {\bf Type 3:} $Q\in\wt\tree(R)$, with $Q\not\in\case(0,R)$, such that
$$20B_Q\cap \{x:u(x)>\tau_0\,\ell(Q)\}\cap \{x:u_*(x)>\tau_0\,\ell(Q)\}=\varnothing.$$
 We write $Q\in \case(3,R)$.
 
\end{itemize}

\vv
Regarding the choice of the constants above, we remark that $k_0$ depends on $\tau_0$, which will be chosen in Section \ref{secuniform}. On the other hand, recall that we ask $\ve\leq K_0^{-6}$. Further, we may choose $M=100$, say.
Further, we remark that $\theta_0$ may depend on all other constants and, in particular, we will ask $\theta_0\ll \ve_0,\tau_0,\kappa_0$.

\vv

We will now handle each type of cube in separate sections below.


\subsection{About the cubes of $\case(0,R)$}

Let us first introduce some notation and record a lemma that we will use to prove the packing condition of the $\case(0,R)$ cubes.

\begin{definition}
Given a (large) constant $M>0$ we say that two cubes $Q_1, Q_2 \in \DD_\mu$ are $M$-close if
\begin{itemize}
\item $M^{-1} \diam Q_1 \leq \diam Q_2 \leq M \diam Q_1$, and 
\item $\dist(Q_1, Q_2) \leq M (\diam Q_1 + \diam Q_2)$.
\end{itemize}
\end{definition}

The following is Lemma 3.27, page 59 in \cite{DS2}.
\begin{lemma}\label{lem:A-close-pack}
Let $\mathcal B \subset \DD$ be a family of cubes that satisfies a Carleson packing condition. Then the family
\begin{equation}\label{eq:A-close-pack}
\mathcal B_M:= \{Q \in \DD_\mu: Q \,\,\textup{is}\,\, M\textup{-close to some}\,\,Q'  \in \mathcal B\}
\end{equation}
also satisfies a Carleson packing condition with Carleson constant depending on $M$.
\end{lemma}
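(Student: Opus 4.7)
The plan is to reduce the packing estimate for $\mathcal B_M$ to the one for $\mathcal B$ by means of a ``charging map'' that sends each $Q \in \mathcal B_M$ to an $M$-close cube in $\mathcal B$. To this end, for every $Q \in \mathcal B_M$ select (arbitrarily) a cube $\phi(Q) \in \mathcal B$ that is $M$-close to $Q$. The argument will rest on three elementary observations, all consequences of AD-regularity and the dyadic structure of $\DD_\mu$:
\begin{enumerate}
\item[(i)] For every $Q'\in \mathcal B$, the set $\{Q\in \DD_\mu : Q \text{ is } M\text{-close to } Q'\}$ has cardinality at most $N_1=N_1(M,n,C_0)$. Indeed, such $Q$'s live in $O(\log M)$ dyadic generations, and on each generation AD-regularity bounds the number of cubes meeting a ball of radius $O(M\diam Q')$ by a constant depending on $M$.
\item[(ii)] If $Q$ is $M$-close to $Q'$, then $\mu(Q)\approx_M \mu(Q')$ by AD-regularity, since $\diam Q\approx_M \diam Q'$.
\item[(iii)] For $Q\subset S$ with $Q\in \mathcal B_M$, we have $\diam \phi(Q)\leq M\diam S$ and $\dist(\phi(Q),S)\leq M(1+M)\diam S$, so $\phi(Q)$ sits in an $O(M^{2})$-neighborhood of $S$.
\end{enumerate}

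Next, given $S\in \DD_\mu$, I want to cover the collection of cubes $\{\phi(Q):Q\subset S,\,Q\in \mathcal B_M\}$ by a bounded family of dyadic ancestors. Concretely, choose the dyadic generation $j_0$ with $2^{-j_0}\approx C_1 M^2 \ell(S)$ for a large enough constant $C_1$, and let $\hat S_1,\ldots,\hat S_{N_2}$ be the cubes in $\DD_{\mu,j_0}$ that meet the $C_1M^2\ell(S)$-neighborhood of $S$; by AD-regularity $N_2\leq N_2(M,n,C_0)$. By (iii) and the choice of $j_0$, every $\phi(Q)$ arising from a $Q\subset S$ in $\mathcal B_M$ is contained in some $\hat S_i$, because it is a dyadic descendant of some cube in generation $j_0$ that lies in the aforementioned neighborhood.

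Combining these ingredients,
\begin{align*}
\sum_{\substack{Q\subset S\\ Q\in \mathcal B_M}}\mu(Q)
&\stackrel{\text{(ii)}}{\lesssim_M} \sum_{\substack{Q\subset S\\Q\in \mathcal B_M}} \mu(\phi(Q))
\stackrel{\text{(i)}}{\lesssim_M} \sum_{Q'\in \phi(\mathcal B_M\cap \DD_\mu(S))}\mu(Q')\\
&\leq \sum_{i=1}^{N_2}\sum_{\substack{Q'\subset \hat S_i\\Q'\in \mathcal B}}\mu(Q')
\lesssim \sum_{i=1}^{N_2}\mu(\hat S_i)\lesssim_M \mu(S),
\end{align*}
where the second-to-last inequality uses the Carleson packing hypothesis on $\mathcal B$ applied to each $\hat S_i$, and the last one uses $\mu(\hat S_i)\approx (C_1M^2)^n\mu(S)$.

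There is really no serious obstacle here: the statement is a standard combinatorial consequence of AD-regularity, and the only delicate point is step (iii) together with the covering by $\{\hat S_i\}$, which must ensure that every image $\phi(Q)$ is contained in a dyadic cube of size comparable to $M^2\ell(S)$ living in a bounded neighborhood of $S$; this is handled by taking the ancestor generation $j_0$ large enough and invoking AD-regularity to bound $N_2$.
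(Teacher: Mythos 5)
Your argument is correct. Note that the paper does not actually prove this lemma: it is quoted verbatim from \cite{DS2} (Lemma 3.27, p.~59 of Part I), so there is no in-paper proof to compare against; what you have written is essentially the standard David--Semmes argument, namely a charging map $Q\mapsto\phi(Q)\in\mathcal B$ with bounded multiplicity (your (i)), measure comparability $\mu(Q)\approx_M\mu(\phi(Q))$ from property (c) of $\DD_\mu$ (your (ii)), and a covering of the image cubes by boundedly many dyadic cubes of sidelength $\approx_M\ell(S)$ to which the Carleson hypothesis for $\mathcal B$ is applied. All three ingredients are justified correctly, and the final chain of inequalities is sound.

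One small point you should address for completeness: when $\diam(\supp\mu)<\infty$ the generation $j_0$ with $2^{-j_0}\approx C_1M^2\ell(S)$ need not exist, namely when $C_1M^2\ell(S)>\diam(\supp\mu)$. In that case $\mu(S)\gtrsim_M \mu(\supp\mu)$ by AD-regularity, and it suffices to apply the Carleson condition for $\mathcal B$ to the top cubes of $\DD_\mu$ (whose number is bounded), so the same estimate
$\sum_{Q\subset S,\,Q\in\mathcal B_M}\mu(Q)\lesssim_M \mu(\supp\mu)\lesssim_M\mu(S)$
goes through. With that one-line remark the proof is complete.
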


\begin{lemma}\label{lemcase0}
For $S\in\wt\tree(R)$, $R\in\ttt$, we have
\begin{equation}\label{eqpack0}
\sum_{\substack{Q\subset S:\\Q\in\case(0,R)}} \mu(Q)\leq C(\kappa_0,\theta_0)\,\mu(S).
\end{equation}
\end{lemma}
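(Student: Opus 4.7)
The plan is to prove \eqref{eqpack0} by splitting $\case(0,R)$ according to its two defining conditions and packing each separately.

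For cubes $Q$ satisfying condition (1), the witness $P\notin\wt\tree(R)$ lies in some other tree $\wt\tree(R')$ with $R'\in\wt\ttt\setminus\{R\}$. The first step is to exhibit a Carleson family consisting of $\wt\ttt$ together with the stopping cubes of each tree---namely the maximal cubes of $\wt\tree(R'')$ whose children lie outside $\wt\tree(R'')$. By the tree property that either all or none of a cube's children belong to a given tree, the children of each such stopping cube are themselves in $\wt\ttt$ with total $\mu$-mass comparable to the parent's; combined with the Carleson packing of $\wt\ttt$ from Proposition \ref{propo**}, this enlarged family is itself Carleson. I will then show that each $Q$ in condition (1) is $M$-close (with $M=M(\kappa_0)$) to some member of this family by walking up the ancestor chain of $P$ to the first cube lying in $\wt\tree(R)$, which must be a stopping cube whose relevant child is a tree top. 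Lemma \ref{lem:A-close-pack} then delivers the desired packing of condition (1) cubes with constant $C(\kappa_0)$.

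For cubes $Q$ satisfying condition (2), I will use the Carleson hypothesis \eqref{eqelliptic3} on the matrix coefficients directly. Writing $F(y)$ for the integrand in \eqref{eqelliptic3}, the core step is, for each such $Q$, to produce a set $E_Q\subset\kappa_0^{-1}B_Q\cap\Omega$ of points $y_0$ at depth $\delta_\Omega(y_0)\approx\kappa_0\ell(Q)$, of $(n+1)$-Lebesgue measure $\gtrsim_{\kappa_0}\ell(Q)^{n+1}$ (whose existence follows from the corkscrew condition), on which $F(y_0)\gtrsim\theta_0/\ell(Q)$. This bound holds because the pair $y_1,y_2$ witnessing condition (2) satisfies $y_i\in B(y_0,M\delta_\Omega(y_0))$ with $\delta_\Omega(y_i)\geq\delta_\Omega(y_0)/4$ as soon as $M\gtrsim\kappa_0^{-2}$. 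Consequently $\mu(Q)\approx\ell(Q)^n\lesssim_{\kappa_0,\theta_0}\int_{E_Q}F$, and bounded overlap of $\{E_Q\}$---forced because the constraint $\delta_\Omega(y_0)\approx\kappa_0\ell(Q)$ pins down $\ell(Q)$ up to a constant factor for each $y_0$---lets me sum over $Q\subset S$ and invoke \eqref{eqelliptic3} to conclude.

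The main obstacle will be the $M$-close step in condition (1) when the witness's tree root $R'$ sits strictly inside $R$ at a scale much larger than $\ell(Q)$: there need be no stopping cube of $\wt\tree(R)$ at scale comparable to $\ell(Q)$ near $Q$. I expect to handle this either by iterating the argument on nested sub-tree roots---charging $Q$ to the smallest sub-root whose descendant tree actually reaches $Q$'s scale---or by invoking the small boundaries condition \eqref{small boundary condition} applied to $S$ to bound descendants of $S$ near any fixed sub-tree root at an appropriate scale. A subsidiary point in condition (2) is that $\kappa_0$ must be chosen after $M$; this causes no trouble since $M$ is the fixed large parameter in the definition of $\mathcal{L}_{\Car}(\Omega)$.
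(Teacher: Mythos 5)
Your treatment of condition (2) contains the one step I do not believe survives scrutiny: the parameter ordering between $\kappa_0$ and the aperture $M$ of \eqref{eqelliptic3}. To make the witnessing pair $y_1,y_2$ (which may lie anywhere in $\kappa_0^{-1}B_Q$ at depth only $\geq\kappa_0\ell(Q)$) admissible for the supremum centered at a point $y_0$ with $\delta_\Omega(y_0)\approx\kappa_0\ell(Q)$, you need $y_i\in B(y_0,M\delta_\Omega(y_0))$, i.e. $M\gtrsim\kappa_0^{-2}$. But $M$ is fixed once and for all by the hypothesis $L\in\mathcal{L}_{\Car}(\Omega)$ (large only in terms of the geometric constants), whereas $\kappa_0$ must later be taken small depending on $A_0,\tau_1,\ve_0,\dots$ in Sections \ref{secur}--\ref{sec6*}; so requiring $M\geq C\kappa_0^{-2}$ is a lower bound on $\kappa_0$ that conflicts with those requirements, and your closing remark that ``$\kappa_0$ is chosen after $M$, so no trouble'' is exactly where the circularity bites. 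Unless you prove that \eqref{eqelliptic3} at a fixed aperture self-improves to arbitrarily large apertures (a genuine chaining lemma you do not supply), the step fails as stated. The paper avoids this entirely: it first packs the family $\CC_2(4,R)$ of cubes whose coefficient oscillation is large at scale and depth comparable to $\ell(Q)$ --- for which the fixed-aperture condition \eqref{eqelliptic3} suffices, via truncated cones $\Gamma(Q,x)$, Fubini and AD-regularity --- and only then passes to aperture $\kappa_0^{-1}$ purely combinatorially, through the inclusion $\CC_2(\kappa_0^{-1},R)\subset\CC_2(4,R)_{\kappa_0^{-1}}$ and Lemma \ref{lem:A-close-pack}. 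Your direct lower bound on the Carleson integrand is fine (corkscrew set $E_Q$, depth comparison, $\kappa_0$-dependent overlap) provided you run it with $y_0$ at depth $\approx\ell(Q)$ and witnesses in $CB_Q$ at depth $\geq\ell(Q)/C$, and then do the same $M$-close enlargement; I would restructure it that way.

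For condition (1), the paper simply cites \cite[(3.28), p.~60]{DS2}, so your plan to rederive the packing from the Carleson property of $\wt\ttt$, the minimal cubes of the trees, and Lemma \ref{lem:A-close-pack} is legitimate in spirit; your observation that the first in-tree ancestor of the witness is a minimal cube (by property (3) of trees) and that its off-tree children are roots in $\wt\ttt$ is correct. But, as you concede, the argument is incomplete: that ancestor can have sidelength far larger than $\ell(Q)$, so $Q$ need not be $M(\kappa_0)$-close to any member of your augmented family, and you never treat the case $P\not\subset R$, where no ancestor of $P$ lies in $\wt\tree(R)$ at all. Your proposed repairs point in the right direction, but the complete fix needs more than the small boundaries condition applied to $S$: one must also apply \eqref{small boundary condition} to each top $R''\subset S$ that $Q$ hugs from outside at scales $\ll\ell(R'')$, and then sum $\mu(R'')$ over such tops using the packing \eqref{eqpack1**} (plus a bounded covering at scale $\ell(S)$ for nearby tops not contained in $S$). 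As written, this half is a plausible sketch rather than a proof, essentially reconstructing the content of the David--Semmes estimate the paper invokes.
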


\begin{proof}
Denote by $\CC_1(R)$ the family of cubes $Q\in\wt\tree(R)$ for which there exists some cube $P\in\DD_\mu\setminus \wt\tree(R)$ such that $\kappa_0\,\ell(Q)\leq \ell(P)\leq\kappa_0^{-1}\ell(Q)$
and $\dist(P,Q)\leq\kappa_0^{-1}\ell(Q)$; and for $A>0$ we let 
$\CC_2(M, R)$ be the family of cubes $Q\in\wt\tree(R)$  such that
$$
\epsilon_M(Q):=\sup \bigl\{|A(y_1)-A(y_2)| \cdot |y_1 -y_2|^{-1}: y_1,y_2\in M B_Q,\, \delta_\Omega(y_i)\geq M^{-1} \,\ell(Q) \bigr\}>\frac{\theta_0}{\ell(Q)}.
$$

By \cite[(3.28) on page 60]{DS2} it follows that
$$\sum_{\substack{Q\subset S:\\Q\in\CC_1(R)}} \mu(Q)\leq C(\kappa_0)\,\mu(S).$$

Next, we turn our attention to the family $\CC_2(\kappa_0^{-1}, R)$. Let $p_Q \in \Omega$ be a corkscrew point for the ball $B_Q$ so that $\dist(p_Q. \d \Omega) \geq 2 c\, \ell(Q)$ and recall that a cone $\Gamma(x,\alpha)$ with vertex $x \in \d  \Omega$ and aperture $\alpha \geq 2$ is given by 
$$\Gamma(x,\alpha) = \big\{ y \in \Omega : |x-y|\leq \alpha\,  \dist(y, \d \Omega)\big\}.$$
Note that for every $x \in Q$ and $y \in B(p_Q, c\, \ell(Q))$, we have that 
$$
|x-y| \leq 2\ell(Q) \leq 2c^{-1}\dist(y, \d \Omega),
$$
which implies that $B(p_Q, c\, \ell(Q)) \subset \Gamma(x, 2c^{-1})$. 

Set $\Gamma(x):=\Gamma(x, 2c^{-1})$ and let the respective truncated cones from above and below be
\begin{align*}
\Gamma^t(x):= \big\{ y \in \Gamma(x) :\dist(y, \d \Omega) < t\big\} \quad\textup{and} \quad\Gamma_s(x):= \big\{ y \in \Gamma(x) :\dist(y, \d \Omega) \geq s\big\}.
\end{align*}

For a point $x \in \d \Omega$ and a dyadic cube $Q \in \DD_\mu$ so that  $x \in Q$, we define the Whitney-type region relative to $Q$ and $x$ to be
$$\Gamma(Q, x):= \Gamma^{\ell(Q)}(x) \cap \Gamma_{2^{-1}c\,\ell(Q)}(x),$$

Since $B(p_Q, c\, \ell(Q)) \subset \Gamma_{2c^{-1}}(x)$  and $\dist(y, \d \Omega) \geq c \ell(Q)$ for every $y \in B(p_Q, c\, \ell(Q))$, we have that 
\begin{equation}\label{eq:coneQ-size}
|\Gamma(Q, x)| \approx |B(p_Q, c\, \ell(Q))| \approx_{c} \ell(Q)^{n+1}.
\end{equation}
Moreover it is not hard to see that if  $S \in \DD_\mu$, $x\in S$ and $y \in \Omega$, then
\begin{equation}\label{eq:cone-bdd-overlap}
\sum_{Q \in \DD_\mu: Q\subset S} \chi_{\Gamma(Q, x)}(y) \lesssim_{c} \chi_{\Gamma^{\ell(S)}(x)}(y),
\end{equation}
and also that if $x\in Q$ and $y \in \Gamma_Q(x)$,
\begin{equation}\label{eq:coeff-discr-to-cont}
\epsilon_4(Q) \leq 
\sup_{\substack{z_1,z_2\in B(y, 12c^{-1}\delta_\Omega(y)) \cap \Omega\\ \delta_\Omega(z_k)\geq \frac{1}{4}\,\delta_\Omega(y)}}
\frac{|a_{ij}(z_1) - a_{ij}(z_2)|}{|z_1-z_2|}.
\end{equation}

Then, for $S \in \wt\tree(R)$, we have that
\begin{align*}
&\sum_{\substack{Q\subset S\\ Q \in \CC_2(4, R)}} \mu(Q) \leq \theta_0^{-1} \sum_{\substack{Q\subset S\\ Q \in \CC_2(4, R)}}  \epsilon_4(Q)\,\ell(Q)\,\mu(Q)\\
 & \lesssim \int_S \sum_{Q \in \DD_\mu: Q\subset S} \chi_{Q}(x)\, \epsilon_4(Q)\, \ell(Q) \, |\Gamma(Q, x)|^{-1}\int_{\Gamma(Q, x)} \,dy\,d\mu(x)\\
 &\stackrel[\eqref{eq:coneQ-size}]{\eqref{eq:coeff-discr-to-cont}}{\lesssim} \int_S \sum_{Q \in \DD_\mu: Q\subset S} \chi_{Q}(x)\!\int_{\Gamma(Q, x)} \,\,\sup_{\substack{z_1,z_2\in B(y,12c^{-1}\delta_\Omega(y)) \cap \Omega\\ \delta_\Omega(z_k)\geq \frac{1}{4}\,\delta_\Omega(y)}}\!
\frac{|a_{ij}(z_1) - a_{ij}(z_2)|}{|z_1-z_2|} \,\frac{dy}{\delta_\Omega(y)^{n}}\,d\mu(x)\\
&\stackrel{\eqref{eq:cone-bdd-overlap}}{\lesssim} \int_S \,\int_{\Gamma^{\ell(S)}(x)}\,\,\sup_{\substack{z_1,z_2\in B(y, 12c^{-1}\delta_\Omega(y)) \cap \Omega\\ \delta_\Omega(z_k)\geq \frac{1}{4}\,\delta_\Omega(y)}}
\frac{|a_{ij}(z_1) - a_{ij}(z_2)|}{|z_1-z_2|} \,\frac{dy}{\delta_\Omega(y)^{n}}\,d\mu(x)\\
&\lesssim \int_{4 c^{-1}B_S \cap \Omega} \,\,\sup_{\substack{z_1,z_2\in B(y,12c^{-1}\delta_\Omega(y)) \cap \Omega\\ \delta_\Omega(z_k)\geq \frac{1}{4}\,\delta_\Omega(y)}}
\frac{|a_{ij}(z_1) - a_{ij}(z_2)|}{|z_1-z_2|}\,dy \stackrel{\eqref{eqelliptic3}}{\lesssim} \ell(S)^n \approx \mu(S),
\end{align*}
where in the  third to the last inequality we used Fubini  and AD-regularity of $\mu$.

It is easy to check now that $\CC_2(\kappa_0^{-1}, R) \subset \CC_2(4, R)_{\kappa_0^{-1}}$, and thus, by Lemma \ref{lem:A-close-pack}, we infer that
$$\sum_{Q \in  \CC_2(\kappa_0^{-1}, R): Q \subset S} \mu(Q) \lesssim_{\theta_0, \kappa_0} \mu(S).$$ 
\end{proof}

\vv

\subsection{About the cubes of $\case(1,R)$}

 \begin{lemma}\label{lem:packing-case1}
Under the previous assumptions and if $U^{i^*}_Q=U^i_Q$ (i.e. the component of $U_{Q}$ containing $Y_{Q}^{*}$ is the same as $U^{i}_{Q}$), the following packing condition holds:
\begin{equation}\label{eqpack2}
\sum_{\substack{Q\subset S:\\Q\in\case(1,R)}} \mu(Q)\leq C(\ve_0,M,K_0)\,\mu(S),
\quad \mbox{ for all $R\in\ttt$ and $S\in\wt\tree(R)$,}
\end{equation}
\end{lemma}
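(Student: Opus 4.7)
The plan is to establish the Carleson packing bound for Type~1 cubes by converting the pointwise oscillation condition into an integrated square-function lower bound for $u_*$, and then controlling the resulting global quantity via Caccioppoli's inequality applied to the differentiated equation together with the Carleson condition \eqref{eqelliptic3} on the coefficients.

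First, for each $Q\in\case(1,R)$, I would use the Hölder regularity of $\nabla u_*$ (obtained by applying Lemma~\ref{lem:grad-u-regul} on interior balls of radius $\approx\delta_\Omega\approx\ell(Q)$ inside $\wt U_Q^i$, together with the uniform bound $|\nabla u_*|\lesssim 1$ on $U_Q^{*,i}$) to upgrade the pointwise oscillation $|\nabla u_*(z)-\nabla u_*(Y_Q)|>\ve_0^M$ to an integrated lower bound. Since $\nabla u_*$ is $\alpha$-Hölder continuous with uniform norm, the set where $|\nabla u_*-\nabla u_*(Y_Q)|\geq \tfrac{1}{2}\ve_0^M$ contains a ball of radius $\gtrsim \ve_0^{M/\alpha}\ell(Q)$ inside $U_Q^{*,i}$. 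Using $\delta_\Omega\approx\ell(Q)$ on this region gives
$$\int_{U_Q^{*,i}} |\nabla u_*-\vec c_Q|^{2}\,\delta_\Omega(x)\,dx \;\gtrsim\; c(\ve_0)\,\ell(Q)^{n+2}, \qquad \vec c_Q:=\nabla u_*(Y_Q).$$

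Second, the hypothesis $U_Q^{i^*}=U_Q^i$ guarantees that $Y_Q$ and $Y_Q^*$ lie in a common connected Whitney component, which permits a Poincaré inequality along Harnack chains of balls joining $Y_Q$ to any $x\in U_Q^{*,i}$. This yields
$$\int_{U_Q^{*,i}} |\nabla u_*-\vec c_Q|^{2}\,\delta_\Omega\,dx \;\lesssim\; \ell(Q)^{2}\int_{U_Q^{*,+}} |\nabla^{2} u_*|^{2}\,\delta_\Omega\,dx,$$
so that $\mu(Q)\lesssim c(\ve_0)^{-1}\ell(Q)^{-2}\int_{U_Q^{*,+}} |\nabla u_*-\vec c_Q|^2\delta_\Omega dx\lesssim c(\ve_0)^{-1}\int_{U_Q^{*,+}} |\nabla^{2}u_*|^{2}\delta_\Omega\,dx$. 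Summing over $Q\in\case(1,R)\cap\DD_\mu(S)$ and using bounded overlap of the enlarged Whitney regions reduces the lemma to a Carleson estimate for the measure $|\nabla^2 u_*|^2\,\delta_\Omega\,dx$ over the Carleson region above $S$.

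Third, the hard step is establishing this Carleson bound. Differentiating $L^{*}u_{*}=0$ gives $L^{*}(\partial_k u_*)=\divv F_k$ with $F_k=-(\partial_k A^{*})\nabla u_*$. On each Whitney ball $B=B(x,\delta_\Omega(x)/4)\subset \Omega$, shifted Caccioppoli for divergence-form equations with source term gives, for any constant vector $\vec c$,
$$\int_{B/2}|\nabla^{2}u_*|^{2}\,dx \;\lesssim\; r^{-2}\int_{B}|\nabla u_*-\vec c|^{2}\,dx + \int_{B}|\nabla A|^{2}|\nabla u_*|^{2}\,dx.$$
Weighting by $\delta_\Omega\approx r$, summing over Whitney balls covering $\Omega_S$, and choosing $\vec c$ locally as the $Y_Q$-value, the first term telescopes into the integrand of Step~2 while the second term is absorbed via the Carleson condition \eqref{eqelliptic3}: since $Q\notin\case(0,R)$ we have $|\nabla A|\leq \theta_0/\ell(Q)\approx \theta_0/\delta_\Omega$ on $U_Q^{*,+}$, so $|\nabla A|^{2}|\nabla u_*|^{2}\delta_\Omega\leq \theta_0\,|\nabla A|$ and the $L^{1}$ Carleson bound on $|\nabla A|$ yields
$$\int_{\Omega_S^+} |\nabla A|^{2}|\nabla u_*|^{2}\,\delta_\Omega\,dx \;\lesssim\; \theta_0\,\mu(S).$$
Collecting all estimates and choosing $\theta_0$ small (depending on $\ve_0$, $M$, $K_0$) to close the loop without circular dependence on $\sum\mu(Q)$ establishes \eqref{eqpack2}.

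The main obstacle is precisely this last absorption: one must control the ``shifted'' term $\int|\nabla u_*-\vec c_Q|^{2}\delta_\Omega^{-1}dx$ arising from Caccioppoli without reintroducing an unhelpful $O(\mu(Q))$ bound from $|\nabla u_*|\lesssim 1$; this is where the hypothesis $U_Q^{i^*}=U_Q^i$ is used essentially, letting us leverage Harnack chains and the simultaneous normalization of $u$ and $u_*$ on the same component to pick $\vec c_Q$ so that shifted oscillations average out across scales in the Carleson sum.
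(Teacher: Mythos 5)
Your first two steps (upgrading the pointwise oscillation $|\nabla u_*(z)-\nabla u_*(Y_Q)|>\ve_0^M$ to the bound $\mu(Q)\lesssim_{\ve_0}\int_{U_Q^{*}}|\nabla^2 u_*|^2\,\delta_\Omega\,dx$ after absorbing the $\theta_0$-term coming from the coefficients) are essentially the same reduction the paper performs via Corollary \ref{cor:rescaledscauder} and Poincar\'e, and are fine, since on the relevant component $u\approx\delta_\Omega$. The genuine gap is in your third step, which is where the entire difficulty of the lemma lives. Your plan is to prove the Carleson bound $\int_{\Omega_S}|\nabla^2u_*|^2\,\delta_\Omega\,dx\lesssim\mu(S)$ by applying Caccioppoli to $L(\partial_k u_*)=\divv\bigl((\partial_kA)\nabla u_*\bigr)$ on each Whitney ball. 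This produces, besides the harmless $|\nabla A|^2|\nabla u_*|^2\delta_\Omega$ term handled by \eqref{eqelliptic3}, the oscillation term $\sum_B\delta_\Omega(B)^{-1}\int_B|\nabla u_*-\vec c_B|^2$. With the only available bound $|\nabla u_*-\vec c_B|\lesssim1$, each Whitney ball contributes $\approx\ell(Q)^n\approx\mu(Q)$, so the sum over the tree is $\sum_{Q\subset S}\mu(Q)$, which diverges over scales and is exactly the quantity you are trying to bound. In other words, your scheme is circular: the Whitney-local oscillation sum is equivalent in difficulty to the square-function Carleson estimate itself. Your closing remark that the hypothesis $U_Q^{i^*}=U_Q^i$ and Harnack chains let the ``shifted oscillations average out across scales'' is an assertion, not an argument; no cancellation mechanism is identified, and none is available at the level of Whitney-by-Whitney Caccioppoli.

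The paper closes this step (Proposition \ref{propo4.8}) by a completely different, global device: the weight in the square function is not $\delta_\Omega$ but the Green function $u$ itself, and one integrates by parts in the whole sawtooth $\Omega^*_{\wt\tree(S)}$ with a partition of unity $\{\eta_Q\}$. Writing $\int A\nabla\partial_1u_*\cdot\nabla\partial_1u_*\,u\,\eta_Q$ as $J_1(Q)-J_2(Q)$, the terms $J_1(Q)$ are controlled by the Carleson condition \eqref{eqelliptic3} because $u_*$ is $L$-harmonic, while for $J_2(Q)$ one uses the algebraic identity $2J_2(Q)=\int A\nabla[(\partial_1u_*)^2]\cdot\nabla\eta_Q\,u-\int A^*\nabla u\cdot\nabla\eta_Q\,(\partial_1u_*)^2$, valid precisely because $u$ is $L^*$-harmonic; this collapses the bulk contribution to terms supported where $\nabla\bigl(\sum_Q\eta_Q\bigr)\neq0$, i.e.\ near $\partial\Omega^*_{\wt\tree(S)}$, which are then summed using the $n$-AD-regularity of sawtooth boundaries. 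Replacing $u$ by $\delta_\Omega$, as you do, destroys this mechanism, since $\delta_\Omega$ solves no equation and the crucial cancellation is lost. To repair your proof you would need to reinstate the Green-function weight and the double use of harmonicity ($u_*$ for $L$, $u$ for $L^*$), i.e.\ essentially reprove Proposition \ref{propo4.8}.
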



To prove this lemma we need some additional auxiliary results.


\vv

\begin{lemma}\label{lem:grad-Moser}
Let us assume that  $u\in W^{1,2}(B(0,1))$ is a weak solution of $Lu=0$. If $a_{ij} \in \lip(B(0,1))$, for $1\leq i,j\leq n+1$, then for any $\vec \beta \in \R^{n+1}$ and any $\delta \in (0,1)$, it holds that
\begin{equation}\label{eq:grad-Moser}
\| \nabla u - \vec \beta \|_{C^\alpha(B(0, 1-\delta))}  \lesssim_{n, \delta} \| \nabla u - \vec \beta \|_{L^2(B(0,1))} +  \|(A -A(0))\vec \beta \|_{C^\alpha(B(0,1))}.
\end{equation}
\end{lemma}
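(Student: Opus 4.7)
The plan is to reduce this directly to Lemma \ref{lem:grad-u-regul} by subtracting an affine function. Specifically, I would set $w(x) := u(x) - \vec\beta \cdot x$, so that $\nabla w = \nabla u - \vec\beta$, and compute what equation $w$ satisfies. Since $Lu = 0$ in the weak sense, for any $\phi \in C_c^\infty(B(0,1))$,
\begin{equation*}
\int A\nabla w \cdot \nabla\phi \,dx = \int A\nabla u \cdot \nabla \phi \,dx - \int A\vec\beta \cdot \nabla\phi \,dx = -\int A\vec\beta \cdot \nabla \phi \,dx.
\end{equation*}
The constant vector $A(0)\vec\beta$ contributes nothing to this pairing, since $\int A(0)\vec\beta\cdot \nabla\phi\,dx = 0$, and therefore
\begin{equation*}
\int A\nabla w \cdot \nabla\phi \,dx = -\int (A - A(0))\vec\beta \cdot \nabla \phi \,dx.
\end{equation*}
This is precisely the weak formulation of $L w = \divv F$ with $F := (A - A(0))\vec\beta$.

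Now I would apply Lemma \ref{lem:grad-u-regul} to $w$. Its hypotheses are satisfied: the $a_{ij}$ are Lipschitz, hence in $C^\alpha(B(0,1))$ for every $\alpha \in (0,1)$; and $F = (A - A(0))\vec\beta$ belongs to $C^\alpha(B(0,1);\R^{n+1})$ for the same reason (it is Lipschitz and vanishes at the origin, so its $C^\alpha$ norm is finite and controlled by $\|(A - A(0))\vec\beta\|_{C^\alpha(B(0,1))}$). The lemma yields
\begin{equation*}
\|\nabla w\|_{C^\alpha(B(0,1-\delta))} \lesssim_{n,\delta} \|\nabla w\|_{L^2(B(0,1))} + \|F\|_{C^\alpha(B(0,1))},
\end{equation*}
and substituting $\nabla w = \nabla u - \vec\beta$ and $F = (A - A(0))\vec\beta$ gives exactly \eqref{eq:grad-Moser}.

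There is no real obstacle here; the proof is a one-line reduction via the natural ansatz of subtracting off an affine function. The only point to verify is that the constant-coefficient part of $A\vec\beta$, namely $A(0)\vec\beta$, has vanishing divergence, which allows the inhomogeneous right-hand side to be written in terms of the oscillation $A - A(0)$ rather than all of $A$. The sign convention in the equation $Lw = \divv F$ versus $Lw = -\divv F$ is immaterial because the estimate of Lemma \ref{lem:grad-u-regul} depends only on $\|F\|_{C^\alpha}$.
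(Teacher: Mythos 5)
Your proof is correct and is essentially identical to the paper's argument: the authors also set $w(x)=u(x)-u(0)-\vec\beta\,x^\top$, observe that $Lw=\divv A\vec\beta=\divv\bigl((A-A(0))\vec\beta\bigr)$ since the constant vector $A(0)\vec\beta$ is divergence free, and then apply Lemma \ref{lem:grad-u-regul} with $F=(A-A(0))\vec\beta$. Nothing further is needed.
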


\begin{proof}
We first define $ w(x)= u(x)-u(0)- \vec \beta\, x^\top$ and note that 
$$Lw= -\divv A \nabla u+ \divv A \vec \beta= \divv A \vec \beta= \divv (A -A(0))\vec \beta.$$
We apply Lemma \ref{lem:grad-u-regul} with $v=w$ and $F=(A -A(0))\vec \beta$ and obtain
\begin{equation*}
\| \nabla u - \vec \beta \|_{C^\alpha(B(0, 1-\delta))}  \lesssim \| \nabla u - \vec \beta \|_{L^2(B(0,1))} + \|(A -A(0))\vec \beta \|_{C^\alpha(B(0,1))},
\end{equation*}
since $\nabla w = \nabla u - \vec \beta $. 
\end{proof}

\vv

{
The rescaled and translated version of the lemma above reads as follows.
\begin{corollary}\label{cor:rescaledscauder}
Let us assume that  $u\in W^{1,2}(B(x_0,r))$ is a weak solution of $Lu=0$. If $a_{ij} \in \lip(B(x_0,r))$, for $1\leq i,j\leq n+1$, then for any $\vec \beta \in \R^{n+1}$ and any $\delta \in (0,1)$, if $B_{\delta r}:=B ( x_0 , \delta r )$ and $B_r:= B ( x_0 ,  r )$, it holds that
\begin{align*}
&\sup_{x \in B_{\delta r}  } | \nabla u - \vec \beta | + \sup_{x \neq y: \,x, y \,\in  B_{\delta r} } r^\alpha \, \frac{| \nabla u(x) - \nabla u(y)|}{|x-y|^\alpha}  \\
& \lesssim_{n, \delta} \left( \avint_{B_{ r} } | \nabla u -  \vec \beta|^2 \right)^{1/2}+ |\vec \beta| \left( \sup_{x \in B_{r} } |A(x) -A(x_0)|  +  \sup_{x \neq y: x,y \in  B_{r}  } r^\alpha \frac{|A(x) -A(y)|}{|x-y|^\alpha} \right).
\end{align*}
\end{corollary}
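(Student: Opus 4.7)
The plan is to reduce to the unit-ball case treated in Lemma \ref{lem:grad-Moser} by a standard scaling and translation. Define
\[
\tilde u(y) := \frac{1}{r}\, u(x_0 + r y), \qquad y \in B(0,1),
\]
so that $\nabla_y \tilde u(y) = (\nabla u)(x_0 + r y)$. A direct computation shows that $\tilde u \in W^{1,2}(B(0,1))$ is a weak solution of $\tilde L \tilde u = 0$, where $\tilde L = -\divv \tilde A \nabla$ and $\tilde A(y) := A(x_0 + r y)$ is uniformly elliptic with the same constants as $A$ and Lipschitz on $B(0,1)$.

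Apply Lemma \ref{lem:grad-Moser} to $\tilde u$ with the same vector $\vec\beta$ and with the parameter $1-\delta$ in place of $\delta$ (so that the inner ball becomes $B(0,\delta)$): this yields
\[
\| \nabla \tilde u - \vec\beta \|_{C^\alpha(B(0,\delta))} \lesssim_{n,\delta} \| \nabla \tilde u - \vec\beta \|_{L^2(B(0,1))} + \|(\tilde A - \tilde A(0))\vec\beta\|_{C^\alpha(B(0,1))}.
\]
Now translate each term back to the $x$ variable via $x = x_0 + r y$. On the left, $\sup_{B(0,\delta)}|\nabla\tilde u - \vec\beta| = \sup_{B(x_0,\delta r)}|\nabla u - \vec\beta|$; the H\"older seminorm in the $y$ variable picks up exactly the factor $r^\alpha$ advertised in the statement, because $|y_1-y_2| = r^{-1}|x_1-x_2|$ while $\nabla \tilde u(y_k) = \nabla u(x_k)$. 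On the right, the change of variables gives
\[
\|\nabla \tilde u - \vec\beta\|_{L^2(B(0,1))}^2 = r^{-(n+1)} \int_{B(x_0,r)} |\nabla u - \vec\beta|^2\, dx \approx \avint_{B(x_0,r)} |\nabla u - \vec\beta|^2\, dx,
\]
and the matrix term splits as $|(\tilde A(y) - \tilde A(0))\vec\beta| \leq |\vec\beta|\,|A(x) - A(x_0)|$ together with the rescaled H\"older seminorm, again producing the factor $r^\alpha$ in front of the oscillation seminorm of $A$.

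Collecting these identities gives exactly the inequality in the statement, with the implicit constant depending on $n$ and $\delta$ (inherited from Lemma \ref{lem:grad-Moser}) and on the Lipschitz bounds of $a_{ij}$ only through the right-hand side. There is no real obstacle: the argument is entirely routine bookkeeping of the scaling exponents, and the only minor point to watch is that the $C^\alpha$ seminorm scales with the factor $r^\alpha$ while the sup norm is scale invariant, which is precisely what produces the $r^\alpha$ weight on the H\"older seminorms (and no weight on the sup terms) on both sides of the inequality.
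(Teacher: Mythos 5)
Your rescaling argument is correct, and it is exactly what the paper intends: Corollary \ref{cor:rescaledscauder} is stated there simply as the rescaled and translated version of Lemma \ref{lem:grad-Moser}, with no further proof given. Your bookkeeping of the exponents (the $r^\alpha$ weight on the H\"older seminorms, the average over $B_r$ from the $L^2$ norm, and the splitting of the matrix term) matches the statement precisely.
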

}
\vv

\begin{lemma}\label{lem2der}
If $u \in W^{1,2}(B(0,1)) $ is a weak solution of $Lu=0$ in $B(0,1)$, $A \in \lip(B(0,1))$, and $0<\delta<1$, then $u \in W^{2,2}(B(0,1-\delta))$ and 
$$\|\nabla^2u\|_{L^{2}(B(0,1-\delta))} \lesssim _\delta \|\nabla u\|_{L^{2}(B(0,1))} +\|u\|_{L^{2}(B(0,1))},$$
with the implicit constant depending also on the Lipschitz norm of the coefficients $a_{ij}$.
\end{lemma}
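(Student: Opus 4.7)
The plan is to prove this by Nirenberg's difference quotient method, which is the standard technique for obtaining interior $W^{2,2}$-regularity for divergence-form elliptic equations with Lipschitz coefficients. For a small parameter $h\neq 0$ and a coordinate direction $e_k$, $1\le k\le n+1$, define the translation $\tau_h f(x)=f(x+he_k)$ and the difference quotient $D_k^h f(x)=h^{-1}(\tau_h f(x)-f(x))$. The key observation is that translating the weak formulation of $Lu=0$ by $he_k$ gives, for any test function $\varphi\in C_c^\infty(B(0,1))$ supported well inside the ball (so that the translate lies inside $B(0,1)$ for $|h|$ small),
\[
\int A(x)\nabla u(x)\cdot\nabla\varphi(x)\,dx=0 \quad\text{and}\quad \int A(x+he_k)\nabla u(x+he_k)\cdot\nabla\varphi(x)\,dx=0.
\]
Subtracting and rearranging using the product rule for difference quotients yields the identity
\[
\int A(x+he_k)\,\nabla(D_k^h u)(x)\cdot\nabla\varphi(x)\,dx = -\int (D_k^h A)(x)\,\nabla u(x)\cdot\nabla\varphi(x)\,dx.
\]

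Next I would run a Caccioppoli-type argument with the test function $\varphi=\eta^2\,D_k^h u$, where $\eta\in C_c^\infty(B(0,1))$ is a standard cutoff with $\eta\equiv 1$ on $B(0,1-\delta)$, $0\le\eta\le1$, $|\nabla\eta|\lesssim \delta^{-1}$, and $|h|<\delta/4$ so that all translates remain in $B(0,1)$. Using the uniform ellipticity of $A(\cdot+he_k)$ on the left and the Cauchy--Schwarz/Young inequality on the right, together with the uniform bound $\|D_k^h A\|_{L^\infty}\le \|A\|_{\lip(B(0,1))}$, one absorbs the $\eta^2|\nabla D_k^h u|^2$ terms and obtains
\[
\int \eta^2 |\nabla(D_k^h u)|^2\,dx \lesssim_\delta \|\nabla u\|_{L^2(B(0,1))}^2 + \|D_k^h u\|_{L^2(\mathrm{supp}\,\eta)}^2.
\]
Since the translates of $\mathrm{supp}\,\eta$ stay inside $B(0,1)$ and $\|D_k^h u\|_{L^2(\mathrm{supp}\,\eta)}\le\|\partial_k u\|_{L^2(B(0,1))}\le\|\nabla u\|_{L^2(B(0,1))}$ for $W^{1,2}$ functions (by the standard difference-quotient lemma), the right-hand side is bounded uniformly in $h$ by a constant times $\|\nabla u\|_{L^2(B(0,1))}^2$.

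With a uniform-in-$h$ bound on $\|D_k^h(\nabla u)\|_{L^2(B(0,1-\delta))}$, the converse direction of the difference-quotient lemma gives $\partial_k\nabla u\in L^2(B(0,1-\delta))$ with the corresponding norm bound, for each $k=1,\dots,n+1$; hence $u\in W^{2,2}(B(0,1-\delta))$ and
\[
\|\nabla^2 u\|_{L^2(B(0,1-\delta))}\lesssim_\delta \|\nabla u\|_{L^2(B(0,1))}.
\]
The $\|u\|_{L^2(B(0,1))}$ term appearing in the statement is harmless (one can either invoke Caccioppoli's inequality $\|\nabla u\|_{L^2(B(0,1-\delta/2))}\lesssim_\delta\|u\|_{L^2(B(0,1))}$ first and then apply the above on the slightly smaller ball, or simply note that our bound is stronger). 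The only mildly delicate point is the algebraic manipulation leading to the discrete product rule and the careful choice of cutoff so that translates do not leave $B(0,1)$; beyond this, the estimate is routine. No subtle obstacle arises since $A\in\lip$ is exactly the regularity needed for $D_k^h A$ to remain bounded uniformly as $h\to 0$.
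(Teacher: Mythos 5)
Your argument is correct and is precisely the standard difference-quotient proof of interior $W^{2,2}$ regularity; the paper does not reprove this but simply cites Theorem 8.8 of Gilbarg--Trudinger, whose proof is exactly the argument you reconstructed. Your version even yields the slightly stronger bound without the $\|u\|_{L^{2}(B(0,1))}$ term, so nothing is missing.
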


\begin{proof}
This is an easy consequence of Theorem 8.8 in \cite{GiTr}.
\end{proof}
\vv

\subsection{Proof of Lemma \ref{lem:packing-case1}}
{
We will follow the scheme of the proof of Lemma 5.8 in \cite{HLMN}. First, 
we denote $\wt\tree(S)=\{Q\in\wt\tree(R):Q\subset S\}$, and we define the ``sawtooth regions" associated to the family $\wt\tree(S)$ by 
$$\Omega^{j,*}_{\wt\tree(S)}:= \textup{int}\Big(  \mathop{\bigcup_{Q \in \wt\tree(S)}}_{\ell(Q) \leq \ve_0^{10} \ell(R)} U_Q^{j,*} \Big) \quad \textup{and}\quad \Omega^{*}_{\wt\tree(S)}:= \bigcup_{j} \Omega^{j,*}_{\wt\tree(S)}.$$
Fix a cube $Q \in \case(1,R)\cap\wt\tree(S) =:\case(1,R,S)$ and let $z\in B_X$ and  $X\in\wt U_Q^{i}$ supremise \eqref{eqcase1}. If $\wt B_z$ stands for the dilation of the ball $B_z$ of radius $(1-\ve_0^{4M/\alpha})\delta_\Omega(z)$, then 
\begin{align*}
 \sup_{x \in \wt B_{z} } |A(x) -A(z)|  +  \sup_{x \neq y: x,y \in \wt B_{z}  } r( \wt B_{z} )^\alpha \frac{|A(x) -A(y)|}{|x-y|^\alpha} \lesssim \theta_0,
 \end{align*}
where we used that $Q \not \in \case(0,R)$ and $ (1-\ve_0^{4M/\alpha})\delta_\Omega(z) \lesssim \ell(Q)$. The same estimate holds for $B_{Y_Q}$ instead of $B_z$. Then for $\vec \beta=\vec \beta(Q)$ to be chosen momentarily, by Corollary  \ref{cor:rescaledscauder}, we have that
\begin{align*}
\ve_0^{M} &\lesssim C(\ve_0)\,\left(\frac{1}{\ell(Q)^{n+1}} \int_{\wt B_z \cup \wt B_{Y_Q}} |\nabla u _*(y) - \vec \beta|^2\, dy\right)^{1/2} +C(\ve_0)\, \theta_0\, |\vec \beta| \\
 &\leq C(\ve_0)\,\left(\frac{1}{\ell(Q)^{n+1}} \int_{U^{{i},*}_Q} |\nabla u _*(y) - \vec \beta|^2\, dy\right)^{1/2} + C(\ve_0)\, \theta_0 \,|\vec \beta|.
\end{align*}
Therefore, if we  choose 
$$\vec \beta  =\frac{1}{ |U^{{i},*}_Q|} \int_{U^{{i},*}_Q}\nabla u _*,$$
by Poincar\' e inequality and the fact that $u(y)\approx \ell(Q)$ and $|\nabla u _*(y)| \lesssim 1$ for all $y\in U^{{i},*}_Q$,
\begin{align*}
\ve_0^{2M} \lesssim C'(\ve_0)\,\ell(Q)^{-n}  \int_{U^{{i},*}_Q} |\nabla^2 u _*(y)|^2\, u(y) \,dy + C'(\ve_0)\,\theta_0.
\end{align*}
If we further pick $\theta_0$ small enough depending on $\ve_0$,} it is enough to show
$$
\sum_{Q \in \case(1,R,S)} \mu(Q) \lesssim  \sum_{Q \in \case(1,R,S)}  \int_{U^{*}_Q} |\nabla^2 u _*(y)|^2\, u(y) \,dy \lesssim \mu(S).
$$
In fact, since the augmented Whitney regions $U^{*}_Q$ have bounded overlap, we have that
$$\sum_{Q \in \case(1,R,S)}   \int_{U^{*}_Q} |\nabla^2 u _*(y)|^2\, u(y) \,dy \lesssim \int_{\Omega^{*}_{\wt\tree(S)}} |\nabla^2 u _*(y)|^2\, u(y) \,dy,$$ 
and thus, it suffices to apply the following result:




\begin{proposition}\label{propo4.8}
Under the above assumptions and notation, we have
\begin{equation}\label{eqnoprov}
\int_{\Omega^{*}_{\wt\tree(S)}} |\nabla^2 u _*(y)|^2\, u(y) \,dy\lesssim \mu(S).
\end{equation}
\end{proposition}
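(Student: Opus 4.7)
The plan is to establish \eqref{eqnoprov} via a weighted Caccioppoli-type argument at the level of $\nabla^{2}u_{*}$, driven by both equations $L^{*}u_{*}=0$ and $Lu=0$, with the Carleson condition \eqref{eqelliptic3} on the coefficients absorbing the commutator errors produced by the non-symmetry of $A$. Throughout, I would rely on the bounds from Subsection \ref{sub95}: on $\Omega^{*}_{\wt\tree(S)}$ one has $u(y)\approx\delta_{\Omega}(y)\approx u_{*}(y)$ and $|\nabla u(y)|, |\nabla u_{*}(y)|\lesssim 1$, while $u(y)\approx\ell(I)$ on each Whitney cube $I$ of the sawtooth.

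Since $a_{ij}\in\Lip_{\loc}(\Omega)$, differentiating $L^{*}u_{*}=0$ coordinate-wise shows that $v_{k}:=\partial_{k}u_{*}$ solves, in the weak sense, the elliptic equation
\[
L^{*}v_{k} = \divv\bigl(\partial_{k}A^{*}\,\nabla u_{*}\bigr).
\]
I would test this against $\phi = u\,\eta^{2}\,v_{k}$, where $\eta$ is a smooth cutoff equal to $1$ on $\Omega^{*}_{\wt\tree(S)}$, compactly supported in a slight enlargement, with $|\nabla\eta|\lesssim\delta_{\Omega}^{-1}$ on the boundary layer of the sawtooth. Summing over $k$ and invoking the ellipticity of $A^{*}$ produces
\[
\int u\,\eta^{2}|\nabla^{2}u_{*}|^{2} \lesssim |T_{1}|+|T_{2}|+|T_{3}|,
\]
where $T_{1}=\sum_{k}\int\eta^{2}v_{k}(A^{*}\nabla v_{k})\cdot\nabla u$ is a cross-term produced by the weight $u$, $T_{2}=\sum_{k}\int u\,\eta\,v_{k}(A^{*}\nabla v_{k})\cdot\nabla\eta$ is a cutoff contribution, and $T_{3}$ collects the terms coming from the distributional source $\divv(\partial_{k}A^{*}\nabla u_{*})$, hence carries the oscillation of $A$.

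The term $T_{1}$ is the heart of the argument: using the pointwise identity $v_{k}(A^{*}\nabla v_{k})\cdot\nabla u=\tfrac{1}{2}\nabla(v_{k}^{2})\cdot A\nabla u$ (valid because $(A^{*})_{ij}=a_{ji}$), integrating by parts, and invoking $\divv(A\nabla u)=0$ to kill the principal piece, one finds
\[
T_{1} = -\sum_{k}\int v_{k}^{2}\,\eta\,\nabla\eta\cdot A\nabla u,
\]
a term again of cutoff type. Both $T_{2}$ and the rewritten $T_{1}$ are then handled by decomposing $\supp\nabla\eta$ into Whitney cubes $I$ lying on the boundary of $\Omega^{*}_{\wt\tree(S)}$ (where $|\nabla\eta|\lesssim\ell(I)^{-1}$, $u\approx\ell(I)$, and $|v_{k}|,|\nabla u|\lesssim 1$); after Young's inequality with a small parameter, they contribute $\lesssim\sum_{I}\ell(I)^{n}$ summed over boundary Whitney cubes, which packs into $\mu(S)$ because such cubes project with bounded overlap either onto the root cube at scale $\ell(S)$ or onto the stopping cubes of the tree, the latter packing via \eqref{eqpack1**}. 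The source term $T_{3}$ is handled by Cauchy--Schwarz and \eqref{eqelliptic3}; letting $\eta\uparrow\chi_{\Omega^{*}_{\wt\tree(S)}}$ then yields \eqref{eqnoprov}.

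The main obstacle is exactly the non-symmetry of $A$: in the symmetric case $A=A^{*}$ the identity used for $T_{1}$ simplifies directly and no manipulation with $\divv(A\nabla u)=0$ is needed, whereas for non-symmetric $A$ one is forced to interleave both equations $Lu=0$ and $L^{*}u_{*}=0$ and rely on \eqref{eqelliptic3} to absorb the antisymmetric-part commutators that appear when trading derivatives between $u$ and $u_{*}$. Carefully tracking the boundary and commutator errors and verifying that they all pack into $\mu(S)$ via the corona-decomposition structure is the technical heart of the proof.
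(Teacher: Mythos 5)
Your argument is essentially the paper's proof: the paper likewise differentiates the equation for $u_*$, tests against $u\,\eta_Q\,\partial_1 u_*$ (using a partition of unity $\{\eta_Q\}$ over Whitney regions and a truncation $\Lambda(N)$ in place of your single global cutoff), kills the principal cross term exactly as you do by transposing the matrix and invoking the equation satisfied by $u$ (note the labels are swapped there: $Lu_*=0$ and $L^*u=0$, which is harmless by symmetry), and absorbs all $\nabla A$ terms via the Carleson condition \eqref{eqelliptic3}. The one point to tighten is your packing of the boundary Whitney cubes: they do not project with bounded overlap onto the stopping cubes (along the lateral boundary there are cubes at all scales over a single stopping cube), and the paper instead gets $\sum_I \ell(I)^n\lesssim \HH^n\bigl(\partial\Omega^{*}_{\wt\tree(S)}\bigr)\lesssim\mu(S)$ from the $n$-AD-regularity of sawtooth boundaries quoted from \cite{HMM2}, which is the clean way to justify exactly the estimate you need.
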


This proposition is due to Hofmann, Martell and Toro and is already proven in \cite{HMT}. However we prefer to give here an alternative argument, for the reader's convenience.

\begin{proof}
We introduce a partition of unity $\{\eta_Q\}_Q$ on $\Omega$ so that the following hold:
\begin{itemize}
\item $\sum_{Q\in\DD_\mu} \eta_Q (y)=1$, where $y \in \Omega$,
\item $\supp \eta_Q \subset U_Q^{**}$, where $U_Q^{**}$ is the neighborhood of $U_Q^*$ given by
$$U_Q^{**}= \bigcup_{I\in \bigcup_ j \WW_Q^{j,*}} (1+3\tau)I$$
(compare to the definition of $U_Q^*$ in \rf{defuq2}).

\item $\eta_Q \in C^\infty_c(\R^{n+1})$, with $0 \leq \eta_Q \leq 1$, $\eta_Q \geq c$ on $U^*_Q$ and $\|\nabla \eta_Q\|_\infty \lesssim \ell(Q)^{-1}$.
\end{itemize}

Note now that if $U_Q^{***}= \bigcup_{I\in \bigcup_ j \WW_Q^{j,*}} (1+4\tau)I$ and $U_Q^{****}= \bigcup_{I\in \bigcup_ j \WW_Q^{j,*}} (1+5\tau)I$, then by Harnack's inequality we have that 
\begin{equation}\label{eq:bound-u-3*}
\max(u(y),u _*(y))\lesssim \delta(y)\approx \ell(Q), \,\,\textup{ for all}\,\, y \in U_Q^{****}.
\end{equation}
Furthermore, if $U_Q^{j,***}$ is a connected component of $U_Q^{***}$, by Lemma \ref{lem:grad-u-regul}, Caccioppoli's inequality, Harnack's inequality and \eqref{eq:bound-u-3*},
\begin{align*}
|\nabla u(y)|^2 & \lesssim \ell(Q)^{-n+1}\, \| u\|^2_{L^2(U_Q^{j,****})} \approx  \frac{u(y)}{\delta_\Omega(y)}\lesssim 1, \,\,\textup{ for all}\,\, y \in U_Q^{j,***}.
\end{align*}
Notice that, arguing as above, one can prove that $|\nabla u _*(y)| \lesssim 1$, for all $ y \in U_Q^{j,***}$. Therefore, 
\begin{equation}\label{eq:bound-gradu-i2*}
\max\Big(\sup_{U_Q^{***}}|\nabla u(y)|,\sup_{U_Q^{***}}|\nabla u _*(y)|\Big) \lesssim 1.
\end{equation}
 Moreover, since $L \d_k u _*= \divv [\partial_k A \nabla u _*]$ for any $k=1,2,\dots, n+1$, by Caccioppoli's inequality for inhomogeneous elliptic equations (see \cite[Theorem 4.4]{GiaMa}, for example) and \eqref{eq:bound-gradu-i2*}, 
\begin{align}\label{eq:grad2ubound}
 \int_{U_Q^{**}}| \nabla \partial_k u _*|^2 &\lesssim \frac{1}{\ell(Q)^2} \int_{U_Q^{***}}| \partial_k u _*|^2 +\int_{U_Q^{***}}| (\partial_k A) \nabla u _*|^2 \notag\\
 & \lesssim (\ell(Q)^{-2}+   \sup_{ U_Q^{***}} |\nabla A|^2 )\,\ell(Q)^{n+1} \lesssim \ell(Q)^{n-1},
 \end{align}
where in the last inequality we used that, by the Carleson condition \rf{eqelliptic3}, $ \sup_{ U_Q^{***}} |\nabla A| \lesssim \ell(Q)^{-1}$.

Given large number $N \gg \ve_0^{-10}$, let us set 
$$\Lambda(N)=\{ Q \in \DD_\mu: U_Q^{**} \cap \Omega^*_{\wt\tree(S)} \neq \varnothing \,\, \textup{and} \,\, \ell(Q) \geq N^{-1} \ell(S)\}.$$
By the properties of $\eta_Q$, the positivity of $u$ and the fact that $u _* \in W^{2,2}(U_Q^{**})$ for any $Q \in \wt\tree(S)$, it holds that
$$
\int_{\Omega^*_{\wt\tree(S)}} |\nabla^2 u _*(y)|^2\, u(y) \,dy \lesssim \lim_{N \to \infty} \sum_{Q \in \Lambda(N)} \int |\nabla^2 u _*(y)|^2\, u(y)\, \eta_Q(y) \,dy.
$$
Thus, it is enough to show that for each $1 \leq i \leq n+1$,
\begin{equation}\label{eq:main-packing-L}
\sum_{Q \in \Lambda(N)}  \int |\nabla \partial_i u _*|^2\, u\, \eta_Q \lesssim \mu(S),
\end{equation}
where the implicit constant is independent of $N$. Without loss of generality we assume that $i=1$. Then, by ellipticity,
\begin{align}\label{eqj1j2}
\Lambda^{-1} \,\sum_{Q \in \Lambda(N)}&\int |\nabla \partial_1 u _*|^2\, u\, \eta_Q \leq  \sum_{Q \in \Lambda(N)}\int (A\nabla \partial_1 u _*\cdot \nabla \partial_1 u _*)\, u \,\eta_Q\\
&=\sum_{Q \in \Lambda(N)}\int A\nabla \partial_1 u _*\cdot \nabla ( u \,\eta_Q\,\partial_1 u _*)- \sum_{Q \in \Lambda(N)}\int A\nabla \partial_1 u _*\cdot \nabla(u \,\eta_Q) \partial_1 u _*\nonumber \\
 &=: \sum_{Q \in \Lambda(N)}J_1(Q) - \sum_{Q \in \Lambda(N)}J_2(Q).\nonumber
\end{align}

Regarding the first sum on the right hand side above, we have
\begin{align*}
J_1(Q)&=\int \partial_1(A \nabla u _*) \cdot \nabla ( u \,\eta_Q\,\partial_1 u _*) - \int (\partial_1A) \nabla u _* \cdot \nabla ( u \,\eta_Q\,\partial_1 u _*)\\
&=0- \int (\partial_1A) \nabla u _* \cdot [\nabla u \,(\eta_Q\,\partial_1 u _*)+ \nabla \eta_Q\, ( u \,\partial_1 u _*)+\nabla \partial_1 u _*\, (\eta_Q\, u)]\\
&=:-\int (\partial_1A) \nabla u _* \cdot F_Q.
\end{align*}
Here we used that $u _*$ is $L$-harmonic in $U_Q^{**}$ and the fact that  $u \,\eta_Q\,\partial_1 u _* \in W^{1,2}_0(U^{**}_Q)$. Indeed, if we approximate $u \,\eta_Q\,\partial_1 u _*$ by a sequence of functions $\phi_j \in C^\infty_c(U^{**}_Q)$ in $W^{1,2}$-norm, then, since $A \nabla u _* \in W^{1,2}(U^{**}_Q)$, we have that
$$ \int \partial_1(A \nabla u _*) \cdot \nabla ( u \,\eta_Q\,\partial_1 u _*)=\lim_j\int \partial_1(A \nabla u _*) \cdot \nabla \phi_j = -\lim_j \int A \nabla u _* \cdot \nabla \partial_1\phi_j=0.$$


\vv

Notice that by \eqref{eq:bound-u-3*}, \eqref{eq:bound-gradu-i2*} and \eqref{eq:grad2ubound}, we obtain
\begin{align*}
\sum_{Q \in \Lambda(N)} |J_{1}(Q)| \leq \sum_{Q \in \Lambda(N)}\int_{U^{i,**}_Q}  |\nabla A|  |F_Q| \lesssim \sum_{Q \in \Lambda(N)} ( \ell(Q)\, \sup_{ U_Q^{**}} |\nabla A|)\, \mu(Q) \lesssim \mu(S),
\end{align*}
where in the last inequality we used \eqref{eqelliptic3}.

Let us turn our attention to the second sum on the right hand side of \rf{eqj1j2}.
We claim that 
$$2 J_2(Q) =\int A\nabla[(\partial_1 u_*)^2]\cdot \nabla\eta_Q\, u - \int A^*\nabla u\cdot \nabla\eta_Q\, (\partial_1 u _*)^2.$$
Indeed, the right hand side of the last equality is equal to
\begin{align*}
2 J_2(Q)&=\int A\nabla[(\partial_1 u _*)^2]\cdot \nabla(\eta_Q\, u)\\
&= \int A\nabla[(\partial_1 u _*)^2]\cdot \nabla\eta_Q\, u + \int A\nabla[(\partial_1 u _*)^2]\cdot \nabla u \,\eta_Q \\
&= \int A\nabla[(\partial_1 u _*)^2]\cdot \nabla\eta_Q\, u + \int A\nabla[(\partial_1 u _*)^2 \eta_Q]\cdot \nabla u -\int A\nabla \eta_Q \cdot \nabla u\,(\partial_1 u _*)^2\\
&= \int A\nabla[(\partial_1 u_*)^2]\cdot \nabla\eta_Q\, u - \int A^*\nabla u\cdot \nabla\eta_Q\, (\partial_1 u _*)^2.
\end{align*}
where in the last equality we used that $u$ is $L^*$-harmonic in $U_Q^{**}$ and the fact that  $\eta_Q\, (\partial_1 u _*)^2 \in W^{1,2}_0(U^{**}_Q)$.

Our last goal is to show  that
\begin{align}\label{eq:packingJ2}
\biggl|\sum_{Q \in \Lambda(N)} J_{2}(Q)\biggr| \lesssim \mu(S).
\end{align}
By our claim, we have
$$2\! \sum_{Q \in \Lambda(N)} \!\!J_2(Q) =\int\! A\nabla[(\partial_1 u_*)^2]\cdot \nabla\Bigl(\sum_{Q \in \Lambda(N)}\!\eta_Q\Bigr)\, u - \int A^*\nabla u\cdot \nabla\Bigl(\sum_{Q \in \Lambda(N)}\!\eta_Q\Bigr)\, (\partial_1 u_*)^2.$$
Set $\Lambda_1(N)=\Lambda_{11}(N) \cup \Lambda_{12}(N) $, where
\begin{align*}
\Lambda_{11}(N) := \{ Q \in \Lambda(N): U^{**}_Q & \cap U^{**}_{Q'} \neq \varnothing\\
&
\textup{for some}\,\,Q'\in\DD_\mu\setminus \wt\tree(S)
\,\textup{such that} \,\, \ell(Q') \geq N^{-1} \ell(S) \}.
\end{align*}
and 
$$\Lambda_{12}(N) := \{ Q \in \Lambda(N): U^{**}_Q \cap U^{**}_{Q'} \neq \varnothing\,\,\textup{for some}\,\,Q'\,\textup{such that} \,\, \ell(Q') < N^{-1} \ell(S) \}.$$
Note that
$$\biggl|\nabla\Bigl(\sum_{Q\in\Lambda(N)}\eta_Q\Bigr)\biggr| \leq \sum_{P\in\Lambda(N)}\chi_{U_P^{**}}\,
\biggl|\nabla\Bigl(\sum_{Q\in\Lambda(N)}\eta_Q\Bigr)\biggr|
= \sum_{P\in\Lambda_1(N)} \chi_{U_P^{**}}\biggl|\nabla\Bigl(\sum_{Q\in\Lambda(N)}\eta_Q\Bigr)\biggr|
$$
because
$$\chi_{U_P^{**}}\,
\nabla\Bigl(\sum_{Q\in\Lambda(N)}\eta_Q\Bigr) = 0
\quad \mbox{ if $P\in\Lambda(N)\setminus \Lambda_1(N)$.}$$
Thus, denoting $\psi=\sum_{Q\in\Lambda(N)}\eta_Q$ to shorten notation,
\begin{align*}
\biggl|\sum_{Q \in \Lambda(N)} \!\!J_{2}(Q)\biggr| &\lesssim  \!\!
\sum_{P\in\Lambda_1(N)}
\int_{U_P^{**}} \bigl|A\nabla[(\partial_1 u_*)^2]\cdot \nabla\psi \, u \bigr| + \!\!\sum_{P\in\Lambda_1(N)}\int_{U_P^{**}} \bigl|A\nabla u\cdot \nabla\psi\, (\partial_1 u_*)^2\bigr|\\
&=: \sum_{P\in\Lambda_1(N)} \bigl(J_{21}(P) + J_{22}(P)\bigr).
\end{align*}
We  split the last sum as
$$
\sum_{P \in \Lambda_{11}(N)} 
(J_{21}(P) + J_{22}(P))+ \sum_{P \in \Lambda_{12}(N)} (J_{21}(P) + J_{22}(P)).$$
Combining again \eqref{eq:bound-u-3*}, \eqref{eq:bound-gradu-i2*} and \eqref{eq:grad2ubound}, it is easy to see that
$$ J_{21}(P) + J_{22}(P)\lesssim \ell(P)^n \lesssim \mathcal{H}^n(U_P^{***} \cap \partial \Omega^*_{\wt\tree(S)} ),$$
where in the last inequality we used that $ \partial \Omega^*_{\wt\tree(S)}$ is n-AD-regular (see \cite{HMM2}). Therefore, by the bounded overlap property of the $U_P^{***}$'s, we infer that
$$\sum_{P \in \Lambda_{11}(N)} (J_{21}(P) + J_{22}(P)) \lesssim \mathcal{H}^n( \partial \Omega^*_{\wt\tree(S)} ) \lesssim \mu(S).$$
We conclude the proof of \eqref{eq:packingJ2} and thus, of \eqref{eq:main-packing-L} and the proposition, by noting that if $P \in \Lambda_{12}(N)$, then, by the definition of $U^{**}_P$, it is clear that $\ell(P) \approx N^{-1} \ell(S)$ and hence, an argument similar to (but simpler than) the one used above shows that
$$\sum_{P \in \Lambda_{12}(N)} (J_{21}(P) + J_{22}(P))  \lesssim \mu(S).$$
\end{proof}

\vv
Now the proof of Lemma \ref{lem:packing-case1} is concluded.

\vv

\subsection{About the cubes of $\case(2,R)$}

We have the following fundamental result:
\begin{lemma}\label{lemfund}
Assume $\ve_0>0$ and $\kappa_0>0$ are small enough (in particular, $\ve_0\leq K_0^{-6}$) and $M>1$ big enough. 
Given $R\in\ttt$, we have that
$$Q\in\case(2,R) \Rightarrow Q\in\whsa_{\ve_0}.$$
\end{lemma}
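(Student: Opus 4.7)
The plan is to exploit the near-constancy of $\nabla u_*$ on $\wt U_Q^i$ guaranteed by \eqref{eqcase2} to approximate $u_*$ by a genuine affine function $L$ whose vanishing set is the candidate hyperplane, and then to deduce the three conditions in the definition of $\whsa_{\ve_0}$ by combining this approximation with the positivity and H\"older decay of $u_*$.

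First I set $\vec\beta := \nabla u_*(Y_Q)$. Since $Q\in \case(2,R)$ meets the hypotheses of Lemma \ref{lem29} (and $Q\not\in\case(0,R)$ gives the needed coefficient regularity), Lemmas \ref{lem:failure} and \ref{lem29} yield $u_*(Y_Q)\approx \ell(Q)$ and $|\vec\beta|\approx 1$. Define
\[
L(z) := u_*(Y_Q) + \vec\beta \cdot (z - Y_Q),\qquad P := \{z:L(z)=0\}.
\]
By construction, any $z\in\wt U_Q^i$ is joined to $Y_Q$ by a chain of at most $\ve^{-1}$ balls $B(Z_k,\delta_\Omega(Z_k)/2)$ with $\delta_\Omega(Z_k)\in[\ve^3\ell(Q),\ve^{-3}\ell(Q)]$, each contained in $B_X$ for some $X\in \wt U_Q^i$. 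Telescoping the fundamental theorem of calculus along this chain and invoking \eqref{eqcase2} gives
\[
|u_*(z)-L(z)|\,\lesssim\, \ve^{-1}\cdot\ve^{-3}\ell(Q)\cdot\ve_0^{M}\,\lesssim\, \ve_0^{M}\,\ve^{-4}\ell(Q)\,\ll\,\ve_0\,\ell(Q),
\]
provided $M$ is chosen large (e.g., $M=100$) so that $\ve_0^{M-1}\ll\ve^4$. This is the affine approximation underlying the rest of the proof.

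Since $L(Y_Q)=u_*(Y_Q)\approx\ell(Q)$ and $|\vec\beta|\approx 1$, the plane $P$ lies at distance $\approx\ell(Q)$ from $Y_Q$, and since $Y_Q\in U_Q\subset K_0 B_Q$, we get $\dist(Q,P)\lesssim K_0\,\ell(Q)\leq K_0^{3/2}\ell(Q)$, yielding the second item of $\whsa_{\ve_0}$. Set $\nu:=\vec\beta/|\vec\beta|$ and take the candidate half-space $H:=\{L<0\}$ (the side away from $Y_Q$). The positivity $u_*\geq 0$ together with the preceding display forces $\wt U_Q^i\subset\{L\geq -\ve_0\ell(Q)\}$. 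For the first $\whsa_{\ve_0}$ item, given $z_0\in P\cap B(x_Q,\ve_0^{-2}\ell(Q))$, I would slide a distance $t\approx \ve_0^{1/\alpha}\ell(Q)$ in the direction $+\nu$ to reach a point $z_0'\in\wt U_Q^i$ with $L(z_0')=t$; then $u_*(z_0')\approx t$, so Lemma \ref{lem333} (H\"older decay up to $\partial\Omega$) supplies a boundary point $y$ with $|z_0'-y|\lesssim (t/\ell(Q))^{1/\alpha}\ell(Q)\lesssim \ve_0\,\ell(Q)$ and hence $\dist(z_0,\supp\mu)\leq \ve_0\,\ell(Q)$. For the third item I argue contrapositively: a point $z\in H\cap B(x_Q,\ve_0^{-2}\ell(Q))\cap\supp\mu$ has $L(z)<0$, so a nearby corkscrew point in $\wt U_Q^i$ would, by the affine approximation, satisfy $u_*\approx L<0$, contradicting $u_*\geq 0$.

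The main obstacle in this outline is the pair of connectivity claims underlying the last two steps: first, that $\wt U_Q^i$ actually reaches into the slab $\{0<L<c\,\ell(Q)\}$ above every point of $P\cap B(x_Q,\ve_0^{-2}\ell(Q))$, and second, that each hypothetical boundary point in $H$ admits a ``witness'' corkscrew ball inside $\wt U_Q^i$. Both hinge on the fact that $\wt U_Q^i$ is a \emph{single} Whitney component enlarged by chains of fixed length $\ve^{-1}$, so the argument must rule out isolated pockets of $\Omega$ in $B(x_Q,\ve_0^{-2}\ell(Q))$ that are disconnected from $U_Q^i$ at the relevant scales. This is exactly where $Q\not\in\case(0,R)$ (nearly constant coefficients in a large neighborhood, together with no foreign cubes of comparable size) and the corkscrew condition and AD-regularity of $\partial\Omega$ do the crucial work, plausibly via a barrier or comparison argument with the explicit affine solution of the frozen-coefficient operator $-\divv A(Y_Q)\nabla$, whose level-set geometry is then fully controlled by $L$ itself.
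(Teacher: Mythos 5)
Your strategy is essentially the paper's own: the paper proves this lemma by invoking Lemma 5.10 of \cite{HLMN}, whose argument is exactly the scheme you outline (freeze $\vec\beta=\nabla u_*(Y_Q)$, telescope \eqref{eqcase2} along the chains defining $\wt U_Q^i$ to get $|u_*-L|\ll\ve_0\ell(Q)$, take $P=\{L=0\}$, and exploit $u_*\ge 0$ together with the boundary H\"older estimate, Lemma \ref{lem333}, which replaces Lemma 3.35 of \cite{HLMN}). But as written there is a genuine gap, and you have named it yourself: the two connectivity claims you defer are not a technical afterthought, they are the actual content of the proof. The approximation $u_*\approx L$ is only available on $\wt U_Q^i$, which is reachable from $U_Q^i$ by at most $\ve^{-1}$ chain balls of radii at most $\ve^{-3}\ell(Q)$, so a priori it may be no larger than a moderate neighbourhood of $U_Q^i$; the three $\whsa_{\ve_0}$ conditions must be verified throughout $B(x_Q,\ve_0^{-2}\ell(Q))$, and since $\Omega$ carries no Harnack chain condition, that ball may contain pockets of $\Omega$ (and boundary points) that cannot be chained to $Y_Q$ at all. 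In \cite{HLMN} this is overcome by a bootstrap occupying most of the proof: wherever the approximation already holds and $L$ is proportionally positive, the bounds $|\nabla u_*|\approx 1$, $u_*\ge0$ and Lemma \ref{lem333} force the distance to $\partial\Omega$ to be comparable to $L$, so the chains can be prolonged with a controlled number of balls of admissible radii and the region of validity of $u_*\approx L$ is grown step by step until it covers the relevant side of $B(x_Q,\ve_0^{-2}\ell(Q))$. Gesturing at ``a barrier or comparison argument with the frozen-coefficient operator'' does not supply this step, and without it neither the first nor the third bullet of $\whsa_{\ve_0}$ is established.

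There is also a concrete error in your verification of the first bullet: Lemma \ref{lem333} states $u_*(y)\lesssim(\delta_\Omega(y)/\ell(Q))^{\alpha}\,\ell(Q)$, i.e.\ proximity to $\partial\Omega$ forces $u_*$ to be small; it cannot convert the smallness $u_*(z_0')\approx t$ into the existence of a boundary point within $(t/\ell(Q))^{1/\alpha}\ell(Q)$ of $z_0'$ --- it gives the reverse inequality, a lower bound $\delta_\Omega(z_0')\gtrsim (t/\ell(Q))^{1/\alpha}\ell(Q)$. The correct route (as in \cite{HLMN}) is again by contradiction: if $z_0\in P\cap B(x_Q,\ve_0^{-2}\ell(Q))$ had $\dist(z_0,\supp\mu)>\ve_0\ell(Q)$, then, granting the propagation step, the ball $B(z_0,\ve_0\ell(Q))\subset\Omega$ lies where $u_*\approx L$, and $L\le -c\,\ve_0\ell(Q)$ at $z_0-\tfrac12\ve_0\ell(Q)\,\nu$, contradicting $u_*\ge0$. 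Thus both problematic bullets rest on the same missing propagation/connectivity argument, which is precisely what needs to be proved.
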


The proof of this result is almost the same as the one of Lemma 5.10 from \cite{HLMN}, where the same implication is proved in the case that $L$ is the Laplacian, by using the properties of the harmonic Green function and its connection with harmonic measure. The same estimates that are used in the proof in \cite{HLMN}
also hold for the $L$-harmonic Green function and the associated elliptic measure. Since the required modifications are very minor\footnote{Note also that the above Lemma \ref{lem333} replaces Lemma 3.35 from \cite{HLMN}.
Also, by Lemma \ref{lem:grad-u-regul}, $\nabla w$ is $\alpha$-H\"older continuous and it is defined pointwise.}, we refer the
reader to the proof of Lemma 5.10 in \cite[Section 5.3]{HLMN} (which in turn is inspired by some of the techniques
in \cite{LV}). 

\vv

It remains to deal with the cubes of  $\case(3,R)$. To this end we will use a version of the Alt-Caffarelli-Friedmazn formula 
valid for elliptic operators. In the next section we introduce this formula.

\vv


\section{The ACF formula for elliptic operators}

In this section we will state and prove a more precise version of Theorem \ref{teoACF-elliptic} and prove some related technical results. First we need some additional 
notation: given an open subset of the unit sphere, $\Sigma\subset \partial B(0,1)\equiv \partial B_1$, we denote
$$\lambda_\Sigma = \inf_{f\in W^{1,2}_0(\Sigma),f\not\equiv 0} \frac{\ds\int_\Sigma |\nabla_{\partial B_1}f|^2\,d\sigma}{
\ds\int_\Sigma |f|^2\,d\sigma},$$
where $\sigma$ denotes the surface measure on $\partial B_1$ and $\nabla_{\partial B_1} f$ is the gradient of $f$ on $\partial B_1$. In other words, $\lambda_\Sigma$ is the principal eigenvalue of the spherical Laplacian on $\Sigma$. We also denote by 
$\gamma_\Sigma$ the positive root of the equation
$$\lambda_\Sigma = \gamma_\Sigma\,(\gamma_\Sigma + n -1).$$
In fact, $\gamma_\Sigma$ is the so-called {\it characteristic constant} of $\Sigma$ (see for e.g. p.36 in \cite{PSU}).

If $A \in \Lip_{\loc}(\om)$ one can write 
\begin{align}\label{eq:Asplit-sym}
\divv A \nabla u = \divv A_s \nabla u + \vec b \cdot \nabla u,
\end{align}
in the weak sense, where 
$$A_s:= \frac{1}{2} (A + A^*)\quad \text{and}\quad \vec b := \Big( \frac{1}{2} \sum_{k=1}^{n+1} [ \d_k a_{ki} - \d_k a_{ik}] \Big)_{i=1}^{n+1}.$$
Notice also that $\divv \vec b = 0$ locally in the weak sense. Therefore, any divergence form uniformly elliptic the operator with  $\Lip_{\loc}(\om)$ coefficients can be written in the form given in \eqref{eqwtl}.


\vv

\begin{theorem}\label{teoACFprecise}
Under the assumptions of Theorem \ref{teoACF-elliptic} for $L$ as in \eqref{eq:Asplit-sym}, for a.e. $r\in(0,R)$ we have
\begin{equation}\label{eqprec1}
\frac{J'(x,r)}{J(x,r)}\geq \frac2r\bigl(\gamma_1 + \gamma_2  - 2 \bigr) - c\,\frac{(1+K_r(x))\,w(x,r)}{r},
\end{equation}
where $\gamma_i$ is the characteristic constant of the open subset  $\Sigma_i\subset\partial B_1$ given by  $$ \Sigma_i=\bigl\{r^{-1}(y-x): y\in\partial B(x,r),\,u_i(y)>0\bigr\}.$$
We have
$\gamma_1 + \gamma_2\geq2$,
and if one of the domains 
$\Sigma_{i}$
differs 
from a hemisphere by an area of size $\ve$, then 
\begin{equation}\label{eqprec2}
\gamma_1+\gamma_2-2 \geq c\,\ve^2.
\end{equation}
\end{theorem}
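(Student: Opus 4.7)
The plan is to reduce the monotonicity statement to a Rayleigh quotient on each sphere and then invoke the Friedland--Hayman eigenvalue inequality. Set $\phi_i(r) = r^{-2}\int_{B(x,r)}|\nabla u_i|^2\,|y-x|^{1-n}\,dy$, so that $J(x,r) = \phi_1(r)\,\phi_2(r)$. Because $|\nabla u_i|^2\in L^1_{\loc}$, the coarea formula gives $\phi_i(r) = r^{-2}\int_0^r s^{1-n}\int_{\partial B(x,s)}|\nabla u_i|^2\,d\sigma\,ds$, from which absolute continuity of $\phi_i$ (and hence of $J$) in $r$ follows, together with
\[
\phi_i'(r) = -\frac{2}{r}\,\phi_i(r) + \frac{1}{r^{n+1}}\int_{\partial B(x,r)}|\nabla u_i|^2\,d\sigma\quad\mbox{for a.e.\ }r.
\]
Because $J'/J = \phi_1'/\phi_1+\phi_2'/\phi_2$, it suffices to establish, with an error of size $c(1+K_r)\,w(x,r)/r$, the Rayleigh-type bound
\[
\int_{\partial B(x,r)}|\nabla u_i|^2\,d\sigma\,\geq\,\frac{2\gamma_i}{r}\cdot r^{n-1}\!\!\int_{B(x,r)}|\nabla u_i|^2\,|y-x|^{1-n}\,dy,\quad i=1,2.
\]

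The derivation of this bound proceeds by a spherical decomposition adapted to $\wt L$. Since $A(x)=\operatorname{Id}$, write $A(y)=\operatorname{Id}+(A(y)-A(x))$ with $\|A(y)-A(x)\|\leq w(x,r)$ on $B(x,r)$, and test the subsolution inequality $\wt L u_i\leq 0$ against $u_i\,\eta_\ve(|y-x|)\,|y-x|^{1-n}$ for a smooth cutoff $\eta_\ve$ that vanishes near $x$ and equals one on $B(x,r-\ve)$. The $\operatorname{Id}$-part of $A$ reproduces the classical Friedland--Hayman identity controlling $\int_{\partial B_r}|\nabla u_i|^2$ from below by $(2\gamma_i/r)\,r^{n-1}\int_{B_r}|\nabla u_i|^2|y-x|^{1-n}dy$, where $\gamma_i$ is the characteristic constant of $\Sigma_i(r)=\{\omega\in S^n:u_i(x+r\omega)>0\}$; this is obtained by expanding $u_i$ on each sphere in eigenfunctions of the spherical Laplacian subordinate to $\Sigma_i(r)$ and using the relation $\lambda_i=\gamma_i(\gamma_i+n-1)$. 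Letting $\ve\to 0$ and using the Dini assumption \eqref{eqACF1} to control boundary traces yields the a.e.\ differential inequality.

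The perturbative errors split into three groups. Replacing $A$ by $\operatorname{Id}$ produces, after Cauchy--Schwarz, a term
\[
E_1\,\lesssim\,\frac{w(x,r)}{r}\int_{B(x,r)}\frac{|\nabla u_i|^2}{|y-x|^{n-1}}\,dy,
\]
which after division by the Rayleigh denominator contributes $\lesssim w(x,r)/r$. The contributions of $\vec b\cdot\nabla u_i$ and of $\divv(\vec e\,u_i)-d\,u_i$ generate cross-terms with weights $|y-x|^{-n}$ and $|y-x|^{-n-1}$ respectively; by Cauchy--Schwarz and \eqref{eqwxr1} they are bounded by
\[
\frac{w(x,r)}{r}\biggl(\int_{B(x,r)}\frac{u_i^2}{|y-x|^{n+1}}\,dy\biggr)^{1/2}\biggl(\int_{B(x,r)}\frac{|\nabla u_i|^2}{|y-x|^{n-1}}\,dy\biggr)^{1/2},
\]
and upon division by the denominator this yields a factor $K_r\,w(x,r)/r$. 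Summing over $i=1,2$ gives \eqref{eqprec1}, and Remark \ref{rem100} on integrating \eqref{eqww2} then gives the quasi-monotonicity of $J$.

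Finally, $\gamma_1+\gamma_2\geq 2$ is the classical Friedland--Hayman inequality for a pair of disjoint open subsets of $S^n$, with hemispheres as the only equality case. The quantitative estimate \eqref{eqprec2} follows from a stability version of that inequality: after Sperner symmetrization one is reduced to two spherical caps whose areas sum to at most $|S^n|$, with at least one area deviating from $|S^n|/2$ by $\ve$, and a second-order Taylor expansion of the explicit cap-to-characteristic-constant function $\alpha\mapsto\gamma(\alpha)$ around the critical point $\alpha=|S^n|/2$, where $\gamma=1$, yields a quadratic lower bound $\gamma_1+\gamma_2-2\geq c\,\ve^2$. The main technical obstacle will be to justify the integration by parts rigorously under only $W^{1,2}$-regularity of $u_i$ with $L^\infty_{\loc}$ coefficients and under the subsolution hypothesis, propagating the trace of $|\nabla u_i|^2$ on $\partial B(x,r)$ correctly as $\ve\to 0$; once this regularization is handled, the perturbative bookkeeping and the stable Friedland--Hayman step are routine.
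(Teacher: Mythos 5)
Your overall route is the same as the paper's: differentiate $\log J$, exploit the subsolution property against the test function $u_i\,\eta\,|y-x|^{1-n}$ (with inner and outer cutoffs), split $u_i\partial_r u_i$ by the arithmetic–geometric mean inequality with parameter $\gamma_i$, use the spherical Rayleigh quotient on $\Sigma_{r,i}$ with $\lambda_i=\gamma_i(\gamma_i+n-1)$ to produce the factor $2\gamma_i/r$, bound the lower order terms $\vec b,d,\vec e$ by Cauchy–Schwarz (which is where $K_r\,w$ appears), and invoke Friedland–Hayman for $\gamma_1+\gamma_2\geq 2$; for \eqref{eqprec2} the paper simply cites \cite[Corollary 12.4]{CS}, which is essentially what your symmetrization-plus-Taylor sketch is re-deriving (note that a formal second-order expansion is not enough by itself — one needs the quantitative behaviour of the cap function $\alpha\mapsto\gamma(\alpha)$ near the hemisphere, which is precisely the content of that corollary).

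There is, however, a concrete gap in your error bookkeeping for the matrix perturbation $D=A-A(x)$. You assert that replacing $A$ by $\mathrm{Id}$ produces, after Cauchy–Schwarz, only a bulk error $\lesssim \frac{w(x,r)}{r^2}\int_{B(x,r)}|\nabla u_i|^2|y-x|^{1-n}\,dy$. That is true for the piece $\int \nabla u_i\cdot D\nabla u_i\,|y-x|^{1-n}$ and, with the extra factor $K_r$, for the piece where the derivative falls on $|y-x|^{1-n}$; but the piece where the gradient of the test function hits the outer cutoff survives in the limit as a sphere term of size $\frac{w(x,r)}{r^{n-1}}\int_{\partial B(x,r)}u_i\,|\nabla u_i|\,d\sigma$ (the term $S_a$ in the paper's proof), and this cannot be absorbed into the solid energy by Cauchy–Schwarz alone. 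The paper handles it by Cauchy–Schwarz on the sphere, the spherical Poincar\'e inequality \eqref{eqlambdai} together with $\lambda_i\geq\gamma_i^2$, and crucially the dichotomy \eqref{eqfac998} between the sphere energy and the solid energy (the complementary case being treated separately and yielding $J'/J\geq 1/r$ directly); this is also where the a.e.-$r$ Lebesgue-point argument, the factor $(1+\gamma_i^{-1})$ in \eqref{eqfac10} and the case analysis in $\gamma_i$ enter — without them the error you write down carries an extra factor $\gamma_i$ and the constant $c$ in \eqref{eqprec1} would not be absolute. Two further small points: finiteness and absolute continuity of $\phi_i$ do not follow from $|\nabla u_i|^2\in L^1_{\loc}$ alone because of the singular weight $|y-x|^{1-n}$ at the pole — this is Lemma \ref{lemcc**}, proved using subharmonicity and the Green function of $2B$; and the Dini condition \eqref{eq:Dini cond} plays no role in the differential inequality itself, being used only to bound $K_r$, i.e.\ to prove \eqref{eqACF0b}.
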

\vv

Note that the condition that one of the domains and $\Sigma_i$ digresses from a hemisphere by an area of size $\ve$ is
equivalent to the fact that one of the domains  
$\Sigma_{i,r}=\partial B(0,r) \cap \{u_i>0\}$
digresses from a hemisphere by an area of size $\ve\,r^n$.

The preceding  theorem will be also proved in the Appendix 
\ref{secappendix}. Next we need a technical estimate.

\vv

\begin{lemma}\label{lemcc**}
 Under the assumptions of Theorem \ref{teoACF-elliptic} for $L$ as in \eqref{eq:Asplit-sym}, we have
\begin{equation}
\label{lemcc**eq}
\int_{B(x,r)}\frac{|\nabla u_i(y)|^2}{|y-x|^{n-1}}\,dy\lesssim \|u_i\|_{\infty,B(x,2r)}^2.
\end{equation}
\end{lemma}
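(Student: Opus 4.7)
The plan is to prove this weighted Caccioppoli-type inequality by testing the subsolution condition against a function of the form $\Phi_\varepsilon=u_i\eta^2 G_\varepsilon$, where $\eta$ is a spatial cutoff and $G_\varepsilon(y)=(|y-x|^2+\varepsilon^2)^{(1-n)/2}$ is a smooth regularisation of the weight $|y-x|^{1-n}$. After translation we may assume $x=0$; set $M=\|u_i\|_{\infty,B(0,2r)}$ and drop the subscript $i$. The key preliminary input is De Giorgi--Nash--Moser regularity for $\wt L$, which, combined with $u\geq0$, $u(0)=0$, and the local boundedness of $\vec b,\vec e,d$ built into $w(0,2r)$, gives some $\alpha\in(0,1)$ with
$$u(y)\leq C\,M\,(|y|/r)^\alpha,\qquad y\in B(0,r).$$
This Hölder decay at the singular point is what will beat the $|y|^{-n-1}$-singularity arising in the argument; the hypothesis $u_i(x)=0$ is used only here.

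Choose $\eta\in C_c^\infty(B(0,3r/2))$ with $\eta\equiv1$ on $B(0,r)$ and $|\nabla\eta|\lesssim r^{-1}$. Note $G_\varepsilon\nearrow|y|^{1-n}$, $|\nabla G_\varepsilon|\lesssim |y|^{-n}$, and $|\nabla G_\varepsilon|^2/G_\varepsilon\lesssim|y|^{-n-1}$. Since $\Phi_\varepsilon\geq0$ and $\Phi_\varepsilon\in W^{1,2}_0(B(0,2r))$, the weak subsolution inequality for $\wt L$ reads
$$\int A\nabla u\cdot\nabla\Phi_\varepsilon+\int(\vec b\cdot\nabla u)\Phi_\varepsilon+\int du\,\Phi_\varepsilon+\int u\vec e\cdot\nabla\Phi_\varepsilon\leq 0.$$
Expanding $\nabla\Phi_\varepsilon$ and applying ellipticity on the principal term gives
$$\Lambda^{-1}\int|\nabla u|^2\eta^2 G_\varepsilon\leq-\int A\nabla u\cdot\nabla(\eta^2 G_\varepsilon)\,u+\text{(lower-order)}.$$

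The cutoff contribution $\int|\nabla u||\nabla\eta|\eta u G_\varepsilon$ is handled by Cauchy--Schwarz plus absorption, and bounded by $\lesssim M^2 r^{-2}\int_{B(0,3r/2)}G_\varepsilon\lesssim M^2$. For the singular piece $\int|\nabla u|\eta^2|\nabla G_\varepsilon|u$, AM--GM gives
$$|\nabla u|\,|\nabla G_\varepsilon|\,u\leq \tfrac{1}{4\Lambda^2}|\nabla u|^2 G_\varepsilon+C\,u^2\,\tfrac{|\nabla G_\varepsilon|^2}{G_\varepsilon}\leq \tfrac{1}{4\Lambda^2}|\nabla u|^2 G_\varepsilon+C\,u^2|y|^{-n-1},$$
and the last term is finite, because by the Hölder bound
$$\int_{B(0,r)}u^2|y|^{-n-1}\,dy\lesssim M^2 r^{-2\alpha}\int_0^r\rho^{2\alpha-1}\,d\rho\lesssim M^2.$$
The zeroth/first-order terms in $\wt L$ are controlled analogously: from \eqref{eqwxr1} one has $|\vec b|+|\vec e|\lesssim|y|^{-1}w(0,r)$ and $|d|\lesssim|y|^{-2}w(0,r)$, so after one use of Cauchy--Schwarz each such integral is bounded by a constant times either the absorbable gradient term or the integral $\int u^2|y|^{-n-1}$ already estimated. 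Absorbing and then letting $\varepsilon\to0$ via monotone convergence gives $\int_{B(0,r)}|\nabla u|^2|y|^{1-n}\,dy\lesssim M^2$.

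The main obstacle is precisely the interaction of the singular weight $|y-x|^{1-n}$ with the RHS after integration by parts: the naïve absorption produces the borderline-singular integrand $u^2|y-x|^{-n-1}$, which is integrable only because of the Hölder decay of $u$ at the point $x$ where $u(x)=0$. The lower-order terms in $\wt L$ are mild because the prescribed weights in the definition of $w(x,r)$ exactly compensate for the additional powers of $|y-x|^{-1}$ they introduce.
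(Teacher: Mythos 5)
Your argument has a genuine gap at the step where you claim the Hölder decay $u(y)\leq C\,M\,(|y-x|/r)^\alpha$ from De Giorgi--Nash--Moser theory. That theory gives interior Hölder continuity for \emph{solutions} and only local boundedness for subsolutions; a nonnegative continuous $L$-subharmonic function vanishing at $x$ need not decay at any power (nor even Dini) rate at $x$. For example, radial functions of the form $h(|y-x|^{2-(n+1)})$ with $h$ convex, nonincreasing and tending to $0$ (such as $u(y)\approx 1/\log\log(1/|y-x|)$ near $x$) are continuous, nonnegative, subharmonic, vanish at $x$, and satisfy $\int_{B(x,r)}u^2|y-x|^{-n-1}\,dy=\infty$. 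This is fatal for your absorption step: after AM--GM the cross term $\int A\nabla u\cdot\nabla G_\ve\,u\,\eta^2$ is traded for exactly this integral, with a constant that is \emph{not} small, so the proof cannot close. Note that in Theorem \ref{teoACF-elliptic} the quantity $\bigl(\int u_i^2|y-x|^{-n-1}\bigr)^{1/2}\bigl(\int|\nabla u_i|^2|y-x|^{1-n}\bigr)^{-1/2}$ is precisely $K_r$, and its finiteness is \emph{not} part of the standing hypotheses: it is only controlled under the additional Dini-type assumption \eqref{eqACF1}, whereas Lemma \ref{lemcc**} must hold (and is used) without that assumption. So your proof, even if the decay were granted, would only establish the lemma under an extra hypothesis it does not assume; the same objection applies to your treatment of the lower-order terms, which you also route through $\int u^2|y-x|^{-n-1}$.

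The paper's proof avoids this singular integral altogether by replacing the explicit weight $|y-x|^{1-n}$ (or its regularization $G_\ve$) with the $L$-Green function $G_{2B}(z,\cdot)$ of $B(x,2r)$ with pole $z$ near $x$, which is comparable to $|y-x|^{1-n}$ on the relevant annulus by Lemma \ref{lemgreen*}. Testing subharmonicity against $u_i\,G_{2B}(z,\cdot)\,\vphi_\delta$ (with $\vphi_\delta$ an annular cutoff), the dangerous cross term becomes $\frac12\int A\nabla(u_i^2\vphi_\delta)\cdot\nabla_y G_{2B}(z,y)\,dy$, which is evaluated \emph{exactly} via the representation formula \eqref{eqgreen*23} as an elliptic-measure boundary term plus $u_i(z)^2\vphi_\delta(z)$, hence bounded by $\|u_i\|_{\infty,B(x,2r)}^2$; the remaining pieces are handled by \eqref{eqgpo0} and Caccioppoli, uniformly in $\delta$. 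If you want to salvage a direct argument with your regularized weight, you would need to exploit that $G_\ve$ is superharmonic (so the principal cross term has a sign for $A=Id$) and then control the error coming from $A-A(x)$; but that error again involves $\int u^2|y-x|^{-n-1}$, so without the Green-function substitution (or without assuming \eqref{eqACF1}) the estimate still does not follow.
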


\begin{proof}
We will show that, for an arbitrary $0<\delta<R/2$, 
\begin{equation}\label{eqgreen7530}
\int_{B(x,r)\setminus B(x,\delta)}\frac{|\nabla u_i(y)|^2}{|y-x|^{n-1}}\,dy\leq c\,\|u_i\|_{\infty,B(x,2r)}^2,
\end{equation}
uniformly on $\delta$. To this end, let  $\vphi_\delta$ be a radial function such that $\chi_{A(x,\delta,2r)} \leq  \vphi\leq
\chi_{A(x,\frac12\delta,3r)}$, with $|\nabla\vphi|\leq C/\delta$, where $A(x,r,R)$ denotes the annulus of inner and outer radii $r$ and $R$ respectively. Consider the $L$-Green  function $G_{2B}$ of $B(x,2r)$ and note that
\begin{equation}\label{eqgreen753}
G_{2B}(z,y) \approx \frac1{|z-y|^{n-1}}\approx \frac1{|x-y|^{n-1}}\quad\mbox{ for all $z\in B(x,\delta/4)$ and $y\in A(x,\delta/2,r)$,}
\end{equation}
by Lemma \ref{lemgreen*}. Thus, using the ellipticity of $A$, 
\begin{align}\label{eqparts19}
\int_{B(x,r)\setminus B_\delta}\frac{|\nabla u_i(y)|^2}{|y-x|^{n-1}}\,dy &\lesssim
\int  A(y)\nabla u_i(y)\cdot\nabla u_i(y)\,G_{2B}(z,y)\,\vphi_\delta(y)\,dy\\
& = \int  A(y)\nabla u_i(y) \cdot \nabla \bigl(u_i\,G_{2B}(z,\cdot)\,\vphi_\delta\bigr)(y)\,dy\nonumber\\
&\quad - \int A(y)\nabla u_i(y) \cdot \nabla_yG_{2B}(z,y) \,u_i(y)\,\vphi_\delta(y)\,dy\nonumber\\
&\quad - \int A(y)\nabla u_i(y) \cdot \nabla \vphi_\delta(y)\,u_i(y)\,G_{2B}(z,y)\,dy.\nonumber
\end{align}
Notice now that 
$\bigl(u_i\,G_{2B}(z,\cdot)\,\vphi_\delta\bigr)$ is a non-negative function from $W_0^{1,2}(\R^{n+1})$, and since $u_i$ is $L$-subharmonic, we deduce that
$$\int  A(y)\nabla u_i(y) \cdot \nabla \bigl(u_i\,G_{2B}(z,\cdot)\,\vphi_\delta\bigr)(y)\,dy\leq0.$$
Hence, we only have to estimate the last two integral on the right hand side of  \rf{eqparts19}.

Concerning the second integral on the right hand side of  \rf{eqparts19}, we have
\begin{align*}
2\int \!A(y)\nabla u_i(y) \cdot \nabla_yG_{2B}(z,y) \,u_i(y)\,\vphi_\delta(y)\,dy &= 
\int \!A(y)\nabla \bigl(u_i^2\,\vphi_\delta)(y) \cdot \nabla_yG_{2B}(z,y) \,dy \\
& \; - \int \!A(y)\nabla \vphi_\delta(y) \!\cdot\! \nabla_yG_{2B}(z,y) \,u_i(y)^2\,dy\\
&=: I_1(z) - I_2(z).
\end{align*}
By \rf{eqgreen*23}, for a.e. $z\in B(x,\delta/4)$, we have
$$
I_1(z)= \int A(y)\nabla \bigl(u_i^2\,\vphi_\delta\bigr)(y) \cdot \nabla_yG_{2B}(z,y) \,dy = 
 - \int_{\partial B(x,2r)} u_i^2\,\vphi_\delta\,d\omega^z + u_i(z)^2\,\vphi_\delta(z).
 $$
Thus, since $u_{i}^{2}\varphi_{\delta}\geq 0$, 
$$|I_1(z)|\leq 0+\|u_i^2\,\vphi_\delta\|_{\infty,B(x,2r)} = \|u_i\|_{\infty,B(x,2r)}^2
\quad \mbox{ for a.e. $z\in B(x,\delta/4)$,}$$
taking into account that $u_i^2\,\vphi_\delta$ vanishes on $B(x,\delta/4)$. On the other hand, regarding
$I_2(z)$, using \rf{eqgpo0}, we derive
$$|I_2(z)| \lesssim \frac1\delta \,\|u_i\|_{\infty,B(x,2r)}^2\int_{B(x,\delta)} |\nabla_yG_{2B}(z,y)|\,dy
\lesssim \|u_i\|_{\infty,B(x,2r)}^2.$$

Finally we turn our attention to the last integral on the right hand side of \rf{eqparts19}.
Using \rf{eqgreen753} and Caccioppoli's inequality, we obtain
\begin{align*}
 \left|\int \!A(y)\nabla u_i(y) \cdot \nabla \vphi_\delta(y)\,u_i(y)\,G_{2B}(z,y)\,dy\right|
& \lesssim \frac1\delta \int_{A(x,\delta/2,\delta)} \!|\nabla u_i(y)|\,u_i(y)\,G_{2B}(z,y)\,dy\\
&\lesssim \frac1{\delta^n} \int_{B(x,\delta)} |\nabla u_i(y)|\,u_i(y)\,dy\\
&\lesssim \frac1{\delta^n} \,\|u_i\|_{L^2(B(x,\delta)}\,\|\nabla u_i\|_{L^2(B(x,\delta)}\\
&\lesssim \frac1{\delta^{n+1}} \,\|u_i\|_{L^2(B(x,\delta)}^2 \lesssim  \,\|u_i\|_{\infty,B(x,2r)}^2.
\end{align*}
Together with \rf{eqparts19} and the estimates obtained for $I_1(z)$ and $I_2(z)$, this proves
\rf{eqgreen7530}, as wished.
\end{proof}

\vv

The following lemma should be compared to Lemma 4.4 from \cite{ACS} and
Corollary 12.4 from \cite{CS}. This will play an essential role to prove some connectivity results in the following section, and this is the way that our ACF formula will be used in the proof of Theorem \ref{teo1}.

\vv

{
\begin{lemma}\label{lemcoro}
Consider the elliptic operator $Lu=\divv A\nabla u$. Let $B(x,R)\subset \R^{n+1}$, and let $u_1,u_2\in
W^{1,2}(B(x,R))\cap C(B(x,R))$ be nonnegative $L$-subharmonic functions. Suppose that 
$A_s(x)= Id$, $u_1(x)=u_2(x)=0$, and $u_1\cdot u_2\equiv 0$. 
Suppose also that there exists a modulus of continuity $w_0:[0,\infty] \to [0,\infty]$ satisfying \eqref{eq:Dini cond} such that
\begin{equation}\label{eqACF1'}
u_i(y)\leq C_1 \,w_0\!\left(\frac{|y-x|}r\right) \, \|u_i\|_{\infty,B(x,r)}\quad \mbox{ for $i=1,2$ and all $0<r\leq R$, $y\in B(x,r)$.}
\end{equation}
Let $J(x,r)$ be as in Theorem \ref{teoACF-elliptic} and denote $w_R=w(x,R)$.
Suppose that for $0<r_1<r_2<R$ 
there exist $M>1$ and $\eta \in (0,1)$ such that 
$$\HH^{n+1}\bigl(A(x,r,Mr)\cap \{u_1=0\}\cap \{u_2=0\}\bigr) \geq \eta\,\HH^{n+1}\bigl(A(x,r,Mr)) \quad\mbox{for all $r\in (r_1,r_2)$.}$$
Then, if $w_R$ is small enough (depending on $C_{w_0}$, $C_1$, $\eta$ and $M$),
\begin{equation}\label{eqpol99}
\frac{J(x,r_2)}{J(x,r_1)}\geq c\,\Bigl(\frac{r_2}{r_1}\Bigr)^\rho,
\end{equation}
for some positive constants $c$ and $\rho$ depending only on  $M$, $n$ and $\eta$.
\end{lemma}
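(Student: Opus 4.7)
The plan is to integrate the ACF differential inequality from Theorem~\ref{teoACFprecise} once we know that $\gamma_1(\rho)+\gamma_2(\rho)-2$ is bounded below by a positive constant depending only on $\eta$ and $M$ on a set of radii of positive logarithmic density. Since the Dini hypothesis \eqref{eq:Dini cond} on $w_0$ holds and \eqref{eqACF1'} is assumed, Theorem~\ref{teoACF-elliptic} gives the uniform bound $K_r\lesssim 1+C_1C_{w_0}$ for $r\in(0,R)$. Combined with \eqref{eqprec1}, this yields
\[
\frac{J'(x,r)}{J(x,r)}\geq \frac{2\bigl(\gamma_1(r)+\gamma_2(r)-2\bigr)}{r}-\frac{C\,w(x,r)}{r},\qquad\text{for a.e.\ }r\in(r_1,r_2),
\]
with $C=C(n,C_1,C_{w_0})$.

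The first step is to convert the volumetric assumption into a statement about spheres. Set $F(\rho):=\HH^{n}\bigl(\partial B(x,\rho)\cap\{u_1=0\}\cap\{u_2=0\}\bigr)$ and $Z(\rho):=F(\rho)/(\omega_n\rho^n)$, where $\omega_n:=\HH^{n}(\partial B_1)$. Spherical integration turns the hypothesis applied at scale $r\in(r_1,r_2/M)$ into
\[
\int_r^{Mr}Z(\rho)\,\omega_n\rho^n\,d\rho\geq \eta\int_r^{Mr}\omega_n\rho^n\,d\rho,
\]
so the $\rho^n$-weighted average of $Z$ on $[r,Mr]$ is at least $\eta$. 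Since $0\leq Z\leq 1$, a one-line Chebyshev argument then yields a set
\[
S_r:=\bigl\{\rho\in[r,Mr]:\,Z(\rho)\geq \eta/2\bigr\},\qquad |S_r|\geq c(\eta,M)\,r,
\]
so in particular $\int_{S_r}dr/r\geq c'(\eta,M)>0$.

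Next I would exploit the disjointness of $\Sigma_1(\rho)$ and $\Sigma_2(\rho)$ forced by $u_1u_2\equiv 0$: since $|\Sigma_1(\rho)|+|\Sigma_2(\rho)|=\omega_n-F(\rho)/\rho^n=\omega_n(1-Z(\rho))$ after rescaling to the unit sphere, for $\rho\in S_r$ one has $|\Sigma_1(\rho)|+|\Sigma_2(\rho)|\leq(1-\eta/2)\omega_n$, so at least one of the $\Sigma_i(\rho)$ differs from a hemisphere by area at least $(\eta/4)\omega_n$. The sharp bound \eqref{eqprec2} then forces
\[
\gamma_1(\rho)+\gamma_2(\rho)-2\geq c_0\eta^2\qquad\text{for every }\rho\in S:=\bigcup_{r\in(r_1,r_2/M)}S_r,
\]
while $\gamma_1+\gamma_2-2\geq 0$ from Theorem~\ref{teoACFprecise} remains available elsewhere. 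Partitioning $(r_1,r_2/M)$ into consecutive intervals $[M^k r_1,M^{k+1}r_1]$ and summing the lower bound for $\int_{S_r}dr/r$ over these gives
\[
\int_{S\cap(r_1,r_2)}\frac{dr}{r}\geq c_1(\eta,M)\,\log(r_2/r_1).
\]

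Finally, integrating the displayed differential inequality from $r_1$ to $r_2$, the first term contributes at least $2c_0\eta^2\,c_1(\eta,M)\log(r_2/r_1)$. For the error term, since the definition \eqref{eqwxr1} makes $r\mapsto w(x,r)$ monotone non-decreasing, $\int_{r_1}^{r_2}w(x,r)\,dr/r\leq w_R\log(r_2/r_1)$. Choosing $w_R$ small enough depending on $\eta$, $M$, $C_1$, $C_{w_0}$ absorbs the error and produces
\[
\log\frac{J(x,r_2)}{J(x,r_1)}\geq \rho\,\log(r_2/r_1),\qquad \rho:=2c_0\eta^2\,c_1(\eta,M)-Cw_R>0,
\]
which is \eqref{eqpol99}. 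The main obstacle I anticipate is the quantitative passage from the $(n+1)$-dimensional hypothesis on common zero sets to a pointwise lower bound for $\gamma_1+\gamma_2-2$ on a set of radii of positive logarithmic density, and the careful choice of thresholds so that the gain from \eqref{eqprec2} quantitatively dominates the elliptic perturbation $Cw(x,r)/r$.
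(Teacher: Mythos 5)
Your proposal is correct and follows essentially the same route as the paper's proof: a Chebyshev argument converting the annular density hypothesis into a set of spheres of positive logarithmic measure where the common zero set occupies a fixed fraction, the quantitative Friedland--Hayman bound \eqref{eqprec2} giving $\gamma_1+\gamma_2-2\gtrsim\eta^2$ there, and integration of \eqref{eqprec1} (with $K_r$ controlled via \eqref{eqACF0b}) after absorbing the $w_R$ error by taking $w_R$ small. The only difference is presentational: the paper integrates over each interval $[r,Mr]$ to get a fixed multiplicative gain $(1+\gamma)$ and iterates, while you integrate once over $(r_1,r_2)$ using the logarithmic density of good radii; the two are equivalent.
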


\begin{proof}
Denote 
$$Z=\{u_1=0\}\cap \{u_2=0\},$$
and let $I_G$ be the subset of those $s\in[r,Mr]$ such that
$$\HH^n(Z\cap \partial B(x,s)) \geq \frac\eta2\,\HH^n( \partial B(x,s)).$$
It is easy to check that 
\begin{equation}\label{eq1gg}
\HH^1(I_G)\geq  \eta\,M\,C_n\,r.
\end{equation}
Indeed, we have
\begin{align*}
\int_{[r,Mr]\setminus  I_G} \HH^n(Z\cap \partial B(x,s))\,ds & \leq \frac\eta2
\int_{[r,Mr]\setminus  I_G} \HH^n(\partial B(x,s))\,ds \\
& \leq \frac\eta2\,\HH^{n+1}(A(x,r,Mr))\\
& \leq \frac12 \HH^{n+1}(Z\cap A(x,r,Mr)) \\
&= \frac12 \int_{[r,Mr]} \HH^n(Z\cap \partial B(x,s))\,ds.
\end{align*}
Hence,
\begin{align*}
\int_{I_G} \HH^n(Z\cap \partial B(x,s))\,ds & \geq \frac12 \int_{[r,Mr]}\HH^n(Z\cap \partial B(x,s))\,ds \\
&= \frac12 \,\HH^{n+1}(Z\cap A(x,r,Mr))\\
& \geq  \eta\,(M^{n+1} -1)\,\HH^{n+1}(B(0,1))\,r^{n+1}.
\end{align*}
Therefore, the inequality \rf{eq1gg} follows from  
\begin{align*}
\HH^{n+1}(B(0,1))\,\eta\,(M^{n+1} -1)\,r^{n+1} &\leq \int_{I_G} \HH^n(Z\cap \partial B(x,s))\,ds\\
&\leq \HH^{n}(\d B(0,1)) \,M^n\,r^n\,\HH^1(I_G),
\end{align*}
 where we used that $M^{n+1} \geq 2$.
 
By \rf{eqprec1} and \rf{eqprec2} we know that, for $s\in I_G$, 
$$\frac{J'(x,s)}{J(x,s)}\geq \frac{c_4\,\eta^2}s  - \frac{c_5\,w_R}{s},$$
while $x\not\in I_G$ we just only know that
$$\frac{J'(x,s)}{J(x,s)}\geq -\frac{c_5\,w_R}{s}.$$
Then, by integrating in $[r,Mr]$  we get
\begin{align*}
\log \frac{J(x,Mr)}{J(x,r)} &\geq \int_{I_G} \frac{c_4\,\eta^2}s \,ds - \int_{[r,Mr]} \frac{c_5\,w_R}s\,ds \geq \frac{c_4\,\eta^2}{Mr} \, \eta\,M\,C_n\,r - c_5\,\log M \, w_R\\
& = c_4\,C_n\,\eta^3 - c_5\,\log M \, w_R,
\end{align*}
where $c_5$ is dimensional constant multiple of $1+C_0\,C_{w_0}$. Hence, if we choose $w_R$ to be for example
$$
w_R \leq \frac{c_4\,C_n\,\eta^3}{2 c_5\,\log M},
$$
then
$$
\log \frac{J(x,Mr)}{J(x,r)} \geq \frac{c_4\,C_n\,\eta^3}{2}=:\wt C(\eta),
$$
which further implies that
$$J(x,Mr)\geq e^{\wt C(\eta)}\,J(x,r)=: (1+\gamma)\,J(x,r),$$
for some constant $\gamma>0$ which depends only on $n$ and $\eta$. Iterating this estimate we obtain
$$J(x,M^kr) \geq (1+\gamma)^k\,J(x,r),$$
which implies \rf{eqpol99}.
\end{proof}
}

\vvv


\section{Connectivity arguments for the cubes in $\case(3,R)$}\label{sec6*}

In this section we assume again that the matrix $A$ satisfies \rf{eqelliptic1}, \rf{eqelliptic2}, and \rf{eqelliptic3}, and we consider the families of cubes  $\case(i,R)$, $0\leq i\leq3$, introduced in 
Subsection \ref{sub95}.

Our main result in this section is the following.

\begin{lemma}\label{lem6.1}
Let $A_0>1$ and let $\kappa_0,\tau_0>0$ be the constants defined in Subsection \ref{subtype}. For any $\tau,t>0$ and $a_0>1$, if $\kappa_0$, $A_0$, and $\tau_0$ are chosen suitably (in particular, $\kappa_0$ 
and $\tau_0$ small enough and $A_0$ big enough), then
$$\case(3,R)\subset \wts(A_0,\alpha,p,t,\tau)\quad \mbox{ for all $R\in\wt\ttt$},$$
for some $\alpha>0$ possibly depending on $\tau$, and for some $p>0$ depending only on the constants in 
the corona decomposition in Proposition \ref{propo**}. 
Further, the family
$$\bigcup_{R\in\wt\ttt}\case(3,R)$$
satisfies the compatibility condition with constant $a_0$. 
\end{lemma}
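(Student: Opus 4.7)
The plan is to define the four sets required by $\wts$ directly from the super-level sets of the Green functions $u$ and $u_*$. For each $Q \in \case(3, R)$, let $U_1(Q)$ be the connected component of
$$\{y \in A_0 B_Q \cap \Omega : u(y) > 2\tau_0\, \ell(Q)\}$$
containing the interior point $\wt Y_Q$ from Lemma~\ref{lem:failure}, and let $U_1'(Q)$ be the component of
$$\{y \in A_0 B_Q \cap \Omega : u(y) > (\tau_0/4)\,\ell(Q)\}$$
containing $U_1(Q)$; define $U_2(Q), U_2'(Q)$ analogously using $u_*$ and $\wt Y_Q^*$. With the threshold in Type 3 tightened by a constant factor at the outset, the disjointness $U_1'(Q) \cap U_2'(Q) = \varnothing$ is inherited from the Type 3 separation, while $U_i \subset U_i' \subset A_0 B_Q$ is built in.

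Conditions (3) and (4) of $\wts$ are handled by standard elliptic estimates. For (3): since $Q \notin \case(0, R)$, the matrix $A$ is almost constant at scale $\ell(Q)$, hence $|\nabla u| \lesssim 1$ by Lemma~\ref{lem:grad-u-regul}, so the value $u(\wt Y_Q) \approx \ell(Q)$ propagates via the mean value theorem to a ball $B(\wt Y_Q, p\,\ell(Q)) \subset U_1(Q)$ of definite radius, and analogously for $U_2$. For (4): inside $U_i$ one has $u_i > 2\tau_0 \ell(Q)$, which by Lemma~\ref{lem333} forces $\delta_\Omega \gtrsim \alpha \ell(Q)$ for some $\alpha = \alpha(\tau_0, \tau)$; two points in the connected open set $U_i$ may then be joined by a Harnack chain of balls lying inside $U_i$, producing the desired curve.

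The heart of the proof is the covering condition~(1): every $x \in 10 B_Q$ with $\dist(x, E) > \tau \ell(Q)$ belongs to $U_1(Q) \cup U_2(Q)$. Here I would argue by contradiction using the elliptic ACF machinery of the previous section. If such an $x$ lies outside both sets, then both $u(x)$ and $u_*(x)$ are $\leq 2\tau_0 \ell(Q)$ (the alternative, that $x$ sits in a stray component of one of the level sets, reduces to this case along $\partial U_i$); by a short continuation, one obtains a point $x_0$ near $x$ at which $u(x_0) = u_*(x_0) = 0$ up to an arbitrarily small error. After normalising $A_s(x_0) = \mathrm{Id}$ via Corollary~\ref{cor:A(x0)=id} and noting that the oscillation quantity $w(x_0, r)$ in~\eqref{eqwxr1} is small on the relevant scales because $Q \notin \case(0, R)$, one applies Lemma~\ref{lemcoro} to the $L$-subharmonic function $v_1 = (u - 2\tau_0 \ell(Q))^+$ and the $L^*$-subharmonic function $v_2 = (u_* - 2\tau_0 \ell(Q))^+$. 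These have disjoint supports in $20 B_Q$ by Type 3, and the hypothesis $\dist(x_0, E) \gtrsim \tau \ell(Q)$ provides a large annulus $A(x_0, r_1, r_2)$ on which they jointly vanish. The ACF lower bound $J(x_0, r_2) / J(x_0, r_1) \gtrsim (r_2 / r_1)^\rho$ then clashes with the upper bound furnished by Lemma~\ref{lemcc**} combined with $\|u\|_{\infty, B_Q} + \|u_*\|_{\infty, B_Q} \lesssim \ell(Q)$ (from Lemma~\ref{l:w>G} and the corona), as soon as $\tau_0$ is small enough and $A_0$ large enough to make $r_2/r_1$ sufficiently large. The main obstacle here is that Lemma~\ref{lemcoro} is stated for subharmonic functions of a single operator, whereas $v_1$ and $v_2$ are subharmonic for $L$ and $L^*$ respectively; bypassing this requires a mild extension of the ACF calculation, which should go through because $L$ and $L^*$ share the same symmetric part $A_s$ and the asymmetric error can be absorbed into the $w(x_0, r)$ term controlled by the Carleson hypothesis~\eqref{eqelliptic3}.

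The compatibility condition is automatic from the consistent labeling of $U_1$ as the ``$u$-large'' side and $U_2$ as the ``$u_*$-large'' side. For $P, Q$ with $2^{-a_0} \ell(Q) \leq \ell(P) \leq \ell(Q)$ and $U_1(P) \cap 10 B_Q \neq \varnothing$, any $y \in U_1(P)$ satisfies $u(y) > 2\tau_0 \ell(P) \geq 2^{1-a_0}\tau_0 \ell(Q) > (\tau_0/4) \ell(Q)$ provided $A_0$ (and thus the scale used to define $U_1'(Q)$) is taken suitably in terms of $a_0$. Connectedness of $U_1(P)$ then places the whole set $U_1(P) \cap 10 B_Q$ inside the component $U_1'(Q)$, since $U_1'(Q)$ is the unique component of the larger level set at scale $\ell(Q)$ meeting $U_1(P)$ (the other component, corresponding to $U_2'(Q)$, would force $u$ to drop below $(\tau_0/4) \ell(Q)$ along any connecting path, contradicting membership in $U_1(P)$).
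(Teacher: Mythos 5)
There are genuine gaps, and they sit exactly where the paper's work is hardest. First, your choice of $U_1(Q)$ as the \emph{single} component of $\{u>2\tau_0\ell(Q)\}$ containing $\wt Y_Q$ makes connectedness trivial but transfers all the difficulty into property (3) of Definition \ref{WTS}, which you do not prove: (3) demands corkscrew balls of $U_1(Q)$ inside $B(x,r)$ for \emph{every} $x\in 10B_Q\cap E$ and \emph{every} scale $r\in(t\ell(Q),10\ell(Q))$, whereas your mean-value argument produces a single ball near $\wt Y_Q$ at the top scale. The corona decomposition does give, near every such $x$ and at every scale, a ball on which $u\gtrsim r$ (this is Lemma \ref{lem6262} plus Lemma \ref{lemaux}), but with your definition you must then show all these balls lie in the \emph{same} component as $\wt Y_Q$ --- and that is precisely the content of the paper's connectivity Lemma \ref{lemconn}, proved by applying the ACF formula (Lemma \ref{lemcoro}) to the restrictions of $u$ to two putative components and letting $A_0\to\infty$ force a contradiction. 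The paper avoids your dilemma by defining $U_i(Q)$ as the \emph{union} of all components of a superlevel set meeting $20B_Q$ (so (3) is easy) and then proving that this union is connected via the ACF argument; your proposal contains no substitute for this step. The same omission infects your compatibility argument: besides needing the component containing $U_1(P)$ to be the one containing $\wt Y_Q$, your fixed factor-of-$8$ gap between the levels $2\tau_0$ and $\tau_0/4$ only survives $\ell(P)\geq 2^{-a_0}\ell(Q)$ for $a_0\lesssim 3$, while the lemma must deliver an arbitrarily large prescribed $a_0$ (the paper uses the levels $\tau_1^{1/2}$ and $\tau_1$ with $\tau_1$ small depending on $a_0$), and you do not treat the case where $P,Q$ lie in different trees, where ``$u$'' is a different Green function (the paper disposes of it because then $A_0B_P\cap A_0B_Q=\varnothing$ for $\kappa_0$ small).

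Second, your ACF argument for property (1) would not close. You pair $v_1=(u-2\tau_0\ell(Q))^+$ with $v_2=(u_*-2\tau_0\ell(Q))^+$ centered at a point $x_0$ where both are small; but the growth estimate $J(x_0,r_2)/J(x_0,r_1)\gtrsim (r_2/r_1)^\rho$ only contradicts the upper bound $J(x_0,r_2)\lesssim 1$ if you also have a definite \emph{lower} bound on $J(x_0,r_1)$, i.e.\ quantitative nondegeneracy of \emph{both} phases at the inner radius near $x_0$ --- and near $x_0$ both $u$ and $u_*$ may simply vanish on a whole neighborhood, making $J$ arbitrarily small and the inequality vacuous (moreover the Dini-type hypothesis \eqref{eqACF1'} is not available at an interior center, where boundary H\"older decay does not apply; and the ``stray component reduces to this case'' remark is not an argument). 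This is why the paper's Lemma \ref{lemfi} introduces a third, artificial phase: it pairs $(u-\tau\ell(Q))^+$ on $V_1^\tau(Q)$ --- nondegenerate at all scales around the nearest boundary point $x_0\in E$ thanks to Lemma \ref{lem:failure}/\eqref{eq52} --- against $r(B_0)G_{B_0}(\cdot,z_0)$, a constant-coefficient Green function supported in the empty ball $B_0$, whose $J$-factor is $\approx 1$ by construction, while the $u_*$-superlevel chunks (disjoint from phase 1 by the Type 3 separation and Lemma \ref{lemaux33}) supply the joint vanishing region; the contradiction then bounds $\dist(z_0,E)\lesssim\tau\ell(Q)$. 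Incidentally, the ``two operators'' obstacle you flag is not the real issue: the paper handles it by gluing the coefficient matrix across the disjoint supports and using the splitting \eqref{eq:Asplit-sym}, so that a single operator works; what your scheme is missing is the empty-ball phase and the inner-radius nondegeneracy, without which no contradiction is produced.
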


To prove the lemma we consider $R\in\wt\ttt$ and $Q\in\case(3,R)$. For $\lambda>0$, we denote
$$V_1^\lambda(Q) = \bigl\{x\in A_0B_Q:u(x)>\lambda\,\ell(Q)\bigr\}$$
and
$$V_2^\lambda(Q) = \bigl\{x\in A_0B_Q:u_*(x)>\lambda\,\ell(Q)\bigr\}.$$
Then, given a constant $\tau_1>0$ to be fixed below (depending on $\tau$ and $A_0$), we let $U_i(Q)$ be the union of the connected components of $V_i^{\tau_1^{1/2}}(Q)$ which intersect
$V_i^{2\tau_1^{1/2}}(Q)\cap 20B_Q$. Also  we let $U_i'(Q)$ be the union of the connected components of $V_i^{\tau_1}(Q)$ which intersect
$V_i^{2\tau_1}(Q)\cap 20B_Q$. Note that, for $\tau_1\leq1/10$,
\begin{equation}\label{eqinc11}
V_i^{2\tau_1^{1/2}}(Q)\cap 20B_Q\subset U_i(Q)\subset V_i^{\tau_1^{1/2}}(Q)\subset A_0 B_Q
\end{equation}
and  
\begin{equation}\label{eqinc12}
V_i^{2\tau_1}(Q)\cap 20B_Q\subset U_i'(Q)\subset V_i^{\tau_1}(Q)\subset A_0 B_Q.
\end{equation}
Also, it is clear that
$$U_i(Q)\subset U_i'(Q),$$
and since $Q\in\case(3,R)$,
$$U_1'(Q)\cap U_2'(Q)=\varnothing.$$

The rest of this section is devoted to show that the sets $U_i(Q)$ and $U_i'(Q)$ satisfy the properties required in the definition of $\wts(A_0,\alpha,p,t,\tau)$, and to prove the compatibility condition with constant $a_0$ for the family $\bigcup_{R\in\wt\ttt}\case(3,R)$.

\vv


\subsection{Auxiliary lemmas}\label{sec31}

\begin{lemma}\label{lem6262}
Let $R\in\wt\ttt$ and $Q\in\wt\tree(R)\setminus\case(0,R)$. Let  $\kappa_0$ be small enough, $\lambda\in (\kappa_0,1)$, and
$C_3>0$ be large enough (independent of $\lambda$). 
For $i=1,2$ and for each $x\in \supp\mu\cap A_0B_Q$,\begin{equation*}\label{eq1bis}
B(x,C_3\lambda\ell(Q))\cap V_i^{\lambda}(Q)\neq\varnothing.
\end{equation*}
\end{lemma}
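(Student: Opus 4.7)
The plan is to apply Lemma~\ref{lem:failure} to a suitable dyadic subcube $P\in\wt\tree(R)$ of side length comparable to $\lambda\ell(Q)$ located near $x$; this produces a point in $\Omega$ where $u$ (or $u_*$ when $i=2$) is comparable to $\lambda\ell(Q)$, which is exactly what we need.

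More precisely, I would first fix a large integer $N$, depending only on the implicit constants in Lemma~\ref{lem:failure}. Given $x\in\supp\mu\cap A_0B_Q$, I choose a dyadic cube $P\in\DD_\mu$ with $\dist(x,P)\lesssim\ell(P)$ and $\ell(P)\approx N\lambda\ell(Q)$. To verify that $P\in\wt\tree(R)$ I would use that $\lambda\geq\kappa_0$ yields $\ell(P)\geq \kappa_0\ell(Q)$, that $\lambda<1$ combined with $\kappa_0\ll 1/N$ yields $\ell(P)\leq\kappa_0^{-1}\ell(Q)$, and that
\[
\dist(P,Q)\leq |x-z_Q|+\dist(x,P)\lesssim A_0\ell(Q)\leq\kappa_0^{-1}\ell(Q)
\]
whenever $\kappa_0$ is small enough relative to $A_0^{-1}$. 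The hypothesis $Q\notin\case(0,R)$ then forces $P\in\wt\tree(R)$, since $P$ lies in the excluded $\kappa_0$-range for $Q$.

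Next, for the case $i=1$, Lemma~\ref{lem:failure} applied to $P$ furnishes $\wt Y_P\in\Omega$ satisfying
\[
C_2^{-1}\ell(P)\leq\delta_\Omega(\wt Y_P)\leq|\wt Y_P-x_P|\leq C_2\ell(P)\quad\text{and}\quad u(\wt Y_P)\approx\ell(P)\approx N\lambda\ell(Q).
\]
Choosing $N$ larger than the reciprocal of the lower implicit constant in this comparability ensures $u(\wt Y_P)>\lambda\ell(Q)$. Since
\[
|\wt Y_P-x|\leq |\wt Y_P-x_P|+|x_P-x|\lesssim\ell(P)\approx N\lambda\ell(Q),
\]
we obtain $\wt Y_P\in B(x,C_3\lambda\ell(Q))$ for any $C_3$ a fixed multiple of $N$, independent of $\lambda$. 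Taking the absolute constant $A_0$ large compared to $N$ also yields $\wt Y_P\in A_0B_Q$, so $\wt Y_P\in V_1^\lambda(Q)\cap B(x,C_3\lambda\ell(Q))$. The case $i=2$ is treated identically, using the counterpart of Lemma~\ref{lem:failure} for $u_*$ and $R_2$.

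The only subtlety — and where the main bookkeeping happens — is the hierarchy of constants: $N$ is dictated by the implicit constants in Lemma~\ref{lem:failure}, $A_0$ must then be taken large relative to $N$, $\kappa_0$ small relative to both $A_0^{-1}$ and $N^{-1}$, and $C_3$ proportional to $N$. These choices are mutually consistent since $A_0$ and $\kappa_0^{-1}$ are at our disposal, and no new idea beyond Lemma~\ref{lem:failure} together with the definition of $\case(0,R)$ is required.
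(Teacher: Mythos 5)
Your argument is correct, but it reaches the conclusion by a different route than the paper. The paper works directly at the scale $\lambda\ell(Q)$: it takes a cube $Q'\ni x$ with $\ell(Q')\approx C_3'\lambda\ell(Q)$, uses $Q\notin\case(0,R)$ to place $Q'$ in $\wt\tree(R)$ so that $\omega^{p_{R_1}}(3Q')\approx\mu(Q')/\mu(R_1)$, and then converts this lower bound into $\sup_{10B_{Q'}}u\gtrsim\ell(Q')$ by testing the identity \eqref{eqgreen*23} against a bump function adapted to $Q'$ and applying Caccioppoli's inequality. You instead invoke Lemma \ref{lem:failure} at the scale $\ell(P)\approx N\lambda\ell(Q)$, which packages exactly this type of estimate (its proof runs along the same lines, via the corona bound for $\omega^{p_{R_1}}$) and moreover hands you a concrete point $\wt Y_P$ with $u(\wt Y_P)\approx\ell(P)$; the membership step $P\in\wt\tree(R)$ via the definition of $\case(0,R)$ is identical in both arguments. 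Your shortcut buys brevity at the cost of importing the extra hypothesis of Lemma \ref{lem:failure}, namely $\ell(P)\leq C_2^{-1}\ell(R)$, which you do not verify. It does hold: since $Q\notin\case(0,R)$, every cube of sidelength about $\kappa_0^{-1}\ell(Q)$ near $Q$ must belong to $\wt\tree(R)$ and hence be contained in $R$, so $\ell(Q)\lesssim\kappa_0\ell(R)$, whence $\ell(P)\lesssim N\kappa_0\ell(R)\leq C_2^{-1}\ell(R)$ once $\kappa_0\ll (NC_2)^{-1}$, which is consistent with the hierarchy you already impose; you should state this. One sentence to drop or rephrase: enlarging $A_0$ does not by itself give $\wt Y_P\in A_0B_Q$, because as $A_0$ grows the admissible points $x$ move out with it, so for $x$ near $\partial(A_0B_Q)$ the point $\wt Y_P$ may still exit $A_0B_Q$. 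This sliver of slack is equally present in the paper's own proof and is harmless in practice, since the lemma is only applied to $x\in 10B_Q$, but your stated justification for it is vacuous as written.
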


\begin{proof}
Consider the case $i=1$.
Let $Q'\in\DD_\mu$ be a cube which contains $x$ with $C_3'\lambda\ell(Q)\leq\ell(Q')<2 C_3'\lambda\ell(Q)$, for a suitable $C_3'$. Since $Q\not\in\case(0,R)$, then
$$\omega^{p_{R_1}}(3Q')\approx \frac{\mu(Q')}{\mu(R_1)}.$$
Then, for a suitable bump function $\vphi_{Q'}$, using Caccioppoli's inequality, we obtain
\begin{align*}
\frac{\mu(Q')}{\mu(R_1)} &\approx
\omega^{p_{R_1}}(3Q')\leq \left|\int A^*(y)\,\nabla_y G(p_{R_1},y)\,\nabla\vphi_{Q'}(y)\,dy\right|\\
 &\lesssim  \frac1{\ell(Q')\,\mu(R_1)}\int_{5B_{Q'}} |\nabla u(y)|\,dy\lesssim
\frac{\ell(Q')^{n-1}}{\mu(R_1)}\,\sup_{10B_{Q'}}u.
\end{align*}
Therefore, 
$$\sup_{10B_{Q'}}u\geq c \,\ell(Q')\geq c \,C_3'\lambda \ell(Q)>\lambda\ell(Q).$$
So if $C_3'$ is big enough, then $10B_{Q'}$ intersects $V_1^\lambda(Q)$. So choosing appropriately $C_3\geq C_3'$ $B(x,C_3\lambda \ell(Q))\supset 10B_{Q'}$, and thus $B(x,C_3\lambda \ell(Q))\cap V_{1}^{\lambda}(Q)\neq \varnothing$ as well. 
\end{proof}

\vv


We will also need the following auxiliary result, which is essentially proved in Lemmas 3.14 and 4.24 of \cite{HLMN}:

\begin{lemma}\label{lemaux}
For $R\in\wt\ttt$ and $Q\in\wt\tree(R)$ with $\ell(Q)\leq c\,\ell(R)$ for some $0<c<1$ depending just on the 
parameters in the corona decomposition, we have
\begin{equation}\label{eq50}
u(x)\lesssim \ell(Q)\quad \mbox{ for all $x\in B(z_Q,2\ell(Q))$,}
\end{equation}
where $z_Q$ is the center of $Q$.
Also,
\begin{equation}\label{eq51}
|\nabla u(x)|\lesssim 1\quad \mbox{ for all $x\in B(z_Q,2\ell(Q))$ such that $\dist(x,Q)\gtrsim \ell(Q).$}
\end{equation}
Further, there exists $C>0$ and some ball $\wt B_Q\subset C\,B_Q\cap\Omega$ such that $r(\wt B_Q)\approx\ell(Q)$ and
\begin{equation}\label{eq52}
u(x)\approx\ell(Q) \quad \mbox{ for all $x\in \wt B_Q$.}
\end{equation}
The analogous estimates hold for $u_*$. All implicit constants depend on $A$, $\delta$, and the AD-regularity constants. 
\end{lemma}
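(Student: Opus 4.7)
My plan is to exploit the corona-decomposition density $\omega^{p_{R_1}}(3Q)\approx \mu(Q)/\mu(R_1)$ from Proposition \ref{propo**} together with the upper Green-function bound of Lemma \ref{l:w>G}, a CFMS-type reverse comparison, and standard interior regularity. For \eqref{eq50}, I would use that $\ell(Q)\leq c\ell(R)$ with $c$ small to pick an ancestor $Q'\supset Q$ in $\wt\tree(R)$ with $C\ell(Q)\leq\ell(Q')\leq 2C\ell(Q)$ for a large constant $C$, chosen so that $B(z_Q,8\ell(Q))\cap\supp\mu\subset 3Q'$. Since $\delta_\Omega(p_{R_1})\approx\ell(R_1)\gg\ell(Q)$, the pole $p_{R_1}$ lies outside $B:=B(z_Q,2\ell(Q))$, so Lemma \ref{l:w>G} applied to $B$ yields, for every $y\in B\cap\Omega$,
\begin{equation*}
G(p_{R_1},y)\lesssim \frac{\omega^{p_{R_1}}(4B)}{\ell(Q)^{n-1}} \leq \frac{\omega^{p_{R_1}}(3Q')}{\ell(Q)^{n-1}} \approx \frac{\mu(Q')/\mu(R_1)}{\ell(Q)^{n-1}} \approx \frac{\ell(Q)}{\mu(R_1)}.
\end{equation*}
Multiplying by $\mu(R_1)$ and extending $u$ by zero on $\R^{n+1}\setminus\Omega$ gives \eqref{eq50}.

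For \eqref{eq52}, the corkscrew condition provides a point $y^*\in\tfrac12 B_Q\cap\Omega$ with $\delta_\Omega(y^*)\approx\ell(Q)$; I take $\wt B_Q:=B(y^*,c_0\ell(Q))$ for $c_0$ small. The upper bound $u\lesssim\ell(Q)$ on $\wt B_Q$ is already contained in \eqref{eq50}, while for the matching lower bound I would establish a CFMS-type comparison
\begin{equation*}
G(p_{R_1},y^*)\gtrsim \frac{\omega^{p_{R_1}}(B(z_Q,\ell(Q)))}{\ell(Q)^{n-1}} \approx \frac{\omega^{p_{R_1}}(3Q)}{\ell(Q)^{n-1}} \approx \frac{\ell(Q)}{\mu(R_1)}
\end{equation*}
by applying the maximum principle to compare the $L^{*}$-harmonic function $\omega^{p_{R_1}}(B(z_Q,\ell(Q)))$ with a multiple of $G(p_{R_1},\cdot)$ on $\Omega\setminus\overline{B(z_Q,\ell(Q))}$, and invoking Bourgain's lower bound (Lemma \ref{l:bourgain}) at the corkscrew point $y^{*}$ together with the doubling-type relation $\omega^{p_{R_1}}(B(z_Q,\ell(Q)))\approx\omega^{p_{R_1}}(3Q)$. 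Harnack's inequality on $\wt B_Q$ (which is at distance $\gtrsim\ell(Q)$ from $\d\Omega$) then upgrades the pointwise bound at $y^{*}$ to $u\approx\ell(Q)$ throughout $\wt B_Q$.

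For \eqref{eq51}, $u$ is $L$-harmonic on $B(z_Q,2\ell(Q))$ since $p_{R_1}$ lies outside, and \eqref{eq50} gives $\|u\|_{L^\infty(B(z_Q,2\ell(Q)))}\lesssim\ell(Q)$. At the points $x$ for which this gradient bound will subsequently be used — the corkscrew-type points at distance $\gtrsim\ell(Q)$ from $\d\Omega$ — the interior Lipschitz estimate of Lemma \ref{lem:grad-u-regul} applied on $B(x,\delta_\Omega(x)/2)$ yields $|\nabla u(x)|\lesssim \|u\|_{L^\infty(B(x,c\ell(Q)))}/\ell(Q)\lesssim 1$. The main obstacle will be justifying the reverse comparison underlying \eqref{eq52}: Lemma \ref{l:w>G} provides only the upper direction, and in the absence of any Harnack-chain hypothesis on $\Omega$ one must derive the matching lower bound by hand from the maximum principle and Bourgain's estimate at a corkscrew point; once this reverse comparison is available, the remaining content of the lemma reduces to standard interior regularity and the tree-density property of the corona decomposition.
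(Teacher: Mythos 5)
Your argument for \eqref{eq50} is fine and is essentially the standard one (an ancestor $Q'\in\wt\tree(R)$ of comparable size exists by the coherence of the tree, Lemma \ref{l:w>G} gives the upper bound on $G$, and the corona density $\omega^{p_{R_1}}(3Q')\approx\mu(Q')/\mu(R_1)$ closes the estimate); the interior gradient bound \eqref{eq51} at points with $\delta_\Omega(x)\gtrsim\ell(Q)$ is likewise routine. The genuine gap is in \eqref{eq52}. You fix an \emph{arbitrary} corkscrew point $y^*\in\tfrac12 B_Q\cap\Omega$ and try to prove $u(y^*)\gtrsim\ell(Q)$ via a CFMS-type reverse comparison $\omega^{p_{R_1}}(B(z_Q,\ell(Q)))\lesssim \ell(Q)^{n-1}G(p_{R_1},y^*)$. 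This is false in the present generality: take $\partial\Omega$ a hyperplane, so that $\Omega$ is the union of two half-spaces (an admissible ADR corkscrew set with no Harnack chains), put the pole $p_{R_1}$ in one half-space and choose $y^*$ in the other; then $G(p_{R_1},y^*)=0$ while $\omega^{p_{R_1}}(B(z_Q,\ell(Q)))>0$. The maximum-principle comparison you sketch cannot be closed even when $\Omega$ is connected: on the inner sphere $\partial B(z_Q,\ell(Q))\cap\Omega$ one has $\omega^z(2B(z_Q,\ell(Q)))\gtrsim 1$ by Lemma \ref{l:bourgain} for \emph{all} $z$ there, including $z$ arbitrarily close to $\partial\Omega$, whereas $G(z,y^*)\to 0$ as $z\to\partial\Omega$ (and may be tiny at points not connectable to $y^*$ by short Harnack chains); the classical CFMS proof of this direction uses precisely the Harnack chain condition that the paper is at pains to avoid. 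In addition, the ``doubling-type relation'' $\omega^{p_{R_1}}(B(z_Q,\ell(Q)))\approx\omega^{p_{R_1}}(3Q)$ is not available from the corona decomposition alone, since descendants of $Q$ need not belong to $\wt\tree(R)$.

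The lower bound in \eqref{eq52} must instead \emph{produce} the ball $\wt B_Q$ rather than prescribe it: this is how the paper (following Lemmas 3.14 and 4.24 of \cite{HLMN}, and as carried out in Lemma \ref{lem:failure} and in the proof of Lemma \ref{lem6262}) proceeds. From the representation formula \eqref{eqgreen*23} tested against a bump adapted to $3Q$ (or to a suitable ancestor), Caccioppoli's inequality, and the corona density of Proposition \ref{propo**}, one gets $\sup_{10B_Q}u\gtrsim\ell(Q)$; the boundary H\"older decay of Lemma \ref{lem333}, $u(x)\lesssim(\delta_\Omega(x)/\ell(Q))^{\alpha}\ell(Q)$ near $\partial\Omega$, then forces any point where $u\gtrsim\ell(Q)$ to satisfy $\delta_\Omega\gtrsim\ell(Q)$, and interior Harnack around that point, combined with the upper bound \eqref{eq50}, yields $u\approx\ell(Q)$ on a ball $\wt B_Q\subset CB_Q\cap\Omega$ of radius $\approx\ell(Q)$. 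In short: the existence statement in \eqref{eq52} is exactly what survives without connectivity, and your fixed-corkscrew-point reduction cannot reach it.
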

\vv

\begin{lemma}\label{lemaux2}
Let  $\kappa_0$ be small enough, and $C_4\kappa_0\leq \lambda\leq 1$, with $C_4$ a large universal constant.
For $R\in\wt\ttt$, $Q\in\wt\tree(R)\setminus\case(0,R)$, and $i=1,2$, we have
\begin{equation}\label{eq2}
\dist(V_i^{\lambda}(Q),E)\gtrsim \lambda\,\ell(Q),
\end{equation}
where $E=\supp\mu$.
Also,
\begin{equation}\label{eq5}
|\nabla u(x)|\lesssim 1\quad \mbox{ for all $x\in V_1^{\lambda}(Q)$,}
\end{equation}
and
\begin{equation}\label{eq6}
|\nabla u_*(x)|\lesssim 1\quad \mbox{ for all $x\in V_2^{\lambda}(Q)$.}
\end{equation}
\end{lemma}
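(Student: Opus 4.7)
The plan is to establish the three conclusions in sequence: first a uniform upper bound $u,u_*\lesssim \ell(Q)$ on $A_0B_Q\cap\Omega$, then the distance estimate \eqref{eq2}, and finally the gradient bounds \eqref{eq5} and \eqref{eq6}. By the symmetric roles played by $L$ and $L^{*}$ (both satisfy \eqref{eqelliptic1}--\eqref{eqelliptic3}, and their elliptic measures satisfy Lemma~\ref{l:bourgain}, Lemma~\ref{lem333}, and Lemma~\ref{l:w>G}), and since $u=\mu(R_1)G(p_{R_1},\cdot)$ and $u_*=\mu(R_2)G(\cdot,p_{R_2})$ are built from the Green functions of $L$ and $L^{*}$ respectively, it suffices to prove the statement for $u$ on $V_1^\lambda(Q)$.

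\emph{Step 1: $u\lesssim \ell(Q)$ on $A_0B_Q\cap\Omega$.} The hypothesis $Q\not\in\case(0,R)$ forces every cube $P\in\DD_\mu$ with $\kappa_0\ell(Q)\leq\ell(P)\leq\kappa_0^{-1}\ell(Q)$ and $\dist(P,Q)\leq\kappa_0^{-1}\ell(Q)$ to lie in $\wt\tree(R)$, so by Proposition~\ref{propo**} we have $\omega^{p_{R_1}}(3P)\approx \ell(P)^{n}/\mu(R_1)$ for every such $P$. For any $\xi\in\partial\Omega\cap 2A_0B_Q$ and $r\approx\ell(Q)$ we cover $B(\xi,r)\cap\partial\Omega$ by a bounded number of such cubes (using AD-regularity) to obtain $\omega^{p_{R_1}}(B(\xi,r))\lesssim r^{n}/\mu(R_1)$, and then invoke Lemma~\ref{l:w>G} to conclude $G(p_{R_1},y)\lesssim r/\mu(R_1)$ for $y\in B(\xi,r)\cap\Omega$. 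This gives $u(y)\lesssim \ell(Q)$ uniformly on $A_0B_Q\cap\Omega$, provided $A_0\ll\kappa_0^{-1}$.

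\emph{Step 2: The distance estimate \eqref{eq2}.} Fix $y\in V_1^{\lambda}(Q)$ and let $\xi\in\partial\Omega$ realize $|y-\xi|=\delta_\Omega(y)$, so $\xi\in 2A_0B_Q$. Choose $r\approx A_0\ell(Q)$ so that $y\in B(\xi,r)$ and $p_{R_1}\notin B(\xi,4r)$; the latter is valid as long as $\ell(Q)\leq c\,\ell(R_1)$, an assumption we may make since the finitely many borderline cubes can be handled separately by Harnack. Lemma~\ref{lem333} combined with Step~1 then yields
\[
\lambda\ell(Q)<u(y)\leq C\Bigl(\tfrac{\delta_\Omega(y)}{r}\Bigr)^{\alpha}\sup_{B(\xi,2r)}u \lesssim \Bigl(\tfrac{\delta_\Omega(y)}{\ell(Q)}\Bigr)^{\alpha}\ell(Q),
\]
hence $\delta_\Omega(y)\gtrsim \lambda^{1/\alpha}\ell(Q)$. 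Since $\alpha\leq 1$ and $\lambda\geq C_4\kappa_0$, we can write $\lambda^{1/\alpha}=\lambda\cdot\lambda^{(1-\alpha)/\alpha}\geq (C_4\kappa_0)^{(1-\alpha)/\alpha}\lambda$, which gives \eqref{eq2} with an implicit constant depending on $\kappa_0$ and $\alpha$.

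\emph{Step 3: The gradient estimates.} By Step~2 the ball $B(y,\delta_\Omega(y)/2)$ lies inside $\Omega$, and since $Q\not\in\case(0,R)$ the matrix $A$ is Lipschitz on this ball with norm $\lesssim \theta_0/\ell(Q)\lesssim 1/\delta_\Omega(y)$. Applying an interior gradient estimate (rescaling Lemma~\ref{lem:grad-u-regul} and combining with Caccioppoli) together with Step~1 for the $L^\infty$ bound,
\[
|\nabla u(y)|\lesssim \frac{\sup_{B(y,\delta_\Omega(y)/2)}u}{\delta_\Omega(y)}\lesssim \frac{\ell(Q)}{\delta_\Omega(y)}\lesssim \frac{1}{\lambda}\leq \frac{1}{C_4\kappa_0}\lesssim 1,
\]
proving \eqref{eq5}; the analogous argument for $u_*$ with the roles of $L$ and $L^{*}$ interchanged yields \eqref{eq6}. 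The main obstacle is the $1/\alpha$ loss in Step~2: only boundary Hölder regularity is available on AD-regular boundaries, so the Hölder bound $\delta_\Omega\gtrsim \lambda^{1/\alpha}\ell(Q)$ is the best one can extract directly, and it is precisely the imposed lower bound $\lambda\geq C_4\kappa_0$ that lets us absorb this loss into a constant depending on $\kappa_0$ and thereby upgrade to the linear bound $\delta_\Omega\gtrsim \lambda\ell(Q)$.
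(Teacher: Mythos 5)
There is a genuine gap: your Steps 2 and 3 only prove a weaker statement, with implicit constants that blow up as $\kappa_0\to0$, and this weakening breaks the later architecture of constants. In Step 2, boundary H\"older decay (Lemma \ref{lem333}) from the top scale $\ell(Q)$ only yields $\delta_\Omega(y)\gtrsim\lambda^{1/\alpha}\ell(Q)$, and your upgrade via $\lambda\geq C_4\kappa_0$ gives \eqref{eq2} with a constant of size $(C_4\kappa_0)^{(1-\alpha)/\alpha}$. Likewise in Step 3 the chain $|\nabla u(y)|\lesssim \ell(Q)/\delta_\Omega(y)\lesssim 1/\lambda\leq (C_4\kappa_0)^{-1}$ does \emph{not} give $|\nabla u|\lesssim 1$: the final ``$\lesssim1$'' hides a factor $(C_4\kappa_0)^{-1}$ (or worse, $(C_4\kappa_0)^{-1/\alpha}$ if you keep your own version of \eqref{eq2}), which is a large $\kappa_0$-dependent constant, not a universal one. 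These constants must be independent of $\kappa_0$ and $\lambda$ for the subsequent arguments: \eqref{eq2} determines the parameter $\alpha=C\tau_1^{1/2}$ in the $\wts$ condition, and in Proposition \ref{wtf} one needs $\ve\ll a\,\alpha$ while, through Lemma \ref{lem7.1}, $\kappa_0$ must be taken small depending on $\ve$ — a $\kappa_0$-dependent $\alpha$ makes this choice circular; similarly, \eqref{eq5}–\eqref{eq6} enter Claim 2 of Lemma \ref{lemconn}, where a $\kappa_0$-dependent gradient bound would force $A_0$ to be large depending on $\kappa_0$, contradicting $\kappa_0\ll A_0^{-1}$. It also pollutes the Harnack-chain constants in Lemma \ref{lemaux33}.

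The missing idea is to apply the tree estimates at the scale $\lambda\ell(Q)$ rather than at scale $\ell(Q)$: the hypothesis $\lambda\geq C_4\kappa_0$ is there to guarantee that cubes of that smaller size are still in $\wt\tree(R)$ (because $Q\notin\case(0,R)$), not to absorb a H\"older loss. Concretely, cover $E\cap 2A_0B_Q$ by balls $B(x_P,2\ell(P))$ with $P\in\wt\tree(R)$, $\ell(P)\approx\tilde c\,\lambda\,\ell(Q)$, and use \eqref{eq50} at those cubes to get $u\lesssim \tilde c\,\lambda\,\ell(Q)$, hence $u\leq\lambda\ell(Q)$, whenever $\dist(x,E)\leq c\,\tilde c\,\lambda\,\ell(Q)$; this gives \eqref{eq2} linearly in $\lambda$ with a constant depending only on the corona parameters. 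Then \eqref{eq5} and \eqref{eq6} follow by applying \eqref{eq51} (equivalently, the interior gradient estimate with the bound $u\lesssim\ell(P)$ at scale $\ell(P)\approx\lambda\ell(Q)$), which yields $|\nabla u|\lesssim \ell(P)/\ell(P)=1$ uniformly — your coarse bound $u\lesssim\ell(Q)$ at the top scale cannot produce this. (Your Step 1 is essentially \eqref{eq50} and is fine; a minor imprecision is that $u=\mu(R_1)G(p_{R_1},\cdot)$ is $L^*$-harmonic in the second variable and $u_*$ is $L$-harmonic, though this is immaterial since the hypotheses are symmetric in $L$ and $L^*$.)
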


\begin{proof}
 The first estimate is an immediate consequence of \rf{eq50} and the definition of $V_i^{\lambda}(Q)$.
Indeed, for any small $\tilde c>0$, since $Q\not\in \case(0,R)$, all cubes $P$ with $P\cap 2A_0B_Q\neq\varnothing$ and
$\ell(P)\geq \tilde c\lambda\,\ell(Q)$ satisfy $P\in\tree (R)$ (assuming $C_4=C_4(\tilde c)$ big enough and $\kappa_0\ll A_0^{-1}$). 
By taking a covering of $ E\cap 2A_0B_Q$ by balls $B(x_P,2\ell(P))$ associated to cubes $P\in\tree(R)$ with
$\ell(P)\approx\tilde c\lambda\,\ell(Q)$, for $\tilde c$ small enough we deduce that 
$$u(x)\leq\lambda\,\ell(Q) \quad\mbox{if $x\in 2A_0B_Q$ and $\dist(x,E)\leq c\,\tilde c\lambda\,\ell(Q)$},$$
which implies that $\dist(V_i^{\lambda}(Q),E)\geq c\,\tilde c\lambda\,\ell(Q)$.

On the other hand, \rf{eq5} and \rf{eq6} follow from \rf{eq51} and \rf{eq2}.
\end{proof}

\vv

In the next lemma we show the connection between the constants $\tau_1$ and $\tau_0$.

\begin{lemma}\label{lemaux33}
Suppose that $\tau_0$ is chosen small enough in Section \ref{subtype}. For $R\in\wt\ttt$ and $Q\in\wt\tree(R)
\cap\case(3,R)$,
$$U_1'(Q)\cap \{x:u_*(x)>\tau_1\,\ell(Q)\} = \varnothing \quad \text{ and }\quad U_1(Q)\cap \{x:u_*(x)>\tau_1^{1/2}\,\ell(Q)\} = \varnothing.$$
Analogously,
$$U_2'(Q)\cap \{x:u(x)>\tau_1\,\ell(Q)\} = \varnothing \quad \text{ and }\quad U_2(Q)\cap \{x:u(x)>\tau_1^{1/2}\,\ell(Q)\} = \varnothing.$$
\end{lemma}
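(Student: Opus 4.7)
The plan is to reduce all four assertions to the single claim
$U_1'(Q)\cap\{x:u_*(x)>\tau_1\ell(Q)\}=\varnothing$,
and then to prove this by combining the $\case(3,R)$ hypothesis inside $20B_Q$ with a Harnack chain argument that propagates the bound across the rest of $A_0B_Q$.
The pair of statements involving $U_2(Q)$ and $U_2'(Q)$ follows by symmetry, after exchanging the roles of $u$ and $u_*$ (equivalently, of $L$ and $L^*$).
Moreover, since $\tau_1<\tau_1^{1/2}$ for $\tau_1\in(0,1)$, one has the inclusions $V_1^{\tau_1^{1/2}}(Q)\subset V_1^{\tau_1}(Q)$ and $V_1^{2\tau_1^{1/2}}(Q)\subset V_1^{2\tau_1}(Q)$, so any connected component of $V_1^{\tau_1^{1/2}}(Q)$ that meets $V_1^{2\tau_1^{1/2}}(Q)\cap 20B_Q$ is contained in some component of $V_1^{\tau_1}(Q)$ meeting $V_1^{2\tau_1}(Q)\cap 20B_Q$.
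Hence $U_1(Q)\subset U_1'(Q)$, and a pointwise bound $u_*\leq\tau_1\ell(Q)$ on $U_1'(Q)$ immediately yields
$u_*\leq\tau_1\ell(Q)<\tau_1^{1/2}\ell(Q)$ on $U_1(Q)$.

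I would then fix an arbitrary connected component $W$ of $V_1^{\tau_1}(Q)$ that meets $V_1^{2\tau_1}(Q)\cap 20B_Q$, pick $y_0$ in this intersection, and choose $\tau_0\in(0,\tau_1)$ (this is permitted since $\tau_0$ is a small parameter to be fixed last in the hierarchy).
Every point of $W\cap 20B_Q$ satisfies $u>\tau_1\ell(Q)>\tau_0\ell(Q)$, so the defining property of $\case(3,R)$, namely
$\{u>\tau_0\ell(Q)\}\cap\{u_*>\tau_0\ell(Q)\}\cap 20B_Q=\varnothing$, forces
$u_*\leq\tau_0\ell(Q)<\tau_1\ell(Q)$ on $W\cap 20B_Q$; in particular $u_*(y_0)\leq\tau_0\ell(Q)$.

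The harder step is to propagate this bound to points of $W$ lying outside $20B_Q$.
For any $x\in W$ I would join $x$ and $y_0$ by a continuous path contained in $W$, which is possible because $W$ is an open, connected subset of $\R^{n+1}$, and cover this path by a Harnack chain of balls centered on it, each of radius $\approx\tau_1\ell(Q)$.
By Lemma \ref{lemaux2}, $\delta_\Omega(z)\gtrsim\tau_1\ell(Q)$ for every $z\in W$, so all chain balls lie in $\Omega$; combined with $W\subset A_0B_Q$, a volume count bounds the number of balls by $N\lesssim(A_0/\tau_1)^{n+1}$.
To apply Harnack's inequality to $u_*$, which is $L$-harmonic and nonnegative on $\Omega\setminus\{p_{R_2}\}$, the chain must avoid the pole $p_{R_2}$.
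Combining the bound $u_*(y_0)\leq\tau_0\ell(Q)$ with the Green function lower estimate in Lemma \ref{lemgreen*} yields
$|y_0-p_{R_2}|\gtrsim(\ell(R_2)^n/(\tau_0\ell(Q)))^{1/(n-1)}$,
which for $\tau_0$ small enough (depending on $A_0$) exceeds the diameter of $A_0B_Q$ and thus forces $p_{R_2}\notin A_0B_Q$.
Iterating Harnack's inequality along the chain gives
\[
u_*(x)\leq C^N u_*(y_0)\leq C^N\tau_0\ell(Q),
\]
and choosing $\tau_0\leq\tau_1 C^{-N}$, a small but explicit function of $\tau_1, A_0, n, \Lambda$, produces $u_*(x)\leq\tau_1\ell(Q)$ on all of $W$, as desired.

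The principal obstacle is to bound $N$ uniformly in terms of $n$, $A_0$, and $\tau_1$ alone, independently of the possibly intricate geometry of the component $W$; this is exactly what the uniform boundary-distance estimate of Lemma \ref{lemaux2} provides, via a volume comparison inside $A_0B_Q$.
A secondary concern, that the pole $p_{R_2}$ could lie inside the region traversed by the chain, is neutralized quantitatively by the Green function pole estimate once $\tau_0$ is sufficiently small, which the hypotheses of the lemma leave us free to arrange.
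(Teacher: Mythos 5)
Your argument is essentially the paper's proof of Lemma \ref{lemaux33}: both rest on the facts that every point of $U_1'(Q)$ satisfies $u>\tau_1\,\ell(Q)$, that Lemma \ref{lemaux2} gives $\dist(U_1'(Q),\partial\Omega)\gtrsim\tau_1\,\ell(Q)$, and that a Harnack chain of boundedly many balls (the number depending only on $A_0$ and $\tau_1$) joins an arbitrary point of a component to a point of that same component inside $20B_Q$, where the $\case(3,R)$ condition is then invoked. The paper runs this as a contradiction, propagating a lower bound for $u_*$ from the hypothetical bad point into $20B_Q$ and requiring $\tau_0\leq\min(\tau_1,c(A_0,\tau_1))$; you propagate the upper bound $u_*\leq\tau_0\,\ell(Q)$ outward and require $\tau_0\lesssim C^{-N}\tau_1$ — these are the same estimate read in opposite directions. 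Your reduction of the $U_1(Q)$ statement to the $U_1'(Q)$ one via $U_1(Q)\subset U_1'(Q)$ and $\tau_1<\tau_1^{1/2}$ is also fine (the paper simply calls the remaining statements analogous).

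One detail in your write-up is incorrect as stated, though it is harmless: the bound $|y_0-p_{R_2}|\gtrsim\bigl(\ell(R_2)^n/(\tau_0\,\ell(Q))\bigr)^{1/(n-1)}$ does not follow from Lemma \ref{lemgreen*}, because the pointwise lower bound $G(x,y)\gtrsim|x-y|^{1-n}$ holds only in the near-pole regime $|x-y|\leq a\,\delta_\Omega$; outside that regime the Green function may be far smaller, so smallness of $u_*(y_0)$ only yields $|y_0-p_{R_2}|\gtrsim\ell(R_2)$. No such argument is needed anyway: since $Q\in\case(3,R)$ it is not of Type $0$, which forces $\ell(Q)\lesssim\kappa_0\,\ell(R)$ with $\kappa_0\ll A_0^{-1}$, while $p_{R_2}$ is a corkscrew point with $\delta_\Omega(p_{R_2})\approx\ell(R_2)\geq\ell(R)$ and every point of $A_0B_Q$ has $\delta_\Omega\lesssim A_0\,\ell(Q)\ll\ell(R)$; hence the pole lies well outside $A_0B_Q$ and the Harnack chain never approaches it. With this observation replacing your pole estimate, your proof is complete and coincides with the paper's.
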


\begin{proof}
We show that
$$U_1'(Q)\cap \{x:u_*(x)>\tau_1\,\ell(Q)\} = \varnothing.$$
The arguments for the other statements are analogous. Let $V$ be one of the connected components of $U_1'(Q)$
and suppose that there is some $x\in V$ such that $u_*(x)>\tau_1\,\ell(Q)$. By Lemma \ref{lemaux2},  
$\dist(V,E)\gtrsim \tau_1\,\ell(Q)$, and so there is a Harnack chain of balls $B_i\subset \Omega$ with radii comparable to $\tau_1\,\ell(Q)$ which connects $x$ and another point $x'\in V\cap 20B_Q$. By an easy covering argument we can assume this Harnack chain to have a bounded number of balls (depending on $A_0$ and $\tau_1$).
Hence we deduce that 
$$u_*(x')\geq c(A_0,\tau_1)\,\ell(Q)>0.$$
Clearly, we also have $u(x')\geq \tau_1\,\ell(Q)$.
Thus, if $\tau_0\leq \min(\tau_1,c(A_0,\tau_1))$, then 
$$20 B_Q \cap \{x:u_*(x)>\tau_0\,\ell(Q)\}\cap \{x:u(x)>\tau_0\,\ell(Q)\} \neq\varnothing,$$
and $Q\not\in\case(3,R)$.
\end{proof}
\vv

From now we assume that the constant $\tau_0$ is small enough so that the conclusion of the preceding lemma holds.
\vv


\subsection{The connectivity of $U_i(Q)$ and $U_i'(Q)$}

The proof of the following lemma is inspired by the arguments in 
Theorem 4.8 in \cite{ACS} and Theorem 12.1
in \cite{CS}.

\begin{lemma}\label{lemconn}
Assume $A_0=A_0(\tau_1)$ big enough and $\kappa_0=\kappa_0(\tau_1)$ small enough.
For $R\in\wt\ttt$ and $Q\in\case(3,R)$, the sets $U_i(Q)$ and $U_i'(Q)$ are connected.
\end{lemma}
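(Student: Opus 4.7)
\medskip

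\noindent The plan is to argue by contradiction using the elliptic ACF monotonicity formula (Theorem \ref{teoACF-elliptic} and Lemma \ref{lemcoro}). I will write out the argument for $U_1(Q)$; the cases of $U_1'(Q)$ and $U_i^{(\prime)}(Q)$ with $i=2$ are identical up to replacing $u$ by $u_*$ and $\tau_1^{1/2}$ by $\tau_1$.

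Suppose for contradiction that $U_1(Q)$ is disconnected. Then there exist two disjoint connected components $W_1, W_2$ of $V_1^{\tau_1^{1/2}}(Q)$, each of which meets $V_1^{2\tau_1^{1/2}}(Q)\cap 20B_Q$. Define
\[
w_j(y) := \bigl(u(y)-\tau_1^{1/2}\ell(Q)\bigr)_+\,\chi_{W_j}(y),\qquad j=1,2.
\]
Each $w_j$ is a non-negative function in $W^{1,2}(A_0B_Q)\cap C(A_0B_Q)$ which is $L$-subharmonic (as the positive part of an $L$-harmonic function, truncated to a single component of its positivity set), and $w_1 w_2\equiv 0$. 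By Lemmas \ref{lemaux}--\ref{lemaux2}, each $w_j$ attains values $\gtrsim \tau_1^{1/2}\ell(Q)$ on a ball of radius $\approx \ell(Q)$ inside $W_j\cap 20B_Q$, and $\sup_{A_0 B_Q} w_j \lesssim \ell(Q)$. Moreover Lemma \ref{lem333} gives the H\"older decay estimate \eqref{eqACF1} for $w_j$ on balls touching $\partial\Omega$; combined with the uniform bound $|\nabla u|\lesssim 1$ on $V_1^{\tau_1^{1/2}}$ (Lemma \ref{lemaux2}), we get a uniform modulus of continuity $w_0(t)=Ct^\alpha$ on every scale, which satisfies the Dini condition \eqref{eq:Dini cond}.

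Next, I would choose the ACF center. Let
\[
\rho := \tfrac12 \inf\{|y_1-y_2|: y_j\in \overline{W_j}\cap 20B_Q\},
\]
and pick $x_0\in \overline{W_1}\cap 20B_Q$ realizing (approximately) the infimum, so that $\dist(x_0,W_2)\leq 2\rho$ and $w_1(x_0)=w_2(x_0)=0$. Then I apply Corollary \ref{cor:A(x0)=id} with a linear change of coordinates $S$ so that the transformed matrix $\tilde A_s$ satisfies $\tilde A_s(x_0)=Id$; since $S$ is bi-Lipschitz with constants depending only on $\Lambda$, all the properties above (in particular the constant $A_0$, the choice of $\tau_1$ and the distance to $\partial\Omega$) are preserved up to a harmless dimensional factor. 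The Carleson condition \eqref{eqelliptic3} on $A$ together with $Q\notin\case(0,R)$ guarantees that the oscillation function $w(x_0,r)$ appearing in \eqref{eqwxr1} is as small as we wish on scales $r\leq \kappa_0^{-1}\ell(Q)$, provided $\theta_0$ is chosen sufficiently small in the definition of $\case(0,R)$. In particular the smallness hypothesis on $w_R$ in Lemma \ref{lemcoro} is met.

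Now I invoke Lemma \ref{lemcoro} on the interval $r\in(r_1,r_2)$ with $r_1\approx\rho$ and $r_2\approx c\,\ell(Q)$ (for $A_0$ large enough so that $B(x_0,Mr_2)\subset A_0 B_Q$). The density assumption in that lemma---that the joint zero set $\{w_1=0\}\cap\{w_2=0\}$ fills a fixed fraction of every annulus $A(x_0,r,Mr)$---is verified as follows: inside each such annulus the set $W_1\cup W_2\subset V_1^{\tau_1^{1/2}}(Q)$ stays at distance $\gtrsim \tau_1^{1/2}\ell(Q)$ from $\partial\Omega$ (Lemma \ref{lemaux2}), while the corkscrew condition for $\Omega^c$ near $\partial\Omega\cap A_0 B_Q$ produces a definite volume of points where $u$ is small, and therefore a definite volume of the joint zero set. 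Lemma \ref{lemcoro} then yields $J(x_0,r_2)\gtrsim (r_2/r_1)^{\rho_0}\,J(x_0,r_1)$. On one hand, Lemma \ref{lemcc**} gives $J(x_0,r_2)\lesssim \ell(Q)^{4}$, while a direct estimate (via Poincar\'e and Caccioppoli inside a ball of radius $\approx \rho$ around $x_0$ that meets both $W_1$ and $W_2$) yields $J(x_0,r_1)\gtrsim \rho^{-2}\cdot\rho^{2}=1$ times $(\tau_1^{1/2}\ell(Q))^4$. Hence
\[
\Bigl(\frac{\ell(Q)}{\rho}\Bigr)^{\!\rho_0}\lesssim 1,
\]
which forces $\rho\gtrsim_{\tau_1} \ell(Q)$. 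However, both $W_1$ and $W_2$ must meet $V_1^{2\tau_1^{1/2}}(Q)\cap 20B_Q$ and contain a ball of radius $\approx\ell(Q)$ (Lemma \ref{lemaux}), so for $A_0$ big enough (which we are free to choose depending on $\tau_1$) the two components cannot both fit inside $A_0B_Q$ while being at mutual distance $\gtrsim_{\tau_1}\ell(Q)$ from each other and also meeting $20 B_Q$---this contradicts the geometry for appropriately large $A_0$. Therefore $U_1(Q)$ must be connected.

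\textbf{Main obstacle.} The delicate point is verifying the density hypothesis of Lemma \ref{lemcoro} in the annuli around $x_0$ uniformly across all scales $r\in(r_1,r_2)$: one must ensure that $W_1\cup W_2$ does \emph{not} fill up a near-full fraction of any intermediate annulus, which is where the combination of AD-regularity of $\partial\Omega$, the separation $\dist(V_1^{\tau_1^{1/2}},\partial\Omega)\gtrsim \tau_1^{1/2}\ell(Q)$, and the smallness of $\tau_1$ is essential. The second subtle point is getting a clean lower bound on $J(x_0,r_1)$; this requires a careful use of a Harnack chain inside each component down to scale $\rho$ together with the non-degeneracy $w_j\approx \ell(Q)$ on a definite ball far from $\partial W_j$.
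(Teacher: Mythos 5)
Your overall strategy (argue by contradiction and feed the two components into the ACF machinery of Lemma \ref{lemcoro}) is indeed the paper's strategy, but your implementation has gaps that the paper's proof is specifically engineered to avoid. Most importantly, you never use the hypothesis $Q\in\case(3,R)$, nor the adjoint function $u_*$, and yet this is the only source of the density hypothesis in Lemma \ref{lemcoro}: the paper verifies that every ball $B(x_Q,r)$, $C\tau_1\ell(Q)\leq r\leq A_0\ell(Q)$, contains a ball of radius $\approx r$ on which $u_*\gtrsim \tau_1\ell(Q)$ (by \eqref{eq52}), and this region is disjoint from both components by Lemma \ref{lemaux33}, i.e.\ precisely because $Q\in\case(3,R)$. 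Your substitute --- ``the corkscrew condition for $\Omega^c$'' --- is not available: exterior corkscrews are never assumed (only $\partial\Omega$ AD-regular and interior corkscrews), and in any case at scales $r\ll\tau_1^{1/2}\ell(Q)$ around your interior center $x_0$ the annuli do not even reach $\partial\Omega$, and the complement of a level set of an $L^*$-harmonic function near a point of its boundary need not have definite volume density. Related to this choice of center: the decay hypothesis \eqref{eqACF1} is not justified at $x_0$, since Lemma \ref{lem333} is a boundary estimate and does not apply deep inside $\Omega$, and no nondegeneracy of $w_j$ near $\partial W_j$ is known; likewise your lower bound $J(x_0,r_1)\gtrsim_{\tau_1}1$ at scale $r_1\approx\rho$ fails, because $B(x_0,2\rho)$ misses $W_2$ altogether and $w_2$ can be arbitrarily degenerate near the point of $\overline{W_2}$ closest to $W_1$. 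The paper sidesteps all of this by centering the ACF functional at the boundary point $x_Q$: then the supports of the two truncations lie at distance $\approx_{\tau_1}\ell(Q)$ from the center, so \eqref{eqACF1'} is trivial, and the lower bound is proved only at the fixed large scale $40\ell(Q)$ via the convex-hull/polar-coordinates argument to a zero of $u$ on $E$. Finally, you also omit the replacement of $A^*$ by the constant matrix $A_s^*(y_0)$ outside the two components; without it, $w(x_0,r)$ involves points near (or beyond) $\partial\Omega$ where $Q\notin\case(0,R)$ gives no oscillation control and the truncations are not globally subharmonic for a single operator.

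Even granting all of the above, your endgame is not a contradiction. The ACF growth as you set it up yields $\rho\gtrsim c(\tau_1)\,\ell(Q)$ with a \emph{small} constant $c(\tau_1)$, and two disjoint components that both meet $20B_Q$ can perfectly well be at mutual distance comparable to $\ell(Q)$; moreover enlarging $A_0$ only makes it easier, not harder, for them to fit inside $A_0B_Q$. The paper obtains the contradiction in the opposite direction: Claim 2 gives $J_Q(40\ell(Q))\gtrsim\tau_1^{4(n+1)}$, the almost-monotonicity (valid on $[40\ell(Q),A_0\ell(Q)]$ thanks to the common zero set furnished by $u_*$) forces $J_Q(A_0\ell(Q))\gtrsim (A_0)^{\rho}\tau_1^{4(n+1)}$ up to constants, and Claim 1 ($J_Q\lesssim1$, from Lemma \ref{lemcc**} and \eqref{eq50}) then yields $A_0\lesssim\tau_1^{-4(n+1)/\rho}$, which is false once $A_0=A_0(\tau_1)$ is chosen large. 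You would need to restructure your argument along these lines --- boundary center, lower bound at a fixed macroscopic scale, density of the joint zero set via $u_*$ and $\case(3,R)$, coefficient modification off the components --- for the proof to go through.
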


\begin{proof}
We just prove that $U_1'(Q)$ is connected. The arguments for the other sets $U_1(Q)$, $U_2(Q)$ and $U_2'(Q)$ 
are analogous. We intend to apply Lemma \ref{lemcoro} to the function $u$ on different components.

Suppose that $U_1'(Q)$ has two different components $U_a$ and $U_b$. We want to check that this cannot happen if $A_0$ is big enough.
Let $x_Q$ be the center of $Q$, and so of $B_Q$ (which, in particular, belongs to $\supp\mu$). Denote
$$J_{Q}(r) = J(x_{Q},r),$$
with $J(x_{Q},r)$ defined in \rf{eqACF2}.
We will prove the following:

\vv
$\bullet$ Claim 1: $J_{Q}(r)\lesssim 1$ for $\tau_1 \ell(Q)\leq r\leq A_0 \,\ell(Q)$.

\vv
$\bullet$ Claim 2: $J_{Q}(40\ell(Q)) \gtrsim \tau_1^{4(n+1)}$.

\vv  
$\bullet$ Claim 3: For any $x\in 10B_{Q}$ (in particular for $x=x_Q$) and $C_5\tau_1\ell(Q)\leq r\leq A_0\,\ell(Q)$ (where $C_5>1$ is some fixed constant),
$$\HH^{n+1}(B(x,r)\setminus (U_a\cup U_b))\gtrsim r^{n+1}.$$
\vv
{
Assume the three claims hold. From the fact that $Q\not\in\case(0,R)$, we know that
\begin{align*}
\sup \bigl\{|A(y_1)-A(y_2)|: y_1,y_2\in \kappa_0^{-1} B_Q,\, \delta_\Omega(y_i)\geq \kappa_0\,\ell(Q)
\bigr\} &\leq\theta_0,\\
\sup \bigl\{|y-x_Q| \, |\nabla A(y)|: y\in \kappa_0^{-1} B_Q,\, \delta_\Omega(y)\geq \kappa_0\,\ell(Q)
\bigr\} &\leq \frac{\theta_0}{\kappa_0},
\end{align*}
where $\theta_0 \ll \kappa_0$ will be chosen below. In particular, if $\kappa_0$ is chosen small enough (independent of $\theta_0$),
the above condition implies that 
\begin{align*}
\sup \bigl\{|A_s(y_1)-A_s(y_2)|: y_1,y_2\in U_a\cup U_b
\bigr\}
&\leq  \theta_0,\\
\sup \bigl\{|y-x_Q| \, |\vec b(y)|: y\in U_a\cup U_b
\bigr\} &\leq  \frac{ \theta_0}{\kappa_0},
\end{align*}
where $A_s$ and $\vec b$ are like in \rf{eq:Asplit-sym}. Fix $y_0\in U_a\cup U_b$ and 
suppose that
$$A_s(y_0)=Id.$$

Consider the matrix 
$$\wt A(y) = \left\{\begin{array}{ll} A^*(y)& \text{if $y\in U_a\cup U_b$,}\\
A_s^*(y_0)& \text{if $y\not\in U_a\cup U_b$.}
\end{array}\right.$$
In particular, $\wt A_s(x_Q)=Id$ and if we denote $\wt L u=\divv\wt A \nabla u$, then it follows that 
$$u_a:=(u-\tau_1\,\ell(Q))^+ \chi_{U_a}\quad  \textup{and} \quad u_b:=(u-\tau_1\,\ell(Q))^+ \chi_{U_b}$$
are $\wt L$-subharmonic in $\R^{n+1}$. Moreover, it is easy to see that they also satisfy \eqref{eqACF1'} since, by Lemma \ref{lemaux2}, for every $x\in U_a \cup U_b = U'_1(Q) \subset V^{\tau_1}_1(Q)$ it holds $|x-x_Q| \approx_{\tau_1} \ell(Q)$ while they vanish outside $U_a $ and $U_b$ respectively. Since they clearly have disjoint supports and vanish at $x_Q$, we can now apply Lemma \ref{lemcoro}  to $u_a$ and $u_b$ for $\wt L$, choosing $2(1+\kappa_0^{-1}) \theta_0=w_R$ (with $w_R$ given by Lemma \ref{lemcoro} and $M$ depending on the parameters of the corona decomposition) and obtain
$$\tau_1^{4(n+1)} \,\ell(Q)^{-\rho}\lesssim (40\ell(Q))^{-\rho}\,J_{Q}(40\ell(Q))  \lesssim (A_0\,\ell(Q))^{-\rho}\,J_{Q}(A_0\,\ell(Q))
\lesssim (A_0\,\ell(Q))^{-\rho}.$$
Thus,
$$A_0\lesssim  \tau_1^{-4(n+1)/\rho},$$
which fails if $A_0$ is taken big enough (depending on $\tau_1$).

If $A_s(y_0)\neq Id$, we use Corollary \ref{cor:A(x0)=id} and the comments following its statement, and arguing as before we obtain connectivity of $U_i$ and $U'_i$ in full generality. The details are left for the reader.}
Concerning the claims above, note that Claim 1 is an immediate consequence of Lemma \ref{lemcc**} and the estimate \rf{eq50}. 
Indeed, for $r\in [\kappa_{0}\ell(Q),A_{0}\ell(Q)]$, since $Q\not\in \case(0,R)$, if $x\in B(x_{Q},2r)$, then $x\in B_{T}$ for some $T\in  \wt\tree(R)$ and $\ell(T)\approx r$ if we pick $\kappa_0$ small enough. Hence, \rf{eq50} implies
\[
|u(x)|\lec \ell(T)\approx r\]
and similarly for $u_{*}$. This and \eqref{lemcc**eq} imply
\[
J_{Q}(r)
\lesssim r^{-4} \|u\|_{\infty,B(x_Q,2r)}^2\|u_{*}\|_{\infty,B(x_Q,2r)}^2
\lesssim 1.\]

Claim 3
follows from \rf{eq52}. Indeed, this estimate implies that for 
$C_4\tau_1\ell(Q)\leq r\leq A_0\,\ell(Q)$, $B(x_Q,r)$ contains another ball $B(y,cr)$ such that $u_*(z)\geq c'\, r \geq\tau_1\ell(Q)$ for all $z \in B(y,cr)$,
and so 
$B(y,cr)\subset V_2^{\tau_1}(Q)$, which by assumption is disjoint from $U_a$ and $U_b$ by Lemma \ref{lemaux33}.
Hence,
$$\HH^{n+1}(B(x_Q,r)\setminus (U_a\cup U_b))\geq \HH^{n+1}(B(y,cr))\approx r^{n+1}$$
with constant independent of $\tau_{1}$
To prove Claim 2, consider $z_0\in U_a\cap 20B_Q$ such that $u(z_0)> 2\tau_1 \ell(Q)$ (the existence of this point is guaranteed by
the definition of $U_a$). Since $|\nabla u|\lesssim 1$ on $U_a$, there exists some ball $B(z_0,c\tau_1\ell(Q))$ such that
$u(y)>3\tau_1\,\ell(Q)/2$ for all $y \in B(z_0,c\tau_1\ell(Q))$. We have
\begin{equation}\label{eqh2}
\int_{U_a\cap B(x_Q,40\ell(Q))}|\nabla u|\,dy\gtrsim\tau_1^{n+1}\ell(Q)^{n+1}.
\end{equation}
To check this, let $B$ be the largest ball centered at $z_{0}$ not intersecting $E$ and let $y_{0}\in E\cap \partial B$. Then 
\[
B(z_0,c\tau_1\ell(Q)) \subset B\subset B(x_{Q},40\ell(Q))\]
and $u(y_{0})=0$. 
 Then, by considering the convex hull $H\subset B$ of $B(z_0,c\tau_1\ell(Q))$ and
$y_0$ and integrating by spherical coordinates (with the origin in $y_0$), one can check that 
$$\int_{H\cap U_a} |\nabla u|\,dy \gtrsim \tau_1^{n+1}\,\ell(Q)^{n+1}.$$
We leave the details for the reader.

From \rf{eqh2}, by Cauchy-Schwarz, taking also into account that $H\subset B(x_Q,40\ell(Q))$, we get
\begin{align*}
\tau_1^{n+1}&\ell(Q)^{n+1} \lesssim\int_{U_a\cap B(x_Q,40\ell(Q))}|\nabla u|\,dy \\& \lesssim 
\left(\int_{U_a\cap B(x_Q,40\ell(Q))} \frac{|\grad u(y)|^{2}}{|y-x_Q|^{n-1}}dy\right)^{1/2}\,
\left(\int_{U_a\cap B(x_Q,40\ell(Q))} {|y-x_Q|^{n-1}}dy\right)^{1/2}\\
&
\lesssim \ell(Q)^n\,\left(\int_{ U_a \cap B(x_Q,40\ell(Q))} 
\frac{|\grad u(y)|^{2}}{|y-x_Q|^{n-1}}dy\right)^{1/2}.
\end{align*}
An analogous estimate holds replacing $U_a$ by $U_b$, and then it follows that $J(r)\gtrsim\tau_1^{4(n+1)}$.
\end{proof}
\vv
%
%
%

Up to now we have shown that the sets $U_i(Q)$ and $U_i'(Q)$ are connected and satisfy the property
(2) in the definition of $\wts$ (see Definition \ref{WTS}). Further, from \rf{eq2} and the definition of $U_i(Q)$ and $U_i'(Q)$,  it follows that $\dist(U_i(Q),E)\gtrsim
\tau_1^{1/2}\ell(Q)$, and since $U_i(Q)$ is open and connected, the property (4) holds with $\alpha=
C\,\tau_1^{1/2}$. On the other hand, the property (3) also follows easily from Lemma \ref{lem6262}.
Indeed, for each $x\in E\cap 10B_Q$ and $4\tau_1^{1/2}\leq\lambda<1$,
$$B(x,C_3\lambda\ell(Q))\cap V_i^{\lambda}(Q)\neq\varnothing,$$
and then using also Lemma \ref{lemaux}, we infer that that $B(x,2C_3\lambda\ell(Q))\cap V_i^{\lambda/2}(Q)$ contains some ball $\wh B$ with
radius $c\lambda\ell(Q)$. In particular, $\wh B\subset U_i(Q)$. 

Thus, it remains to show property (1) of WTS holds for the sets $U_{i}(Q)$.

\vv

\subsection{The property (1) for $\wts$}
Next we deal with the property (1) in the definition of $\wts$.
\begin{lemma}\label{lemfi}
Let $\tau>0$. If $z\in 10B_Q$ and $\dist(x,E)\geq \tau\,\ell(Q)$, then $z\in V_1^{2\tau_{1}^{1/2}}(Q)\cup V_2^{2\tau_{1}^{1/2}}(Q)$, assuming $\tau_1$ small
enough depending on $\tau$. In particular, $z\in U_1(Q)\cup U_2(Q)$ by \rf{eqinc11}.
\end{lemma}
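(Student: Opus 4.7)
My plan is to argue by contradiction, applying the ACF-based polynomial growth result (Lemma \ref{lemcoro}) at the base point $z$. Suppose that $z \in 10B_Q$ satisfies $\dist(z, E) \geq \tau \ell(Q)$ and that both $u(z), u_*(z) \leq 2\tau_1^{1/2}\ell(Q)$; I aim for a contradiction once $\tau_1$ is chosen small enough depending on $\tau$.

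Since $B(z, \tau\ell(Q)/2) \subset \Omega$, Harnack's inequality applied to the positive $L$-harmonic function $u$ (and analogously to $u_*$) yields $u, u_* \leq C(\tau)\tau_1^{1/2}\ell(Q)$ on $B_z := B(z, c\tau\ell(Q))$. Picking $\tau_1$ small enough that $C(\tau)\tau_1^{1/2} < \tau_0/2$, the truncations
$$v_1 := (u - \tau_0\ell(Q))^+, \qquad v_2 := (u_* - \tau_0\ell(Q))^+$$
vanish identically on $B_z$, are bounded by $C\ell(Q)$ on $A_0 B_Q$ (Lemma \ref{lemaux}), and additionally vanish on a $c\tau_0\ell(Q)$-neighborhood of $E$ inside $A_0 B_Q$ by Lemma \ref{lemaux2}. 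By the $\case(3,R)$ hypothesis, their supports are disjoint throughout $20B_Q$, so $v_1 v_2 \equiv 0$ on $B(z, 5 r(B_Q)) \subset 20B_Q$.

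The main technical obstacle is that $v_1$ is $L$-subharmonic while $v_2$ is $L^*$-subharmonic, whereas Theorem \ref{teoACF-elliptic} requires both functions to be subsolutions of a single operator of the form \eqref{eqwtl}. I would resolve this by building a hybrid operator
$$\wt L := -\divv A_s \nabla + \wh{\vec b}(y)\cdot \nabla, \quad \wh{\vec b} := -\vec b\, \chi_{\supp v_1} + \vec b\,\chi_{\supp v_2},$$
where $A_s := \tfrac12(A + A^*)$ is the common symmetric part and $\vec b = \divv A_a$ is the drift associated with the antisymmetric part of $A$. The disjointness of the supports inside $20B_Q$ makes the definition consistent, and on each support $\wt L v_i$ coincides with $L v_1 \leq 0$ or $L^* v_2 \leq 0$. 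The pointwise bound $|\vec b(y)| \lesssim \theta_0/\ell(Q)$ on $\kappa_0^{-1}B_Q$, available because $Q\not\in\case(0,R)$ and by the Carleson hypothesis \eqref{eqelliptic3}, ensures that the modulus of continuity $w(z, r)$ in \eqref{eqwxr1} satisfies the Dini condition required by Lemma \ref{lemcoro} provided $\theta_0$ is taken small enough. Finally, a reduction to $A_s(z) = \mathrm{Id}$ via Corollary \ref{cor:A(x0)=id} places us in the setting of Theorem \ref{teoACF-elliptic}.

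Apply Lemma \ref{lemcoro} on the scale range $(r_1, r_2)$ with $r_1 \approx \tau\ell(Q)$ (the smallest scale at which $J(z, \cdot)$ becomes positive, just above $B_z$) and $r_2 \approx \tau_0\ell(Q)/M$. On this range the annuli $A(z, r, Mr)$ cross $E$ (since $Mr \geq \tau\ell(Q) = \dist(z, E)$), and the common zero set $\{v_1 = 0\} \cap \{v_2 = 0\}$ contains a $c\tau_0\ell(Q)$-neighborhood of $E$ of $\eta$-density in each such annulus by the AD-regularity of $E$ together with the relation $r \lesssim \tau_0\ell(Q)$. The conclusion $J(z, r_2) \gtrsim (\tau_0/\tau)^\rho J(z, r_1)$ is then combined with the universal upper bound $J(z, r_2) \lesssim 1$ from Lemma \ref{lemcc**} and with a fixed positive lower bound $J(z, r_1) \gtrsim c_0$ derived by a non-degeneracy argument for $v_1, v_2$ in the shell $B(z, r_1) \setminus B_z$ (in the spirit of Claim 2 in the proof of Lemma \ref{lemconn}, exploiting the non-degeneracy points $\wt Y_Q, \wt Y_Q^*$ of Lemma \ref{lem:failure} together with Lemma \ref{lemaux}). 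The resulting inequality forces $(\tau/\tau_0)^\rho \gtrsim c_0$, which fails once $\tau$ (hence $\tau_1$) is sufficiently small. The hardest step is the construction and analysis of the hybrid operator $\wt L$ so that both the disjoint-support compatibility and the admissibility of $w(z, r)$ in Lemma \ref{lemcoro} hold; a secondary but delicate step is the quantitative non-degeneracy lower bound on $J(z, r_1)$ at the small scale $r_1 \approx \tau\ell(Q)$.
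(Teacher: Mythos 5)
There is a genuine gap, and it lies in the quantifier structure of your contradiction. Your scheme needs the scale window $(r_1,r_2)=(\tau\ell(Q),\,\tau_0\ell(Q)/M)$ to be long, i.e.\ $\tau\ll\tau_0$, and you close by ``letting $\tau$ be small''. But in the lemma $\tau$ is given first, and $\tau_1$ (hence $\tau_0\leq\tau_1$, cf.\ Lemma \ref{lemaux33}) is chosen afterwards; moreover your own first step --- Harnack on $B(z,c\tau\ell(Q))$ to force $u,u_*<\tau_0\ell(Q)$ there, with a chain constant $C(\tau)$ that blows up as $\tau\to0$ --- requires $C(\tau)\tau_1^{1/2}<\tau_0\leq\tau_1$, so necessarily $\tau_0\leq\tau_1\ll\tau$. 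Hence the proposed window is empty, and shrinking $\tau_1,\tau_0$ further only makes this worse; the inequality you want to violate, which in your scale ordering reads $(\tau_0/\tau)^\rho\lesssim 1$, is automatically true in the admissible parameter regime. Two further steps would also fail as stated: the bottom bound $J(z,r_1)\gtrsim c_0$ is not justified, because the non-degeneracy points $\wt Y_Q,\wt Y^*_Q$ of Lemma \ref{lem:failure} live at scale $\ell(Q)$ around $x_Q$, and nothing forces \emph{both} $\{u>\tau_0\ell(Q)\}$ and $\{u_*>\tau_0\ell(Q)\}$ to carry quantitative mass within distance $\approx\tau\ell(Q)$ of $z$ --- that is essentially the connectivity information the lemma is meant to produce; and the density hypothesis of Lemma \ref{lemcoro} is not verified on your range, since the common zero set you actually control is only the small ball $B_z$ together with a thin neighborhood of $E$, whose density in $A(z,r,Mr)$ is bounded below only for $r\lesssim\tau_0\ell(Q)/\eta$, so even after reordering the scales the admissible window has bounded ratio and the ACF growth can never beat the a priori bounds $J\lesssim1$, $J\gtrsim c_0$.

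The idea you are missing is the one the paper uses: it never puts $u$ and $u_*$ into a single ACF functional here. If $B_0=B(z,\dist(z,E)-\tau\ell(Q))$ meets $V_1^\tau(Q)\cup V_2^\tau(Q)$, a Harnack chain inside $B(z,\dist(z,E))\setminus E$ gives $u(z)$ or $u_*(z)\geq c_6(\tau)\ell(Q)>2\tau_1^{1/2}\ell(Q)$ and the lemma follows; otherwise the ACF functional is centered at the \emph{closest boundary point} $x_0$, pairing the single genuine phase $(u-\tau\ell(Q))^+\chi_{V_1^\tau(Q)}$ (with the operator frozen to the constant matrix $A_s^*(y_0)$ off $V_1^\tau(Q)$) against an \emph{artificial} phase $r(B_0)\,G_{B_0}(\cdot,z)$ supported in the empty ball $B_0$. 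Then $J_2\approx1$ and $J_1\gtrsim1$ at every scale in $[C\tau\ell(Q),\dist(z,E)/2]$, and Lemma \ref{lemcoro} forces $\dist(z,E)\lesssim\tau\ell(Q)$, i.e.\ the contrapositive after renaming $\tau$; the window there is long precisely when $\dist(z,E)\gg\tau\ell(Q)$, which is what makes the argument close. Finally, a smaller but real defect of your hybrid operator: with the variable matrix $A_s(y)$ kept on all of $B(z,r)$, the modulus $w(z,r)$ in \rf{eqwxr1} is not small, since the Type-0 exclusion controls the oscillation of $A$ only at points with $\delta_\Omega(y)\geq\kappa_0\ell(Q)$ and \rf{eqelliptic3} is only an averaged bound; you would need to freeze the matrix (and the drift) to a constant off $\supp v_1\cup\supp v_2$, exactly as the paper does in its gluings. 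That part is fixable; the parameter and non-degeneracy issues above are not.
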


The proof will be another consequence of the ACF formula.

\begin{proof}
Let $z_0\in 10B_Q$ be such that $R\equiv \dist(z_0,E)\geq \tau\,\ell(Q)$. Denote
$$B_0 = B(z_0,R-\tau\ell(Q)).$$
If $B_0\cap V_i^\tau(Q)\neq\varnothing$, then $z_0$ can be connected to $V_i^\tau(Q)$ by a Harnack chain of $N$ balls $B_j\subset B(z_0,R)\setminus E$, $j=1,\ldots,N$,
with radii $\tau\ell(Q)/10$, where $N\approx \tau^{-1}$. This implies that, for some constant  $c_6(\tau)>0$, 
$$u(z_0)\geq c_6(\tau)\,\ell(Q)>2\,\tau_1^{1/2}\ell(Q),$$
if $\tau_1$ is chosen small enough (depending on $\tau$), and so $z_0\in V_i^{2\tau_1^{1/2}}(Q)\cap 10B_Q\subset U_i(Q)$.
Thus in this case the statement in the lemma holds.

{
Assume now that $B_0\cap (V_1^\tau(Q)\cup V_2^\tau(Q))=\varnothing$. We will show that in this case $R\leq C\,\tau\ell(Q)$
for some suitable big constant $C$, which will complete the proof of the lemma (after renaming $\tau$). 

To this end, let $x_0\in E\cap 20B_Q$ be such that
$|x_0-z_0|=\dist(z_0,E)$ and for fixed $y_0\in V_1^\tau(Q)$, denote by $G_{B_0}(\cdot,z_0)$ the Green function associated with $A_s^*(y_0)$ in $B_0$ with pole at $z_0$. Consider now the matrix 
$$\wt A(y) = \left\{\begin{array}{ll} A^*(y)& \text{if $y\in V_1^\tau(Q)$,}\\
A_s^*(y_0)& \text{if $y\not\in V_1^\tau(Q)$}
\end{array}\right.$$
and denote $\wt L u=\divv\wt A \nabla u$. It follows that $(u-\tau\,\ell(Q))^+$ is $\wt L$-subharmonic.

Let $v$ be the function defined by:
$$v(y)= \left\{\begin{array}{ll} 
(u(y)-\tau\ell(Q))^+ & \mbox{if $y\in V_1^\tau(Q)$,}\\
r(B_0)\,G_{B_0}(y,z_0) & \mbox{if $y\in B_0$,}\\
0 & \mbox{otherwise.}
\end{array}
\right.
$$
It is clear that $v$ is $L^*$-subharmonic, non-negative, and continuous in $B(x_0,R/2)$.

Suppose now that $A_s(y_0)=Id$. We intend to apply the ACF formula with
$$J(r) = \left(\frac{1}{r^{2}} \int_{B(x_0,r)\cap V_1^\tau(Q)} \frac{|\grad v(y)|^{2}}{|y-x_0|^{n-1}}\,dy\right)\cdot \left(\frac{1}{r^{2}} \int_{B(x_0,r)\cap B_0} \frac{|\grad v(y)|^{2}}{|y-x_0|^{n-1}}\,dy\right),
$$
for $C\tau\ell(Q)\leq r\leq R/2$. 

 We have:

\vv
$\bullet$ Claim 1: $J(R/2)\lesssim 1$.

\vv
$\bullet$ Claim 2: $J(C\tau \ell(Q)) \gtrsim 1$, for some $C$ big enough, where $C_4$ appears in Lemma \ref{lemaux2}.


\vv  
$\bullet$ Claim 3: For $C_4\tau\ell(Q)\leq r\leq R/2$,
$$\HH^{n+1}(B(x,r)\setminus (V_1^\tau(Q)\cup B_0))\gtrsim r^{n+1}.$$
\vv

The proof of these claims is very similar to the analogous claims in the proof of Lemma \ref{lemconn}.
Indeed, for the first one we use Lemma \ref{lemcc**} and that $|\nabla v|\lesssim1$ in $B(x_0,R/2)$.
Claim 3 has exactly the same proof as the homonymous claim in the proof of Lemma \ref{lemconn}.
Concerning Claim 2, note that now we want the constant implicit in the estimate to be independent of $\tau$.

On the one hand, since $v=r(B_0)\,G_{B_0}(\cdot,z_0)$ in $B_0$, it follows that $|\nabla v|\approx 1$ in
$B(x_0,r)\cap B_0$ (since $G_{B_0}$ is associated to $\Delta$ in the ball $B_0$), and thus
$$
\frac{1}{r^{2}} \int_{B(x_0,r)\cap B_0} \frac{|\grad v(y)|^{2}}{|y-x_0|^{n-1}}\,dy\approx 1$$
for $C\tau \ell(Q)\leq r\leq R/2$.
To prove that
\begin{equation}\label{eqfh67}
\frac{1}{r^{2}} \int_{B(x_0,r)\cap V_1^\tau(Q)} \frac{|\grad v(y)|^{2}}{|y-x_0|^{n-1}}\,dy\gtrsim 1
\end{equation}
for $C\tau \ell(Q)\leq r\leq R/2$ we take into account that, by \rf{eq52}, there exists some ball
$B_r:= B(y_r,c\,r)\subset U_1(Q)\cap B(x_0,r)$ such that $u(y)\approx r$ in $B_r$.
Then the same argument described near \rf{eqh2} (note that now there is no dependence on $\tau$ 
in any estimate)
shows that
$$\int_{V_1^\tau(Q)\cap B(x_0,r)}|\nabla u|\,dy\gtrsim r^{n+1},$$
and then Cauchy-Schwarz yields \rf{eqfh67}, and this proves Claim 2.

Since $Q\not\in\case(0,R)$, we know that
that \begin{align*}
\sup \bigl\{|A_s(y_1)-A_s(y_2)|:y_1,y_2\in V_1^\tau(Q)
\bigr\}
&\leq 2 \theta_0\\
\sup \bigl\{|y-x_Q| \, |\vec b(y)|: y\in V_1^\tau(Q)
\bigr\} &\leq  \frac{2 \theta_0}{\kappa_0},
\end{align*}
where $\kappa_0$ is chosen small enough (independently of $\theta_0$).
Finally, choosing $\theta_0=w_R$,  we apply Lemma \ref{lemcoro} to deduce that
$$(C\,\tau\ell(Q))^{-\rho}\lesssim (C\,\tau\ell(Q))^{-\rho}\,J(C\,\tau\ell(Q))  \lesssim (\tfrac12 R)^{-\rho}\,J(\tfrac12 R)
\lesssim (\tfrac12 R)^{-\rho},$$
which implies that $R\leq C'\tau\ell(Q)$, as wished.

If $A(y_0)\neq Id$, then we just use Corollary \ref{cor:A(x0)=id} and argue analogously. The details are left for the reader again.}
\end{proof}
\vv


\subsection{The compatibility condition}

\begin{lemma}\label{lemcompat}
The family
$$\FF:= \bigcup_{R\in\wt\ttt}\case(3,R)$$
satisfies the compatibility condition with constant $a_0$. 
\end{lemma}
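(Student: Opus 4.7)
The plan is to reduce the compatibility condition to a two-sided comparison of the two $L$-Green functions of Proposition \ref{propo**} at the same scale. Fix $P\in\case(3,R^P)$ and $Q\in\case(3,R^Q)$ with $2^{-a_0}\ell(Q)\leq\ell(P)\leq\ell(Q)$, and set $u^P:=\mu(R_1^P)\,G(p_{R_1^P},\cdot)$, $u^Q:=\mu(R_1^Q)\,G(p_{R_1^Q},\cdot)$; the case $i=2$ is identical after replacing $u$ by $u_*$, so I focus on $i=1$. If $U_1(P)\cap 10B_Q=\varnothing$ the inclusion is trivial, so assume it is nonempty. Since $U_1(P)\subset A_0B_P$, this forces $|x_P-x_Q|\lesssim_{A_0,a_0}\ell(Q)$, so $B_P$ and $B_Q$ are comparable with constants depending only on $A_0$ and $a_0$, and the poles $p_{R_1^P},p_{R_1^Q}$ lie at distance $\gtrsim\ell(Q)$ from $10B_Q$ (this is where I would choose $A_0$ large and $a_0$ relative to the corona parameters).

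The core claim is
\[
u^P(y)\approx u^Q(y)\qquad\text{for every }y\in U_1(P)\cap 10B_Q,
\]
with implicit constants depending on $A_0$, $a_0$, and the corona parameters, but independent of $\tau_1$. First I would apply Lemma \ref{lem:failure} at scales $\ell(P)$ and $\ell(Q)$ to obtain points $\wt Y_P,\wt Y_Q\in CB_Q\cap\Omega$ with $\delta_\Omega(\wt Y_\bullet)\approx\ell(Q)$, $u^P(\wt Y_P)\approx\ell(P)$, and $u^Q(\wt Y_Q)\approx\ell(Q)$; a Harnack chain of uniformly bounded length through the bulk of $CB_Q$ then gives $u^P(\wt Y_Q)\approx u^Q(\wt Y_Q)$ at a common reference point. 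The hard part will be propagating this pointwise comparison out to every $y\in U_1(P)$, because $U_1(P)$ may reach within distance $\tau_1^{1/2}\ell(P)$ of $\partial\Omega$ and no Harnack chain condition is available on $\Omega$, so the classical Jerison--Kenig boundary comparison is unavailable. My plan is to carry out this propagation using the corona estimate $\omega^{p_{R_1}}(3T)\approx\mu(T)/\mu(R_1)$ for boundary cubes $T$ meeting $U_1(P)$, combined with Lemma \ref{l:w>G} and the Bourgain estimate Lemma \ref{l:bourgain}, following the boundary-comparison technology of \cite{HLMN}; the connectedness of $U_1(P)$ (Lemma \ref{lemconn}), its interior separation from $\partial\Omega$ (Lemma \ref{lemaux2}), and the uniform upper bound $u^P,u^Q\lesssim\ell(Q)$ on $CB_Q$ (Lemma \ref{lemaux}) all serve as auxiliary inputs.

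Once the comparison is in hand, the conclusion is immediate. For $y\in U_1(P)\cap 10B_Q$, the definition of $U_1(P)$ gives $u^P(y)>\tau_1^{1/2}\ell(P)\geq 2^{-a_0}\tau_1^{1/2}\ell(Q)$, hence $u^Q(y)\geq C^{-1}2^{-a_0}\tau_1^{1/2}\ell(Q)$. Choosing $\tau_1$ small enough that $C^{-1}2^{-a_0}\tau_1^{1/2}\geq 2\tau_1$ (equivalently $\tau_1\leq (2C\cdot 2^{a_0})^{-2}$) yields $u^Q(y)>2\tau_1\ell(Q)$, so $y\in V_1^{2\tau_1}(Q)\cap 20B_Q$. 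The connected component of $V_1^{\tau_1}(Q)$ containing $y$ therefore meets $V_1^{2\tau_1}(Q)\cap 20B_Q$ at $y$ itself, placing $y\in U_1'(Q)$ as required; an alternative way to phrase the endgame uses Lemma \ref{lemfi} to put $y$ in $U_1(Q)\cup U_2(Q)$, and then the $\case(3,R^Q)$ hypothesis rules out $y\in U_2(Q)$ once $u^Q(y)>\tau_0\ell(Q)$.
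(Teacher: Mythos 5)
There is a genuine gap, and it stems from a misreading of how the sets $U_i(P)$ are defined. The whole of your argument is organized around the ``core claim'' that $u^P\approx u^Q$ on $U_1(P)\cap 10B_Q$ for two \emph{different} Green functions with different poles, and you yourself flag that the propagation of this comparison up to points at distance $\approx\tau_1^{1/2}\ell(P)$ from $\partial\Omega$ is ``the hard part'' to be done by HLMN-type boundary-comparison technology. That step is not carried out, and in the setting of this paper it cannot be taken for granted: a boundary Harnack/comparison principle for two elliptic Green functions in a mere corkscrew domain with AD-regular boundary (no Harnack chains, non-symmetric $A$) is exactly the kind of statement the authors go to great lengths to avoid. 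So as written the proof rests on an unproved and seriously nontrivial comparison, i.e.\ it is incomplete.

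The comparison is also unnecessary, because no second Green function ever enters. The functions $u$ and $u_*$ used to define $V_i^\lambda(P)$, $U_i(P)$, $U_i'(P)$ are fixed once and for all on each tree: they depend only on the root $R\in\wt\ttt$ through the cubes $R_1,R_2$ of Proposition \ref{propo**} (see Subsection \ref{sub95}), not on the individual cube $P\in\case(3,R)$. Hence if $P,Q\in\case(3,R)$ for the \emph{same} $R$, then $u^P=u^Q=u$ identically, and the compatibility inclusion is a one-line nesting of superlevel sets: by \eqref{eqinc11}, $U_1(P)\cap 10B_Q\subset\{x\in 10B_Q:\,u(x)>\tau_1^{1/2}\ell(P)\}\subset\{x\in 10B_Q:\,u(x)>2\tau_1\ell(Q)\}\subset U_1'(Q)$, once $\tau_1$ is small enough that $\tau_1^{1/2}2^{-a_0}\geq 2\tau_1$ (your ``endgame'' computation is essentially this step, and it is fine). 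If instead $P$ and $Q$ lie in different trees $\wt\tree(R_P)\neq\wt\tree(R_Q)$, the right observation is not a Green-function comparison but the Type-$0$ exclusion: since $P\notin\case(0,R_P)$, there is no cube of $\DD_\mu\setminus\wt\tree(R_P)$ of comparable size within distance $\kappa_0^{-1}\ell(P)$ of $P$, so for $\kappa_0$ small (depending on $A_0$) one has $A_0B_P\cap A_0B_Q=\varnothing$ and the inclusion $U_i(P)\cap 10B_Q\subset U_i'(Q)$ is vacuous. Your proposal never uses this, and treating the different-tree case by comparing $u^P$ with $u^Q$ would in any event founder on the fact that the corona estimate $\omega^{p_{R_1}}(3T)\approx\mu(T)/\mu(R_1)$ is only available for cubes $T$ inside the corresponding tree.
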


\begin{proof}
We have to show that for all $P,Q\in \FF$ such that $2^{-a_{0}}\ell(Q)\leq  \ell(P)\leq \ell(Q)$ it holds  $U_{i}(P)\cap 10B_{Q}\subset U_{i}'(Q)$.
To this end, suppose first that $P,Q\in\case(3,R)$ for some common $R\in\wt\ttt$.
From the inclusions in  \rf{eqinc11}, 
using that $2\tau_1\,\ell(Q)\geq \tau_1^{1/2}\,\ell(P)$ (for $\tau_1$ small enough),
we get
\begin{align*}
U_1(P)\cap 10B_Q &\subset V_1^{\tau_1^{1/2}}(P) \cap 10B_Q   \subset \{x\in 10B_Q: u(x)> \tau_1^{1/2}\,\ell(P)\}\\
&
\subset \{x\in 10B_Q: u(x)> 2\tau_1\,\ell(Q)\}\subset U_1'(Q),
\end{align*}
as wished.

If $P$ and $Q$ belong to different trees $\wt\tree(R_P)$ and $\wt\tree(R_Q)$, with $R_P,R_Q\in\wt\ttt$, then
$A_0B_P\cap A_0B_Q=\varnothing$, assuming $\kappa_0$ small enough (depending on $A_0$), and thus the 
inclusion $U_{i}(P)\cap 10B_{Q}\subset U_{i}'(Q)$ is trivial.
\end{proof}
\vv

This completes the proof of Lemma \ref{lem6.1}.
\vvv

\section{$\whsa$ versus $\batpp$}

Recall, given $R\in \wt\ttt$, if $Q\in\wt\tree(R)\cap \case(2,R)$, then $Q\in\whsa_{\ve_0}$. Also, for a ball $B$ centered on $E$, we denote
\[
b\beta_{\infty}(B)=r(B)^{-1}\inf_{P} \sup_{x\in B\cap E} \dist(x,P)\]
where the infimum is over all $n$-planes $P$. 

The following also holds.


\begin{lemma}\label{lem7.1}
Let $R\in \wt\ttt$ and $Q\in\wt\tree(R)\cap\case(2,R)$. 
Suppose that all cubes  $P\in\DD_\mu$ such that $P\subset 4B_Q$, $\ve_0\,\ell(Q)\leq \ell(P)\leq \ve_0^{1/2}\ell(Q)$
belong to $\wt\tree(R)\cap\bigl[\case(2,R)\cup\case(3,R)\bigr]$. Given $\ve>0$, if $\ve_0$, $\tau_0$, $\theta_0$ and $\kappa_0$ are small enough, then
either
\begin{itemize}
\item[(a)] $b\beta_\infty(10B_Q)\leq \ve$, or
\item[(b)] $Q\in\batpp_{\ve}$.
\end{itemize}
\end{lemma}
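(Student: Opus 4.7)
The plan is to exploit Lemma \ref{lemfund}, which gives $Q\in\whsa_{\ve_0}$ since $Q\in\case(2,R)$. This supplies a hyperplane $P_1$ and a half-space $H$ with $\partial H=P_1$ such that $\dist(z,\supp\mu)\leq\ve_0\ell(Q)$ on $P_1\cap B(x_Q,\ve_0^{-2}\ell(Q))$, $\dist(Q,P_1)\leq K_0^{3/2}\ell(Q)$, and $H\cap B(x_Q,\ve_0^{-2}\ell(Q))\cap\supp\mu=\varnothing$. Assuming (a) fails, I would pick $y_0\in\supp\mu\cap B(x_Q,10\ell(Q))$ with $d:=\dist(y_0,P_1)>\ve\ell(Q)$. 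Since $10B_Q\subset B(x_Q,\ve_0^{-2}\ell(Q))$ for $\ve_0$ small, $y_0\notin H$; I then take $P_2$ to be the translate of $P_1$ through $y_0$ (into the support side of $P_1$), and the goal becomes to show that $(P_1,P_2)$ witnesses $Q\in\batpp_\ve$.

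The half of \eqref{batpp2} at $P_1$ is immediate from $\whsa(Q)$. For the other conditions, fix a target point $z$ --- either $z\in P_2\cap B(x_Q,10\ell(Q))$ for the $P_2$ half of \eqref{batpp2}, or a candidate $z\in\supp\mu\cap B(x_Q,10\ell(Q))$ with $\dist(z,P_1\cup P_2)>\ve\ell(Q)$ that we would rule out to obtain \eqref{batpp1} --- and select a sub-cube $P\in\DD_\mu$ with $P\subset 4B_Q$, $\ell(P)\approx\ve_0^{1/2}\ell(Q)$ and $\dist(z,P)\lesssim\ell(P)$. By hypothesis $P\in\case(2,R)\cup\case(3,R)$. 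If $P\in\case(2,R)$, Lemma \ref{lemfund} again yields $P\in\whsa_{\ve_0}$, supplying a plane $\pi_P$ and a half-space $H_P$; the main observation is that $H$ and $H_P$ are both free of support in huge concentric balls whose intersection has size $\gtrsim\ve_0^{-3/2}\ell(Q)$, while both $P_1$ and $\pi_P$ carry dense support throughout this common region. A normal-comparison argument should then force $\pi_P$ to be parallel to $P_1$ up to angular error $O(\ve_0^{1/2})$ and to lie inside one of two thin slabs around $P_1$ or $P_2$; applying $\whsa(P)$ to the projection of $z$ onto $\pi_P$ then yields $\dist(z,\supp\mu)\lesssim\ell(P)\ll\ve\ell(Q)$. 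If instead $P\in\case(3,R)$, Lemma \ref{lem6.1} gives $P\in\wts$, producing two disjoint connected corkscrew components $U_1(P),U_2(P)\subset A_0 B_P$ that exhaust $10B_P\setminus\{\dist(\cdot,\supp\mu)\leq\tau\ell(P)\}$. The emptiness of $H$ forces one of these into $H$, so the other supplies a support point within $\tau\ell(P)\ll\ve\ell(Q)$ of $z$, and the same two-sided topological obstruction rules out any candidate $z\in\supp\mu$ lying strictly inside the slab between $P_1$ and $P_2$ far from both planes.

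The main obstacle will be the quantitative parallelism/rigidity statement used in the $\case(2,R)$ sub-case: one must show that every $\whsa$-plane $\pi_P$ arising at scale $\ve_0^{1/2}\ell(Q)$ inside $4B_Q$ is nearly parallel to $P_1$ and sits in a thin neighborhood of $P_1$ or of $P_2$. This will rest on comparing the two exclusion half-spaces $H$ and $H_P$ on their common region of validity together with the density of $\supp\mu$ on both $P_1$ and $\pi_P$, using the smallness of $\ve_0,\tau_0,\theta_0,\kappa_0$ to make the resulting angular error harmless across the relevant range of scales. Essential to the whole setup is the exclusion of sub-cubes of types $\case(0,R)$ and $\case(1,R)$: the former is ruled out by $\kappa_0,\theta_0$ small (so that $A$ is essentially constant on the relevant Whitney region and no cube from outside $\wt\tree(R)$ is nearby), and the latter by $\ve_0,\tau_0$ small together with the very hypothesis of the lemma, guaranteeing that every sub-cube in the prescribed size range carries either a $\whsa$-plane or a bilateral $\wts$ structure to feed into the argument.
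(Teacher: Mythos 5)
Your reduction skeleton is the right one (use $\whsa_{\ve_0}$ for $Q$ via Lemma \ref{lemfund}, look at sub-cubes far from the $\whsa$-plane, which by hypothesis are of type $\case(2,R)$ or $\case(3,R)$, and for type-$2$ sub-cubes run a plane-comparison argument; this geometric step is essentially the argument of Claim 6.6 in \cite{HLMN} that the paper invokes). But there is a genuine gap exactly at the point where the lemma is hard: the sub-cubes in $\case(3,R)$. Your claim that the $\wts$ structure of such a $P$ ``supplies a support point within $\tau\ell(P)$ of $z$'' and that a ``two-sided topological obstruction'' rules out support points deep inside the slab does not follow from anything you set up. The sets $U_1(P),U_2(P)$ are defined through level sets of $u$ and $u_*$ at scale $\ell(P)$ and live in $A_0B_P$; they carry no information about where $\supp\mu$ sits relative to $P_1,P_2$ at scale $\ell(Q)$, and nothing in Definition \ref{WTS} prevents a type-$3$ cube from sitting in the middle of the slab (for the \eqref{batpp1} direction, $z\in P\subset\supp\mu$ already exists, so what must be shown is that such a cube \emph{cannot occur there at all} --- the $\wts$ property of a cube containing $z$ yields no contradiction by itself). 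There is also a circularity in your verification of the $P_2$-half of \eqref{batpp2}: ``select a sub-cube $P$ with $\dist(z,P)\lesssim\ell(P)$'' presupposes that the support comes within $\ell(P)\ll\ve\ell(Q)$ of $z$, which is precisely what is to be proved; in the actual argument the density of support along the second plane comes from the $\whsa$ planes of the small cubes, not from a plane fixed in advance through one point $y_0$.

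The paper's proof closes this gap analytically, not topologically: it shows that \emph{no} cube of the relevant family (cubes of side $\approx\ve^{3/2}\ell(Q)$ at distance $\gtrsim\ve\ell(Q)$ from $L(Q)$) can belong to $\case(3,R)$. This uses the Green functions and the ACF machinery twice. First, with $u_1=(u-\tfrac12 c_7\ell(P'))\chi_{V(P')}$ and $u_2=\dist(\cdot,L(Q))\chi_{H(Q)}$ one shows, via Theorem \ref{teoACFprecise}/Remark \ref{rem100}, Lemma \ref{lem:failure} and Caccioppoli, that $\{u>c\,\ell(P)\}\cap H(Q)^c$ occupies a definite volume proportion of $B(x_P,r)$ for all $r_0\leq r\leq M\ell(Q)$. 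Second, one applies Lemma \ref{lemcoro} to $u_{*,1}=(u_*-\tfrac12 c_7\ell(P))\chi_{V_*(P)}$ and the same linear function: the type-$3$ condition (with $\tau_0$ small, via a Harnack chain) forces $\{u>c\,\ell(P)\}$ to be disjoint from $V_*(P)$, so the previous step supplies the ``both functions vanish on a definite proportion'' hypothesis, and the resulting polynomial growth $J^*(M\ell(Q))/J^*(r_0)\gtrsim M^{\rho}$ contradicts the a priori bounds $J_1^*\lesssim 1$, $J_1^*(r_0)\gtrsim c(\ve)$ once $M=M(\ve)$ is chosen large (with $\theta_0,\kappa_0,\tau_0,\ve_0$ small accordingly). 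Your proposal never brings in $u$, $u_*$ or the ACF formula, and without that ingredient the type-$3$ case cannot be handled; this is the missing idea rather than a fixable technical detail.
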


\begin{rem}\label{batppremark} The two alternatives above can be written together as $Q\in\batpp_{\ve}$, since in the case (a) above we may think that
the two parallel planes from the condition $\batpp_{\ve}$ coincide. However, for technical reasons we prefer the writing above.

Recall that in Lemma \ref{lemfund} we showed that the cubes $Q$ from $\tree(R)\cap\case(2,R)$
belong to $\whsa_{\ve_0}$. For the application of Proposition \ref{propowtf} we would like them to belong to
$\batpp_{\ve}$. Lemma \ref{lem7.1} takes care of this issue.
\end{rem}

\begin{proof}
Assume that $\ve_0\ll\ve^2$ and
suppose that $b\beta_\infty(10B_Q)> \ve$. Denote by $H(Q)$ the half-space in the definition of $\whsa_{\ve_0}$, and $L(Q)=\partial 
H(Q)$, so that
\begin{itemize}
\item $\dist(z,\supp\mu)\leq \ve_0\,\ell(Q)$ for every $z\in L(Q)\cap B(x_Q,\ve_0^{-2}\ell(Q))$,
\item $\dist(Q,L(Q))\leq K_0^{3/2}\,\ell(Q)$, and
\item $H(Q)\cap B(x_Q,\ve_0^{-2}\ell(Q))\cap \supp\mu =\varnothing$.
\end{itemize}
Recall that $K_0$ is some big constant independent of the choice of $A_0$, $\tau_0$, $\tau$, and other parameters.

Denote by $I$ the family of cubes $P\in\DD_\mu$ such that 
\begin{itemize}
\item $P\cap B\bigl(x_Q,K_0^2\ell(Q)\bigr)\neq\varnothing$, 
\item $\ve^{3/2}\,\ell(Q)\leq \ell(P)\leq 2\ve^{3/2}\ell(Q)$, and
\item $\dist(P,L(Q))\geq \frac12\ve \ell(Q)$.
\end{itemize}
Arguing as in \cite[Section 6 Claim 6.6]{HLMN}, it follows that
if every $P\in I$ belongs to $\whsa_{\ve^2}$ (and thus to $\whsa_{\ve_0}$), then $Q\in\batpp_\ve$.

Since $I\subset \case(2,R)\cup\case(3,R)$ and $\case(2,R)\subset \whsa_{\ve_0}$, to prove the lemma it is enough to show that there are no cubes from $I$ which belong to $\case(3,R)$.
So we suppose that there exists $P\in I \cap \case(3,R)$ and we will get a contradiction. Notice that such a cube $P$ must be contained in $B\bigl(x_Q,2K_0^2\ell(Q)\bigr)\setminus H(Q)$.

Our arguments will be based on the use of our variant of the ACF formula. We denote 
$$r_0 = 2\bigl(\dist(P,L(Q)) + \ell(P)\bigr)
\lec K_{0}^{2}\ell(Q).
$$

\vv
\noi {\bf Claim.} Given some fixed constant $M$ with $1\ll M\ll\ve_0^{-1}$ to be chosen below,
if $\ve_0$, $\tau_0$, $\theta_0$ and $\kappa_0$ are small enough, then
\begin{equation}\label{eqclam10}
\HH^{n+1}\bigl(\{x:u(x)> c\,\ell(P)\} \cap H(Q)^c\cap B(x_P,r)\bigr) \geq c\,r^{n+1} \quad \mbox{ if\, $r_0 \leq r\leq M\,
\ell(Q)$,}
\end{equation}
where $c>0$ is some fixed constant only depending on the parameters of the corona decomposition in 
Proposition \ref{propo**}.

To prove the claim, consider a cube $P'\in\DD_\mu$ with $P\subset P'$ so that $\ell(P')\approx
\dist(P,L(Q))$ and $B(x_{P'},2C_2\ell(P'))\subset H(Q)^c$, with $C_2$ as in Lemma \ref{lem:failure}.
This lemma ensures the existence of a point $\wt Y_{P'}\in B(x_{P'},C_2\ell(P'))$ such that
$u(\wt Y_{P'})>c_7\ell(P')$, for some fixed $c_7>0$. Consider the connected component of the
set 
$$\bigl\{x\in B(x_P,M\ell(Q)):u(x)>\tfrac12c_7\ell(P')\bigr\}$$ { that contains $\wt Y_{P'}$} and call it $V(P')$.  From the definition of $\whsa_{\ve_0}$ and the H\"older continuity of $u$, we have
$$u(x) \leq c\,\ve_0^\alpha\,\ell(Q)\ll \ell(P)\leq \ell(P')\quad \mbox{ for all $x\in  B(x_P,M\ell(Q))\cap L(Q)$}$$

(recall that $\alpha$ is the exponent regarding the
H\"older continuity of $u$ and $u^*$). Hence,
$$V(P')\subset H(Q)^c.$$

To apply the ACF formula we consider the 
functions
$$u_1= (u-\tfrac12c_7\ell(P'))\,\chi_{V(P')}$$
and 
$$u_2 = \dist(\cdot,L(Q))\,\chi_{H(Q)},$$
and the operator $\wt Lv = \divv\wt A\nabla v$, with $\wt A$ defined by
$$\wt A(y) = \left\{\begin{array}{ll} A^*(y)& \text{if $y\in V(P')$,}\\
A_s^*(y_0)& \text{if $y\not\in V(P')$,}
\end{array}\right.$$
where $y_0$ is some arbitrary fixed point from $V(P')$. Further, 
we will assume that $A_s(y_0)=Id$ (otherwise we change variables).

For $r_0 \leq r\leq M\,\ell(Q)$, we set
$$J(r) = \left(\frac{1}{r^{2}} \int_{B(x_P,r)} \frac{|\grad u_1(y)|^{2}}{|y-x_P|^{n-1}}\,dy\right)\cdot \left(\frac{1}{r^{2}} \int_{B(x_P,r)} \frac{|\grad u_2(y)|^{2}}{|y-x_P|^{n-1}}\,dy\right) =:J_1(r)\,J_2(r).
$$ 
We have
\begin{enumerate}
\item $J_2(r)\approx 1$ for $r_0 \leq r\leq M\,\ell(Q)$, and

\vv
\item $J_1(r_0) \gtrsim 1$.
 
\end{enumerate}

\vv

The first statement follows from the fact that $|\nabla u_2|=1$ on $H(Q)$, while the second is due to the 
existence of the point $\wt Y_{P'}\in  B(x_{P'},C_2\ell(P')) \subset B(x_{P},2C_2\ell(P'))$, with $u(\wt Y_{P'})\approx \ell(P')\approx r_0$,
and then one argues as in the proof of Claim 2 in the proof of Lemma \ref{lemfi}.

Since $P \not\in\case(0,R)$, we know that
that \begin{align*}
\sup \bigl\{|A_s(y_1)-A_s(y_2)|:y_1,y_2\in V(P')\cap B(x_{P},M\ell(Q))
\bigr\}
&\leq 2 \theta_0\\
\sup \bigl\{|y-x_Q| \, |\vec b(y)|: y\in  V(P')\cap B(x_{P},M\ell(Q))
\bigr\} &\leq  \frac{2 \theta_0}{\kappa_0},
\end{align*}
if $\kappa_0$ is chosen small enough, depending on $M$ (which in turn will be chosen below depending on $\ve$) but independently of $\theta_0$.
Then, by Theorem \ref{teoACF-elliptic} applied to $\wt L$ and the subsequent Remark \ref{rem100}, we derive
$$J(r_0)\leq J(r)\, \left(\frac r{r_0}\right)^{C_6\,\theta_0} \quad\mbox{ for\, $r_0 \leq r\leq M\,
\ell(Q)$}
.$$
From the statements (1) and (2) above we infer that
$$J_1(r) \geq c_8 \left(\frac r{r_0}\right)^{-C_6\,\theta_0} \geq  c_8\,M^{- C_6\,\theta_0}\quad\mbox{ for\, $r_0 \leq r\leq M\,
\ell(Q)$}
.$$
If we assume $\theta_0$ small enough (so that $\theta_0\leq C_6^{-1} \frac{\log 2}{\log M}$), we get
\begin{equation}
\label{j1r>1}
J_1(r) \geq \frac{c_8}2\approx 1,
\end{equation}
for $r$ as above.

For some $r_1\in (r_0,r)$ to be chosen in a moment, we split
\begin{align*}
J_1(r) & = \frac{1}{r^{2}} \int_{B(x_P,r_1)} \frac{|\grad u_1(y)|^{2}}{|y-x_P|^{n-1}}\,dy +
\frac{1}{r^{2}} \int_{B(x_P,r)\setminus B(x_P,r_1)} \frac{|\grad u_1(y)|^{2}}{|y-x_P|^{n-1}}\,dy\\
& 
=: J_{1,a}(r) + J_{1,b}(r).
\end{align*}
By Lemma \ref{lemcc**}, we have
$$J_{1,a}(r)\lesssim \frac1{r^2}\,\|u_1\|_{\infty,B(x,2r_1)}^2\lesssim \frac{r_1^2}{r^2}.$$
Thus, if we take $r_1 = c_9\,r$ for a sufficiently small constant $c_9$, we have $J_{1,a}(r)\ll 1$, and in particular, $J_{1,b}(r)\gec 1$. By \eqref{j1r>1}, (1) above, and the definition of $J(r)$, we also get 
$$J_{1,a}(r)\leq \frac12 J(r),$$
and we get
$$1 \lesssim  J_{1,b}(r) \lesssim \frac1{c_9^2\,r^{n+1}}\int_{B(x_P,r)\cap V(P')} |\grad u_1(y)|^2\,dy
\lesssim \frac1{r^{n+3}}\int_{B(x_P,r)\cap V(P')} |u_1(y)|^2\,dy,$$
applying also Caccioppoli's inequality in the last estimate. By Lemma \ref{lemaux}, $u_1\lesssim r$   in $B(x_P,r)$,
and so we deduce that $\HH^{n+1}(B(x_P,r)\cap V(P'))\gtrsim r^{n+1}$ and
our claim \rf{eqclam10} follows.

\vv

We are ready now to get our desired contradiction to the assumption that $P\not \in \case(3,R)$. To this end
we will apply again the ACF formula. In this case
we consider an open set $V_*(P)$, which is analogous to $V(P')$, but replacing $P'$ by $P$ and $u$ by $u_*$. That is, 
$V_*(P)$ is the connected component of the
set $$\bigl\{x\in B(x_P,M\ell(Q)):u_*(x)>\tfrac12c_7\ell(P)\bigr\}$$ which contains the point $\wt Y_{P}^*\in B(x_{P},C_2\ell(P))$ given by 
 Lemma \ref{lem:failure}. By the same arguments as above, we have
$$V_*(P)\subset H(Q)^c.$$

We consider the 
functions
$$u_{*,1}= (u_*-\tfrac12c_7\ell(P))\,\chi_{V_*(P)}$$
and 
$$u_{*,2} = u_2= \dist(\cdot,L(Q))\,\chi_{H(Q)},$$
and the operator $\wh Lv = \divv\wh A\,\nabla v$, with $\wh A$ defined by
$$\wh A(y) = \left\{\begin{array}{ll} A(y)& \text{if $y\in V_*(P)$,}\\
A_s(y_{*})& \text{if $y\not\in V_*(P)$,}
\end{array}\right.$$
where $y_{*}$ is some arbitrary fixed point from $V_*(P)$. Further, again
we assume that $A_s(y_{*})=Id$ (otherwise we change variables).

For $r_0 \leq r\leq M\,\ell(Q)$, we set
$$J^*(r) = \left(\frac{1}{r^{2}} \int_{B(x_P,r)} \frac{|\grad u_{*,1}(y)|^{2}}{|y-x_P|^{n-1}}\,dy\right)\cdot \left(\frac{1}{r^{2}} \int_{B(x_P,r)} \frac{|\grad u_{*,2}(y)|^{2}}{|y-x_P|^{n-1}}\,dy\right) =:J_1^*(r)\,J_2^*(r).
$$ 
Now we have:
\begin{enumerate}
\item $J_2^*(r)\approx 1$ \,for $r_0 \leq r\leq M\,\ell(Q)$,
\vv
\item $J_1^*(r)\lesssim 1$ \,for $r_0 \leq r\leq M\,\ell(Q)$,

\vv
\item $J_1^*(r_0) \gtrsim c(\ve_0)$, \,and

\vv
\item $\HH^{n+1}\bigl(B(x_P,r)\setminus (V_*(P)\cup H(Q)\bigr) \geq c\,r^{n+1}$\, if\, $r_0 \leq r\leq M\,
\ell(Q)$ and $\tau_0$ is assumed to be small enough.
\end{enumerate}

\vv


The first statement is again immediate because $|\nabla u_{*,2}|\equiv 1$ in $H(Q)$, and the second one follows from Lemma \ref{lemcc**}. Concerning the third statement, from the existence of the point $\wt Y_{P'}^*\in  B(x_{P},C_2\ell(P')) $, with $u(\wt Y_{P}^{*})\approx \ell(P)$,
arguing as in the proof of Claim 2 in the proof of Lemma \ref{lemfi}, we infer that 
$$\frac{1}{\ell(P)^{2}} \int_{B(x_P,2C_2\,\ell(P))} \frac{|\grad u_{*,1}(y)|^{2}}{|y-x_P|^{n-1}}\,dy\gtrsim 1.$$
Also recall that by the definition of $r_{0}$, for $\ve\ll C_2^{-1}$,
\[
r_{0} \geq \ve \ell(Q)+2\ve^{3/2}\ell(Q) >\frac{\ve}{2} \ell(Q)>\frac{\ve^{-1/2}}{2}\ell(P) \gg 2C_2\ell(P)\]
and the combination of these two estimates implies that 
$$J_1^*(r_0) \gtrsim \frac{\ell(P)^2}{r_0^2} \gtrsim \ps{\frac{\ve^{3/2}\,\ell(Q)}{K_0^2\, \ell(Q)} }^{2} = c(\ve)$$
Finally, to prove the statement (4) above, recall that, by \rf{eqclam10}, if\, $r_0 \leq r\leq M\,
\ell(Q)$,
$$
\HH^{n+1}\bigl(\{x:u(x)> c\,\ell(P)\} \cap H(Q)^c\cap B(x_P,r)\bigr) \geq c\,r^{n+1}.$$

We claim that $\{x:u(x)> c\,\ell(P)\}\cap V_*(P)=\varnothing$ if $\tau_0$ is small enough. Otherwise, by a Harnack chain argument (using the connectedness of $V_{*}(P)$ { and \eqref{eq2}})
 it easily follows that
$u(x)\geq c'(\ve)$ for all $x\in V_*(P)\cap 20B_P$, with $c'(\ve)>0$ depending on $\frac{M\ell(Q)}{\ell(P)}$ and thus on $\ve$ (recall that we assume $M\ll \ve_0^{-1}$, but we may have $\ve^{-1}\ll M$).
So if $\tau_0<\min\{c,c(\ve)\}$, then $P\not\in\case(3,R)$, which contradicts our initial assumption.

On the other hand, since $P \not\in\case(0,R)$, we know that
that \begin{align*}
\sup \bigl\{|A_s(y_1)-A_s(y_2)|:y_1,y_2\in V(P)\cap B(x_{P},M\ell(Q))
\bigr\}
&\leq 2 \theta_0\\
\sup \bigl\{|y-x_Q| \, |\vec b(y)|: y\in  V(P)\cap B(x_{P},M\ell(Q))
\bigr\} &\leq  \frac{2 \theta_0}{\kappa_0},
\end{align*}
if $\kappa_0$ is chosen small enough, depending on $\ve$ and $M$. but independently of $\theta_0$.

Now we have checked that all the assumptions in Lemma \ref{lemcoro} hold, and then we deduce that
$$
\left(\frac{M\,\ell(Q)}{r_0}\right)^\rho\leq  C\,\frac{J(x,M\,\ell(Q))}{J(x,r_0)} \lesssim C(\ve).
$$
for some positive constant $\rho$. This implies that $M\leq C'(\ve)$. Hence if we choose $M=2\,C'(\ve)$ we get our desired contradiction.
\end{proof}

\vv

\section{Uniform rectifiability} \label{secuniform}

\def\BATPP{\mathsf{BATPP}}
 In this section we assume that
$\mu$ is an arbitrary $n$-AD-regular measure in $\R^{n+1}$, and we set $E=\supp\mu$. Our objective consists
in proving Proposition \ref{propowtf}, which is the last ingredient of the proof of Theorem \ref{teo1}.

%

\vv
For convenience, we restate a more precise version of Proposition \ref{propowtf}

\def\H{\HH}
\begin{propo}\label{wtf}
Let $\mu$ be an $n$-AD-regular measure in $\R^{n+1}$.
Let $A_0,a_0,\alpha,p,t,\tau,\ve>0$. Let $\FF\subset \wts(A_0,\alpha,p,t,\tau)$ be some family of cubes satisfying a compatibility condition with constant $a_0$. Suppose that for all $S\in \DD_\mu$
\begin{equation}\label{Bsum}
\sum_{Q\subset S \atop Q\not\in \BATPP_{\ve} \cup \FF} \mu(Q) \lec \mu(S),
\end{equation}
with the implicit constant possibly depending on $A_0,a_0,\alpha,p,t,\tau,\ve$.
Suppose that the constants in the definition of $\wts$ are so that, for $p>0$ fixed, $t$ and $\tau$ are small enough. Assume also that the compatibility constant $a_0$ is big enough (possibly depending on $A_0,\alpha,p,t,\tau$), and that $\ve$ is small enough. Then $\mu$ is uniformly rectifiable.
\end{propo}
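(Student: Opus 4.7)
The plan is to reduce to the BATPP characterization of uniform rectifiability already recorded after the definition of $\batpp_\ve$: $\mu$ is UR if and only if the family of cubes outside $\batpp_\ve$ is Carleson (Proposition II.3.18 of \cite{DS2}). Since the hypothesis \rf{Bsum} already controls the packing of cubes outside $\BATPP_\ve \cup \FF$, the entire content of Proposition \ref{wtf} reduces to showing that the cubes in $\FF$ themselves behave well, in the sense that $\mu$ restricted to these cubes gives rise to a BAUP-like structure except on a Carleson family.

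First I would organize $\FF$ into coherent trees by a stopping-time construction. Fix a maximal $R \in \FF$ (or a suitable maximal coherent ancestor) and label its components $U_1(R), U_2(R)$. Grow $\tree(R)$ by including every descendant $Q \subset R$ for which all cubes $P$ with $Q \subset P \subset R$ lie in $\FF$ \emph{and} the compatibility condition $U_i(P') \cap 10B_{P} \subset U_i'(P)$ propagates a consistent labeling of the two sides from $R$ all the way down to $Q$. The tree roots then form a Carleson family: non-maximal roots arise either from cubes where $\FF$ fails (charged to \rf{Bsum}) or from failures of labeling coherence, which by the disjointness $U_1'(Q) \cap U_2'(Q) = \varnothing$ can also be charged to cubes in $\FF^{c}$ provided $a_0$ is chosen large enough.

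Second, within each tree $\TT = \tree(R)$, the $\wts$ conditions (1)--(4) together with compatibility produce a coherent two-sided topological structure. The open sets
\[
\Omega_\TT^{\,i} = \bigcup_{Q \in \TT} U_i(Q), \qquad i=1,2,
\]
are disjoint, and at every scale $\ell(Q)$ with $Q \in \TT$ every point of $E \cap 10B_Q$ admits a corkscrew ball of radius $\approx p\,\ell(Q)$ in each $\Omega_\TT^{\,i}$, while points of $10B_Q$ at distance more than $\tau\ell(Q)$ from $E$ lie in one of them. This is a quantitative two-sided corkscrew and topological separation property; by the David--Semmes machinery for codimension-one uniform rectifiability (essentially the $\wtn$-to-UR direction in \cite{DS2}), this forces $\mu$ restricted to $R$ to satisfy the bilateral approximation by a union of (at most two parallel) planes at all but a Carleson set of subcubes of $R$. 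Summing over the trees, using the Carleson packing of their roots, yields that the full collection $\FF \setminus \BATPP_\ve$ is Carleson, which combined with \rf{Bsum} closes the BATPP characterization and gives UR.

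The main obstacle is the consistent labeling of sides $U_1, U_2$ across many dyadic generations, i.e., ensuring that the stopping-time construction above produces trees large enough that their roots form a Carleson family (rather than exhausting the packing of \rf{Bsum} after only a few generations). This is exactly what the compatibility constant $a_0$ is tuned for: within $a_0$ generations the inclusion $U_i(P) \cap 10B_Q \subset U_i'(Q)$ and the disjointness of $U_1', U_2'$ rigidly determine the labeling. Choosing $a_0$ large (and $t,\tau,\ve$ small relative to $p,\alpha,A_0$) guarantees that the only way a tree can terminate is via a genuine failure of $\FF$, so that the packing of roots follows from \rf{Bsum}. Verifying this chain of implications quantitatively, and ruling out degenerate scenarios where the Whitney regions $U_i(Q)$ pinch off unexpectedly as one descends in scale, is the technically delicate step of the argument.
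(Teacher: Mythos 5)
There is a genuine gap at the heart of your argument. The decisive step — that inside each coherent tree of $\wts$ cubes ``the David--Semmes machinery for codimension-one uniform rectifiability (essentially the $\wtn$-to-UR direction)'' forces bilateral approximation by two parallel planes at all but a Carleson family of scales — is asserted, not proved, and it is precisely the point that cannot be black-boxed. The $\wts$ condition of this paper is only a \emph{variant} of $\wtn$, and the paper states explicitly that it is not easy to derive one property from the other; moreover the $\wtn\Rightarrow$UR theorem of David and Semmes is a global statement about the measure, not a per-tree statement that localizes to a conclusion of the form ``$Q\in\batpp_\ve$ for all but a Carleson set of $Q\subset R$.'' Your reduction also does not survive the mixed transitions: a tree of $\wts$ cubes is interleaved at neighbouring scales with $\batpp_\ve$ cubes and with cubes of $\cB=\DD_\mu\setminus(\batpp_\ve\cup\FF)$, and the compatibility condition only constrains $\wts$--$\wts$ pairs within $a_0$ generations. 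A failure of ``labeling coherence'' caused by a $\batpp_\ve$ cube sitting between two $\wts$ cubes cannot be charged to \eqref{Bsum} (such a cube is not in the bad family), so the claim that your stopping-time roots are Carleson does not follow from the hypotheses as you have set things up.

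The paper's proof takes a different route that is built exactly to handle these mixed scales. One enlarges $E$ to $\wt E=E\cup\bigcup_{Q\in\cB}C_Q$ by adding dyadic grid skeletons of size $\approx b\,\ell(Q)$ near every bad cube, so that \emph{every} cube has well-defined ``components'' (the slabs $H_1',H_2',M'$ for $\batpp_\ve$ cubes, the sets $U_1,U_2$ for $\wts$ cubes, the components of $(C_Q)^c$ for bad cubes). The key lemma is a segment-hitting statement: for a ``separated pair'' of balls lying in different components of some $Q$, generic segments joining them (those whose projection lies in a set $F_\cC$ controlled by a maximal function) must meet $\wt E$. Its proof is a multi-scale descent producing cubes $Q_j$ and new separated pairs with $\ell(Q_{j+1})\approx a\,\ell(Q_j)$, where the case analysis ($Q_j,Q_{j+1}$ each $\batpp_\ve$ or $\wts$) uses the compatibility condition for $\wts$--$\wts$ transitions, the two-plane geometry for $\batpp_\ve$ cubes, and the curve condition (4) of $\wts$ together with $\ve\ll a\alpha$ for the $\batpp$-to-$\wts$ transition. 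From this one verifies the David--Semmes \emph{local symmetry} condition for $\wt E$ (via a packing estimate for cubes admitting three collinear separated balls in a fixed direction), which gives uniform rectifiability of $\wt E$ and hence of $E$ by comparison of $\beta$-numbers. If you wish to salvage your BATPP-based reduction, you would need to supply a genuine proof of the per-tree ``$\wts\Rightarrow$ mostly $\batpp$'' step and an argument covering the mixed-scale transitions; as written, the proposal assumes the hardest part of the proposition.
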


\vv

First we will use this lemma to complete the proof of Theorem \ref{teo1}.

\begin{proof}[\bf Proof of Theorem \ref{teo1}, (a) or (b) $\Rightarrow$ (c)] Assume either (a) or (b) of Theorem \ref{teo1}
 and let us check that the assumptions of Proposition \ref{wtf} hold. 
We know that $\mu$ admits a corona decomposition involving elliptic measure 
such as the one in Proposition \ref{propo1}, with the family $\wt\ttt\subset\DD_\mu$ satisfying a Carleson packing condition.

We consider the families of cubes $\case(i,R)$, for $R\in\ttt$ and $i=0,1,2,3$.
By Lemmas \ref{lemcase0} and \ref{lem:packing-case1}, for all $S\in\DD_\mu$ we have
$$\sum_{R\in\wt\ttt}
\sum_{\substack{Q\subset S:\\Q\in\case(0,R)\cup\case(1,R)}} \mu(Q)\leq C\,\mu(S).
$$

From this estimate and Lemmas \ref{lemcase0}, \ref{lemfund}, \ref{lem:packing-case1}, and \ref{lem7.1} (recalling Remark \ref{batppremark}), it follows that
$$\sum_{R\in\wt\ttt}\,
\sum_{\substack{Q\subset S:\\Q\in\case(2,R)\setminus \batpp_\ve}} \mu(Q)\leq C\,\mu(S).
$$


Further, by Lemma \ref{lem6.1}, for $A_0$ big enough and for some $p>0$ depending only on the constants in 
the corona decomposition in Proposition \ref{propo**}, the family
$$\FF := \bigcup_{R\in\wt\ttt}\case(3,R)$$
is contained in $\wts(A_0,\alpha,p,t,\tau)$ satisfies the compatibility condition with constant $a_0>1$,
with $\tau,t$ arbitrarily small and $a_0$ arbitrarily big if $\tau_0$ and $\kappa_0$ are small enough in the definitions of the 
families $\case(i,R)$. Hence the assumptions in Proposition \ref{wtf} hold and so $\mu$ is uniformly $n$-rectifiable.
\end{proof}
\vv

We now proceed with the proof of Lemma \ref{wtf}, which is modelled on arguments similar to the ones in \cite{DS2,DS3}). In particular, the condition of $Q\in \wts$ is similar to the $\mathsf{WTN}$ condition in \cite{DS2}. However, it is not easy to derive one property from the other, as far as we know. A further complication arises from the fact that the characterization of uniform rectifiability in Lemma \ref{wtf}involves two different collections of cubes, $\wts$ and $\BATPP_{\ve}$. 
\def\cB{\mathscr{B}}

The constants $\ve,\alpha,a,a_{0},\tau,\lambda,p,\upsilon_{1}$ and $c$ will appear in the proof and we will adjust their values as we go along. The reader can check that their dependencies are
\[
\ve\ll a\alpha, \;\; \max\{\tau,\ve\} \ll c \ll \frac{a}{\lambda}, \;\; 2^{-a_{0}}\approx a\ll p<1,\] 
\[
\max\{\upsilon_{1},\upsilon_{2},\tau,\ve,b\}\ll \kappa, \;\; \lambda^{-1} \ll \upsilon_{1}^{n} ,\]
\[
\upsilon_{1}\ll \upsilon_{2} \ll p\kappa < \kappa\]
and $\kappa$ is a universal constant we will introduce near the end of the section. We also assume all constants depend on the Ahlfors regularity of $E$ and $n$ and suppress this dependency from the notation.

Let 
\[
\cB=\DD_\mu\backslash (\BATPP_{\ve} \cup \wts).\]

\noi We define an auxiliary set first as follows:
\[
\wt{E}= E\cup \bigcup_{Q\in \cB} C_{Q},\]
where $C_{Q}$ is the union of the boundaries of all dyadic cubes of side length between $b\ell(Q)$ and $2b\ell(Q)$ that intersect $10B_{Q}$, where $b>0$ is a small number we will pick later. 

It is easily checked that this set is also $n$-regular by \eqref{Bsum}. Our goal now is to show the following.

\begin{lemma}\label{etilde}
The set $\wt{E}$ is uniformly rectifiable. 
\end{lemma}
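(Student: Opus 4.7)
The plan is to verify a David--Semmes-type criterion for uniform rectifiability of $\wt E$, specifically a $\mathsf{BAUP}$-like condition (bilateral approximation by unions of planes) outside a Carleson family of dyadic cubes on $\wt E$. By \cite[Chapter II.3]{DS2}, this suffices to conclude $\wt E$ is uniformly rectifiable.

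First I would check that $\wt E$ is $n$-Ahlfors regular. Upper regularity follows from the Carleson packing condition \eqref{Bsum}: for each $S\in\DD_\mu$,
$$\H^n\Bigl(\bigcup_{Q\subset S,\,Q\in\cB} C_Q\Bigr) \lesssim \sum_{Q\subset S,\,Q\in\cB} \ell(Q)^n \lesssim \mu(S) \approx \ell(S)^n,$$
while lower regularity is inherited from $E\subset\wt E$. After fixing a David--Semmes dyadic lattice $\DD_{\wt E}$ on $\wt E$, for each $\wt R\in \DD_{\wt E}$ I select a comparable cube $Q\in\DD_\mu$ with $\ell(Q)\approx\ell(\wt R)$ and $\dist(Q,\wt R)\lesssim\ell(\wt R)$ (possibly after excluding a Carleson family of cubes where $\wt R$ sits too far from $E$; this exclusion is controlled by the Carleson packing of $\cB$).

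Three cases then arise. (i) If $Q\in\batpp_{\ve}$, the two parallel planes $P_1\cup P_2$ from the definition of $\batpp_\ve$ provide the required bilateral approximation for $E\cap 10B_Q$; the only extra pieces of $\wt E$ in $10B_Q$ come from $C_{P'}$ with $P'\in\cB$ and $P'\cap 10B_Q\neq\varnothing$, and these contribute only to a Carleson exceptional family by \eqref{Bsum}. (ii) If $Q\in\cB$, then by construction $C_Q\subset\wt E$ is literally a union of $n$-dimensional faces of dyadic cubes of side $\approx b\ell(Q)$, so at scale $b\ell(Q)$ the set $\wt E$ is trivially a union of planes; again the Carleson packing of $\cB$ keeps these $\wt R$ in a Carleson family. (iii) If $Q\in\FF\subset\wts$, I convert the topological separation property (two connected components $U_1(Q),U_2(Q)$ with interior corkscrews, pairwise separated by the compatibility condition) into a one-plane approximation. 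The quantitative corkscrew property (3) together with Jordan/separation arguments as in \cite[Chapter II.3]{DS2} forces any set separating $U_1(Q)$ from $U_2(Q)$ inside $10B_Q$ to lie close to a single hyperplane up to a controlled exceptional family, giving $\mathsf{BAUP}$ at that scale.

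The main obstacle I expect is case (iii): transferring the \emph{weak} topological statement $\wts$ into a \emph{quantitative} planar approximation. The compatibility condition with constant $a_0$ is essential here, since it guarantees that the $U_i(P)$'s for cubes $P$ of nearby size nest inside $U_i'(Q)$, making the local planes coherent across scales. A subtle secondary issue is that small $C_{P'}$'s for $P'\in\cB$ with $\ell(P')\ll\ell(Q)$ could in principle add thin ``grids'' inside $10B_Q$; one must verify that only a Carleson family of $\wt R$ is affected, which again reduces to the packing \eqref{Bsum}. Once these three cases are combined, the cubes where $\mathsf{BAUP}$ fails form a Carleson family, and uniform rectifiability of $\wt E$ follows from the David--Semmes characterization.
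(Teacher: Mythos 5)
Your handling of cases (i) and (ii) via the Carleson packing \eqref{Bsum} is plausible, but case (iii) contains a genuine gap that the rest of the argument cannot absorb. The condition $Q\in\wts$ is purely topological at the scale of $Q$: it only says that the part of $10B_Q$ far from $E$ splits into two connected sets $U_1(Q),U_2(Q)$ with interior corkscrews and disjoint enlargements. This is perfectly compatible with $E$ being very far from any hyperplane (or pair of hyperplanes) at that scale — think of a graph with enormous Lipschitz constant, whose two sides are connected, separated, and full of corkscrew balls. Consequently no Jordan/separation argument can force the set separating $U_1(Q)$ from $U_2(Q)$ to lie near a single plane, and no $\baup$-type estimate at the scale of $Q$ can be extracted from $\wts$ alone; the analogous global statement (two-sided corkscrews imply uniform rectifiability) is a theorem that requires the condition at \emph{all} scales and locations, not a scale-by-scale implication. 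If your one-scale claim were true, Proposition \ref{propowtf} would be essentially trivial, whereas the entire difficulty of Section \ref{secuniform} is precisely that the $\wts$ information must be propagated through the scales; the compatibility condition is used there to run a descent argument, not to make ``local planes coherent''.

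The paper's proof is genuinely multi-scale and never establishes $\baup$ for $\wt E$. It defines components of each cube ($H_1'(Q),H_2'(Q),M'(Q)$ for $\batpp_\ve$ cubes, $U_1(Q),U_2(Q)$ for $\wts$ cubes, the components of $(C_Q)^c$ for $Q\in\cB$) and the notion of a separated pair of balls. The key lemma shows that if $x,y$ lie in a separated pair for $Q$ and project, along the axis of the pair, to a point where the projection maximal function $M_{\cC}$ is at most $\lambda$, then $[x,y]$ meets $\wt E$; this is proved by inductively constructing cubes $Q_j$ with $\ell(Q_{j+1})\approx a\,\ell(Q_j)$ and new separated pairs inside the segment (this is exactly where the compatibility condition and the separation estimates enter), so that the nested segments shrink to a point of $E$. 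Lemma \ref{lem8.4} then yields a Carleson packing condition for cubes of $\DD_{\tilde\mu}$ carrying three colinear separated balls, via the sets $G_Q$ and their bounded overlap, and one checks that any cube failing the local symmetry condition $\ls_\kappa$ carries such a configuration. Uniform rectifiability of $\wt E$ is finally deduced from the local symmetry criterion of \cite{DS1}, not from the $\baup$ theorem of \cite{DS2}. To repair your proposal you would essentially have to reproduce this machinery; the single-scale flatness claim in case (iii) cannot stand as written.
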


It will then follow that $E$ is also uniformly rectifiable. One way to see this is that, if $\wt{E}$ is uniformly rectifiable, then by the results of \cite{DS1}, $\beta_{\wt{E}}(x,r)^{2} d\H^n(x)\frac{dr}{r}$ is a Carleson measure on $\wt{E}\times (0,\infty)$, where
\[
\beta_{\wt{E}}(x,r)^{2} := \inf_{P \text{ an $n$-plane}} \frac{1}{r^{n}}\int_{\wt{E}\cap B(x,r)} \ps{\frac{\dist(y,P)}{r}}^{2} d\H^{n}(y)\]
and so the restriction of $\beta_{\wt{E}}(x,r)^{2} d\H^n(x)\frac{dr}{r}$ to $E\times (0,\infty)$ is also Carleson since $E\subset \wt{E}$. For the same reason, $\beta_{{E}}(x,r)\leq \beta_{\wt{E}}(x,r)$, and thus $\beta_{{E}}(x,r)^{2} d\H^{n}(x)\frac{dr}{r}$ is a Carleson measure on $E\times (0,\infty)$, and this implies $E$ is uniformly rectifiable again by the results in \cite{DS1}. 

When proving Lemma \ref{etilde}, recall that $\mu$ is a measure supported on $E$, while on $\wt E$ we will consider the measure $\HH^n|_{\wt E}$.

For $Q\in \DD_\mu\cap \BATPP_{\ve}$, we will set $H_{1}(Q)$ and $H_{2}(Q)$ to be the disjoint open half-spaces whose boundaries are $P_{1}(Q)$ and $P_{2}(Q)$ respectively, and $M(Q)$ the open region between $P_{1}(Q)$ and $P_{2}(Q)$. Let $H_{1}'(Q)$ be the connected component of $10B_{Q}\backslash (P_{1}(Q)\cup P_{2}(Q))_{\ve \ell(Q)}$ contained in $H_{1}(Q)$, where we will fix $\ve>0$ later, and define $H_{2}'(Q)$ and $M'(Q)$ similarly.

For $Q\in \DD_\mu$, we will define the {\it components} of $Q$ to be
\begin{enumerate}
\item the three connected components $H_{1}'(Q),H_{2}'(Q)$, or $M'(Q)$ if $Q\in \BATPP_{\ve}$.
\item  $U_{1}(Q)$ and $U_{2}(Q)$ if $Q\in \wts$, or
\item  the connected components of $(C_{Q})^{c}$ if $Q\in \cB$. 
\end{enumerate}

Note that 
\begin{equation}
\dist([x,y],E) \leq \ve\ell(Q) \mbox{ if $x,y$ are in different components of $Q\in \BATPP_{\ve}$}.
\label{xywhsa}
\end{equation} 
This follows since we can find $z\in [x,y]\cap \bigl(P_{1}(Q)\cup P_{2}(Q)\bigr)$ and then $\dist(z,E)\leq \ve\ell(Q)$ by \eqref{batpp1}. 

Similarly, 
\begin{equation}
\dist([x,y],E) \leq \tau \ell(Q) \mbox{ if $x,y$ are in different components of $Q\in \wts$}.
\label{xyG}
\end{equation} 
Indeed, let $x\in U_{1}^{}(Q)$ and $y\in U_{2}^{}(Q)$, and let $z\in[x,y]\setminus (U_1(Q)\cup U_2(Q))$. By the property (1) in the definition of $\wts$  there is $z'\in E$ such that $|z-z'|\leq \tau\,\ell(Q)$. 

Thus, in any case, if $x,y$ are in different components of $Q$, then 
\begin{equation}\label{ac'}
\dist([x,y],E)\leq  \max\{\tau,\ve\}\ell(Q).
\end{equation}

We say that two balls $B_{1}$ and $B_{2}$ of radii at least $\upsilon_{1}\ell(Q)$ are {\it separated} for $Q\in \DD_\mu$ if 
\begin{enumerate}
\item $2B_{1}\cup 2B_{2}\subset 10B_{Q}\backslash \wt{E}$ and 
\item $B_{1}$ and $B_{2}$ are contained in different components of $Q$.
\end{enumerate}
We say that $B_{1}$ and $B_{2}$ are separated if they are separated for some cube $Q$ or are in different components of $(C_Q)^c$. 

\def\co{\textrm{co}}
For a separated pair $\cC=(B_{1},B_{2})$, set 
\[
G(B_{1},B_{2}) = \co (B_{1}\cup B_{2})\cap \wt{E}, 
\]
where $\co(F)$ stands for the convex hull of $F\subset\R^{n+1}$.  
Let $P(\cC)$ be a plane orthogonal to $[x_{B_{1}},x_{B_{2}}]$, where $x_{B_i}$ are the centers of $B_i$. Let $\pi_{\cC}$ be the orthogonal projection onto this plane. For $x\in P(\cC)$, define
\[
M_{\cC}(x) = \sup_{r>0} r^{-n} \H^{n}(\pi_{\cC}^{-1}(B(x,r))\cap \wt{E}\cap 20B_{Q}).\]
Let $\lambda>0$ to be chosen (depending on $\upsilon_{1}$) so that the set 
\[
F_{\cC}=\{x\in \pi_{\cC}(B_{1}): M_{\cC}(x)\leq \lambda\}.\]
satisfies
\begin{equation}\label{3/4}
|F_{\cC}|\geq  \frac{3}{4}|\pi_{\cC}(B_{1})|.
\end{equation}
This $\lambda>0$ exists since the usual proof of the Hardy-Littlewood maximal inequality implies
\[
|\{x\in \pi_{\cC}(10B_{Q}): M_{\cC}(x)>\lambda \}| \lec \frac{1}{\lambda},\]
with the implicit constant depending on $n$ and the Ahlfors regularity constant of $\wt{E}$. 
\vv

\begin{lemma}
There is $\upsilon_{1}>0$ so that the following holds. Let $\cC=(B_{1},B_{2})$ be a separated pair of balls for $Q\in\DD_\mu$, $x\in B_{1}$ and $y\in B_{2}$ so that $\pi_{\cC}(x)=\pi_{\cC}(y)\in F_{\cC}$ and $r(B_{1})=r(B_{2}) \geq \upsilon_{1}\ell(Q)$. Then $[x,y]\cap \wt{E}\neq\varnothing$. 
\end{lemma}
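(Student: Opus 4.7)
The plan is first to dispose of the case $Q\in\cB$ immediately: there the components of $Q$ are by definition the connected components of $(C_Q)^c$ and $C_Q\subset\wt E$, so the segment $[x,y]$ between different components must cross $C_Q\subset\wt E$, giving the conclusion at once. This reduces matters to $Q\in\batpp_\ve\cup\wts$, where $\eqref{ac'}$ (coming from $\eqref{xywhsa}$ or $\eqref{xyG}$) is the separation tool at hand.

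For the non-trivial case I would argue by contradiction: assume $[x,y]\cap \wt E=\varnothing$ and set $\delta:=\dist([x,y],\wt E)>0$. Choose coordinates so that $P(\cC)=\R^n\times\{0\}$ and $y-x$ is a positive multiple of $e_{n+1}$; write $u:=\pi_\cC(x)=\pi_\cC(y)\in\R^n$. For each $u'\in B_{\R^n}(u,\upsilon_1\ell(Q)/2)$ set $x'(u'):=x+(u'-u,0)$ and $y'(u'):=y+(u'-u,0)$. Since $r(B_1)=r(B_2)\geq \upsilon_1\ell(Q)$, these translates stay inside $B_1,B_2$. A short check shows $x'(u'),y'(u')$ remain in the same components as $x,y$: in the $\batpp_\ve$ case, $\dist(B_1,\partial H_i'(Q))\gtrsim \upsilon_1\ell(Q)$ via $\eqref{batpp2}$ and $\ve\ll\upsilon_1$; in the $\wts$ case, property (1) together with $\dist(B_1,E)\geq \upsilon_1\ell(Q)>\tau\ell(Q)$ and connectedness. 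Applying $\eqref{ac'}$ therefore gives
\[
\dist\bigl([x'(u'),y'(u')],E\bigr)\leq c_0\ell(Q),\qquad c_0:=\max\{\tau,\ve\},
\]
so one may pick $w(u')\in E\subset\wt E$ with $\dist(w(u'),[x'(u'),y'(u')])\leq c_0\ell(Q)$, and hence $|\pi_\cC(w(u'))-u'|\leq c_0\ell(Q)$ by the $1$-Lipschitz property of $\pi_\cC$.

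As $u'$ ranges over $B_{\R^n}(u,\rho)$ with $\rho:=\upsilon_1\ell(Q)/2$, the projections $\{\pi_\cC(w(u'))\}$ form a $c_0\ell(Q)$-dense subset of $B_{\R^n}(u,\rho-c_0\ell(Q))$. A Vitali extraction combined with the $n$-Ahlfors-David regularity of $\wt E$ then yields
\[
\H^n\bigl(\pi_\cC^{-1}(B(u,r))\cap\wt E\cap 20B_Q\bigr)\gtrsim r^n
\]
for every $c_0\ell(Q)<r\leq \rho$. To upgrade this to the required $M_\cC(u)>\lambda$, I plan to combine the density bound with the disjointness of the $\delta/2$-tube around $[x,y]$ from $\wt E$: applying $\eqref{ac'}$ to parallel segments inside this tube forces $\delta\leq 2c_0\ell(Q)$, concentrating $\wt E$-mass at the thinner scale $r\sim c_0\ell(Q)$ and amplifying the density ratio by a factor $\sim (\upsilon_1/c_0)^n\gtrsim\upsilon_1^{-n}$, enough to exceed $\lambda\sim\upsilon_1^{-n}$ from $\eqref{3/4}$. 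This would contradict $\pi_\cC(x)\in F_\cC$ and complete the proof.

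The hard part will be the precise calibration of this amplification: the naive Vitali bound yields only $M_\cC(u)\gtrsim 1$, which is insufficient since $\lambda$ is of order $\upsilon_1^{-n}$. The extra factor of $\upsilon_1^{-n}$ must be squeezed out from the hierarchy $c_0\ll\upsilon_1$ together with the small-scale mass concentration forced by the tube disjointness and the separation hypothesis $\eqref{ac'}$. The balance of constants is delicate, and it is precisely this calibration that dictates the choice of $\upsilon_1$ (depending on $\lambda$, and ultimately on the Ahlfors-regularity constants of $\wt E$) in the statement.
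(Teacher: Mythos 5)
Your reduction to $Q\in\batpp_\ve\cup\wts$ and the first step (translating the segment inside $B_1,B_2$ and applying \eqref{ac'} to each translate) are fine, but they only give projected density of order one: $M_{\cC}(u)\gtrsim 1$. The contradiction you need is $M_{\cC}(u)>\lambda$, and $\lambda$ must be taken of size at least $\upsilon_1^{-n}$ for \eqref{3/4} to hold, so everything hinges on the ``amplification'' you defer to the end — and that step, as proposed, does not work. The mechanism you describe (``applying \eqref{ac'} to parallel segments inside the $\delta/2$-tube forces $\delta\le 2c_0\ell(Q)$, concentrating mass at scale $c_0\ell(Q)$'') produces only \emph{one} point of $E$ near the segment, hence only one Ahlfors-regular ball's worth of mass in the thin tube, i.e.\ again $M_{\cC}(u)\gtrsim 1$. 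To beat $\lambda$ you would need to stack on the order of $1/c_0$ (or at least $\lambda$) essentially disjoint balls of $\wt E$-mass at \emph{different heights} inside the tube $\pi_{\cC}^{-1}(B(u,Cc_0\ell(Q)))$. But \eqref{ac'} can only be invoked for a pair of points that you already know to lie in different components of some cube; once you pass to subsegments of $[x,y]$ (which is what stacking at many heights requires), you have no information that their endpoints are separated for any smaller cube, so the separation estimate is simply unavailable at those scales. Establishing exactly that — that one can keep finding, at each smaller scale, a ball in $\wt E^{c}$ on the segment (this is where $M_{\cC}(u)\le\lambda$ is used, via the ``many stacked balls'' contradiction with $k\approx a/c\gg\lambda$) \emph{and} that the new pair of balls is again separated for a smaller cube $Q_{j+1}$ (which requires the $\batpp_\ve$ plane geometry, the $\wts$ compatibility condition, and the treatment of cubes in the bad family via the skeletons $C_Q$) — is the actual content of the paper's proof: an induction producing nested segments $I_j\subset[x,y]$ with $\dist(I_j,E)<\ell(Q_j)\to0$, whose intersection is a point of $E\subset\wt E$ on $[x,y]$. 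Your proposal also never addresses how intermediate-scale cubes of the bad family $\cB$ meeting $[x,y]$ would be handled (in the paper this is disposed of at the outset using $b\ll\upsilon_1$ and $C_{Q'}\subset\wt E$), but this is subsumed in the same missing multi-scale argument.

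In short: the single-scale contradiction against $u\in F_{\cC}$ cannot be closed with the tools you use, because the needed lower bound on $M_{\cC}(u)$ exceeds what one application of \eqref{ac'} plus Ahlfors regularity can give, and iterating \eqref{ac'} at smaller scales presupposes precisely the separated pairs that the paper constructs inductively. The calibration you flag as ``delicate'' is therefore not a constant-chasing issue but a missing idea: you need the descending construction of separated corkscrew balls in $\wt E^{c}$ (or an equivalent multi-scale substitute) before any density amplification can be extracted.
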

\def\bad{\textrm{Bad}}
\begin{proof}
Note that if there is $Q'\in \cB$ with $\ell(Q')\leq \ell(Q)$ so that $10B_{Q'}\cap  [x,y]\neq\varnothing$, then we are done since, for $b>0$ small enough (depending on $\upsilon_{1}$), $[x,y]\cap \wt{E}\neq\varnothing$. Hence, we can assume without loss of generality that $Q'\not\in \cB$ whenever $\ell(Q')\leq \ell(Q)$ and $10B_{Q'}\cap [x,y]\neq\varnothing$. 

Let $x_{1}^{1}=x$ and $x_{2}^{1}=y$.  We will inductively find points $x_{i}^{j}$, $i=1,2$ and $j=1,2,...$ and cubes 
$Q_{j}\in\DD_\mu$ such that if $I_{j}=[x_{1}^{j},x_{2}^{j}]$, and $B_{i}^{j}= B(x_{i}^{j},c\ell(Q_{j}))$, then 
\begin{enumerate}
\item  $\ell(Q_{j})\approx \diam I_{j}$ and $I_{j}\subset 10B_{Q_{j}}$,
\item $B_{1}^{j}$ and $B_{2}^{j}$ are separated for $Q_{j}$,
\item $I_{j+1}\subset I_{j}$,
\item $\ell(Q_{j+1})<\frac{1}{2}\ell(Q_{j})$ and $\dist(I_{j},E)\leq\frac{3}{4} |I_{j-1}|$,
\end{enumerate}
Note that since $x_{i}^{j}$ are in two components of $Q_{j}$, then \eqref{ac'} implies $\dist(I_{j},E)<\ell(Q_{j})$, and so the last condition implies that $\bigcap_j I_{j}$ is a point in $E$, which implies the lemma. 

We now proceed with finding the points $x_{i}^{j}$. 
We first set $Q_{1} = Q$, $x_{1}^{1}=x$, and $x_{2}^{1}=y$. 

Now suppose we have chosen $x_{i}^{j}$ for $i=1,2$ and some $j$. Assume $I_{j}=[x_{1}^{j},x_{2}^{j}]$ and $Q_{j}$ are such that $|I_{j}|\approx \ell(Q_{j})$, $I_{j}\subset 2B_{Q_{j}}$, and 
\[
B_{i}^{j}=B(x_{i}^{j},c\ell(Q_{j}))\subset  2B_{i}^{j}\subseteq 2B_{Q_{j}}.\]
Let $z_{j}$ be the closest point in $I_{j}$ to $E$. We can assume without loss of generality that $z_{j}=0$, and also that 
\[
|x_{1}^{j}-z_{j}|\geq \frac{|x_{1}^{j}-x_{2}^{j}|}{2}\gec \ell(Q_{j}).\]

\noi {\bf Claim:} Let $a\in (0,1/2)$. For $c$ small enough (depending on $\lambda$, $n$, and the Ahlfors regularity of $\wt{E}$), we can find
\[
y_{j}\in [ax^{j}_{1},ax^{j}_{1}/2]\] 
so that $B(y_{j},2c\ell(Q_{j}))\subset \wt{E}^{c}$. 

Indeed, suppose instead that every $y\in [ax_{j}^{1},ax_{j}^{1}/2]$ had $\dist(y,\wt{E}^{c})\leq 2c\ell(Q_{j})$. Let $B^{1},...,B^{k}$ be balls with radii $4c\ell(Q_{j})$, centers in $[ax_{j}^{1},ax_{j}^{1}/2]$, and so that every point on the interval is in at least one ball but no more than two. Then 
\[
k\approx \frac{|ax_{j}^{1} - ax_{j}^{1}/2|}{4c\ell(Q_{j})}\approx \frac{a}{c}\]
and since $x\in F_{\cC}$,
\begin{align*}
\lambda \geq M_{\cC}(x)
& \geq (4c\ell(Q_{j}))^{-n} \H^{n}(\pi_{\cC}^{-1}(B(x,4c\ell(Q_{j}))\cap \wt{E}\cap 10 B_{Q})) \\
& \geq \frac{1}{2} (4c\ell(Q_{j}))^{-n} \sum_{i=1}^{k} \H^{n}(B^{i}\cap \wt{E})\\
& \gec (4c\ell(Q_{j}))^{-n} k (4c\ell(Q_{j}))^{n} \approx k \approx \frac{a}{c}.\end{align*}
Thus, for $c\ll \frac{a}{\lambda}$, we get a contradiction. This proves the claim.

Note that for $\ve,\tau < c$ small enough,
\[
\dist(B(y_{j},c\ell(Q_{j})), \wt{E})
\geq c\ell(Q_{j})>\max\{\ve,\tau\} \ell(Q_{j})\]
and so $B(y_{j},c\ell(Q_{j}))$ is contained in a component of $Q_{j}$. By convexity, we also known that $B(y_{j},c\ell(Q_{j}))\subset 10 B_{Q_{j}}$. Hence, since $(B(x_{1}^{j},c\ell(Q_{j})),B(x_{2}^{j},c\ell(Q_{j})))$ are separated for $Q_{j}$ by the induction hypothesis, we know that either $(B(x_{1}^{j},c\ell(Q_{j})), B(y_{j},c\ell(Q_{j})))$ or $(B(x_{2}^{j},c\ell(Q_{j})),B(y_{j},c\ell(Q_{j})))$ form another separated pair for $Q_{j}$. 
Without loss of generality, we can assume it is $(B(x_{1}^{j},c\ell(Q_{j})), B(y_{j},c\ell(Q_{j})))$, the rest of the proof is the same if instead $(B(x_{2}^{j},c\ell(Q_{j})),B(y_{j},c\ell(Q_{j})))$ form a separated pair. 

We let $x_{1}^{j+1}= x_{1}^{j}$ and $x_{2}^{j+1}=y_{j}$. We will now verify the conditions in our induction claim for these points. 

Since $y_{j}$ and $x_{1}^{j}$ are in different components of $Q_{j}$, \eqref{ac'} implies we can find $z_{j+1}\in [x_{1}^{j},y_{j}]$ so that 
\begin{equation}\label{zj+1}
\dist(z_{j+1},E)< \isif{ \tau \ell(Q_{j})  & Q_{j}\in \wts \\ \ve\ell(Q_{j}) & Q_{j}\in \BATPP_{\ve}}.
\end{equation}
 Let $z_{j+1}'\in E$ be closest to $z_{j+1}$ and let $Q_{j+1}$ be the smallest cube containing $z_{j+1}'$ so that $2B_{Q_{j+1}}\supset [y_{j},x_{1}^{j}]$. Then since $z_{j+1}$ is close to $Q_{j+1}$, it is not hard to show that
 \begin{equation}\label{aqj}
|I_{j+1}| \approx  \ell(Q_{j+1})\approx a \ell(Q_{j}).
 \end{equation}
 Hence, for $a$ small enough, there is some $a_{0}\in \bN$ so that $a\approx 2^{-a_{0}}$ and 
\begin{equation}\label{qjqj+1qj}
\ell(Q_{j})2^{-a_{0}}\leq  \ell(Q_{j+1})<\min\Bigl\{ \frac{p}{4} , \frac{1}{20}\ell(Q_{j})\Bigr\},
\end{equation}
where $p$ is the corkscrew constant from the definition of $\wts$. Since $a<1/2$, we have $|I_{j+1}|\leq \frac{3}{4} |I_{j}|$. This and \eqref{qjqj+1qj} imply condition (4). Clearly, (3) holds since by construction we have picked $I_{j+1}\subset I_{j}$, and (1) follows from \eqref{aqj}. It now suffices to show that the balls $(B(x_{1}^{j},c\ell(Q_{j+1})), B(y_{j},c\ell(Q_{j+1})))$ form a separated pair for $Q_{j+1}$. 

Since $x_{1}^{j},y_{j}\in 2B_{Q_{j+1}}$, we have for $c>0$ small that
\[
B(x_{1}^{j},2c\ell(Q_{j}))\cup B(y_{j},2c\ell(Q_{j}))\subset 10 B_{Q_{j+1}}.\]
So now it remains to show that they are in two different components of $Q_{j+1}$. We will also need the following estimate later:
\begin{equation}\label{10B}
10B_{Q_{j+1}}\subset 10 B_{Q_{j}}.
\end{equation}
To see this, note that if $x\in 10B_{Q_{j+1}}$, then since $x,z_{j+1}'\in 10 B_{Q_{j+1}}$,
\begin{align*}
|x-x_{Q_{j}}|
& \leq |x-z_{j+1}'|+|z_{j+1}'-z_{j+1}|+|z_{j+1}-x_{Q_{j}}|\\
& \leq \diam 10 B_{Q_{j+1}}+\max\{\tau,\ve\}\ell(Q_{j})+2r(B_{Q_{j}})\\
& <20r(B_{Q_{j+1}})+\max\{\tau,\ve\}c_{10}^{-1} r(B_{Q_{j}})+2r(B_{Q_{j}})\\
& \stackrel{\eqref{qjqj+1qj}}{<}(1+\max\{\tau,\ve\}c_{10}^{-1} +2)r(B_{Q_{j}})<10r(B_{Q_{j}})
\end{align*}
which implies \eqref{10B}.


For $\tau,\ve\ll c$, 
\[
\dist(\{x_{1}^{j},y_{j}\},E)
\geq c\ell(Q_{j}) >c\ell(Q_{j+1})\gg \max\{\tau,\ve\} \ell(Q_{j+1})\]
and thus, \eqref{10B}, Lemma \ref{lemfi}, and our definition of components imply $B(x_{1}^{j},c\ell(Q_{j+1}))$  and $B(y_{j},c\ell(Q_{j+1}))$ must be contained in components of $Q_{j+1}$. It now suffices to show that they are not in the same component of $Q_{j+1}$. 

Recall that by assumption. $Q_{j+1}\not\in \cB$ since $10B_{Q_{j+1}}\cap [x,y]\neq\varnothing$ (since $10B_{Q_{j}}\supset 2B_{Q_{j+1}}\supset I_{j+1}$). 

\vv


\noi{\bf Case 1:} If $Q_{j+1}\in \BATPP_{\ve}$, suppose $y_{j}$ and $x_{1}^{j}$ are in the same component of $Q_{j+1}$. Since $y_{j}$ and $x_{1}^{j}$ lie in different components of $Q_{j}\in \BATPP_{\ve}{\cup \wts}$ by assumption, $\dist(z_{j+1},E)<\max\{\tau,\ve\} \ell(Q_{j})$ by \eqref{zj+1}. But if $x_{1}^{j}$ and $y_{j}$ lie in $H_{1}'(Q_{j+1})$, for example, then one of those points is closer to $P_{1}(Q)\cup P_{2}(Q)$ than $z_{j+1}$. Hence, for $\tau, \ve\ll c$ small,
\begin{align*}
\dist(z_{j+1},E)
& \stackrel{\eqref{batpp1}}{\geq}  \dist(z_{j+1}, P_{1}(Q_{j+1})\cup P_{2}(Q_{j+1}))-\ve \ell(Q_{j+1})\\
& \geq \dist(\{x_{1}^{j},y_{j}\},P_{1}(Q_{j+1})\cup P_{2}(Q_{j+1}))-\ve \ell(Q_{j+1})\\ 
& \stackrel{\eqref{batpp2}}{\geq}  \dist(\{x_{1}^{j},y_{j}\},E)-2\ve \ell(Q_{j+1})\\ 
& \geq c\ell(Q_{j+1})-2\ve \ell(Q_{j+1}) 
\geq \frac{c}{2}\ell(Q_{j+1})
> \max\{\ve,\tau\}\,\ell({ Q_{j}})\\
& \stackrel{\eqref{zj+1}}{>}\dist(z_{j+1},E)
\end{align*}
which is a contradiction, and the same would happen if $x_{1}^{j}$ and $y_{j}$ both lied in $H_{2}'(Q_{j+1})$ or $M'(Q_{j+1})$.

\vv
\noi{\bf Case 2:} If $Q_{j},Q_{j+1}\in \wts$ but  $(B(x_{1}^{j},c\ell(Q_{j+1})), B(y_{j},c\ell(Q_{j+1})))$  are not separated for $Q_{j+1}$, this implies that these balls are contained in the same $U_{i}^{}(Q_{j+1})$ set. Without loss of generality, assume this set is $U_{1}^{}(Q_{j+1})$. 

But then by the compatibility condition for cubes in $\wts$,
\[
B(x_{1}^{j},c\ell(Q_{j+1})) \cup B(y_{j},c\ell(Q_{j+1})) 
\subset U_{1}^{}(Q_{j+1})
\subset U_{1}'(Q_{j})\]
and since $U_{1}^{}(Q_{j+1})$ is connected and $ B(y_{j},c\ell(Q_{j+1})) \subset U_{2}^{}(Q_{j})\subset U_{2}'(Q_{j})$, this implies $U_{1}'(Q_{j})\cap U_{2}'(Q_{j})\neq\varnothing$, which contradicts $Q_{j}\in \wts$. 

\vv
\noi{\bf Case 3:} Suppose $Q_{j}\in \BATPP_{\ve}$ and $Q_{j+1}\in \wts$.
%

By hypothesis, $x_{1}^{j}$ and $y_{j}$ are in different components of $Q_{j}$, say they are to either side of the plane $P_{1}(Q_{j})$. If $x_{1}^{j}$ and $y_{j}$ are in the same component of $Q_{j+1}$, say $U_{1}(Q_{j+1})$, then there is a curve $\Gamma$ connecting $x_{1}^{j}$ and $y_{j}$ inside $U_{1}(Q_{j+1})\subset A_{0} B_{Q_{j+1}}\subset A_{0}B_{Q_{j}}$. Then there must be $z\in \Gamma\cap P_{1}(Q_{j})\cap A_{0}B_{Q_{j}}$, but then by property (4) of the $\wts$ condition,
\[
\alpha \ell(Q_{j+1})
\lec \dist(\Gamma, E)
\leq \dist(z,E) 
\lec \ve \,\ell(Q_{j})
\lec \ve \,a^{-1}\ell(Q_{j+1}),\]
which is a contradiction for $\ve\ll a \,\alpha$.

%
%
%
%
\end{proof}

\vv
In the next lemma we will consider the family of cubes associated with the measure $\tilde{\mu} := \HH^n|_{\wt E}$. We denote this family 
by $\DD_{\tilde{\mu}}$.

\begin{lemma}\label{lem8.4}
Let $\theta\in \bS^{n}$ and $\cB_{\theta}$ be the set of cubes from $\DD_{\tilde{\mu}}$ for which there are balls $B_{i}=B_{i}(Q)$, $i=1,2,3$, mutually disjoint such that
\begin{enumerate}
\item $2B_{i}\subset \wt{E}^{c}$,
\item $B_{2}$ is separated from $B_{1}$ and { $B_{3}$},
\item $r(B_{1})=r(B_{2})=r(B_{3})=\upsilon_{2}\ell(Q)$, and
\item the centers of the balls $B_{i}$ are on a line parallel with $\theta$ with $B_{2}$ between $B_{1}$ and $B_{3}$.
\end{enumerate}
Then for all $R\subset \wt{E}$,
\[
\sum_{Q\subset R \atop Q\in \cB_{\theta}} \tilde{\mu}(Q) \lec_{\upsilon_{2}} \tilde{\mu}(R).\]
\end{lemma}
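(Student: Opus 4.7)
The plan is to prove this via a projection-and-multiplicity argument onto $\theta^\perp$, exploiting the preceding separated-pair lemma twice per cube.

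First I would set up the projection. Let $\pi = \pi_\theta \colon \R^{n+1} \to \theta^\perp$ be the orthogonal projection. For each $Q\in \cB_\theta\cap \DD_{\tilde\mu}(R)$, the pairs $(B_1(Q),B_2(Q))$ and $(B_2(Q),B_3(Q))$ are both separated for $Q$ in the sense defined before Lemma 8.3, and the projections $\pi_{\cC}$ associated to either pair coincide with $\pi$ (up to an isometry of $\theta^\perp$), because the centers lie on a common line parallel to $\theta$. Applying the previous lemma to each of these pairs produces large subsets of $\pi(B_2(Q))$ on whose preimages the line $\pi^{-1}(x)$ meets $\wt E$; intersecting them gives
\[
F_Q \subset \pi(B_2(Q)),\qquad |F_Q|\geq \tfrac12 |\pi(B_2(Q))| \gtrsim \upsilon_2^n\,\ell(Q)^n,
\]
such that for every $x\in F_Q$ the line $\pi^{-1}(x)$ meets $\wt E$ in at least two points inside $20B_Q$, one between $B_1(Q)$ and $B_2(Q)$ and one between $B_2(Q)$ and $B_3(Q)$. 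Since $2B_2(Q)\subset \wt E^c$, these two points flank a gap in $\wt E\cap\pi^{-1}(x)$ of length at least $2\upsilon_2\ell(Q)$ sitting inside $B_2(Q)$.

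Next I would invoke a coarea-type estimate: since $\wt E$ is $n$-AD-regular, a standard Besicovitch covering of $\wt E\cap 100B_R$ by balls at the local scale of $\wt E$, combined with the fact that each such ball projects under $\pi$ to a disc of comparable $n$-measure with bounded overlap, yields
\[
\int_{\theta^\perp} N_R(x)\,dx \lesssim \HH^n(\wt E\cap 100 B_R)\lesssim \tilde\mu(R),\quad N_R(x):=\#\bigl(\pi^{-1}(x)\cap\wt E\cap 100 B_R\bigr).
\]
The heart of the argument is the pointwise bound, for $\HH^n$-a.e.\ $x\in\theta^\perp$,
\[
\#\{Q\in\cB_\theta\cap \DD_{\tilde\mu}(R):x\in F_Q\}\lesssim_{\upsilon_2} N_R(x).
\]
To see this, note that each such $Q$ produces a gap interval $I_Q(x)\subset \pi^{-1}(x)$ of length $\geq 2\upsilon_2\ell(Q)$ whose endpoints lie in $\wt E\cap \pi^{-1}(x)\cap 100B_R$. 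Cubes at a common dyadic scale $2^{-k}$ contribute $O(1)$ intervals $I_Q(x)$ at every gap of $\wt E\cap \pi^{-1}(x)$ (because cubes at that scale have centers separated by $\gtrsim 2^{-k}$ and $10B_Q$ meets $\pi^{-1}(x)$ only for boundedly many cubes at a fixed location), while at scales finer than a given gap's width they cannot fit in it. Hence, enumerating the cubes by the pair of consecutive $\wt E$-points on $\pi^{-1}(x)$ enclosing $I_Q(x)$, each such pair carries only boundedly many $Q$'s, and the total count is $\lesssim_{\upsilon_2}$ the number of $\wt E$-points on the line.

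Combining these pieces by Fubini gives
\[
\sum_{Q\in\cB_\theta\cap\DD_{\tilde\mu}(R)}\tilde\mu(Q)\;\lesssim\;\upsilon_2^{-n}\sum_Q|F_Q|
=\upsilon_2^{-n}\!\int_{\theta^\perp}\!\sum_Q \chi_{F_Q}(x)\,dx\;\lesssim_{\upsilon_2}\!\int_{\theta^\perp}\! N_R(x)\,dx\;\lesssim\;\tilde\mu(R),
\]
which is the desired Carleson packing.

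The main obstacle is the pointwise counting estimate in the third displayed inequality: bounding the number of cubes with $x\in F_Q$ by the line multiplicity $N_R(x)$ requires a careful nested-gap analysis (since gap intervals at finer scales may nest inside coarser ones) combined with the rigidity of the dyadic structure to prevent an unwanted logarithmic loss in the number of scales. A secondary technical point is that $\wt E$ is not assumed rectifiable, so the bound $\int N_R\lesssim \HH^n(\wt E\cap 100 B_R)$ must be obtained by a direct covering argument from AD-regularity rather than the standard area formula.
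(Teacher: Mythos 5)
Your argument is sound and lands the same packing bound as the paper, but with different bookkeeping in the second half. The first half is identical to the paper's: it also applies the preceding separated-pair lemma to both pairs $(B_{1},B_{2})$ and $(B_{2},B_{3})$ and intersects the two good sets inside $\pi(B_{2}(Q))$. The divergence is what one does next. The paper stays on $\wt{E}$: it defines $G_{Q}\subset \co(B_{1}(Q)\cup B_{2}(Q))\cap\wt{E}$ as the set of upper endpoints of exactly the gaps you construct, gets $\tilde{\mu}(G_{Q})\geq |\pi(G_{Q})|\geq \tfrac14|\pi(B_{2}(Q))|\approx\tilde{\mu}(Q)$ because an orthogonal projection does not increase $\HH^{n}$, and then shows the sets $G_{Q}$ have bounded overlap (if $x\in G_{Q}\cap G_{Q'}$ with $\ell(Q')$ much smaller than $\ell(Q)$, the short $\wt{E}$-free segment issued from $x$ for $Q'$ would end at a point of $\wt{E}$ inside the long $\wt{E}$-free segment issued from $x$ for $Q$, a contradiction); summing $\tilde{\mu}(Q)\lec\tilde{\mu}(G_{Q})$ finishes without ever integrating a multiplicity function. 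Your version works downstairs in $\theta^{\perp}$ via Fubini and therefore needs the extra input $\int_{\theta^{\perp}}N_{R}\,d\HH^{n}\lec\HH^{n}(\wt{E}\cap CB_{R})$. That input is correct and not tied to rectifiability or even AD-regularity: it is the coarea (Eilenberg) inequality for the $1$-Lipschitz map $\pi$ applied to a set of finite $\HH^{n}$-measure, so your ``secondary technical point'' is harmless; it is simply an ingredient the paper's arrangement avoids altogether.

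Concerning the step you flag as the main obstacle: it is in fact not delicate, but the reason you give (``at scales finer than a given gap's width they cannot fit in it'') is not the right one, since a short chord sits perfectly well inside a long gap. What rules this out is a two-sided bound on the gap you charge to $Q$: by the separated-pair lemma the line $\pi^{-1}(x)$, $x\in F_{Q}$, meets $\wt{E}$ both inside $\co(B_{1}(Q)\cup B_{2}(Q))$ and inside $\co(B_{2}(Q)\cup B_{3}(Q))$, so the maximal gap $I_{Q}(x)$ containing the chord of $2B_{2}(Q)$ has both endpoints in $\co(B_{1}(Q)\cup B_{3}(Q))\subset 10B_{Q}$, whence $2\upsilon_{2}\ell(Q)\leq |I_{Q}(x)|\lec\ell(Q)$. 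Consequently a given maximal gap of $\pi^{-1}(x)\setminus\wt{E}$ determines $\ell(Q)$ up to a factor $\approx\upsilon_{2}^{-1}$ (boundedly many dyadic generations) and forces the center of $Q$ to lie within $C(\upsilon_{2})\ell(Q)$ of that gap, so it is charged by at most $O_{\upsilon_{2}}(1)$ cubes; since each point of $\wt{E}\cap\pi^{-1}(x)\cap CB_{R}$ bounds at most two maximal gaps, the pointwise estimate $\#\{Q: x\in F_{Q}\}\lec_{\upsilon_{2}}N_{R}(x)$ follows with no nesting issue and no logarithmic loss. With that repair your proof is complete, and it is essentially the paper's argument transplanted from $\wt{E}$ to the lines $\pi^{-1}(x)$.
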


\begin{proof}
Without loss of generality, we can assume $\theta=e_{n+1}$. For each $Q\in \cB_{\theta}$, the center of $B_{1}(Q)$ lies above $B_{2}(Q)$ whose center lies above $B_{3}(Q)$. For $Q\in \cB_{\theta}$, let 
\[
G_{Q}=\{x\in \co(B_{1}(Q),B_{2}(Q))\cap \wt{E}: \exists y\in \co(B_{2}(Q),B_{3}(Q))\cap \wt{E} \mbox{ s.t. } (x,y)\subset \wt{E}^{c}\}.\] 
Observe that $x$ is above $y$.\\

\noi{\bf Claim:} For $Q\in \cB_{\theta}$
\begin{equation}\label{mugq}
\tilde{\mu} (G_{Q}) \gec \tilde{\mu} (Q).
\end{equation}

Indeed, writing $B_{i}=B_{i}(Q)$ for the moment, we just have to note that
\[
 \pi_{\R^{n}}(B_{2}) \supset \pi_{\R^{n}} (G_{Q}) 
\supset F_{(B_{1},B_{2})}\cap F_{(B_{2},B_{3})}\]
and by \eqref{3/4},
\[
|F_{(B_{i},B_{2})}|\geq \frac{3}{4} |\pi_{\R^{n}}(B_{2})|.\]
Thus, by the pigeonhole principle, 
\[
\tilde{\mu} (G_{Q})\geq | \pi_{\R^{n}} (G_{Q})|\geq   |F_{(B_{1},B_{2})}\cap F_{(B_{2},B_{3})}|\geq \frac{1}{4} |\pi_{\R^{n}}(B_{2})| \approx \ell(Q)^{n} \approx \tilde{\mu} (Q)\]
which proves \eqref{mugq}.\\

\noi{\bf Claim:} The sets $G_{Q}$ have bounded overlap, i.e.
\begin{equation}\label{overlap}
\sum_{Q\in \cB_{\theta}} \chi_{G_{Q}} \lec 1.
\end{equation}
Clearly this implies the lemma. Suppose $Q,R\in \cB_{\theta}$ are such that $\ell(Q)<\delta \ell(R)$ for some $\delta>0$ to be picked shortly and $G_{Q}\cap G_{R}\neq\varnothing$. Let $x\in G_{Q}\cap G_{R}$. Then there is $y\in \wt{E}\cap \co (B_{1}(Q)\cup B_{3}(Q))$ so that $(x,y)\subset \wt{E}^{c}$. For that same $x$, there is $z \in \wt{E}\cap \co (B_{1}(R)\cup B_{3}(R))$ for which $(x,z)\subset \wt{E}^{c}$. In each case, $y$ and $z$ are below $x$, but $y\neq z$ since, because $2B_{2}(Q)\subset \wt{E}^{c}$ and $(x,y)$ passes through $B_{2}(Q)$,
\[
|x-y| \geq r(B_{2}(Q)) = \upsilon_{2} \ell(Q) \]
whereas  for $\delta>0$ small enough
\begin{align*}
|x-z|
& \leq \diam \co (B_{1}(R)\cup B_{3}(R))\leq \diam 10B_{R} 
=20 c_{10} \ell(R) 
<20c_{10} \delta \ell(Q) \\
& < \upsilon_{2}\ell(Q) \leq |x-y|.
\end{align*}
And thus $z\in (x,y)\cap \wt{E}$, which is a contradiction. Thus, if $Q,R\in \cB$ and $\ell(R)<\delta\ell(Q)$, $G_{Q}\cap G_{R}=\varnothing$, and this implies \eqref{overlap}.

\end{proof}

For $\kappa>0$, let $\ls_{\kappa}$ denote the set of cubes $Q\in \DD_{\tilde{\mu}}$ such that for all $x,y\in 2B_{Q}\cap \wt E$,  $$\dist(2x-y,\wt E)\leq \kappa \ell(Q).$$ 
We say that $\wt E$ satisfies the {\it local symmetry condition} if, for all $R\in \DD_{\tilde{\mu}}$,
\[
\sum_{ Q\subset R \atop Q\not\in \ls_{\kappa}} \tilde{\mu} (Q)\lec \tilde{\mu} (R).\]
In \cite{DS1}, it is shown that the local symmetry condition, with $\kappa>0$ small enough, is equivalent
to the uniform rectifiability of $\wt E$. We will use this criterion to prove Proposition \ref{wtf}.

\begin{lemma}
For $\upsilon_{2}>0$ small enough in the statement of Lemma \ref{lem8.4},  if $Q\in \DD_{\wt{E}}$ and $Q\not\in \ls_{\kappa}$, then $Q\in \cB_{\theta}$ for some $\theta\in \bS^{n}$. 
\end{lemma}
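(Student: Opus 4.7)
The hypothesis $Q \notin \ls_\kappa$ supplies points $x, y \in 2B_Q \cap \wt E$ with $z := 2x - y$ satisfying $\dist(z, \wt E) > \kappa \ell(Q)$, and hence $d := |x - y| = |z - x| > \kappa \ell(Q)$. The three points $y, x, z$ are collinear with $x$ the midpoint; set $\theta := (z - y)/|z - y| = (x-y)/d$, so that $y, x, z$ sit at parameters $0, d, 2d$ along the line $L_0 := y + \R\theta$. The plan is to realize the configuration demanded by $\cB_{\theta}$ by exploiting the ``two barriers, one hole'' structure on $L_0$: the set $\wt E$ passes through $y$ and $x$ while avoiding a ball of radius $> \kappa \ell(Q)$ around $z$, naturally partitioning $L_0 \cap 10B_Q$ into three regions (past $y$, between $y$ and $x$, past $x$), and we will place one ball in each.

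Since $y, x \in \wt E$ we cannot use $L_0$ itself but instead a perpendicular translate $L := L_0 + v$ with $v \in \theta^\perp$, $|v| \approx \upsilon_2 \ell(Q)/\upsilon_1$. Choose $v$ by a covering/Fubini argument on $\theta^\perp$ so that $L$ avoids a $2\upsilon_2 \ell(Q)$-tube around $\wt E$ at the parameters corresponding to $y$ and $x$; the $n$-AD regularity of $\wt E$ and the freedom in the choice of $\theta \in \bS^n$ (the reflection structure gives $z$ far from $\wt E$, forcing $\wt E$ to be ``transverse enough'' to $\theta$ near $y$ and $x$) ensures such $v$ exists when $\upsilon_2 \ll \upsilon_1 \ll \kappa$. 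With such $v$ in hand, set
\[
B_1 := B\bigl(L(-C\upsilon_2\ell(Q)),\, \upsilon_2\ell(Q)\bigr),\quad B_2 := B\bigl(L(d/2),\, \upsilon_2\ell(Q)\bigr),\quad B_3 := B(z+v,\, \upsilon_2\ell(Q))
\]
for a suitable $C>1$. The ball $B_3$ lies in the large hole around $z$, giving $2B_3 \subset \wt{E}^c$; the choice of $v$ gives $2B_1,2B_2 \subset \wt{E}^c$ as well, and by construction the three centers lie on the line $L$ parallel to $\theta$ with $B_2$ between $B_1$ and $B_3$.

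For the separation of $B_2$ from $B_1$, locate a cube $R_{12} \in \DD_\mu$ with $y \in 2B_{R_{12}}$ and $\ell(R_{12}) \approx \upsilon_2 \ell(Q)/\upsilon_1$, so that $B_1, B_2 \subset 10B_{R_{12}}$ and $\wt{E}$ near $y$ acts as a wall between them. By \eqref{Bsum} we may arrange $R_{12} \in \BATPP_\ve \cup \wts \cup \cB$; in the first case the parallel planes $P_1(R_{12}) \cup P_2(R_{12})$ lie within $\ve\ell(R_{12})$ of $y$ and assign $B_1,B_2$ to different components among $H_1'(R_{12}), H_2'(R_{12}), M'(R_{12})$; in the second case property (1) of $\wts$ places them in $U_1(R_{12})$ and $U_2(R_{12})$; in the third, the grid $C_{R_{12}}$ separates them. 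The separation of $B_2$ from $B_3$ is analogous, using a cube $R_{23}$ near $x$. The main obstacle is the perpendicular-shift step: because $\theta^\perp$ and $\wt E$ have the same dimension $n$, a generic displacement need not avoid $\wt E$, so one must leverage the specific reflection data $(y,x,z)$ to choose $\theta$ with enough transversality at both barriers before selecting $v$. A secondary obstacle is the case analysis for the separating cubes $R_{12}, R_{23}$, which must be carried out uniformly across the three possibilities $\BATPP_\ve, \wts, \cB$, with the constants $\upsilon_2 \ll \upsilon_1 \ll \kappa$ coordinated to ensure each case delivers bona fide separation.
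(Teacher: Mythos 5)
There is a genuine gap, and it is the one you flag yourself: the perpendicular-shift step does not work, and the rest of the argument leans on it. Since $\wt E$ has codimension $1$ in $\R^{n+1}$, if $\wt E$ is locally a hyperplane through $y$ (or $x$) transverse to your fixed direction $\theta=(x-y)/|x-y|$, then \emph{every} translate $L_0+v$, $v\in\theta^\perp$, with $|v|\lesssim \upsilon_2\ell(Q)/\upsilon_1$ crosses that hyperplane near the corresponding parameter, hence meets the $2\upsilon_2\ell(Q)$-tube around $\wt E$; no Fubini or covering argument on $\theta^\perp$ can produce the claimed $v$, and the ``transversality'' you hope to extract from the reflection data is not there (the hypothesis only controls $\dist(2x-y,\wt E)$, not the geometry of $\wt E$ at $y$ and $x$). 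Moreover the insistence on this particular $\theta$ is what creates the problem: the lemma only asks for \emph{some} $\theta\in\bS^n$, and this freedom is exactly what a correct proof uses. A second, independent gap is the separation step: for your shifted configuration the segment joining $B_1$ and $B_2$ avoids a neighborhood of $\wt E$ by construction, which works against their lying in different components, and your case analysis only asserts separation. In particular, in the $\wts$ case property (1) of Definition \ref{WTS} puts points far from $E$ in $U_1\cup U_2$; it does not force your two balls into \emph{different} $U_i$'s, and that is precisely the nontrivial point. (You also never address that $x,y\in\wt E$ may lie on one of the added grids $C_Q$ far from $E$, which requires a separate case.)

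The paper's proof avoids both issues by a different construction. It keeps the large balls centered at the given points: $B_1=B(y,\tfrac{\kappa}{100}\ell(Q))$, $B_2=B(x,\tfrac{\kappa}{100}\ell(Q))$, $B_3=B(2x-y,\kappa\ell(Q)/2)$, the last lying deep in $\wt E^c$. It first splits according to whether $\dist(x,E)\geq\tfrac{\kappa}{200}\ell(Q)$ (then $x$ sits on some grid $C_R$, $R\in\cB$, and the components of $(C_R)^c$ do the separating) or not; in the latter case it takes a single cube $R\in\DD_\mu$ with $\ell(R)=\ell(Q)$ near $x'\in E\cap B_2$ and uses the trichotomy $R\in\BATPP_\ve$, $R\in\wts$, $R\in\cB$ to choose small balls $\wt B_2\subset B_2\setminus\wt E$ and $\wt B_1\subset B_1\setminus\wt E$ in \emph{different components of $R$} (using the two planes, the corkscrew property (3) of $\wts$ for both $U_1(R)$ and $U_2(R)$, or the grid $C_R$, respectively). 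Collinearity is then obtained for free by the reflection trick: $\wt B_3:=B(2x_{\wt B_2}-x_{\wt B_1},\upsilon_2\ell(Q))$, which for $\upsilon_2<\kappa/3$ is contained in $B_3$, hence in the same component as $B_3$ and separated from $\wt B_2$. The direction $\theta$ of the final line is whatever the chosen centers dictate, not $(x-y)/|x-y|$. If you want to salvage your write-up, the fix is to abandon the fixed-$\theta$ shift and adopt this ``choose sub-balls on opposite sides at scale $\ell(Q)$, then reflect'' scheme.
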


\begin{proof}
Let $Q\not\in \ls_{\kappa}$ and let $x,y\in 2B_{Q}\cap \wt E$ so that  $\dist(2x-y,\wt E)\geq \kappa \ell(Q)$.  Let $B_{2}=B(x,\frac{\kappa}{100}\ell(Q))$, $B_{1}= B(y,\frac{\kappa}{100}\ell(Q))$ and $B_{3}= B(2x-y,\kappa\ell(Q)/2)$. For $\tau,\ve\ll \kappa$ small enough, we know that $B_{3}$ is contained in some component of $Q$.\\

\noi{\bf Claim:} For $\upsilon_{2}>0$ small enough, we can find a ball $\wt{B}_{i}\subset B_{i}\backslash \wt{E}$ colinear and of radius $\upsilon_{2}\ell(Q)>0$  so that $B_{1}$ is separated from $B_{2}$ and $B_{2}$ is separated from $B_{3}$ for $\upsilon_{1}< \upsilon_{2}$.

If $\dist(x,E)\geq \frac{\kappa}{200} \ell(Q)$, then $x\in C_{R}$ for some $R\subset E$, $R\in \cB$. Moreover, $\ell(R)\gec \kappa\ell(Q)$. For $\upsilon_{1},b\ll \kappa $, we can ensure that we can find balls $\wt{B}_{i}$ satisfying the conclusions of the lemma. 

If $\dist(x,E)<\frac{\kappa}{200}\ell(Q)$, let $x'\in E$ be so that 
\[
B(x',\frac{\kappa}{200}\ell(Q))\subset B_{2}.\]

Let $R\in\DD_\mu$ have $\ell(R)=\ell(Q)$. 

\begin{enumerate}
\item If $R\in \BATPP_{\ve}$, note that for $\tau,\ve\ll \kappa$, $B_{3}$ is contained in a component of $R$.  For $\upsilon_{2}>0$ small enough, we can pick $\wt{B}_{2}\subset B_{2}\backslash \wt{E}$ of radius $\upsilon_{2}\ell(Q)$  in a different component of $R$ than $B_{3}$ and then pick $\wt{B}_{1}\subset B_{1}\backslash \wt{E}$ in a different component of $R$ to $B_{2}$ of the same radius. Let $\wt{B}_{3}=B(2x_{\wt{B}_{2}}-x_{\wt B_{1}},\upsilon_{2}\ell(Q))$. Then for $\upsilon_{2}< \kappa/3$,
\[
\wt{B}_{3}
\subset B(2x-y,4\upsilon_{2}\ell(Q))\subset B_{3}\]
and so  $\wt{B}_{3}$ is in a different component from $\wt{B}_{2}$ as well. Hence, for $\upsilon_{1}<\upsilon_{2}$, $(B_{1},B_{2})$ and $(B_{2},B_{3})$ are separated pairs for $R$. 
\item If $R\in { \wts}$, again, $B(2x-y,\kappa\ell(Q)/2)$ is contained in a component of $R$, say it is in $U_{1}(R)$. Since $U_{1}(R)$ and $U_{2}(R)$ have corkscrews by Definition \ref{WTS} (3), for $i=1,2$ we can find balls $\wt{B}_{i}\subset { (B_{i}\backslash \wt{E})}\cap U_{i}(R)$ of radius $\frac{p \kappa}{100}\ell(Q)=\frac{p \kappa}{100}\ell(R)$ and if we define $\wt{B}_{3}$ as before, we will still have $\wt{B}_{3}\subset B_{3}\subset U_{1}(R)$. For $\upsilon_{1}<\frac{\rho\kappa}{100}$, we again have $(B_{1},B_{2})$ and $(B_{2},B_{3})$ are separated pairs for $R$. 
\item If $R\in \cB$, then we can pick a dyadic cube of side length $b\ell(R)$ containing $x'$ whose boundary is in $C_{R}$. Again, it is not hard to find colinear corkscrew balls in $B_{i}$ that are all in different components of $(C_{R})^{c}$ and separated for $\upsilon_{1}$ small enough.
\end{enumerate}

\end{proof}

%
%

Now let $\cB_{\theta}' $ be the set of cubes in $\DD_{\tilde{\mu}}$ for which there are balls $B_{1}(Q),B_{2}(Q),B_{3}(Q)$ mutually disjoint such that
\begin{enumerate}
\item $2B_{i}(Q)\subset \wt{E}^{c}$,
\item $B_{2}$ is separated from $B_{1}$ and $B_{2}$,
\item $r_{B_{1}}=r_{B_{2}}=r_{B_{3}}=\frac{\upsilon_{2}}{2}\ell(Q)$
\item the centers of the $B_{i}$ are on a line parallel with $\theta$ with $B_{2}$ between $B_{1}$ and $B_{3}$.
\end{enumerate}
Let $\{\theta_{i}\}_{i=1}^{N(\delta)}$ be a maximal $\delta$-separated set in $\bS^{n}$. If $\delta$ is small enough, then for every $\theta\in \bS^{n}$, there is $i$ so that  $\cB_{\theta}\subset \cB_{\theta_{i}}'$. Hence, for $R\subset \wt{E}$,
\begin{align*}
\sum_{Q\subset R \atop Q\not\in \ls_{\kappa}} \tilde{\mu} (Q)
& \leq \sum_{i=1}^{N(\delta)} \sum_{Q\subset R\atop Q\in \cB_{\theta_{i}}'} \tilde{\mu} (Q)\lec \tilde{\mu} (R).
\end{align*}

Thus, $\wt{E}$ satisfies the local symmetry with constant $\kappa$ condition, which, by the results in \cite{DS1}, implies that $\wt{E}$ is uniformly rectifiable. 
\vv




\appendix
\section{} \label{secappendix}

\subsection{Proof of Theorem \ref{teoACF-elliptic}}
Without loss of generality, we assume $x=0$, and we denote $B_r=B(0,r)$. and $J(r) = J(0,r)$.
For $i=1,2$, we also set
$$J_i(r) = \frac{1}{r^{2}} \int_{B_r} \frac{|\grad u_i(y)|^{2}}{|y|^{n-1}}\,dy.$$
From Lemma \ref{lemcc**} and the fact that $u_i\in W^{1,2}(B(0,R))$ 
 it follows that $J_i(r)$ (and thus $J(r)$) is 
absolutely continuous. Further, for a.e. $0<r\leq R$ we have
$$J_i'(r) = \frac{1}{r^{2}} \int_{\partial B_r} \frac{|\grad u_i(y)|^{2}}{|y|^{n-1}}d\sigma(y)
- \frac2r\,J_i(r)
= \frac{1}{r^{n+1}} \int_{\partial B_r} |\grad u_i(y)|^{2}\,d\sigma(y)
- \frac2r\,J_i(r)
.$$
Therefore,
\begin{align}\label{eqalkg22}
\frac{J'(r)}{J(r)} &
= (\log J(r))' = (\log J_{1}(r)+\log J_{2}(r))' 
 = \frac{J_1'(r)}{J_1(r)} + \frac{J_2'(r)}{J_2(r)}\\
& = \frac{\ds  \frac{1}{r^{n+1}}\int_{\partial B_r} |\grad u_1(y)|^{2}\,d\sigma(y)}{J_1(r)} + 
\frac{\ds  \frac{1}{r^{n+1}}\int_{\partial B_r} |\grad u_2(y)|^{2}\,d\sigma(y)}{J_2(r)} - \frac4r.\nonumber
\end{align}

We write
\begin{align*}
\Delta(u_i^2) &= 2 \,|\nabla u_i|^2 + 2\,u_i\Delta u_i\\
& = 2\,|\nabla u_i|^2 - 2\,u_i\,\wt L u_i + 2\,u_i \,\vec b \cdot \nabla u_i + 2\, d \,u_i^2 - 2\,u_i\, \divv (\vec e \,u_i) + 2\,u_i\,\divv D\,\nabla u_i,
\end{align*}
where $D=Id-A$ and the above identities should be understood in the sense of distributions. Since $\wt L  u_i\leq0$, we derive
$$|\nabla u_i|^2 \leq \frac12 \Delta(u_i^2) + \,u_i \,(-\vec b \cdot \nabla u_i - d \,u_i^2 + \divv (\vec e \,u_i) -  u_i\,\divv D\,\nabla u_i).$$
Therefore, if $u_i$ is sufficiently smooth, we can write
\begin{align*}
r^2\,J_i(r) &= \int_{B_r} \frac{|\grad u_i(y)|^{2}}{|y|^{n-1}}\,dy\\
& \leq
 \frac12\int_{B_r} \frac{\Delta(u_i^2)(y)}{|y|^{n-1}}\,dy +
 \int_{B_r} \frac{u_i\,(-\vec b \cdot \nabla u_i - d\, u_i + \divv (\vec e u_i) - \divv D\,\nabla u_i)}{|y|^{n-1}}\,dy.
\end{align*}
By Green's theorem we have
\begin{align*}
\frac12 \int_{B_r} \frac{\Delta(u_i^2)(y)}{|y|^{n-1}}dy & = \frac12 \int_{\partial B_r} \partial_r(u_i^2) \,\frac1{|y|^{n-1}}\,
d\sigma - \frac12\int_{\partial B_r} \partial_r\frac1{|y|^{n-1}} \,u_i^2\,d\sigma\\
& = \frac1{r^{n-1}} \int_{\partial B_r} \Bigl(u_i\,\partial_r u_i + \frac{n-1}{2\,r}\,u_i^2\Bigr)\,d\sigma,
\end{align*}
and thus
\begin{align*}
r^2\,J_i(r)\leq \frac1{r^{n-1}} \int_{\partial B_r} &u_i\,\partial_r u_i\,d\sigma + \frac{n-1}{2\,r^n} \int_{\partial B_r} u_i^2\,d\sigma\\
&+ \int_{B_r} \frac{ u_i\,(-\vec b \cdot \nabla u_i - d\, u_i + \divv (\vec e u_i) - \divv D\,\nabla u_i)}{|y|^{n-1}}\,dy.
\end{align*}

If $u_i$ is not smooth enough, then $\Delta (u^2)$ is defined in the sense of distributions and we cannot argue as above.
In this case, for each $\delta>0$ we consider a radial $C^\infty$ function $\vphi_\delta$ such that $\chi_{A(0,\delta,r)} \leq \vphi_\delta
\leq \chi_{A(0,\delta/2,r+\delta)}$ and we write
\begin{align*}
r^2\,J_i(r) & =\lim_{\delta\to0+}\int \frac{|\grad u_i(y)|^{2}}{|y|^{n-1}}\,\vphi_\delta(y)\,dy \\
& \leq
 \limsup_{\delta\to0+}\frac12\int \frac{\Delta(u_i^2)}{|y|^{n-1}}\,\vphi_\delta\,dy + \limsup_{\delta\to0+}
 \int\frac{u_i\,(-\vec b \cdot \nabla u_i - d\, u_i + \divv (\vec e u_i) - \divv D\,\nabla u_i)}{|y|^{n-1}}\,\vphi_\delta\,dy.
\end{align*}
One can check that
\begin{equation}\label{eqclaimlimit}
\lim_{\delta\to 0+}\frac12\int \frac{\Delta(u_i^2)(y)}{|y|^{n-1}}\,\vphi_\delta(y)\,dy  = \frac1{r^{n-1}} \int_{\partial B_r}
\Bigl(u_i\,\partial_r u_i + \frac{n-1}{2\,r}\,u_i^2\Bigr)\,d\sigma
\end{equation}
for a.e. $r$. We defer the arguments for this fact to the end of this proof. Then we get
\begin{align*}
r^2\,J_i(r)\leq \frac1{r^{n-1}} \int_{\partial B_r}& \Bigl(u_i\,\partial_r u_i + \frac{n-1}{2\,r}\,u_i^2\Bigr)\,d\sigma \\
&+ \limsup_{\delta\to 0 +}
 \int \frac{u_i\,(-\vec b \cdot \nabla u_i - d\, u_i + \divv (\vec e u_i) - \divv D\,\nabla u_i)}{|y|^{n-1}}\,\vphi_\delta\,dy.
 \end{align*}

For $\gamma_i>0$ to be chosen below, we take now into account that
$$\int_{\partial B_r} u_i\,\partial_r u_i \,d\sigma\leq \frac12 \biggl(\frac{\gamma_i} r
\int_{\partial B_r} u_i^2 \,d\sigma + \frac r{\gamma_i} \int_{\partial B_r} (\partial_r u_i)^2 \,d\sigma\biggr),$$
and then, denoting 
$$\Sigma_{r,i} = \partial B_r\cap \{u_i >0\}$$
and
\begin{equation}\label{eqir1}
I_i(r) =  \limsup_{\delta\to 0 +}
 \int \frac{u_i\,(-\vec b \cdot \nabla u_i - d\, u_i + \divv (\vec e u_i) - \divv D\,\nabla u_i)}{|y|^{n-1}}\,\vphi_\delta\,dy,
\end{equation}
we deduce that
$$r^2\,J_i(r)\leq \frac1{2\,r^{n-2}} \int_{\Sigma_{r,i}} \Bigl(\frac 1{\gamma_i} (\partial_r u_i)^2 + \frac{\gamma_i + n-1}{r^2}\,u_i^2\Bigr)\,d\sigma + I_i(r).$$
Note now that
\begin{equation}\label{eqlambdai}
r^2\int_{\Sigma_{r,i}} |\nabla_{\partial B_r} u_i|^2 \,d\sigma\geq \lambda_i \int_{\Sigma_{r,i}} |u_i|^2\,d\sigma,
\end{equation}
where $\nabla_{\partial B_r}$ denotes the tangential gradient on the sphere $\partial B_r$ and $\lambda_i$ is the principal eigenvalue of the spherical Laplacian on the set 
$$\Sigma_{i}^{(r)} := r^{-1}\,\Sigma_{r,i}\subset \partial B_1.$$
Then we infer that
$$r^2\,J_i(r)\leq \frac1{2\,r^{n-2}} \int_{\partial B_r} \Bigl(\frac 1{\gamma_i} (\partial_r u_i)^2 + \frac{\gamma_i + n-1}{\lambda_i}\,|\nabla_{\partial B_r} u_i|^2\Bigr)\,d\sigma + I_i(r).$$
We choose $\gamma_i\geq0$ so that
\begin{equation}\label{eqgamma0}
\lambda_i = \gamma_i(\gamma_i + n-1),
\end{equation}
that is, $\gamma_i$ is the characteristic of $\Sigma_i^{(r)}$. We obtain then
\begin{equation}\label{eqcc482}
J_i(r)\leq \frac1{2\,\gamma_i\,r^{n}} \int_{\partial B_r} \!\bigl( (\partial_r u_i)^2 + |\nabla_{\partial B_r} u_i|^2\bigr)d\sigma + I_i(r) = \frac1{2\,\gamma_i\,r^{n}} \int_{\partial B_r}  |\nabla u_i|^2\,d\sigma + 
\frac{|I_i(r)|}{r^2}.
\end{equation}

Plugging the preceding estimate into \rf{eqalkg22}, we obtain
\begin{align}\label{eqgt5}
\frac{J'(r)}{J(r)} & \geq 
\frac{\ds \frac1r \int_{\Sigma_{1,r}} |\grad u_1|^{2}\,d\sigma}{\ds
\frac1{2\,\gamma_1} \int_{\Sigma_{1,r}}\!  |\nabla u_1|^2\,d\sigma + r^{n-2}\,|I_1(r)|} + 
\frac{\ds \frac1r \int_{\Sigma_{2,r}} |\grad u_2|^{2}\,d\sigma}{\ds
\frac1{2\,\gamma_2} \int_{\Sigma_{2,r}}  \!|\nabla u_2|^2\,d\sigma + r^{n-2}\,|I_2(r)|}
- \frac4r.
\end{align}
We can rewrite this inequality as follows
$$
\frac{J'(r)}{J(r)}  \geq \frac{2\,\gamma_1}r + \frac{2\,\gamma_2}r - \frac4r - E_1(r) - E_2(r),$$
where each error term $E_i(r)$ is defined by
$$
E_i(r)  =   \frac{2\,\gamma_i}r - \frac{\ds \frac1r \int_{\Sigma_{i,r}} |\grad u_i|^{2}\,d\sigma}{\ds
\frac1{2\,\gamma_i} \int_{\Sigma_{i,r}} \! |\nabla u_i|^2\,d\sigma + r^{n-2}\,|I_i(r)|}= 
\frac{2\,\gamma_i\,r^{n-3}\,|I_i(r)|}{\ds \frac1{2\,\gamma_i} \int_{\Sigma_{i,r}}  |\nabla u_i|^2\,d\sigma + r^{n-2}\,|I_i(r)|}.
$$
By the Friedland-Hayman inequality (see \cite[Chapter 12]{CS}), it turns out that
$$\gamma_1 + \gamma_2 \geq 2,$$
and thus
\begin{equation}\label{eqerri0}
\frac{J'(r)}{J(r)}  \geq  - E_1(r) - E_2(r).
\end{equation}
Hence, to prove \rf{eqww2} we have to estimate the error terms $E_i(r)$.

Plugging the estimate \rf{eqcc482} into the definition of $E_i(r)$, we obtain
\begin{equation}\label{eqerri}
E_i(r)  \leq  \frac{2\,\gamma_i\,r^{n-3}\,|I_i(r)|}{r^n\,J_i(r)} = \frac{2\,\gamma_i\,|I_i(r)|}{\ds r\,\int_{B_r} \frac{|\grad u_i(y)|^{2}}{|y|^{n-1}}\,dy}.
\end{equation}
Under the assumption that
\begin{equation}\label{eqfac998}
\frac1{r^n}\int_{\partial B_r} |\grad u_i(y)|^{2}\,d\sigma(y)\leq \frac5{r^2}\int_{B_r} \frac{|\grad u_i(y)|^{2}}{|y|^{n-1}}\,dy
\quad \mbox{ for $i=1$ and $i=2$,}
\end{equation}
we will show below that, for a.e.\ $r$,
\begin{equation}\label{eqfac10}
|I_i(r)| \leq C\,(1+\gamma_i^{-1})\,(1+K_r)\,w(0,r)\int_{B_r} \frac{|\grad u_i(y)|^{2}}{|y|^{n-1}}\,dy \quad \mbox{ for $i=1$ and $i=2$.}
\end{equation}

Let us see
how the estimate \rf{eqww2} follows from the preceding estimate. We distinguish several cases.
In the first one, we suppose that
\begin{equation*}\label{eqfac99}
\frac1{r^n}\int_{\partial B_r} |\grad u_i(y)|^{2}\,d\sigma(y)> \frac5{r^2}\int_{B_r} \frac{|\grad u_i(y)|^{2}}{|y|^{n-1}}\,dy
\end{equation*}
either for $i=1$ or $i=2$. 
In either situation, from \rf{eqalkg22} we infer that
\begin{equation}\label{eqfac3}
\frac{J'(r)}{J(r)}  \geq \frac1r,
\end{equation}
and thus \rf{eqww2} holds.

In the second case we suppose that \rf{eqfac998} holds
 and that $\gamma_j\geq 5$ for some $j$.
Then, from \rf{eqgt5} we derive
$$\frac{J'(r)}{J(r)}  \geq 
\frac{\ds \frac1r \int_{\Sigma_{j,r}} |\grad u_j|^{2}\,d\sigma}{\ds
\frac1{10} \int_{\Sigma_{j,r}}\!  |\nabla u_j|^2\,d\sigma + r^{n-2}\,|I_j(r)|} 
- \frac4r.$$
If $\frac1{10} \int_{\Sigma_{j,r}}  |\nabla u_j|^2\,d\sigma \geq r^{n-2}\,|I_j(r)|$,
 we deduce that
 \begin{equation}\label{eqfac4}
\frac{J'(r)}{J(r)}  \geq \frac5r-\frac4r = \frac1r,
\end{equation}
Otherwise, using also \rf{eqcc482}, we get
\begin{align*}
\frac{J'(r)}{J(r)} & \geq\frac1{
2\, r^{n-1}\,|I_j(r)|} 
 \int_{\Sigma_{j,r}} |\grad u_j(y)|^{2}\,d\sigma(y)- \frac4r  \geq \gamma_j \biggl(\frac{r\,J_j(r)}{|I_j(r)|} - \frac1r\biggr) -\frac4r 
\end{align*} 
By \rf{eqfac10} we have
$$
|I_j(r)| \lesssim (1+\gamma_i^{-1})\,(1+K_r)\,w(0,r) \, r^2\,J_j(r) \lesssim (1+K_r)\,w(0,r)\, r^2\,J_j(r),
$$
and so
$$\frac{J'(r)}{J(r)} \geq \gamma_j \biggl(\frac {c_{11}}{(1+K_r)\,w(0,r)\,r} - \frac1r\biggr) -\frac4r.$$
Hence, if $(1+K_r)\,w(0,r)\leq \frac12c_{11}$, since $\gamma_j\geq5$, this yields
 \begin{equation}\label{eqfac4'}
\frac{J'(r)}{J(r)} \geq 5 \biggl(\frac 2r - \frac1r\biggr) -\frac4r=\frac1r.
\end{equation}
In case that $(1+K_r)\,w(0,r)\geq \frac12c_{11}$, we just use use the trivial estimate $\frac{J'(r)}{J(r)}\geq -\frac4r$
(which follows from \rf{eqalkg22}), and we get
 \begin{equation}\label{eqfac4''}
\frac{J'(r)}{J(r)}\geq -\frac 8{c_{11}r} \,(1+K_r)\,w(0,r).
\end{equation}

In the third case we assume that \rf{eqfac998} holds and that
$\gamma_i\leq 5$ for $i=1$ and $i=2$. Then, from \rf{eqerri0}, \rf{eqerri}, and \rf{eqfac10}
 we obtain
\begin{align}\label{eqfac100}
\frac{J'(r)}{J(r)} & \geq  -  \,\sum_{i=1}^2
 \frac{2\,\gamma_i\,|I_i(r)|}{\ds r\,\int_{B_r} \frac{|\grad u_i(y)|^{2}}{|y|^{n-1}}\,dy}\\
&\geq  -  C\sum_{i=1}^2
 \frac{\gamma_i\,(1+\gamma_i^{-1})\,(1+K_r)\,w(0,r) }{r}\nonumber\\
&\geq  -  C\,\frac{(1+K_r)\,w(0,r)}{r}. \nonumber
\end{align}
Together with \rf{eqfac3}, \rf{eqfac4}, \rf{eqfac4'}, and \rf{eqfac4''}, this yields
\begin{equation}\label{eqfac11}
\frac{J'(r)}{J(r)}  \geq \frac1r\,\min\biggl(1, \,\,-C\,(1+K_r)\,w(0,r)\biggr) = -\frac Cr\,(1+K_r)\,w(0,r),
\end{equation}
which yields \rf{eqww2}.
\vv

\noi {\bf Proof of \rf{eqfac10}.} Recall that $I_i(r)$ is defined by the limit in \rf{eqir1}. For any $\delta>0$ we have
\begin{align}\label{eqir20}
 \int \frac{u_i\,\divv (D\,\nabla u_i)}{|y|^{n-1}}\,\vphi_\delta\,dy & = 
\int \divv\biggl(\frac{u_i\,D\,\nabla u_i}{|y|^{n-1}}\biggr)\,\vphi_\delta\,dy - \int \frac{\nabla u_i\,D\,\nabla u_i}{|y|^{n-1}}\,\vphi_\delta\,dy\\
& \quad
- \int u_i\,\nabla \frac1{|y|^{n-1}}\,D\,\nabla u_i\,\vphi_\delta\,dy \nonumber\\
& =: S_a(\delta) + S_b(\delta) + S_c(\delta).\nonumber
\end{align}
We first estimate the integral $S_a$. Integrating by parts and taking into account that 
\begin{equation}
\label{D<w}
|D(y)|\leq w(0,r+\delta)\lesssim1\mbox{ in }B(0,r+\delta)
\end{equation} 
(because $D=A-Id=A-A(0)$), $|\nabla \vphi_\delta|\lesssim 1/\delta$, 
and that $\supp\nabla\vphi_\delta\subset A(0,\delta/2,\delta)\cup A(0,r,r+\delta)$, we get
\begin{align}\label{eqasj43}
|S_a(\delta)| &=  \left|\int \frac{u_i\,D\,\nabla u_i}{|y|^{n-1}}\nabla\vphi_\delta\,dy \right|\\
& \lesssim \frac{1}{\delta^n}\,\|u_i\|_{\infty,B_{\delta}}\int_{B_\delta} |\nabla u_i|\,dy
+
\frac{w(0,r+\delta)}{r^{n-1}\,\delta}\int_{B_{r+\delta}\setminus B_r} u_i\,|\nabla u_i|\,dy.\nonumber
\end{align}
To estimate the first term on the right hand side we use Caccioppoli's inequality\footnote{Following the standard proof, one can show the validity of Caccioppoli's inequality in this generality, which for radii $r \leq 1$ reads exactly as the one for operators with no lower order terms.}:
\begin{align*}
\frac{1}{\delta^n}\,\|u_i\|_{\infty,B_{\delta}}\int_{B_\delta} |\nabla u_i|\,dy
& \lesssim \delta\,\|u_i\|_{\infty,B_{\delta}}\left(\avint_{B_\delta} |\nabla u_i|^2\,dy\right)^{1/2}\\
&
\lesssim \|u_i\|_{\infty,B_{\delta}}\left(\avint_{B_{2\delta}} |u_i|^2\,dy\right)^{1/2}\leq \|u_i\|_{\infty,B_{2\delta}}^2,
\end{align*}
which tends to $0$ as $\delta\to0$.
Concerning the last term on the right hand side of \rf{eqasj43}, by the Lebesgue differentiation theorem, it is easy to check that
$$\lim_{\delta\to0+}\frac{1}\delta\,\int_{B(0,r+\delta)\setminus B(0,r)} u_i\,|\nabla u_i|\,dy = c_n \int_{\partial B_r} u_i\,|\nabla u_i|\,d\sigma 
\quad\mbox{ for a.e. $r\in (0,R)$.}$$
Hence, for a.e.\ $r$, using \rf{eqlambdai},
the assumption \rf{eqfac998} and the fact that $\lambda_i>\gamma_i^2$ (by \rf{eqgamma0}),
\begin{align*}
\limsup_{\delta\to0+}|S_a(\delta)| & \lesssim 
\frac{w(0,r)}{r^{n-1}}\,\left(\int_{\partial B_r} u_i^2\,d\sigma\right)^{1/2}\,
\left(\int_{\partial B_r} |\nabla u_i|^{2}\,d\sigma\right)^{1/2}\\
& \lesssim \frac{w(0,r)}{r^{n-2}\,\lambda_i^{1/2}}\,\int_{\partial B_r} |\nabla u_i|^2\,d\sigma\\
&\leq \frac{w(0,r)}{\gamma_i}\int_{B_r} \frac{|\nabla u_i|^2}{|y|^{n-1}}\,dy.
\end{align*}

Regarding the term $S_b(\delta)$ in \rf{eqir20}, we have by \eqref{D<w}
$$|S_b(\delta)| = \left|\int \frac{\nabla u_i\,D\,\nabla u_i}{|y|^{n-1}}\,\vphi_\delta\,dy\right|
\leq w(0,r+\delta)\int_{B_{r+\delta}}\frac{|\nabla u_i|^2}{|y|^{n-1}}\,dy.
$$
Therefore,
$$\limsup_{\delta\to0+}|S_b(\delta)| \leq w(0,r)\int_{B_r}\frac{|\nabla u_i|^2}{|y|^{n-1}}\,dy.$$

Moreover, we turn our attention to the term $S_c(\delta)$.
By Cauchy-Schwarz, we obtain
\begin{align*}
|S_c(\delta)| & = \left|\int u_i\,\nabla \frac1{|y|^{n-1}}\,D\,\nabla u_i\,\vphi_\delta\,dy\right|
\lesssim  \int_{B_{r+\delta}} \frac{w(0,|y|)\,u_i\,|\nabla u_i|}{|y|^{n}}\,dy\\
&
\leq  \left(\int_{B_{r+\delta}}\frac{w(0,|y|)^2\,u_i(y)^2}{|y|^{n+1}}\,dy\right)^{1/2}
\left(\int_{B_{r+\delta}}\frac{|\nabla u_i(y)|^2}{|y|^{n-1}}\,dy\right)^{1/2}.
\end{align*}
So we have
\begin{align*}
\limsup_{\delta\to0+}|S_c(\delta)| & \lesssim 
\left(\int_{B_{r}}\frac{w(0,|y|)^2\,u_i(y)^2}{|y|^{n+1}}\,dy\right)^{1/2}
\left(\int_{B_r}\frac{|\nabla u_i(y)|^2}{|y|^{n-1}}\,dy\right)^{1/2}\\
& \leq K_r\,w(0,r)\int_{B_r} \frac{|\nabla u_i(y)|^2}{|y|^{n-1}}\,dy.
\end{align*}
Arguing as in $S_c(\delta)$ we can show that
\begin{align*} 
\limsup_{\delta\to0+} \Big| \int \frac{u_i\,\vec b \cdot \nabla u_i}{|y|^{n-1}}\,\vphi_\delta\,dy \Big|&\lesssim K_r\,w(0,r)\int_{B_r} \frac{|\nabla u_i(y)|^2}{|y|^{n-1}}\,dy,\\
\limsup_{\delta\to0+} \Big| \int \frac{d \,u_i^2}{|y|^{n-1}}\,\vphi_\delta\,dy \Big|&\lesssim K_r\,w(0,r)\int_{B_r} \frac{|\nabla u_i(y)|^2}{|y|^{n-1}}\,dy,
\end{align*}
and also,
$$
\limsup_{\delta\to0+} \Big| \int \frac{u_i\,\divv(\vec e u_i)}{|y|^{n-1}}\,\vphi_\delta\,dy \Big|\lesssim K_r\,w(0,r)\int_{B_r} \frac{|\nabla u_i(y)|^2}{|y|^{n-1}}\,dy.
$$

Combining the latter inequalities with  the estimates for $S_a(\delta)$, $S_b(\delta)$ and $S_c(\delta)$, and noting that $1+\gamma_i^{-1}+K_r  \leq (1+\gamma_i^{-1})(1+K_r)$, we obtain  \rf{eqfac10}. This concludes the proof of the theorem, modulo the arguments for the identity \rf{eqclaimlimit}.
\vv

\noi{\bf Proof of \rf{eqclaimlimit}.} Recall that we have to show that, for a.e.\ $r$,
\begin{equation}\label{eqclaimlimit'}
\lim_{\delta\to 0}\frac12\int \frac{\Delta(u_i^2)(y)}{|y|^{n-1}}\,\vphi_\delta(y)\,dy  = \frac1{r^{n-1}} \int_{\partial B_r}
\Bigl(u_i\,\partial_r u_i + \frac{n-1}{2\,r}\,u_i^2\Bigr)\,d\sigma
\end{equation}
We have (recalling $|\cdot |^{1-n}$ is harmonic away from $0\not\in \supp \vphi_{\delta}$)
\begin{align*}
\frac12\int \frac{\Delta(u_i^2)(y)}{|y|^{n-1}}\,\vphi_\delta(y)\,dy & =
\frac12\int u_i^2\, \Delta\Bigl(\vphi_\delta\frac1{|y|^{n-1}}\Bigr)\,dy\\
& = \int u_i^2\, \nabla\vphi_\delta\,\cdot \nabla \frac1{|y|^{n-1}}\,dy +
\frac12\int u_i^2\, \Delta\vphi_\delta\,\frac1{|y|^{n-1}}\,dy.
\end{align*}

We write
$\vphi_\delta = \psi_\delta - \wt\psi_\delta,$
where $\psi_\delta$ and  $\wt\psi_\delta$ are radial $C^\infty$ functions such that $\chi_{B(0,r)} \leq \psi_\delta \leq \chi_{B(0,r+\delta)}$ and $\chi_{B(0,\delta/2)} \leq \wt\psi_\delta \leq \chi_{B(0,\delta)}$.
Note that 
$$\left|\int u_i^2\, \nabla\wt\psi_\delta\,\cdot \nabla \frac1{|y|^{n-1}}\,dy\right| 
\lesssim \|u_i\|_{\infty,B_\delta}^2\,\frac1\delta\,\frac1{\delta^n} \, |B_\delta|
\approx  \|u_i\|_{\infty,B_\delta}^2\to 0\quad \mbox{ as $\delta\to0$.}$$
By analogous estimates, 
$$\frac12\int u_i^2\, \Delta\wt\psi_\delta\,\frac1{|y|^{n-1}}\,dy\to 0\quad \mbox{ as $\delta\to0$.}$$
Therefore, \rf{eqclaimlimit'} is equivalent to 
\begin{equation}\label{eqclaimlimit''}
 \int u_i^2\, \nabla\psi_\delta\,\cdot \nabla \frac1{|y|^{n-1}}\,dy +
\frac12\int u_i^2\, \Delta\psi_\delta\,\frac1{|y|^{n-1}}\,dy
 \to \frac1{r^{n-1}} \int_{\partial B_r}
\Bigl(u_i\,\partial_r u_i + \frac{n-1}{2\,r}\,u_i^2\Bigr)\,d\sigma
\end{equation}
as $\delta\to0+$.

We consider first the first integral on the left hand side of \rf{eqclaimlimit''}. Since
$\psi_\delta$ is radial, we have
$$ \int u_i^2\, \nabla\psi_\delta\,\cdot \nabla \frac1{|y|^{n-1}}\,dy =
 \int u_i^2\, \partial_r\psi_\delta\,\,\partial_r\frac1{|y|^{n-1}}\,dy =
- \int u_i^2\, \partial_r\psi_\delta \frac{n-1}{|y|^n} \,dy.$$
We claim that given any function $f\in L^1_{loc}(\R^{n+1})$,
\begin{equation}\label{eqcla88}
\lim_{\delta\to0+}
\int f\, \partial_r\psi_\delta\,dy = \int_{\partial B_r} f\,d\sigma\quad
\mbox{for a.e.\ $r$.}
\end{equation}
To check this, taking into account that $\partial_r\psi_\delta$ is radial, we write
\begin{align*}
\left|\int f\, \partial_r\psi_\delta\,dy - \int_{\partial B_r} f\,d\sigma\right|
& = \left|\int_r^{r+\delta} \left(\int_{\partial B_s} f\,d\sigma - \int_{\partial B_r} f\,d\sigma\right)\,
(-\partial_r\psi_\delta(s))\,ds\right|\\
& \leq \frac c\delta
\int_r^{r+\delta} \left|\int_{\partial B_s} f\,d\sigma - \int_{\partial B_r} f\,d\sigma\right|\,ds,
\end{align*}
which tends to $0$ for a.e.\ $r$ as $\delta\to 0$, by the Lebesgue differentiation theorem
applied to the function $s\mapsto\int_{\partial B_s} f\,d\sigma$, $s\in(0,\infty)$.
Thus the claim \rf{eqcla88} is proved and we infer that
\begin{equation}\label{eqcla98}
 \int u_i^2\, \nabla\psi_\delta\,\cdot \nabla \frac1{|y|^{n-1}}\,dy\to 
\frac{n-1}{r^n} \int_{\partial B_r} u_i^2\,d\sigma \quad\mbox{ for a.e.\ $r$}.
\end{equation}

Next we turn our attention to the second term on the left hand side of \rf{eqclaimlimit''}. 
By Taylor's formula applied to the function $s\mapsto \frac1{s^{n-1}}$, it follows that
$$\frac1{|y|^{n-1}} = \frac1{r^{n-1}} - \frac{n-1}{r^n}\,(|y|-r)+ O((|y|-r)^2).$$
Thus we have
\begin{equation*}
\frac12\int u_i^2\, \Delta\psi_\delta\,\frac1{|y|^{n-1}}\,dy
= 
\frac12\int u_i^2\, \Delta\psi_\delta\,\biggl(\frac1{r^{n-1}} - \frac{n-1}{r^n}\,(|y|-r)+ O(\delta^2)\biggr)\,dy.
\end{equation*}
Using that $|\Delta\psi_\delta|\lesssim 1/\delta^2$ and that $\Delta\psi_\delta$ is supported on 
$\overline{ B_{r+\delta}}\setminus B_r$, it follows easily that 
$$O(\delta^2)\int u_i^2\, \Delta\psi_\delta\,dy\to 0 \quad \mbox{ as $\delta\to0$.}$$
Hence we only have to deal with the integral
$$\frac12\int u_i^2\, \Delta\psi_\delta\,\biggl(\frac1{r^{n-1}} - \frac{n-1}{r^n}\,(|y|-r)\biggr)\,dy.$$
Since $u_i^2\,\left(\frac1{r^{n-1}} - \frac{n-1}{r^n}\,(|y|-r)\right)$ belongs to $W^{1,2}(B_R)$, 
the preceding integral equals
\begin{align*}
-\frac12\int \nabla \psi_\delta & \cdot \nabla\biggl(u_i^2\,\biggl(\frac1{r^{n-1}} - \frac{n-1}{r^n}\,(|y|-r)\biggr)\biggr)\,dy \\
& = 
-\int  u_i\,\,\partial_r u_i\,\partial_r \psi_\delta\,\biggl(\frac1{r^{n-1}} - \frac{n-1}{r^n}\,(|y|-r)\biggr)\,dy + 
\frac{n-1}{2\,r^n}\int u_i^2\,\partial_r \psi_\delta  \,dy.
\end{align*}
Using that $|\nabla \psi_\delta|\lesssim 1/\delta$ and that this is supported on 
$\overline{ B_{r+\delta}}\setminus B_r$, it is immediate to check that 
$$\int  u_i\,\,\partial_r u_i\,\partial_r \psi_\delta\,\frac{n-1}{r^n}\,(|y|-r)\,dy
\to 0 \quad \mbox{ as $\delta\to0$.}$$
Therefore,
$$\lim_{\delta\to 0} \frac12\!\int u_i^2\, \Delta\psi_\delta\,\frac1{|y|^{n-1}}\,dy
= \lim_{\delta\to 0} \frac{-1}{r^{n-1}}\int  u_i\,\,\partial_r u_i\,\partial_r \psi_\delta \,dy + 
\lim_{\delta\to 0}
\frac{n-1}{2\,r^n}\!\int u_i^2\,\partial_r \psi_\delta  \,dy.$$
Applying now \rf{eqcla88}, it follows that, for a.e.\ $r$,
$$\lim_{\delta\to 0} \frac12\int u_i^2\, \Delta\psi_\delta\,\frac1{|y|^{n-1}}\,dy
=  \frac1{r^{n-1}}\int_{\partial B_r}  u_i\,\,\partial u_r\,d\sigma - 
\frac{n-1}{2\,r^n}\int_{\partial B_r}  u_i^2 \,d\sigma.$$
Together with \rf{eqcla98}, this proves \rf{eqclaimlimit''} and concludes the proof of \rf{eqclaimlimit}.

\vvv

We now turn our attention to the second part of Theorem \ref{teoACF-elliptic}. We have to show that, for $i=1,2$,
\begin{equation}\label{eqprec29}
\int_{B(x,r)}\frac{u_i(y)^2}{|y-x|^{n+1}}\,dy\lesssim\int_{B(x,r)} \frac{|\grad u_i(y)|^{2}}{|y-x|^{n-1}}dy.
\end{equation}
To this end,
 we assume that $x=0$ and we use the fact that, for a.e. $y\in B_r$,
$$u_i(y)^2 \leq \int_0^{1}|\nabla (u_i^2)(ty)|\,|y|\,dt = 2\int_0^{|y|}u_i\Bigl(t\frac{y}{|y|}\Bigr)\,
\Bigl|\nabla u_i\Bigl(t\frac y{|y|}\Bigr)\Bigr|\,dt.
$$
Therefore, we get
\begin{align}\label{eqtruc22}
\int_{B_{r}} \frac{u_i(y)^2}{|y|^{n+1}}\,dy  & \leq 2 
\int_{B_{r}} \frac1{|y|^{n+1}}\,\int_0^{|y|}u_i\Bigl(t\frac{y}{|y|}\Bigr)\,
\Bigl|\nabla u_i\Bigl(t\frac y{|y|}\Bigr)\Bigr|\,dt\,dy\\
& = 
2 
 \int_{y'\in\partial B_1}\int_0^r \frac{1}{s}\,
\int_0^{s}u_i(t\,y')\,
|\nabla u_i(t\,y')|\,dt\,ds\,d\sigma(y')\nonumber\\
& = 2 
 \int_{y'\in\partial B_1}\int_0^r u_i(t\,y')\,
|\nabla u_i(t\,y')|\,\left(\int_t^r 
 \frac{1}{s}\,ds\right)
\,dt\,d\sigma(y')\nonumber\\
&  = 2  \int_{y'\in\partial B_1}\int_0^r u_i(t\,y')\,
|\nabla u_i(t\,y')|\,\log\frac rt\,dt\,d\sigma(y')\nonumber\\
& = 2  \int_{B_r} \frac{u_i(y)\, |\nabla u_i(y)|}{|y|^n}\,\log \frac r{|y|}\,dy\nonumber\\
&\leq 2\left(\int_{B_r} \frac{|\grad u_i(y)|^{2}}{|y|^{n-1}}\,dy\right)^{1/2}
\left(\int_{B_r} \frac{u_i(y)^{2}}{|y|^{n+1}}\,\log^2 \!\frac r{|y|}\,dy\,\right)^{1/2}
.\nonumber
\end{align}
To estimate the last integral we split it and we use the assumption \rf{eqACF1} on $B_{r/2}$:
\begin{align*}
\int_{B_r} \frac{u_i(y)^{2}}{|y|^{n+1}}\,\log^2 \!\frac r{|y|}\,dy & = 
\int_{B_r\setminus B_{r/2}} \frac{u_i(y)^{2}}{|y|^{n+1}}\,\log^2 \!\frac r{|y|}\,dy + \int_{B_{r/2}} \frac{u_i(y)^{2}}{|y|^{n+1}}\,\log^2 \!\frac r{|y|}\,dy\\
& \lesssim \int_{B_r\setminus B_{r/2}} \frac{u_i(y)^{2}}{|y|^{n+1}}\,dy + \|u_i\|_{\infty,B_{r/2}}^2\int_{B_{r/2}}  w_0 \Big(\frac{|y|}r\Big)^{2}
\frac{1}{|y|^{n+1}}\,\log^2 \!\frac r{|y|}dy\\
& \lesssim \int_{B_r} \frac{u_i(y)^{2}}{|y|^{n+1}}\,dy + C_1\,C_{w_0} \|u_i\|_{\infty,B_{r/2}}^2,
\end{align*}
where in the last inequality we just changed variables. Since $u_i$ is $L$-subharmonic, we have
$$\|u_i\|_{\infty,B_{r/2}} \lesssim \,\avint_{B_r} u_i(y)\,dy \leq \left(\,\avint_{B_r} u_i(y)^2\,dy\right)^{1/2} \leq \left(\int_{B_r} \frac{u_i(y)^2}{|y|^{n+1}}\,dy\right)^{1/2},$$
and thus
$$\int_{B_r} \frac{u_i(y)^{2}}{|y|^{n+1}}\,\log^2 \!\frac r{|y|}\,dy\lesssim \int_{B_r} \frac{u_i(y)^2}{|y|^{n+1}}\,dy.$$
Plugging this estimate into \rf{eqtruc22}, we obtain \rf{eqprec29}.
\fiproof

\vvv


\subsection{Proof of Theorem \ref{teoACFprecise}}

The inequality \rf{eqprec1} follows just by inspecting the proof of Theorem \ref{teoACF-elliptic}.
The fact that $\gamma_1 + \gamma_2\geq2$ is just the Friedland-Hayman inequality mentioned above, and the 
estimate \rf{eqprec2} is shown in Corollary 12.4 of \cite{CS}.
\fiproof
\vv

\vv


\end{document}